\begin{document}

\newtheorem{theorem}{Theorem}[section]
\newtheorem{prop}[theorem]{Proposition}
\newtheorem{proposition}[theorem]{Proposition}
\newtheorem{lemma}[theorem]{Lemma}
\newtheorem{cor}[theorem]{Corollary}
\newtheorem{definition}[theorem]{Definition}
\newtheorem{conj}[theorem]{Conjecture}
\newtheorem{claim}[theorem]{Claim}
\newtheorem{defth}[theorem]{Definition-Theorem}
\newtheorem{example}[theorem]{Example}
\newtheorem{obs}[theorem]{Observation}
\newtheorem{rmk}[theorem]{Remark}
\newtheorem{remark}[theorem]{Remark}
\newtheorem{criterion}[theorem]{Criterion}

\newtheorem{introthm}{Theorem}
\newtheorem{introcor}[introthm]{Corollary}
\renewcommand{\theintrothm}{\Alph{introthm}}
\renewcommand{\theintrocor}{\Alph{introcor}}
\newtheorem{appendthm}{Lemma}[section]
\renewcommand{\theappendthm}{A.\arabic{appendthm}}
\newtheorem{appendremark}[appendthm]{Remark}

\newcommand{\dd}{{\partial}}
\newcommand{\e}{{\epsilon}}
\newcommand{\s}{{\sigma}}

\newcommand\AAA{{\mathcal A}}
\def\Ax{{\mathop {\rm{Ax}}}}
 
\newcommand\BB{{\mathcal B}}
\newcommand\CC{{\mathcal C}}
\newcommand\C{{\mathbb C}}
\newcommand{\Chat}{{\hat {\mathbb C}}}
\newcommand\EE{{\mathcal E}}
\newcommand\G{{\Gamma}}
\newcommand\Gr{{\mathcal G}}
\newcommand\HH{{\mathcal H}}
\newcommand{\HHH}{{\mathbb H}}

\newcommand\Hyp{{\mathbb H}}
\newcommand\GG{{\mathcal G}} 
\newcommand\MM{{\mathcal M}}
\newcommand\FF{{\mathcal F}}
\newcommand\LL{{\mathcal L}}
\newcommand\XX{{\mathcal X}}
\newcommand\YY{{\mathcal Y}}

\newcommand{\N}{{\mathcal N}}
\newcommand\PP{{\mathcal P}}

\newcommand\TT{{\mathcal T}}

\newcommand{\PSL}{{PSL_2 (\mathbb{C})}}
 \newcommand\til{\widetilde}
\newcommand\CH{{\CC\HH}}

\newcommand\EXH{{ \EE (X, \HH )}}
\newcommand\GXH{{ \GG (X, \HH )}}
\newcommand\GYH{{ \GG (Y, \HH )}}
\newcommand\PEX{{\PP\EE  (X, \HH , \GG , \LL )}}
\newcommand\MF{{\MM\FF}}
 
\def\mul{\stackrel{{}_\ast}{\asymp}}
\def\add{\stackrel{{}_+}{\asymp}}
\def\ladd{\stackrel{{}_+}{\prec}}
\def\gadd{\stackrel{{}_+}{\succ}}
\def\lmul{\stackrel{{}_\ast}{\prec}}
\def\gmul{\stackrel{{}_\ast}{\succ}}

\def\co{{\colon \thinspace}}
 \def\Stab {{\mathop {\rm Stab}}}

\title{Limits of  limit sets I}

\author{Mahan Mj}
\address{\begin{flushleft} \rm {\texttt{mahan@rkmvu.ac.in; mahan.mj@gmail.com \\http://people.rkmvu.ac.in/$\sim$mahan/} }\\ School
of Mathematical Sciences, RKM Vivekananda University\\
P.O. Belur Math, Dt. Howrah, WB 711202, India \end{flushleft}}

\author{Caroline Series}
\address{\begin{flushleft} \rm {\texttt{C.M.Series@warwick.ac.uk \\http://www.maths.warwick.ac.uk/$\sim$masbb/} }\\ Mathematics Institute, 
 University of Warwick \\
Coventry CV4 7AL, UK \end{flushleft}}
\thanks{Research of first author partially supported by a DST Research grant}   

\date{\today} 

 \begin{abstract}
 
We show that for a strongly convergent sequence of geometrically finite Kleinian groups with
geometrically finite limit, the Cannon-Thurston maps of limit sets converge uniformly. If however the algebraic and geometric limits differ, as in the well known examples due to Kerckhoff and Thurston, then provided the geometric limit is geometrically finite, the 
  maps on limit sets converge pointwise but not uniformly. 
 \medskip

 {\bf MSC classification: 30F40; 57M50}    
\end{abstract}

 \maketitle


\section{Introduction}

Hausdorff convergence of limit sets under algebraic and geometric limits has been studied by several authors, see for example \cite{marden-book} p.\;203 and Theorem~\ref{evans} below.
In this paper and its companion~\cite{mjs2}, we study convergence of limit sets as \emph{the  
convergence of a sequence of continuous maps} from a fixed compact set, namely the limit set of a 
 fixed geometrically finite group,  to the sphere.

 Given an isomorphism $\rho \co \Gamma \to G$ between two geometrically finite Kleinian groups,  Floyd~\cite{floyd} showed that there is a continuous  equivariant map from a certain completion of the  Cayley graph   of $\Gamma$ to the limit set $  \Lambda_{G}$.  As long as $\rho$ is weakly type preserving (meaning that 
the image of every parabolic element of $\G$ is also parabolic in $G$), this map factors through  the limit set $  \Lambda_{\G}$, giving a  continuous  equivariant map  $  \Lambda_{\G} \to  \Lambda_{G}$ between the limit sets. This result was extended by the remarkable work of Cannon-Thurston~\cite{CT} who showed that  such a map may still exist even when $G$ is totally degenerate, by giving examples with $\G$ a surface group so that 
$  \Lambda_{\G} = S^1$, and a continuous surjective equivariant map to     $\Lambda_{G} =S^2$.
In fact   maps between limit sets of isomorphic groups have a long history prior to Floyd's paper as so-called \emph{boundary mappings}, going back to Nielsen~\cite{nielsen}, see also~\cite{fenchel-n} Section 25.2 and  for example~\cite{tukia, kuusalo}. We will nevertheless stick with what is now well established terminology and call  an equivariant continuous map between the limit sets $\Lambda_{\Gamma},  \Lambda_{G}$ of two isomorphic Kleinian groups, a Cannon-Thurston  or \emph{CT}-map. 
 
Suppose now that we have a sequence of isomorphisms $\rho_n: \Gamma \to G_n$. This paper is the first of two which studies the pointwise behaviour of the \emph{CT}-maps  as  the sequence $G_n$ converges to a limiting Kleinian group  $G_{\infty}$. In this paper we confine ourselves to the case in which both algebraic and geometric limits of  $(G_n)$  are geometrically finite; in the second~\cite{mjs2} we will use the geometric models given by  Minsky's ending lamination theorem~\cite{minsky-elc1} and results of the first author on existence of Cannon-Thurston maps~\cite{mahan-kl} under very general hypotheses,  to study the situation in which group $G_{\infty}$ is geometrically infinite.  

The results in this paper are the following:

\begin{introthm}\label{thm:strong=unif}  
 Let $\Gamma$ be a fixed geometrically finite   Kleinian group and $\rho_n \co \Gamma  \to G_n$ be a sequence
 of weakly type-preserving
isomorphisms to geometrically finite Kleinian groups $G_n$ which converge strongly to a geometrically finite Kleinian group $G_{\infty} = \rho_{\infty} (\Gamma )$. Then the sequence of CT maps $\hat i_n: \Lambda_{\Gamma} \to  \Lambda_{G_n}$ converges uniformly to $\hat i_{\infty}: \Lambda_{\Gamma} \to  \Lambda_{G_{\infty}}$.
\end{introthm}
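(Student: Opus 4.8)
The plan is to obtain uniform convergence from two ingredients: pointwise convergence of the $\hat i_n$ on a dense subset of $\Lambda_\G$, which uses only algebraic convergence, together with equicontinuity of the family $\{\hat i_n\}_n\cup\{\hat i_\infty\}$, which is where the hypotheses of \emph{strong} convergence and \emph{geometrically finite} limit are needed. First some preliminaries. Strong convergence prevents the geometric limit from carrying parabolics beyond those visible in the algebraic limit, so $\rho_\infty\co\G\to G_\infty$ is a weakly type-preserving isomorphism and $\hat i_\infty$ exists by Floyd's theorem; we may also assume $\G$ is non-elementary, the elementary case being trivial, and (since uniform convergence of a sequence depends only on a tail) we may pass to large $n$ whenever convenient. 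Now let $D\subset\Lambda_\G$ be the set of attracting fixed points of loxodromic elements of $\G$; it is dense. If $\gamma\in\G$ is loxodromic with attracting fixed point $\gamma^+$, equivariance of the CT maps forces $\hat i_n(\gamma^+)$ to be the attracting fixed point of $\rho_n(\gamma)$ (or its unique fixed point, if $\rho_n(\gamma)$ is parabolic); since $\rho_n(\gamma)\to\rho_\infty(\gamma)$ in $\PSL$ the fixed points vary continuously — even if $\gamma$ becomes parabolic in the limit, as happens when a curve is pinched — so $\hat i_n(\gamma^+)\to\hat i_\infty(\gamma^+)$. The same holds at parabolic fixed points, using $\rho_n|_P\to\rho_\infty|_P$ on the cusp subgroups $P$. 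Hence $\hat i_n\to\hat i_\infty$ pointwise on $D$.

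For equicontinuity, fix a finite generating set $S$ of $\G$ and let $\mathcal{C}=\mathcal{C}(\G,S)$; via $\rho_n$ and $\rho_\infty$ this is simultaneously the Cayley graph of every $G_n$ and of $G_\infty$. Because $\G$ is geometrically finite, $\mathcal{C}$ is hyperbolic relative to the cusp subgroups, its Bowditch boundary is equivariantly identified with $\Lambda_\G$, and each $\hat i_n$ (resp. $\hat i_\infty$) is the boundary extension of the orbit map $\Phi_n,\Phi_\infty\co\mathcal{C}\to\Hyp^3$ based at a fixed point $o$, whose image lies within bounded distance of the convex core of the relevant group. The crucial claim is that strong convergence together with geometric finiteness of $G_\infty$ makes these orbit maps quasi-isometric embeddings of the Cayley graph coned off along its cusp subgroups (and along any cyclic subgroup whose generator is being pinched), onto the correspondingly coned-off thick part of the convex core, with quasi-isometry constants independent of $n$: the absence of new parabolics keeps the cusps of the $G_n$ uniformly modelled on those of $\G$ (with at most one extra, uniformly modelled, short Margulis tube per pinched curve), while geometric convergence of $\Hyp^3/G_n$ to the geometrically finite manifold $\Hyp^3/G_\infty$ forces the diameter of the convex core of $G_n$, modulo standard cusp- and tube-neighbourhoods, to stay bounded and the injectivity radius in the thick part to stay bounded below. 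Granting this uniform control, together with $\Phi_n\to\Phi_\infty$ uniformly on finite sets, the standard Morse and fellow-travelling estimates in Gromov-hyperbolic spaces give a single modulus of continuity valid for every $\hat i_n$ and for $\hat i_\infty$, once one compares the visual metric on $\partial\Hyp^3=\Chat$ based at $o$ with the round metric, which are uniformly H\"older-equivalent. Thus $\{\hat i_n\}_n\cup\{\hat i_\infty\}$ is equicontinuous on $\Lambda_\G$.

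Since $\Lambda_\G$ is compact, equicontinuity and pointwise convergence on the dense set $D$ yield uniform convergence by the usual three-$\epsilon$ argument: for $\epsilon>0$ choose $\delta$ from equicontinuity applied to $\epsilon/3$, cover $\Lambda_\G$ by finitely many $\delta$-balls centred at points of $D$, and take $n$ large enough that $\hat i_n$ and $\hat i_\infty$ agree to within $\epsilon/3$ at each centre. The main obstacle is the crucial claim of the second paragraph — producing a genuinely \emph{uniform} modulus of continuity from strong convergence. This has two delicate parts: uniform control of the thick part of the convex cores $C(G_n)$, which is exactly where geometric finiteness of $G_\infty$ is indispensable (a degenerating end would let the thick parts spread out and destroy uniformity, and indeed this is the mechanism behind the loss of uniform convergence in the setting of differing algebraic and geometric limits described in the abstract), together with a uniform description of the orbit maps inside the cusps — rank-two cusps and tubes around pinched curves being the subtle cases — so that the diameter of $\hat i_n(B)$, for small balls $B\subset\Lambda_\G$ meeting a parabolic point, is controlled independently of $n$.
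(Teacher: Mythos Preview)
Your strategy is sound and is, at bottom, the same as the paper's: the paper packages equicontinuity as a condition it calls UEPP (whenever a $d_\G$-geodesic segment lies outside $B_\G(1;N)$, the $\HHH^3$-geodesic $[j_n(\lambda)]$ lies outside $B_\HHH(O;f(N))$ for all $n$, with $f$ independent of $n$), proves that UEPP implies uniform convergence via essentially your three-$\epsilon$ argument, and then verifies UEPP by running Floyd's proof of Proposition~\ref{floydorig} with constants independent of $n$. Your ``uniform quasi-isometric embedding of the coned-off Cayley graph'' is exactly that uniform Floyd estimate in relative-hyperbolic language, so the two arguments are the same once unwound.

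The genuine gap is the step you flag as the ``main obstacle'' but do not actually argue: that geometric convergence to a geometrically finite limit forces the diameters of the $\epsilon$-thick parts of the convex cores $\mathcal N_n/G_n$ to be uniformly bounded. You assert this follows from geometric convergence, but it does not follow formally from Hausdorff or Carath\'eodory convergence alone. The paper makes this explicit with a toy example: a family of planar regions that converges in the Hausdorff sense on compacta to a finite-area set while the diameters blow up. What is needed is a separate argument (Proposition~\ref{prop:diameters}) that uses the uniformly bounded diameter of the boundary surfaces $\partial\mathcal N_n/G_n$ together with a pair of non-commuting incompressible loops to produce, for every $n$, a thick point of $\mathcal N_n\cap\mathcal D_n$ inside a fixed shell whenever there is one far outside; this contradicts the approximation of $\mathcal D_n$ by $\mathcal D_H$ inside that shell. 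Without this step your equicontinuity claim is unproven, and it is precisely the place where the geometric finiteness of the limit enters. The second issue you mention --- uniform control inside cusps and tubes --- is handled by the paper's distortion lemmas in the Appendix; once the thick-part diameters are bounded, those estimates go through with constants depending only on $\epsilon$.
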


\begin{introthm}\label{thm:alg=ptwise}  
 Let $\Gamma$ be a fixed geometrically finite  Kleinian group   and $\rho_n\co \Gamma \to G_n$ be a sequence
 of weakly type-preserving
isomorphisms to geometrically finite Kleinian groups which converge algebraically  to $G_{\infty} = \rho_{\infty} (\Gamma )$. Suppose also that the geometric limit of the groups $G_n$ is geometrically finite. Then the sequence of  {CT}-maps $\hat i_n: \Lambda_{\Gamma} \to  \Lambda_{G_n}$ converge pointwise to $\hat i_{\infty}: \Lambda_{\Gamma} \to  \Lambda_{G_{\infty}}$.
\end{introthm}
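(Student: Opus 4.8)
The plan is to reduce the statement to a pointwise estimate plus a single uniform-in-$n$ decay bound supplied by the geometry of the geometric limit. Fix a basepoint $o\in\HHH^3$. Since $\Gamma$ is geometrically finite, by the Beardon--Maskit/Bowditch characterisation $\Lambda_\Gamma=\Lambda_\Gamma^{\mathrm{con}}\cup\Lambda_\Gamma^{\mathrm{par}}$, where $\Lambda_\Gamma^{\mathrm{con}}$ is the set of conical (radial) limit points and $\Lambda_\Gamma^{\mathrm{par}}$ the countable set of bounded parabolic fixed points; I would handle these two cases separately.

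The parabolic case is essentially formal. If $\xi$ is fixed by the maximal parabolic subgroup $P\le\Gamma$, then by equivariance $\hat i_n(\xi)$ is fixed by $\rho_n(P)$; since each $\rho_n$ is weakly type-preserving, $\rho_n(P)$ is again a (rank one or two) parabolic subgroup, hence has a unique fixed point, which must therefore be $\hat i_n(\xi)$, and likewise $\hat i_\infty(\xi)$ is the unique fixed point of $\rho_\infty(P)$ (note $\rho_\infty(a)$ has trace $\pm2$ and is nontrivial, hence is parabolic, for $a\in P$). Choosing a parabolic generator $a$ of a cyclic subgroup of $P$, algebraic convergence gives $\rho_n(a)\to\rho_\infty(a)$ within the parabolic elements of $\PSL$, on which the fixed-point map is continuous; hence $\hat i_n(\xi)\to\hat i_\infty(\xi)$.

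Now let $\xi\in\Lambda_\Gamma^{\mathrm{con}}$. Choose a geodesic ray $r$ in $\HHH^3$ ending at $\xi$ and elements $g_k\in\Gamma$ with $d(g_k o,r)\le R$ for a fixed $R$ and $g_k o\to\xi$. The defining property of the Cannon--Thurston map gives, for each fixed $n$ (and for $n=\infty$), that $\rho_n(g_k)\,o\to\hat i_n(\xi)$ in $\overline{\HHH^3}$ as $k\to\infty$. The crux is to make this convergence \emph{uniform in $n$}: I would aim to produce a single function $\delta(t)\to 0$ as $t\to\infty$, independent of $n$, such that, writing $z_n^k\in S^2$ for the endpoint of the geodesic ray from $o$ through $\rho_n(g_k)\,o$, one has $d_{S^2}\big(z_n^k,\hat i_n(\xi)\big)\le\delta\big(d(o,g_k o)\big)$. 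Granting this, the theorem follows by a standard interchange of limits: given $\e>0$, first fix $k$ with $\delta(d(o,g_k o))<\e/3$; then for $n$ large, algebraic convergence places $\rho_n(g_k)\,o$ close to $\rho_\infty(g_k)\,o$, while the latter is within $\e/3$ of $\hat i_\infty(\xi)$ once $k$ is large, so the triangle inequality gives $d_{S^2}(\hat i_n(\xi),\hat i_\infty(\xi))<\e$.

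The uniform decay $\delta$ is where geometric finiteness of the geometric limit $H\supseteq G_\infty$ enters. Geometric convergence $G_n\to H$ provides, for every $\rho>0$, almost-isometric maps from the $\rho$-neighbourhood of the based thick part of $N_H=\HHH^3/H$ into $N_n=\HHH^3/G_n$ with bi-Lipschitz constants $\to1$, under which the Margulis tubes and newly-formed cusp regions of $N_n$ correspond to the finitely many standard cusp regions of the geometrically finite manifold $N_H$. Consequently, in the geometric model the $\rho_n$-image of the ray $r$ --- which, after coning off cusps, is a quasigeodesic with uniform constants --- tracks a hyperbolic geodesic to $\hat i_n(\xi)$ outside a uniformly bounded family of horoball-like regions; feeding in $\delta$-hyperbolicity of $\HHH^3$ together with the comparison between electric quasigeodesics and genuine geodesics from~\cite{mahan-kl} then yields $\delta$ depending only on $R$, $\Gamma$, and the geometry of $N_H$. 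I expect this last passage --- converting the uniform almost-isometric control over $N_H$ into a uniform-in-$n$ bound on $d_{S^2}(z_n^k,\hat i_n(\xi))$, with particular care for rays whose excursions into the cusps of $N_\Gamma$ (hence into the thin parts and new cusps of $N_n$) are unbounded --- to be the main obstacle; this is precisely where the discrepancy between algebraic and geometric limits is felt, and indeed where uniform convergence, as opposed to the pointwise convergence asserted here, must fail.
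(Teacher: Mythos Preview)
Your overall plan---an interchange of limits based on a decay bound along an approximating orbit sequence---is exactly the skeleton the paper uses (Theorem~\ref{ptwisecrit}). The treatment of parabolic fixed points of $\Gamma$ is also the same. The gap is in the form of the decay bound you posit for conical $\xi$.

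The function $\delta$ you ask for, independent of $n$ (and, in your final sentence, even independent of $\xi$), does not exist. If it did, your argument would give \emph{uniform} convergence of $\hat i_n$, contradicting Evans' theorem when the algebraic and geometric limits differ. More concretely, take $p\in\Gamma$ loxodromic with $\rho_\infty(p)$ parabolic and with powers $\rho_n(p)^{m_n}\to q\in H\setminus G_\infty$ (the Kerckhoff--Thurston picture). The point $\xi=p^+$ is conical for $\Gamma$, and $g_k=p^k$ tracks the ray to $\xi$ with $d(o,g_ko)\asymp k$. But for $k=m_n$ one has $\rho_n(g_{m_n})\,o\to q\cdot o$, which sits at bounded hyperbolic distance from $o$; the radial endpoint $z_n^{m_n}$ therefore converges to the endpoint of the ray through $q\cdot o$, which is \emph{not} the parabolic fixed point $\hat i_\infty(\xi)$ in general. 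So $d_{S^2}(z_n^{m_n},\hat i_n(\xi))$ stays bounded below while $\delta(d(o,g_{m_n}o))\to 0$: your inequality fails along the diagonal $k=m_n$. The same obstruction arises for conical $\xi$ whose approximating path contains arbitrarily long blocks of the form $p^k$.

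The paper's remedy is precisely to drop the uniformity in $n$: its conditions $EP(\xi)$ and $EPP(\xi)$ (Definition~\ref{critep}) require the estimate only for $n\ge M_\xi(N)$, where the threshold $M_\xi(N)$ is allowed to go to infinity with the depth $N$ along the path to $\xi$. This weaker form is still enough for the interchange of limits, and it \emph{is} provable. The work lies in showing that images under $j_n$ of word paths in $\Gamma$ stay within uniformly bounded distance of $[j_n(\lambda)]\cup\mathcal H([j_n(\lambda)])$ (Proposition~\ref{Prop1}), which in turn requires rewriting such paths in terms of the generators of the \emph{geometric} limit $H$ and carefully analysing the resulting ``parabolic blocks'' $p^k$ (Proposition~\ref{PropA}, Lemma~\ref{penetrates}). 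In particular, the relevant case split is not conical versus parabolic for $\Gamma$, but rather by the behaviour of these blocks---bounded length, unbounded length, or terminating in an infinite block---the last of which subsumes the fixed points of the new parabolics and is dispatched directly by algebraic convergence. Your sketch using electric quasigeodesics and almost-isometries from $N_H$ is in the right spirit for the technical core, but it must be organised so that the output carries the $M_\xi(N)$ dependence rather than the uniform bound you stated.
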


We remark that if  the geometric limit of geometrically finite groups is geometrically finite, so is the algebraic limit, see~\cite{marden-book} Theorem 4.6.1 and~\cite{jor-mar}. However geometrically finiteness of the algebraic limit does not in general imply  the same for the geometric limit, as can be seen for example by examining the many geometric limits of once punctured torus groups constructed in~\cite{MinskyOPT}.

If the convergence is algebraic but not strong, then uniform convergence necessarily fails.
This is a consequence of the following result, which in the generality below is due to 
Evans.

\begin{theorem} [\cite{evans1}, \cite{evans2}]   Suppose $\rho_n\co \Gamma \rightarrow G_n$ is a sequence of weakly type-preserving
isomorphisms from a geometrically finite group
$\Gamma$ to Kleinian groups $G_n$ with limit sets $\Lambda_n$, 
such that the sequence converges algebraically to $\rho_\infty \co \Gamma \rightarrow G_\infty$ and geometrically to $H$. Let $\Lambda_{\infty}$ and
 $\Lambda_{H}$ denote the limit sets of $G_\infty$ and  $H$ respectively.
Then $\Lambda_n \rightarrow \Lambda_H$ in the Hausdorff metric. If in addition $\Gamma$ is non-elementary, the sequence converges strongly if and only if
 $\Lambda_n \rightarrow \Lambda_{\infty}$ in the Hausdorff metric.
\label{evans}
\end{theorem}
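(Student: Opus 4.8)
The plan is to work throughout with Kuratowski convergence of closed subsets of $\Chat$, which coincides with Hausdorff convergence since $\Chat$ is compact; so it suffices to prove the two inclusions $\Lambda_H\subseteq\liminf_n\Lambda_n$ and $\limsup_n\Lambda_n\subseteq\Lambda_H$, and then to read off the equivalence. Recall the standard facts (see \cite{marden-book}, \cite{jor-mar}) that the algebraic limit embeds in the geometric limit, $G_\infty\le H$, so that $\Lambda_\infty\subseteq\Lambda_H$, and that geometric (Chabauty) convergence $G_n\to H$ provides, for each $h\in H$, elements $g_n\in G_n$ with $g_n\to h$ in $\PSL$. For the first inclusion, assume $H$ is non-elementary, so that fixed points of loxodromic elements are dense in $\Lambda_H$; given $h\in H$ loxodromic, the approximants $g_n\to h$ are eventually loxodromic (an open condition), their fixed points lie in $\Lambda_n$ and converge to those of $h$, and since $\liminf_n\Lambda_n$ is closed it therefore contains $\Lambda_H$. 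When $H$ (equivalently $\Gamma$, hence every $G_n$) is elementary the limit sets are finite and the inclusion is immediate.

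The inclusion $\limsup_n\Lambda_n\subseteq\Lambda_H$ is, I expect, the main obstacle. I would argue by contradiction: suppose $\xi_{n_k}\in\Lambda_{n_k}$ with $\xi_{n_k}\to\xi\in\Omega_H:=\Chat\setminus\Lambda_H$. Since $\Lambda_{n_k}$ is the accumulation set of the orbit $G_{n_k}o$ of a fixed basepoint $o\in\mathbb{H}^3$, and $\xi_{n_k}$ lies in the open set $\Omega_H$ for large $k$, one can pick $g_k\in G_{n_k}$ with $g_k o\to\xi$ and $d_{\mathbb{H}^3}(o,g_k o)\to\infty$ (using discreteness of $G_{n_k}$ to send the orbit point far out); hence $g_k\to\infty$ in $\PSL$, with source--sink behaviour $g_k\to\xi$ uniformly off a point $b$ and $g_k^{-1}\to b$ uniformly off $\xi$. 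One then fixes a closed round disk $\overline B\ni\xi$ inside $\Omega_H$ that is precisely invariant modulo the finite group $\Stab_H(\xi)$ and argues that the escaping elements $g_k$ produce either a nontrivial element of $H$ carrying $\overline B$ into itself outside this finite set, or a violation of the discreteness of $H$. Closing this argument is exactly where the finer hypotheses enter: that the $\rho_n$ are weakly type-preserving and $\Gamma$ geometrically finite ensures that the geometric convergence $G_n\to H$ controls the domains of discontinuity and convex cores as well as the $\mathbb{H}^3$-geometry --- equivalently, that $\Omega_H\subseteq\liminf_n\Omega_n$ --- which is the content of \cite{evans1,evans2}. Granting this, the two inclusions give $\Lambda_n\to\Lambda_H$.

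Finally, assume $\Gamma$ is non-elementary. If the convergence is strong then $H=G_\infty$ by definition, so $\Lambda_H=\Lambda_\infty$ and the above yields $\Lambda_n\to\Lambda_\infty$. Conversely, if $\Lambda_n\to\Lambda_\infty$ then, by uniqueness of Hausdorff limits together with $\Lambda_n\to\Lambda_H$, we get $\Lambda_\infty=\Lambda_H$; suppose for contradiction the convergence were not strong. Then there is a sequence $\gamma_n\to\infty$ in $\Gamma$ with $\rho_n(\gamma_n)\to\mathrm{id}$ in $\PSL$, and, $\Gamma$ being non-elementary, after passing to a subsequence $\gamma_n$ acquires source--sink dynamics on $\Lambda_\Gamma$. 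The fact that $\rho_n(\gamma_n)$ contracts to the identity while $\gamma_n$ escapes forces $\mathbb{H}^3/G_n$ to develop a thin part --- a short geodesic, or the basepoint drifting into a Margulis tube --- which in the geometric limit produces a new element $h\in H\setminus G_\infty$, a parabolic or loxodromic coming from the pinched data, whose fixed point lies in $\Lambda_H$ but not in $\Lambda_\infty$; this contradicts $\Lambda_\infty=\Lambda_H$, so the convergence is strong. As with the second inclusion above, extracting this new element and checking that it strictly enlarges the limit set is the substantive step and rests on the structure theory of weakly type-preserving algebraic-versus-geometric limits developed in \cite{evans1,evans2}.
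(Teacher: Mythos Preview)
The paper does not prove this theorem: it is quoted as a result of Evans and cited to \cite{evans1,evans2}, with the remark immediately following that for the purposes of the paper (geometric limit geometrically finite with non-empty regular set) the easier Proposition~4.2 of \cite{jor-mar} already suffices. So there is no proof in the paper to compare your attempt against.

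That said, your sketch is a reasonable outline of how such a proof goes, and you are right to flag that the two substantive steps --- the inclusion $\limsup_n\Lambda_n\subseteq\Lambda_H$, and the production of a limit point in $\Lambda_H\setminus\Lambda_\infty$ when the convergence is not strong --- are exactly where the work of Evans lies; you correctly defer to \cite{evans1,evans2} for these. One small correction: your characterisation of non-strong convergence as ``there is a sequence $\gamma_n\to\infty$ in $\Gamma$ with $\rho_n(\gamma_n)\to\mathrm{id}$'' is not the standard one and would need justification. What you get directly is an element $h\in H\setminus G_\infty$, realised as $h=\lim\rho_{n_k}(\gamma_k)$ with $|\gamma_k|_\Gamma\to\infty$; it is the fixed point of \emph{this} $h$ that one shows lies in $\Lambda_H\setminus\Lambda_\infty$, and this is where weak type-preservation and the structure of such extra elements (they are parabolics commuting with parabolics of $G_\infty$, cf.\ Theorem~\ref{extraelts}) enter.
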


 For the purposes of this paper, in which the geometric limit is geometrically finite  with non-empty regular set,  then we are in the easier situation of~\cite{jor-mar} Proposition 4.2, see also~\cite{marden-book}  Theorem 4.5.4.

Since uniform convergence implies diagonal convergence of limit points and hence Hausdorff convergence, Theorem~\ref{evans} shows that if $\Lambda_{\infty}$ and
 $\Lambda_H$ differ, then uniform convergence is impossible.

 Theorem~\ref{thm:alg=ptwise}  is illustrated
 by the   Kerckhoff-Thurston examples \cite{ kerckhoff-thurston} of a sequence $G_m$ of quasi-Fuchsian groups converging
geometrically to a geometrically finite group $G$ with a rank $2$-cusp. 
Our proof will make clear  how it is that the \emph{CT}-maps
  converge pointwise, but not uniformly. The lack of uniform convergence in these examples has also been noted in a remark due to Souto, see Section 9 of~\cite{franca}.

As a special case of Theorem~\ref{thm:strong=unif}  one obtains an alternative proof of the following
 well-known application of the $\lambda$-lemma from complex dynamics \cite{MSS}.
Let $A \subset \C$ be a connected open set. A family of quasi-Fuchsian groups $  G_{z}, z \in A$
is said to be \emph{holomorphic} if there are  maps $\rho_z: \G \to G_z, z \in A$ from a fixed (finitely generated) Fuchsian  group $\G$ such that $ z \mapsto \rho_z(g)$ is holomorphic for each $g \in \G$. If the maps $\rho_z$ are all type preserving, then in the formula for the fixed points, the attracting fixed point (or unique parabolic fixed point) is defined  by a fixed branch of  the square root, so that 
 the maps
$i_z: z \mapsto g^+(z)$ are holomorphic. A straightforward application of the $\lambda$-lemma allows one to extend $i_z$ to the entire limit set $\Lambda_{\G}$: 

\begin{theorem} \label{lambdalemma}  Let $ z \to G_{z}$ be a holomorphic family of Kleinian groups such that 
the map $\rho_z: \G \to G_z$ is an isomorphism for $z \in A$ and such that the map $  i_z: \Lambda_G^+ \to \Chat$ is injective. Then the natural embedding 
$  i_z: \Lambda_G^+ \to \Chat$ extends to a continuous equivariant
  homeomorphism $\hat  i_z: \Lambda_G \to \Chat$, such that  $\hat  i_z: \Lambda_G \times A \to \Chat$ is jointly continuous and such that  
 the map 
 $z \mapsto \hat i_z(\xi)$ is holomorphic on $A$ for each $\xi \in \Lambda_{\G}$.
  \end{theorem}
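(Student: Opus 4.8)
The plan is to read the statement as essentially a direct application of the $\lambda$-lemma on extension of holomorphic motions. We may assume $\G$ is non-elementary, the case $\#\Lambda_\G\le 2$ being trivial. Fix a base point $z_0\in A$ and write $G=G_{z_0}$, so that $\Lambda_G=\Lambda_{G_{z_0}}$ is the fixed compact set carrying all the maps $i_z$. Let $\Lambda_G^+=\{\rho_{z_0}(g)^+:g\in\G\}\subset\Lambda_G$, where $\rho_{z_0}(g)^+$ denotes the attracting fixed point of $\rho_{z_0}(g)$ when it is loxodromic and the unique fixed point when it is parabolic; this set is dense in $\Lambda_G$ because $\G$ is non-elementary. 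Define $h\co A\times\Lambda_G^+\to\Chat$ by $h\bigl(z,\rho_{z_0}(g)^+\bigr)=\rho_z(g)^+=i_z\bigl(\rho_{z_0}(g)^+\bigr)$. First I would check the three axioms of a holomorphic motion of $\Lambda_G^+$ over $A$ with base point $z_0$: (i) $h(z_0,\cdot)=\mathrm{id}$, the normalization built into $i_z$; (ii) for each fixed $\xi$ the function $z\mapsto h(z,\xi)$ is holomorphic on $A$, which is exactly the content of the paragraph preceding the theorem --- type-preservation keeps each $\rho_z(g)$ of the same type as $\rho_{z_0}(g)$, so the attracting (or parabolic) fixed point is cut out globally on $A$ by a fixed holomorphic branch of the square root in the fixed-point formula; (iii) for each fixed $z$ the map $h(z,\cdot)$ is injective on $\Lambda_G^+$, which is precisely the hypothesis that $i_z$ is an embedding.

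Next I would invoke the $\lambda$-lemma of Ma\~{n}\'{e}--Sad--Sullivan \cite{MSS} (equivalently, of Lyubich): a holomorphic motion of a set extends uniquely to a jointly continuous holomorphic motion of its closure. (The lemma is classically stated over a disk, but its conclusions are local in the parameter and so hold over the connected open set $A$, e.g. by applying it on a disk about each point of $A$ and using uniqueness of the extension.) Thus $h$ extends to a holomorphic motion $\hat h\co A\times\overline{\Lambda_G^+}=A\times\Lambda_G\to\Chat$ which is jointly continuous, with each slice $\hat i_z:=\hat h(z,\cdot)$ a homeomorphism of $\Lambda_G$ onto its image and with $z\mapsto\hat i_z(\xi)$ holomorphic on $A$ for every $\xi\in\Lambda_G$. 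This already gives the extension, the joint continuity of $\hat i_z$ on $\Lambda_G\times A$, and the holomorphic dependence on $z$; only the extension-to-the-closure part of the $\lambda$-lemma is used, not the quasiconformal-extension refinement of S{\l}odkowski.

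It then remains to check equivariance and to identify the image. On the dense set $\Lambda_G^+$, equivariance is a formal computation: for $\gamma,g\in\G$, the M\"{o}bius map $\rho_{z_0}(\gamma)$ carries $\rho_{z_0}(g)^+$ to $\rho_{z_0}(\gamma g\gamma^{-1})^+$, hence
\[
\hat i_z\bigl(\rho_{z_0}(\gamma)\,\rho_{z_0}(g)^+\bigr)=\rho_z(\gamma g\gamma^{-1})^+=\rho_z(\gamma)\bigl(\rho_z(g)^+\bigr)=\rho_z(\gamma)\,\hat i_z\bigl(\rho_{z_0}(g)^+\bigr).
\]
Since $\hat i_z$ and the two M\"{o}bius actions are continuous and $\Lambda_G^+$ is dense in $\Lambda_G$, equivariance holds on all of $\Lambda_G$. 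For the image: $\hat i_z(\Lambda_G)$ is compact and, by equivariance, $\rho_z(\G)$-invariant, and it contains $i_z(\Lambda_G^+)$, which (again using type-preservation) is the full set of fixed points of loxodromic and parabolic elements of the non-elementary group $\rho_z(\G)$ and hence dense in $\Lambda_{G_z}$; therefore $\Lambda_{G_z}\subseteq\hat i_z(\Lambda_G)$. Conversely $\hat i_z(\Lambda_G)=\hat i_z(\overline{\Lambda_G^+})\subseteq\overline{i_z(\Lambda_G^+)}\subseteq\Lambda_{G_z}$, as those fixed points lie in the closed limit set. Hence $\hat i_z\co\Lambda_G\to\Lambda_{G_z}$ is the asserted equivariant homeomorphism onto the limit set, i.e. the \emph{CT}-map.

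I do not expect a genuine obstacle: the whole weight of the argument rests on the $\lambda$-lemma, which we quote, and the remaining work is bookkeeping. The one point that needs any care is (ii) --- that the square-root normalization really produces a single-valued holomorphic function $z\mapsto\rho_z(g)^+$ on $A$; this is where type-preservation is essential, and it is handled by the discussion preceding the theorem. If desired one can also note, though it is not needed for the statement, that $\hat i_z$ is independent of the auxiliary base point $z_0$, by the uniqueness clause in the $\lambda$-lemma together with the transitivity of holomorphic motions.
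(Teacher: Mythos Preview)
Your proof is correct and is precisely the classical $\lambda$-lemma argument that the paper alludes to in the introduction (``a well-known application of the $\lambda$-lemma from complex dynamics~\cite{MSS}''). The paper, however, deliberately takes a different route: it presents Theorem~\ref{lambdalemma} as a \emph{corollary of Theorem~\ref{thm:strong=unif}}, its main strong-convergence result. In the paper's proof, existence of each individual $\hat i_z$ comes from Theorem~\ref{firstresult}; joint continuity is obtained by passing to sequences $z_n\to z_\infty$, observing that algebraic convergence of quasi-Fuchsian groups is automatically strong, and invoking Theorem~\ref{thm:strong=unif} to get uniform convergence of the $\hat i_{z_n}$; holomorphy of $z\mapsto\hat i_z(\xi)$ is then deduced by approximating $\xi$ by attracting fixed points $\xi_m=g_m^+$, for which $z\mapsto\hat i_z(\xi_m)$ is manifestly holomorphic, and applying a normal-families argument to the pointwise limit.

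What each approach buys: yours is shorter and self-contained, relying only on~\cite{MSS} and bookkeeping; the paper's proof is longer but makes the point that its geometric convergence machinery (Theorem~\ref{thm:strong=unif}) independently recovers this complex-analytic fact, which is exactly why the theorem is included --- as an advertisement for the strength of the uniform-convergence criterion rather than as a result needing proof in its own right.
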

  
Here $\Lambda_G^+$ is the set of attractive fixed points of loxodromic elements in $\G$, see Section~\ref{sec:basicCT}. 
This result applies, for example, in the interior of quasifuchsian deformation space $\mathcal {QF}$. However if  $z \in \partial \mathcal {QF}$ then the map $i_z$ is not in general a bijection in a neighborhood of $z$, so the above result does not give information about what happens as we limit on the boundary $\partial \mathcal {QF}$.

 Miyachi~\cite{miyachi} proved Theorem~\ref{thm:strong=unif} without the restrictions on geometric finiteness, in the case in which $\G$ is a surface group without parabolics and the injectivity radius is bounded along the whole sequence. The only other convergence result  we are aware of is that of Francaviglia~\cite{franca} Theorem 1.8 in which
 $\Gamma$ is a discrete subgroup of $\HHH^k$ which diverges at its critical exponent (and so only applies in our context to $\Gamma$ Fuchsian with no parabolics, see~\cite{marden-book} Theorem 3.14.3). It is shown that, provided that all groups in question are non-elementary, then the corresponding \emph{CT}-maps exist and converge almost everywhere with respect to Patterson-Sullivan measure on  $\Lambda_{\G}$.

The  results  in this paper  are all based on the  geometry of geometrically finite groups. In the geometrically infinite situation of~\cite{mjs2}, besides  the Minsky model of degenerate Kleinian groups, we introduce techniques from the theory of  relative hyperbolic spaces and electric geometry.
We will prove  in~\cite{mjs2} that Theorem~\ref{thm:strong=unif} holds in many cases of geometrically infinite  $G_{\infty}$, but  show that for certain algebraic limits (such as the examples of Brock~\cite{brock-itn}), even pointwise convergence may fail.

In Section~\ref{sec:background} we set up the background
and in Section~\ref{sec:cayley} we   prove some important estimates based on Floyd~\cite{floyd} on the embedding of the Cayley graph of a geometrically finite group  into $\HHH^3$. This is a prelude to Section~\ref{sec:CT}  in which we describe the \emph{CT}-maps carefully and  give a geometrical criterion Theorem~\ref{crit1} for their existence. In Section~\ref{sec:criteria} this  is extended to a criterion Theorem~\ref{unifcrit1} for uniform convergence. In Section~\ref{sec:strong} we prove Theorem~\ref{thm:strong=unif}, the case of strong convergence and easily deduce the application to Theorem~\ref{lambdalemma}. Finally  in Section~\ref{sec:algebraic} we formulate the corresponding criterion for pointwise convergence and prove  Theorem~\ref{thm:alg=ptwise}. Some hyperbolic geometry estimates we need are given in the Appendix.


 \section{Preliminaries}
 
 \label{sec:background}


\subsection{Kleinian groups}
 
 \label{sec:basics}

 A \emph{Kleinian group} $G$ is a discrete subgroup of $PSL_2 (\mathbb{C})$. As such it acts as a properly discontinuous group of isometries of hyperbolic $3$-space $\HHH^3$, whose boundary we identify with  the Riemann sphere $\Chat = \C \cup \infty$.
 All groups considered in this paper will be finitely generated and torsion free, so that  
 $ M = \HHH^3/G$ is a hyperbolic $3$-manifold. An excellent reference for the background we need is~\cite{marden-book}.

 A Kleinian group is \emph{geometrically finite} if it has a fundamental polyhedron in $\HHH^3$ with finitely many faces.
 The limit set $\Lambda_G \subset \Chat$  is the closure of the set of its non-elliptic fixed points.  It can also be defined as  the set of accumulation points of any $G$-orbit.

Let $\mathcal N$ be the hyperbolic convex hull of $\Lambda_G$ in $\HHH^3$. This projects to the convex core of $ M $, that is, the smallest closed convex subset containing all closed geodesics. An alternative characterisation of being geometrically finite is that a $\delta$-neighbourhood of $\mathcal N/G$ in $\HHH^3$ has finite volume for some $\delta>0$. Note that if $G$ is geometrically finite, then $\mathcal N/G$ is compact if and only if $G$ contains no parabolics. Such groups are called \emph{convex cocompact}.

The \emph{thin part} $M_{thin}(\epsilon)$ is by definition that part of $M$ where the injectivity radius is at most $\epsilon$. The Margulis constant $\epsilon_{\mathcal M} >0$ is  such that if $\epsilon \leq \epsilon_{\mathcal M} $, then $M_{thin}(\epsilon)$ is a disjoint union of horocyclic neighbourhoods of cusps and Margulis tubes around short geodesics.
It is well known that 
the $\delta$-neighbourhood of $\mathcal N/G$  has finite volume if and only if the thick part of $(\mathcal N/G )\setminus M_{thin}(\epsilon)$ has finite diameter. (Consider a maximal collection of  disjoint embedded balls of small radius in $(\mathcal N/G ) \setminus M_{thin}(\epsilon)$.)

 We denote by $\HH = \HH_{\epsilon} = \HH_{\epsilon;G}$   the set of  lifts of components of $M_{thin}(\epsilon)$ to $\HHH^3$. Thus $\HH$  is a $G$-invariant collection of horoballs based at the parabolic fixed points of $G$, together with equidistant tubes about geodesic axes whose associated loxodromics are short. If we are dealing with a single geometrically finite group $G$, then by reducing $\epsilon$ we can assume that all elements of $\HH$ are horoballs, see for example~\cite{marden-book} Theorem 3.3.4.
We will denote by $\mathcal V=\mathcal V_{\epsilon} = \mathcal V_{\epsilon;G}$ the $\epsilon$-thick part $ \mathcal N$ relative to $G$, that is, $\mathcal V$ is the closure of  $\mathcal N \setminus (\cup_{H \in \HH} H)$.

Let $\Gamma$ be a fixed Kleinian group.
A representation $\rho: \G \to PSL_2 (\mathbb{C})$ is said to be \emph{type preserving} if it maps loxodromics to loxodromics and parabolics to parabolics. It is   \emph{weakly type preserving} if 
the image of every parabolic element of $\G$ is also parabolic in $G$.

Let $\rho_n\co \G \to PSL_2 (\mathbb{C}), n = 1,2 \ldots  $ be a sequence of group isomorphisms.
 The representations $\rho_n$ are said to converge  to the representation $\rho_{\infty}\co \Gamma \to PSL_2 (\mathbb{C})$  \emph{algebraically}
if  for each $g \in \G$,  $\rho_n(g) \to \rho_{\infty} (g)$ as elements of $PSL_2 (\mathbb{C})$. 
They are said to converge \emph{geometrically}   if  $(G_n = \rho_n(G))$ converges as a sequence of closed subsets of $\PSL$
to  $H \subset PSL_2 (\mathbb{C})$. Then  $H$ is a   Kleinian group called the
\emph{ geometric limit} of $(G_n)$.    The sequence   $(G_n)$ converges  \emph{ strongly }
to  $\rho_{\infty}(G)$ if $\rho_{\infty}(G)= H$ and the convergence is both geometric and algebraic.

\subsection{Cannon-Thurston Maps} \label{sec:basicCT}

Let $\G$ be a Kleinian group and let $\rho\co \G \to \PSL$ with $\rho(\G) = G$.
Let $\Lambda_{\G}, \Lambda_G$ be the corresponding limit sets.
A \emph {Cannon-Thurston} or \emph{CT}-map  is an equivariant continuous map  $\hat i\co \Lambda_{\G} \to  \Lambda_G$, that is, a map such that 
$$ \hat i    (g \cdot \xi)  = \rho(g) \hat i (\xi) \ \ {\rm for \ all} \ \ g \in \G, \xi \in \Lambda_{\G}.$$

For a loxodromic  $A \in \PSL$,  denote by $A^+$ its attracting fixed point.
For simplicity of notation, if $A$ is parabolic, we denote the single fixed point  in the same way.
From the equivariance it easily follows that  a Cannon-Thurston map
preserves fixed points, namely
$$ \hat i    (g^+)  = \rho(g)^+ \ \ {\rm for \ all} \ \ g \in \G.$$
Notice that in this we do not necessarily assume that $\rho$ is  type preserving, but that 
weakly type preserving is clearly a necessary condition for the existence of an \emph{CT}-map.
Denoting by $\Lambda_{\G}^+ $  the subset of attracting fixed  points, we see that the \emph{CT}-map, if it exists, is the continuous extension of 
the above obvious map $\Lambda_{\G}^+ \to \Lambda_{G}^+$ to the whole of $\Lambda_{\G}$.

Here is an alternative view on the construction of \emph{CT}-maps.
The Cayley graph of $\G$ is naturally embedded in $\HHH^3$ by a map $j_{\G}: \Gr \G \to \HHH^3$ which sends the vertex $  g  \in \Gr \G$ to $ g \cdot O$ where $O = O_{G} \in \HHH^3$ is a fixed base point and extends in the obvious way to edges, see Section~\ref{sec:cayley} for details. Suppose that $\rho \co  \G \to G$ is   weakly type
 preserving. Then define $i \co j_{\G}(\Gr \G) \to \HHH^3$ by setting $i ( j_{\G}(g)) =  j_{G}(g)$
 on vertices and again extending in the obvious way.  Clearly $i$ is equivariant in the sense that 
$i(hg\cdot O) = \rho(h)i(g\cdot O)$.
If
the map $i$ extends (with respect to the Euclidean metric in the ball model) to a continuous map $\hat i : \Lambda_{\G} \to \Lambda_G$, it follows easily from the equivariance and continuity that  $\hat i (g^+) = \rho(g)^+  $ for all $g \in \Gamma$, so that $\hat i$ is an \emph{CT}-map as defined above.
(A necessary condition for this extension to work  is that $\rho$ be weakly type preserving.)

The original  interest in the Cannon-Thurston maps applied to the case in which 
$\G$ is a surface group and $G$ a doubly degenerate group which is the cyclic cover
of a $3$-manifolds fibering over the circle with pseudo-Anosov monodromy, \cite{CT}, in which case of course $\Lambda_{\G}$ is a round circle and $\Lambda_{G} = \Chat$.
However we can also put results of Floyd \cite{floyd} in this context. 
Floyd constructed the completion $\bar \G$ of the Cayley graph of $\G$ with a suitable metric and showed:

\begin{theorem} [\cite{floyd}] \label{floyd1} Let $\G$ be a geometrically finite Kleinian group with group completion $\bar \G$. Then there is an equivariant continuous map $\bar \G \setminus \G \to \Lambda_{\G}$ which is $2-1$ on parabolic points and injective elsewhere.
\end{theorem}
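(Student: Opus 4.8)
The plan is to realize the desired map as the continuous extension of the orbit embedding $j_\G\co\Gr\G\to\HHH^3$, $g\mapsto g\cdot O$, regarded as taking values in the closed ball model $\HHH^3\cup\Chat$. Recall that $\bar\G$ is the completion of $\Gr\G$ for the Floyd metric $\delta_f$, in which an edge at word-distance $n$ from the identity $e$ is assigned length $f(n)$ for a fixed summable weight $f$ with $\inf_n f(n+1)/f(n)>0$ (Floyd's original choice being $f(n)=(n+1)^{-2}$); then $\bar\G$ is compact, points of $\dd_f\G:=\bar\G\setminus\G$ are represented by $\delta_f$-Cauchy sequences $(g_m)$ --- equivalently by word-geodesic rays out of $e$, using local finiteness of $\Gr\G$ and summability of $f$ --- and two such sequences represent the same boundary point iff they remain boundedly $\delta_f$-close. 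Granting that $j_\G$ extends continuously, its equivariance (it intertwines left multiplication on $\Gr\G$ with the isometric action on $\HHH^3\cup\Chat$) passes to the extension, and since orbit points can accumulate only on $\Lambda_\G$ the extension carries $\dd_f\G$ into $\Lambda_\G$. Thus the theorem splits into three points: (A) $j_\G\co(\Gr\G,\delta_f)\to\HHH^3\cup\Chat$ is uniformly continuous, hence extends to $\bar\G$; (B) the extension maps $\dd_f\G$ onto $\Lambda_\G$; (C) the extension is injective over conical limit points and two-to-one over rank-one parabolic fixed points.

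For (A) --- the heart of the matter --- I would prove: for each $\e>0$ there is $\eta>0$ such that $\delta_f(g,h)<\eta$ forces $\hat d(g\cdot O,h\cdot O)<\e$, where $\hat d$ is the Euclidean metric on $\HHH^3\cup\Chat$. First, any $\delta_f$-geodesic from $g$ to $h$ of length $<\eta$ must avoid the word-ball $B(e,R)$ with $R=R(\eta)\to\infty$ as $\eta\to 0$, since a single edge inside $B(e,R)$ already costs $f(R)$; in particular $g$ and $h$ lie far from $e$. Next, analyze this connecting path through the thick-thin geometry of the geometrically finite group $\G$: cut it into maximal excursions inside cosets of maximal parabolic subgroups, together with the complementary ``thick'' subpaths. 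On a thick subpath $j_\G$ is a uniform quasi-isometric embedding into $\HHH^3$ --- this uses cocompactness of the thick part $\mathcal V$ of $\N$, which is precisely geometric finiteness --- so its image is a uniform quasigeodesic lying far from $O$ (at least $\asymp R$), hence of small $\hat d$-diameter. Each parabolic excursion is carried by $j_\G$ into and back out of a single horoball $H\in\HH$ based at a parabolic point $p$, and an elementary horoball estimate bounds the $\hat d$-diameter of its image by $Ce^{-t}$ with $t$ the penetration depth, while $t$ is bounded below in terms of the Floyd length of the excursion (this is where the growth/regularity of $f$ and the $\mathbb Z^k$-geometry of the coset enter). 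Summing these controlled contributions over the path yields the estimate.

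Step (B) is a compactness argument: given $z\in\Lambda_\G$, write $z=\lim_m g_m\cdot O$ with $g_m\in \G$, choose word-geodesics $[e,g_m]$ in $\Gr\G$, extract a subsequential limit geodesic ray $r$ (Arzel\`{a}--Ascoli, using local finiteness of $\Gr\G$ and compactness of $\bar\G$), and check via (A) that $j_\G(r)$ still accumulates at $z$; then the $\dd_f\G$-endpoint of $r$ maps to $z$. For (C): a conical limit point $z$ is the endpoint of a hyperbolic ray returning cofinally to $\mathcal V/\G$, so a word-geodesic ray shadowing it is never eventually trapped in a parabolic coset, and its $j_\G$-image is a genuine quasigeodesic ray landing at $z$; a second boundary point over $z$ would give a second such ray diverging linearly (in the word metric) from the first while both track the geodesic to $z$, which the thick-part quasi-isometry of (A) forbids. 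Finally, the fibre over a parabolic fixed point $p$ with $P=\Stab_\G(p)$ is exactly the Floyd boundary of a coset of $P$; when the cusp has rank one, $P$ is virtually $\mathbb Z$ and its Floyd completion adds precisely its two ends, both mapping to $p$, which gives the asserted two-to-one behaviour (a rank-two cusp $P\cong\mathbb Z^2$ has one-point Floyd boundary for any admissible $f$, consistently with injectivity off rank-one parabolic fixed points).

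I expect the genuine obstacle to be the estimate in (A): one must simultaneously control an a priori unbounded number of cusp excursions, of unbounded depth, against a fixed small Floyd budget, and correctly match Floyd length to hyperbolic penetration depth inside each horoball. This is exactly the step where geometric finiteness (cocompactness of $\mathcal V$, discreteness of the horoball packing $\HH$) and the precise decay and regularity hypotheses on $f$ are indispensable; the remaining points (B) and (C) are then comparatively soft.
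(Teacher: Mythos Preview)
The paper does not actually prove this theorem: it is quoted with attribution to Floyd's original paper~\cite{floyd}, and the only further comment is the informal remark that ``it is not hard to deduce the above result of Floyd'' from the criterion of Theorem~\ref{crit1}. So there is no proof in the paper to compare against, and your proposal is really an independent attempt at Floyd's theorem.

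Your outline is broadly sound and close in spirit to Floyd's original argument, but the paper's machinery suggests a cleaner route to the key step~(A). You propose to decompose a $\delta_f$-short path into thick subpaths and horoball excursions, bound the Euclidean diameter of each piece separately, and then sum. The delicate point you flag --- controlling an unbounded number of excursions of unbounded depth against a fixed Floyd budget --- is exactly where your sketch is thin: the phrase ``summing these controlled contributions'' hides the work, since you need the \emph{sum} of the Euclidean diameters (not just each term) to be small, and for that you must tie the penetration depth of each excursion to its Floyd cost in a summable way. The paper's Theorem~\ref{floyd}, proved via McMullen's ambient quasi-geodesic result (Proposition~\ref{mcmtracking} and Corollary~\ref{floyd3}), avoids this summation entirely: it shows in one stroke that if a $d_\G$-geodesic $\lambda$ lies outside $B_\G(1;N)$ then both $j_\G(\lambda)$ and the hyperbolic geodesic $[j_\G(\lambda)]$ lie outside $B_\HHH(O;f(N))$ with $f(N)\to\infty$, whence Lemma~\ref{easy} gives small Euclidean diameter directly. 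This replaces your piece-by-piece estimate with a single global one, and is what the paper presumably has in mind when it says Floyd's result follows.

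Your steps (B) and (C) are, as you say, comparatively soft; the conical/parabolic dichotomy and the $\mathbb Z$-coset analysis for the two-to-one claim are standard and correctly identified.
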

 Floyd's  map  gives a continuous extension of the embedding $ i : j_{\G} (\Gr \G) \to \HHH^3$ to a map $\bar \G \to \HHH^3 \cup \dd \HHH^3$. In the special case in which $G$ contains no parabolics, $j_{\G} (\Gr \G)$ is a hyperbolic metric space and $\bar G$  can be identified both with its Gromov boundary and with $\Lambda_{\G}$. 
For precise details of the connection between Floyd's result  and maps of limit sets, see~\cite{tukia1}.

 In Section~\ref{sec:CT} we give a criterion for the existence of a  $CT$-map in the case in which $\G$ is any geometrically finite Kleinian group and $\rho: \G \to G$ is a weakly type preserving isomorphism.  It is not hard to deduce  the above result of Floyd.   In subsequent sections  the criterion is extended  to deal with converging sequences of groups.

\subsection{Notation} We denote the 
 hyperbolic metric on $\HHH^3$ by $d_{\HHH}$; sometimes we explicitly  use the ball model $\mathbb B$ with centre $O$ and denote by $d_{\mathbb E}$   the Euclidean metric  on $\mathbb B \cup \Chat$.
For $P \in \HHH^3$,  write $B_{\HHH}(P; R)$ for the hyperbolic ball centre $P$ and  radius $R$, with a similar definition for Euclidean balls $B_{\mathbb E}$.  
  Let $\beta$ be a path in $\HHH^3$ with endpoints $X,Y $. We denote its hyperbolic  length by $\ell(\beta)$ and    write $[\beta]$  or $[X,Y]$ for the $\HHH^3$-geodesic  from $X$ to $Y $ (so that $\ell(\beta) \geq \ell([\beta])$ with equality if and only if $\beta$ is itself geodesic).

We write $ X \prec Y$  (resp. $ X \succ Y$) to mean there is a constant $c>0$  such that $ X < c Y$  (resp. $ X >c Y$)
and $ X \asymp Y$ to mean $ X \prec Y$ and $Y\prec X$.
We also write $ X \ladd Y$  to indicate an inequality up to an additive constant, thus $ X \ladd Y$ means 
  there is a constant $c>0$  such that  $c>0$  such that $ X < Y+c$; the notation
 $Y\gadd X$ is defined similarly.

\section{Embedding the Cayley graph} \label{sec:cayley}
   Let $G$ be a finitely generated Kleinian group with generating set $G^* = \{e_1, \ldots, e_k \}$. Its Cayley graph $\mathcal GG$ is the graph whose vertices are elements $g \in G$ and which has an edge between $g,g'$ whenever $g^{-1}g' \in G^*$.  The graph metric $d_G$ is defined as the edge length of the shortest path between vertices so that $d_G(1,e_i) =1$ for all $i$, where $1$ is the unit element of $G$.  Let
 $|g|$ (or where needed for  clarity $|g|_{G^*}$)  denote the word length of $g \in   G$ with respect to $G^*$, so that $|g| = d_G(1,g)$. For $X \in \Gr G$, we denote by $B_{G}(X; R) \subset \Gr G$  the $d_G$-ball centre $X$ and  radius $R$.

Choose a base point  $O = O_G \in \HHH^3$ which is not a fixed point of any element of $G$. One may if desired assume the basepoint is the centre of the ball model $\mathbb B$. For simplicity, we do this throughout the paper unless indicated otherwise. 
 Then $\mathcal GG$  is embedded in $\HHH^3$ 
by the map $j_{G}$ which sends $g \in G$ to $j_{G}(g) = g \cdot O$, and which sends the edge joining $g,g'$ to the $ \HHH^3$-geodesic joining $j_{G}(g),j_{G}(g')$. In particular,  $j_G(1) = O$. Note that using the ball model of $\HHH^3$, the limit set $\Lambda_G  $ may be regarded as the completion of $j_{G}(\mathcal GG)$  in the Euclidean metric $d_{\mathbb E} $ on   $\mathbb B \cup \Chat$.  

It will be  important for us to understand the relationship between geodesic segments lying outside large balls in $  {\mathcal G}G$ and in $\HHH^3$.  For geometrically finite groups, the main facts we need are encapsulated in the following theorem.
Recall that  for a path $\beta \subset \HHH^3$, $[\beta]$ denotes the hyperbolic geodesic joining its endpoints. 

\begin{theorem}\label{floyd} Let $G$ be a finitely generated geometrically finite Kleinian group. There exists a function   $f \co \mathbb N \to  \mathbb N$  such that 
 $f(N) \rightarrow\infty$ as $N\rightarrow\infty$ and such that 
   if $\lambda$ is a $d_G$-geodesic segment in ${\mathcal G}G$ which  lies outside $B_{G}(1; N)$, then both  $j_G(\lambda)$ and   $[j_{G}(\lambda)]$ lie   outside $B_{\HHH}(O;f(N))$ in $\HHH^3$.
\end{theorem}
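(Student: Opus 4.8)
The plan is to prove the contrapositive: show that if a $d_G$-geodesic segment $\lambda$ comes close to $O$ in $\HHH^3$ (either $j_G(\lambda)$ or its hyperbolic geodesic $[j_G(\lambda)]$ enters $B_{\HHH}(O;R)$ for some fixed $R$), then $\lambda$ must pass through a bounded neighbourhood of the identity vertex in $\Gr G$, with the bound depending only on $R$ and the group data. The key point is that the embedding $j_G$, while far from being a quasi-isometry in the presence of parabolics, still enjoys a \emph{properness} property coming from geometric finiteness: only finitely many group elements $g$ satisfy $d_{\HHH}(O, g\cdot O) \le R$, and more importantly, any $\HHH^3$-geodesic of bounded length near $O$ can only ``shadow'' a $d_G$-geodesic that stays in a bounded word-length ball.

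First I would set up the combinatorial horoball picture. Using the decomposition of $\mathcal N/G$ into the compact (up to the action) thick part $\mathcal V$ and the horoball neighbourhoods $\HH$, I would observe that away from the horoballs the orbit map $g \mapsto g\cdot O$ is a quasi-isometry onto its image (standard Milnor--\v{S}varc in the cocompact thick part), while inside a horoball $H$ based at a parabolic fixed point $p$ with stabiliser $\Stab(p)$, the combinatorial geometry is that of a ``combinatorial horoball'' over $\Stab(p)$: travelling deep into $H$ in $\HHH^3$ corresponds, in $\Gr G$, to a long detour using many generators of $\Stab(p)$. The crucial estimate — which is exactly the content of Floyd's work and the reason the theorem is attributed to him — is that a word-geodesic in $\Gr G$ is forced to enter such a horoball region at word-distance roughly $N$ from $1$ only if the corresponding point in $\HHH^3$ stays at hyperbolic distance $\succ \log N$, or some comparable unbounded function, from $O$. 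I would extract from this a function $f$ with $f(N) \to \infty$.

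Second, I would handle the two assertions about $j_G(\lambda)$ and $[j_G(\lambda)]$ together. For $j_G(\lambda)$ itself: the path $j_G(\lambda)$ visits only vertices $j_G(g)$ with $g$ on the geodesic segment $\lambda$, hence with $d_G(1,g) > N$; by the horoball analysis above each such $g\cdot O$ lies outside $B_{\HHH}(O; f(N))$, and the connecting geodesic edges (bounded hyperbolic length, since $d_{\HHH}(O, e_i \cdot O)$ is bounded over the finite generating set) cannot dip much closer, so after adjusting $f$ by a bounded amount $j_G(\lambda)$ avoids $B_{\HHH}(O; f(N))$. For the hyperbolic geodesic $[j_G(\lambda)]$ with endpoints $X = j_G(g_0), Y = j_G(g_1)$: here I would use a hyperbolicity/thin-triangles argument in $\HHH^3$ together with the fact that $j_G(\lambda)$ is already a (not necessarily geodesic) path from $X$ to $Y$ staying far from $O$; the subtlety is that $j_G(\lambda)$ need not be a quasigeodesic in $\HHH^3$ because of the parabolic detours, so I cannot simply invoke the Morse lemma. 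Instead I would argue that $[X,Y]$ can only come close to $O$ if it enters some horoball $H\in\HH$ and exits it, and then use convexity of horoballs together with the location of the entry/exit points (which lie on $j_G(\lambda)$, hence far from $O$) to bound how deep $[X,Y]$ penetrates, giving the required lower bound for $d_{\HHH}(O, [X,Y])$.

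I expect the main obstacle to be precisely this last point — controlling the hyperbolic geodesic $[j_G(\lambda)]$ rather than the broken path $j_G(\lambda)$ — because the embedding $j_G$ badly distorts distances near parabolic points, so standard quasigeodesic stability fails. The resolution is geometric finiteness: it guarantees the horoballs in $\HH$ are uniformly separated and that, outside a fixed-radius neighbourhood of the $G$-orbit of the convex core's thick part, the only way to be near $O$ is to sit in a horoball based at a parabolic fixed point of bounded ``generation''. I would most likely package the needed hyperbolic-geometry facts (convexity of horoballs, the relation between penetration depth into a horoball and the word length of the corresponding coset representative, bounded backtracking of geodesics near a union of disjoint horoballs) as lemmas, with some of the more technical estimates deferred to the Appendix referenced in the introduction.
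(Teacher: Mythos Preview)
Your overall architecture matches the paper's: use Floyd's lower bound $d_{\HHH}(O,g\cdot O)\gadd 2\log|g|$ to push $j_G(\lambda)$ outside a large ball, then treat the hyperbolic geodesic $[j_G(\lambda)]$ separately because $j_G(\lambda)$ is not a quasi-geodesic in the parabolic case. You also correctly flag that last step as the real obstacle.

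However, your proposed resolution of that obstacle contains a genuine gap. You write that the entry/exit points of $[X,Y]$ to a horoball $H$ ``lie on $j_G(\lambda)$, hence far from $O$''. They do not: those entry/exit points lie on $[X,Y]\cap\partial H$, and a priori you know nothing about $[X,Y]$ away from its endpoints --- that is exactly the problem you just identified. Without some relation between the broken path $j_G(\lambda)$ and the geodesic $[j_G(\lambda)]$ beyond sharing endpoints, you cannot locate those entry/exit points at all, and the horoball-penetration estimate (your Lemma~\ref{horoballs}-style argument) never gets off the ground.

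What the paper does, and what is missing from your sketch, is to supply that relation via McMullen's \emph{ambient quasi-geodesic} machinery. One first shows (Lemma~\ref{floyd1a}) that $j_G(\lambda)$ is an ambient quasi-geodesic in the thick part $\mathcal V$: this is not automatic and uses Floyd's construction again, comparing $|\lambda|$ to the length of the shortest $\mathcal V$-path between its endpoints. Then McMullen's tracking result (Proposition~\ref{mcmtracking}) says such an ambient quasi-geodesic stays within bounded distance of $[j_G(\lambda)]\cup\HH([j_G(\lambda)])$. \emph{This} is what forces the entry and exit points $P_1,P_2$ of $[j_G(\lambda)]$ into each horoball to lie within bounded distance of points of $j_G(\lambda)$, hence outside $B_{\HHH}(O;2\log N - c')$; only then does the horoball-penetration lemma (Lemma~\ref{horoballs}) finish the job. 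Your phrase ``bounded backtracking of geodesics near a union of disjoint horoballs'' gestures in this direction, but as stated it is too vague to carry the argument, and your earlier sentence shows you have not yet isolated the correct statement.
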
 


Our proof  is based on the following result  of Floyd
 whose proof we recall as it is 
fundamental in what follows.

\begin{prop} [\cite{floyd} p.~216] \label{floydorig} Let $G$ be a finitely generated geometrically finite Kleinian group with generating set $G^*$, and let $O \in \HHH^3$ be a fixed base point.  
Then there exist constants $a,b, k>0$ such that if $G$ contains no parabolics then
\begin{equation}\label{eqn:floyd1}
 b |g|  \leq d_{\HHH }(O, g \cdot  O) \leq a|g| \qquad \forall g \in G, \end{equation}
 while if $G$ contains parabolics then
 \begin{equation}\label{eqn:floyd2}
 2 log |g| -k \leq d_{\HHH }(O, g\cdot  O) \leq a|g| \qquad \forall g \in G.
 \end{equation}
\end{prop}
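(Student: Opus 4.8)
The plan is to prove the upper and lower bounds separately; the upper bounds are trivial, the no-parabolic lower bound is the \v{S}varc--Milnor lemma, and the parabolic lower bound is the real content. For the upper bound, if $g = e_{i_1}\cdots e_{i_n}$ is a geodesic word then the triangle inequality along the word, together with the fact that each generator acts by a hyperbolic isometry, gives $d_{\HHH}(O, g\cdot O)\le n\max_i d_{\HHH}(O, e_i\cdot O)$, so $a := \max_i d_{\HHH}(O, e_i\cdot O)$ works in both \eqref{eqn:floyd1} and \eqref{eqn:floyd2}. When $G$ has no parabolics it is convex cocompact, hence acts properly discontinuously and cocompactly by isometries on the convex hull $\mathcal N$, whose intrinsic path metric is the restriction of $d_{\HHH}$ since $\mathcal N$ is convex; the \v{S}varc--Milnor lemma then makes $g\mapsto g\cdot O$ a quasi-isometric embedding of $(G, d_G)$, so $\tfrac1\lambda|g|-c\le d_{\HHH}(O, g\cdot O)$ for some $\lambda\ge1,\ c\ge0$. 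For $|g|\ge 2\lambda c$ this absorbs into $\tfrac1{2\lambda}|g|\le d_{\HHH}(O, g\cdot O)$, and since the finitely many remaining $g$ each have $d_{\HHH}(O,g\cdot O)>0$ unless $g=1$, shrinking $b$ further covers them; this is \eqref{eqn:floyd1}.

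For the lower bound with parabolics I would prove the equivalent estimate $|g|\le C\exp\!\big(d_{\HHH}(O, g\cdot O)/2\big)$. After reducing $\epsilon$ I may assume every $H\in\mathcal H=\mathcal H_{\epsilon;G}$ is a horoball; if $G$ is elementary (equal to some parabolic subgroup $\Gamma_\xi$) both bounds are immediate from the translation normal form, so assume $G$ is non-elementary. Let $\mathcal V=\mathcal V_{\epsilon;G}$ be the truncated convex hull, so $\mathcal V/G$ is compact, and --- at the expense of the constants --- assume $O\in\mathcal V$. The geometric heart is a \emph{cusp estimate}: if $H\in\mathcal H$ is a horoball at a parabolic point $\xi$ with stabilizer $\Gamma_\xi$ and $x,y\in\partial H$ (hence $x,y\in\mathcal V$), then $d_{\mathcal V}(x,y)\prec\exp\!\big(d_{\HHH}(x,y)/2\big)+1$. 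The reason: $G$ being geometrically finite, $\xi$ is a bounded parabolic point, so $\Gamma_\xi\cong\mathbb Z$ or $\mathbb Z^2$ acts cocompactly on a ``collar of the cusp'' consisting of the points of $\mathcal V$ within a fixed bounded distance of $\partial H$; by \v{S}varc--Milnor this collar is quasi-isometric to $\Gamma_\xi$, on which the word metric is comparable to the flat distance along the horosphere $\partial H$, and two points of $\partial H$ at hyperbolic distance $s$ lie at flat distance $2\sinh(s/2)\le e^{s/2}$.

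Granting the cusp estimate, write $d=d_{\HHH}(O,g\cdot O)$ and $\sigma=[O,g\cdot O]$, so $\sigma\subset\mathcal N$ by convexity. A geodesic meets each convex horoball in at most one subarc, and distinct horoballs of $\mathcal H$ are uniformly $\eta$-separated (standard for fixed $G$ and sufficiently small $\epsilon$, via the Margulis lemma), so $\sigma$ crosses horoballs $H_1,\dots,H_m$ in disjoint subarcs of lengths $\ell_1,\dots,\ell_m$ with $\sum_i\ell_i\le d$ and $m\le d/\eta+1$. Replacing each such subarc (whose endpoints lie on $\partial H_i$, hence in $\mathcal V$) by a path in $\mathcal V$ of length $\prec e^{\ell_i/2}+1$ supplied by the cusp estimate (using that the endpoints are at $d_{\HHH}$-distance $\le\ell_i$), and keeping the portions of $\sigma$ that already lie in $\mathcal V$ (of total length $\le d$), yields a path $\tau\subset\mathcal V$ from $O$ to $g\cdot O$ with $\ell(\tau)\le d+C\sum_i(e^{\ell_i/2}+1)\prec d+e^{d/2}+m\prec e^{d/2}$ for $d$ sufficiently large, using $\sum_i e^{\ell_i/2}\le e^{d/2}+(m-1)$ when $\sum_i\ell_i\le d$ and $m\le d/\eta+1$. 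Hence $d_{\mathcal V}(O,g\cdot O)\le\ell(\tau)\prec e^{d/2}$. Finally $G$ acts properly discontinuously and cocompactly by isometries on the geodesic space $(\mathcal V,d_{\mathcal V})$, so \v{S}varc--Milnor gives $|g|\le C'd_{\mathcal V}(O,g\cdot O)+C'$, whence $|g|\prec e^{d/2}$ for $d$ large, i.e.\ $d\ge 2\log|g|-k$ for a suitable $k$; the finitely many $g$ with $d$ (equivalently $|g|$) bounded are absorbed into $k$, proving \eqref{eqn:floyd2}.

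The step I expect to be the main obstacle is the cusp estimate, and within it the geometry of the convex core near a rank-$1$ parabolic point: one must verify that the ``collar'' $\{v\in\mathcal V:d_{\HHH}(v,\partial H)\le D\}$ is connected, $\Gamma_\xi$-cocompact, and coarsely Euclidean. For a rank-$2$ cusp this is routine: a small horoball at $\xi$ lies inside $\mathcal N$ and $\Gamma_\xi=\mathbb Z^2$ acts cocompactly on the whole (flat) horosphere. For a rank-$1$ cusp it uses essentially that $\xi$ is a \emph{bounded} parabolic point --- that $(\Lambda_G\setminus\{\xi\})/\Gamma_\xi$ is compact --- to see that $\mathcal N$ near $\xi$ is a bounded-width slab on which $\Gamma_\xi=\mathbb Z$ acts cocompactly. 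A secondary technical point is pinning down the uniform separation constant $\eta$, which is what controls the number of horoball crossings along $\sigma$.
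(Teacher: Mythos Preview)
Your proof is correct and follows essentially the same strategy as the paper: compare $|g|$ to the path metric $d_{\mathcal V}$ on the truncated convex core via cocompactness of the $G$-action on $\mathcal V$, then compare $d_{\mathcal V}(O,g\cdot O)$ to $d_{\HHH}(O,g\cdot O)$ via the horosphere distance estimate $2\sinh(s/2)\le e^{s/2}$.

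The differences are in presentation rather than substance. Where you invoke the \v{S}varc--Milnor lemma as a black box, the paper argues directly: it subdivides the $\mathcal V$-shortest path from $O$ to $g\cdot O$ into unit intervals, connects each division point to the nearest orbit point, and reads off $|g|\le 1+C\,d_{\mathcal V}(O,g\cdot O)$ from the bound $C=\max\{|h|:d_{\HHH}(O,h\cdot O)\le 1+2\,\mathrm{diam}(\mathcal D\cap\mathcal V)\}$. This hands-on version is what gets reused later in the paper (Proposition~\ref{prop:diameters} and the proof of Theorem~\ref{thm:strong=unif}) when one needs the constants to be \emph{uniform} along a converging sequence of groups, so it is worth internalising. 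Conversely, you are more explicit than the paper about counting horoball crossings and summing $\sum_i e^{\ell_i/2}$; the paper simply writes ``this leads to'' the inequality after citing the horosphere formula (its Lemma~\ref{expdist1}). Your concern about the rank-$1$ cusp collar is legitimate---one does need that a path of length $\asymp e^{s/2}$ joining the entry and exit points on $\partial H$ can be taken inside $\mathcal V$---but the paper handles this (elsewhere, e.g.\ in Lemma~\ref{ambientdist}) with the one-line observation that for a bounded rank-$1$ parabolic point, $\partial H\cap\mathcal N$ is a strip of bounded width, so no elaborate collar argument is needed.
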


\begin{proof}
The right hand inequality is easy: $d_{\HHH }(O, g\cdot  O) \leq a|g| $ where $a = \max \{d_{\HHH }(O,e_i\cdot  O)\co  e_i \in G^*\}$.
 
 Now for  the left hand inequality.
 Assume first that $G$ contains no parabolics and thus is convex cocompact. Let $\mathcal D$ be a finite sided fundamental polyhedron  for $G$,  let $\N \subset \HHH^3$ be the hyperbolic  convex hull of the limit set $\Lambda_{G}$, and let $\mathcal D' = \N \cap \mathcal D$. We may as well assume that $O \in \mathcal D'$ and note that by convexity the geodesic from $O$ to $g\cdot  O$ is in $\N$ for all $g \in G$.
 
 Let $d = {\rm diam} \; \mathcal D'$ and let $C = \max \{ |g|: d_{\HHH}(O,g\cdot  O) \leq 1+2d\}$. Let $g \in G$. Divide the geodesic between $O$ to $g\cdot  O$ into intervals of length $1$ (with one shorter interval) and connect each division point to the closest point in $G \cdot O$. This gives the estimate 
 $|g| \leq 1+ Cd_{\HHH}(O,g\cdot  O)$.
 
Now suppose that $G$ contains  parabolics. Choose $\epsilon \leq \epsilon_{\mathcal M}$ such that  $\HH_{\epsilon}$ is an invariant set of disjoint horoballs around the cusps. Let $\mathcal V = \mathcal V(\epsilon) = \N \setminus \cup_{H \in \HH} H$.
Then $\mathcal D'' = \mathcal D' \cap \mathcal V$ is compact and so has finite diameter. By reducing $\epsilon $ if necessary, we can arrange that  the horoballs are small enough so that the geodesic  from $O$ to $e_i \cdot O$ is in $\mathcal V$ for all the generators $e_i \in G^*$.  Define a metric $d'_{\HHH}$ on $\mathcal V$ by setting $d'_{\HHH}(x,y)$ to be  the length of the shortest path in $\mathcal V$ between $x$ and $y$ in the  induced metric on $\mathcal V$.
Then just as above we obtain the estimate  $|g| \leq 1+ Cd'_{\HHH}(O,g \cdot O)$.
Now use  Lemma~\ref{expdist1} which says  that for $H \in \HH$ and 
  points $x,y \in \dd H$, $d'_{\HHH}(x,y) \leq exp \; {d_{\HHH}(x,y)/2}$.
This leads to the left hand inequality in~\eqref{eqn:floyd2}.
\end{proof}


A path $\beta \subset  \HHH$ is a \emph{K-quasi-geodesic} if  for any subsegment $ \alpha \subset \beta $, 
$$ \ell([\alpha])/K- K \leq \ell(\alpha) \leq K \ell([\alpha]) + K$$
where as usual $\ell(\alpha)$ denotes  the hyperbolic length of the path $\alpha$ and  $[\alpha]$ is the hyperbolic geodesic with the same endpoints as $\alpha$.  We use heavily the important fact that  a $K$-quasi-geodesic is at bounded distance from the geodesic with the same endpoints, with constants depending only on   $K$, see for example~\cite{GhH}.

To deal with the thin parts of  the manifold $\HHH^3/G$, we use the following extension of this definition due to McMullen~\cite{ctm-locconn}.
\begin{definition} Let $V \subset \HHH^3$ be a Riemannian manifold. A path $ \beta : [0,1] \to V$ is  an \emph{ambient $K$-quasi-geodesic} if 
$$ \ell(\alpha) \leq K\ell(\gamma) + K$$ for any subsegment $ \alpha  \subset  \beta $ and any path $\gamma \subset  V$ with the same endpoints as $\alpha$.
\end{definition}

  As in Section~\ref{sec:basics},  fix $\epsilon \leq \epsilon_{\mathcal M}$ and let $\HH = \HH_{\epsilon}$ denote the union of the lifts of  the $\epsilon$-thin parts to $\HHH^3$.
We state McMullen's result as applied to $   \mathcal V = \mathcal N \setminus \bigcup_{H \in \HH} H$.

\begin{proposition} [\cite{ctm-locconn} Theorem 8.1]   \label{mcmtracking}  
 Let $ \beta : [0,1] \to \mathcal V$ be an ambient $K$-quasi-geodesic. Then  $\beta$  lies within a bounded distance $R(K)$ of $[\beta] \cup \HH ([\beta])$, where  $\HH ([\beta])$ is the union of those $H \in \HH $ which meet $[\beta]$.
\end{proposition}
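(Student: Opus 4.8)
The plan is to derive this from two inputs: the $\delta$-hyperbolicity of $\HHH^3$ (so that ordinary quasi-geodesics stay close to the geodesic with the same endpoints) and the elementary fact that a point at depth $D$ inside a horoball $H$ lies at distance at least $D$ from any geodesic disjoint from $H$ (since a geodesic from such a point to any point off $H$ must cross $\dd H$). I would write $\sigma = [\beta]$, let $p,q$ be the endpoints of $\beta$, and set $\Sigma = [\beta]\cup\HH([\beta])$, the geodesic together with the horoballs it meets. First I would record that $\Sigma$ is uniformly quasi-convex in $\HHH^3$ (a geodesic together with any subfamily of $\HH$ that it meets is uniformly quasi-convex), so that any two points within distance $\rho$ of $\Sigma$ are joined by a geodesic lying within $\rho + \delta'$ of $\Sigma$, with $\delta'$ absolute; and that, by the Margulis property, only boundedly many members of $\HH$ come within a fixed distance of any unit-length piece of $\sigma$.

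Next I would split the analysis according to whether $\beta$ is near $\HH$ or not. On a subarc $\alpha$ of $\beta$ that stays away from all of $\HH$, the geodesic $[\alpha]$ joining its endpoints lies in $\mathcal V$, so the ambient quasi-geodesic inequality reads $\ell(\alpha)\le K\ell([\alpha])+K$; thus $\alpha$ is an ordinary $K$-quasi-geodesic of $\HHH^3$ and stays within $C(K)$ of $[\alpha]$, and if its endpoints are already within $\rho$ of $\Sigma$ then quasi-convexity of $\Sigma$ puts $\alpha$ within $\rho + C(K) + \delta'$ of $\Sigma$. So the whole difficulty is concentrated where $\beta$ runs alongside some $H\in\HH$, and there the key claim is that such an $H$ is, up to bounded error, one of the horoballs met by $\sigma$, hence contained in $\Sigma$. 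The mechanism should be: since $\beta\subset\mathcal V$, travelling a long horospherical distance near $\dd H$ is a detour whose cost is exponential in the depth to which the corresponding geodesic penetrates $H$, by the horosphere distortion estimate of Lemma~\ref{expdist1}; the ambient efficiency of $\beta$ bounds that cost, which forces either that the penetration is bounded — so the detour displaces $\beta$ only boundedly (absorbed into $R(K)$), and then by the depth inequality the penetrating geodesic already runs boundedly close to $\Sigma$ — or that $\sigma$ itself enters $H$, in which case $H\subset\HH([\beta])$ and $\beta$ hugging $\dd H$ is harmless. Assembling these local statements along $\beta$, and inducting on the number of horoballs $\sigma$ crosses (with the Margulis bound controlling how many are relevant on any bounded piece), should yield the uniform constant $R(K)$.

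The step I expect to be the genuine obstacle is this reconciliation, and in particular keeping the constants from becoming circular: the natural bound for a detour of $\beta$ around a present-but-shallow horoball is phrased in terms of the very neighbourhood size $R$ one is trying to produce. My way around this would be to run the argument not with a single pre-chosen $R$ but by induction on the combinatorial complexity of $\sigma$ — the number of horoballs of $\HH$ it crosses — peeling off one crossing at a time and re-applying quasi-convexity of $\Sigma$ and the depth inequality at each stage, so that the constant grows in a controlled way and stabilises. This inductive horoball bookkeeping, rather than the hyperbolic-geometry input, is where the real work lies.
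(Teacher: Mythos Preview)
The paper does not prove this proposition; it is quoted as Theorem~8.1 of McMullen~\cite{ctm-locconn} and used as a black box. So there is no in-paper argument to compare your proposal against.

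That said, your sketch has a genuine gap at the very first step of the dichotomy. You write that on a subarc $\alpha$ of $\beta$ that stays away from all of $\HH$, ``the geodesic $[\alpha]$ joining its endpoints lies in $\mathcal V$, so the ambient quasi-geodesic inequality reads $\ell(\alpha)\le K\ell([\alpha])+K$.'' This implication is false: $\alpha\subset\mathcal V$ (indeed $\beta\subset\mathcal V$ by hypothesis) does not force $[\alpha]\subset\mathcal V$. Two points outside a horoball can easily have their geodesic pass through it. The ambient inequality only lets you compare $\ell(\alpha)$ to paths $\gamma\subset\mathcal V$ with the same endpoints, and the shortest such $\gamma$ may be the horospherical detour, with length exponentially larger than $\ell([\alpha])$. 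So on these ``away from $\HH$'' pieces you do \emph{not} get that $\alpha$ is an ordinary quasi-geodesic, and the appeal to Morse stability collapses.

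This is not a patchable technicality: it is exactly the phenomenon the proposition is about. McMullen's actual argument does not try to promote ambient quasi-geodesics to genuine ones on pieces; instead it controls the nearest-point projection of $\beta$ to $[\beta]\cup\HH([\beta])$ directly, using that projection onto a geodesic (and onto a horoball) in $\HHH^3$ is exponentially contracting outside a bounded neighbourhood. The circularity you flagged in your last paragraph is a symptom of the same problem --- you are trying to bootstrap a neighbourhood size before you have any control --- and the inductive scheme on the number of horoballs $\sigma$ crosses does not resolve it, since $\sigma$ is fixed and the issue is local to each detour, not global along $\sigma$.
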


\begin{rmk} {\rm Although in this section we only need this result when all elements of $\HH_{\epsilon}$ are horoballs, the result also  holds when  $\HH_{\epsilon}$ contains  Margulis tubes. Moreover the constants involved  depend only on  $\epsilon$ and are independent of the group $G$.} 
\end{rmk}


Here are two ways of constructing ambient quasi-geodesics.

\begin{lemma} \label{floyd2} 
Let  $\gamma \subset \Gr G$ be a path from $1$ to $g$ constructed as in the proof of Proposition~\ref{floydorig}. Then
$\gamma$ is an ambient quasi-geodesic in $\mathcal V$.
\end{lemma}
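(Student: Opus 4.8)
The plan is to show directly that a path $\gamma \subset \Gr G$ built by the recipe in the proof of Proposition~\ref{floydorig} — namely, subdivide the $\HHH^3$-geodesic $[O, g\cdot O]$ (or the shortest $\mathcal V$-path in the parabolic case) into unit-length pieces and connect successive division points to the nearest orbit point — has the property that, under the embedding $j_G$, the hyperbolic length of any subsegment is controlled by the $\mathcal V$-length of \emph{any} competing path with the same endpoints. First I would record the basic geometric features of the construction: the number of edges of $\gamma$ is comparable (up to additive and multiplicative constants depending only on $G$, via $C$ and $d$ from Proposition~\ref{floydorig}) to $d_{\HHH}(O, g\cdot O)$, and each edge of $j_G(\gamma)$ is an $\HHH^3$-geodesic of length at most $a = \max_i d_{\HHH}(O, e_i \cdot O)$. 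Moreover, by the choice of $\epsilon$ made in the proof of Proposition~\ref{floydorig}, every such edge lies in $\mathcal V$: the construction connects each division point to a nearby orbit point inside $\mathcal V$, and the edges joining orbit points to their $G^*$-translates were arranged to lie in $\mathcal V$. Hence $j_G(\gamma) \subset \mathcal V$, so it is legitimate to ask whether it is an ambient quasi-geodesic there.

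Next I would handle the two cases of Proposition~\ref{floydorig} separately. In the convex cocompact case, $\mathcal V = \mathcal N$ and the construction gives $\ell(j_G(\gamma)) \asymp (\text{number of edges}) \asymp d_{\HHH}(O, g\cdot O) = \ell([j_G(\gamma)])$, and the same estimate applies to every subsegment $\alpha \subset \gamma$: its image is a concatenation of boundedly many unit-geodesic-segments sitting near $[O',O'']$ for its endpoints $O', O''$, so $\ell(\alpha) \prec d_{\HHH}(O',O'') \leq \ell(\gamma')$ for any competitor $\gamma'$ in $\mathcal V = \mathcal N$ with the same endpoints (using that the $\HHH^3$-geodesic is the shortest path, and it lies in $\mathcal N$ by convexity). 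In the parabolic case one instead subdivides the shortest $\mathcal V$-path, so one gets $\ell(\alpha) \prec d'_{\HHH}(O',O'')$, which is exactly the $\mathcal V$-length of the shortest competitor, so again $\ell(\alpha) \leq K \ell(\gamma') + K$ for any $\gamma' \subset \mathcal V$ joining the endpoints. In both cases the additive constant absorbs the one shorter interval in the subdivision and the endpoint corrections from passing between division points and nearby orbit points.

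The one point needing care — and what I expect to be the main obstacle — is making sure the estimate is genuinely \emph{ambient}, i.e.\ comparing against arbitrary paths in $\mathcal V$ rather than against the ambient $\HHH^3$-geodesic. In the parabolic case the geodesic $[O', O'']$ need not lie in $\mathcal V$ (it may cut through horoballs), so the relevant lower bound is the $\mathcal V$-path-metric $d'_{\HHH}$, not $d_{\HHH}$; the construction in Proposition~\ref{floydorig} is exactly set up to produce a path whose length is comparable to $d'_{\HHH}(O', O'')$, so the argument goes through, but one must be careful to invoke the $\mathcal V$-version of the subdivision throughout and not accidentally pass through Lemma~\ref{expdist1}, which would reintroduce an exponential and destroy the quasi-geodesic bound. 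A secondary technical point is that the constant $C = \max\{|h| : d_{\HHH}(O, h\cdot O) \leq 1 + 2d\}$ (or its $\mathcal V$-analogue) controls how many orbit points are used per unit length, hence how the edge-count of $\gamma$ compares to $d'_{\HHH}$; since this constant depends only on $G$ (through its fundamental domain), the resulting $K$ depends only on $G$, as required for the later applications.
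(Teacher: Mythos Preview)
Your approach is correct and essentially the same as the paper's: both arguments rest on the observation that the construction in Proposition~\ref{floydorig} gives $|\gamma| \asymp \ell(\mu)$ where $\mu$ is the shortest $\mathcal V$-path between the endpoints, together with $\ell(j_G(\gamma)) \prec |\gamma|$, whence $\ell(j_G(\gamma)) \prec \ell(\mu) \leq \ell(\gamma')$ for any competitor $\gamma' \subset \mathcal V$. The paper's proof is a terse two lines and does not explicitly separate the convex cocompact and parabolic cases or spell out the subsegment check; you have simply filled in those details, and your caution about comparing against $d'_{\HHH}$ rather than $d_{\HHH}$ (and not invoking the exponential distortion of Lemma~\ref{expdist1}) is exactly the right point to flag.
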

\begin{proof}
By the construction, $|\gamma| \asymp \ell(\mu)$, where $\mu$ is the shortest path from $O$ to $g \cdot O$ in $\mathcal V$.  
  Since $\ell(j_G(\gamma)) \prec   |\gamma|$, it follows that 
$\gamma$ is also an ambient quasi-geodesic in $\mathcal V$.
\end{proof}

\begin{lemma} \label{floyd1a} Suppose that $G$ is geometrically finite and that $\lambda$ is a geodesic in $(\Gr G,d_G)$. Then $j_G(\lambda)$  is an ambient quasi-geodesic in $\mathcal V= \mathcal N \setminus \bigcup_{H \in \HH} H$.\end{lemma}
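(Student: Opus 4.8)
The plan is to reduce Lemma~\ref{floyd1a} to Lemma~\ref{floyd2} by comparing an arbitrary $d_G$-geodesic $\lambda$ from $1$ to $g$ with a path $\gamma$ of the type constructed in the proof of Proposition~\ref{floydorig}. First I would recall that, for the path $\gamma$ from $O$ to $g\cdot O$ built in that proof, one has $|\gamma| \asymp \ell(\mu)$ where $\mu$ is the shortest path in $\mathcal V$ from $O$ to $g\cdot O$, and that by Lemma~\ref{floyd2} such a $\gamma$ is an ambient quasi-geodesic in $\mathcal V$. So the content to be established is that $j_G(\lambda)$ is comparable, in the relevant ambient sense, to this $\gamma$.

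The key point is a two-sided length comparison. On one side, $\ell(j_G(\lambda)) \prec |\lambda| = |g|$, simply because each edge of $\Gr G$ maps to a $\HHH^3$-geodesic of length at most $a=\max\{d_{\HHH}(O,e_i\cdot O)\}$, so $j_G(\lambda)$ has bounded length per unit word length. On the other side, one needs a lower bound: $|g| \prec \ell(\gamma')+1$ for any path $\gamma'\subset \mathcal V$ from $O$ to $g\cdot O$. This is exactly the estimate $|g|\le 1+C d'_{\HHH}(O,g\cdot O)$ from the proof of Proposition~\ref{floydorig}, applied to the path $\gamma'$ rather than to a geodesic: divide $\gamma'$ into subarcs of $d'_{\HHH}$-length $1$ and connect consecutive division points through nearby orbit points, which lands $g$ within bounded word length of the number of such subarcs, hence $|g|\prec \ell(\gamma')+1$. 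Combining, for any subsegment $\alpha$ of $j_G(\lambda)$, corresponding to a subpath of $\lambda$ from $h$ to $h'$, and any $\gamma'\subset\mathcal V$ with the same endpoints, we get
$$\ell(\alpha) \prec |h^{-1}h'| \prec \ell(\gamma') + 1,$$
where the first inequality uses that $\lambda$ is a $d_G$-geodesic so the subpath realises $d_G(h,h')=|h^{-1}h'|$, and the second uses the lower bound above (translated by $h$, which is an isometry). This is precisely the ambient $K$-quasi-geodesic inequality $\ell(\alpha)\le K\ell(\gamma')+K$.

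The main obstacle, and the step deserving the most care, is the lower bound $|g|\prec \ell(\gamma')+1$ in the presence of parabolics: the naive division argument produces the \emph{path} length $d'_{\HHH}$ inside $\mathcal V$, not the ambient hyperbolic length $\ell(\gamma')$, and these differ exponentially inside horoballs by Lemma~\ref{expdist1}. However, since here $\gamma'$ is required to lie in $\mathcal V$ (where ambient and $\mathcal V$-path metrics agree up to the geometry of $\mathcal V$ near $\partial\mathcal V$), and since $\mathcal V\cap\mathcal D'$ is compact, the division argument goes through with $\ell(\gamma')$ directly: each subarc of $\gamma'$ of hyperbolic length $1$ stays within bounded $\mathcal V$-distance of its endpoints because it cannot dip far into a horoball without accumulating hyperbolic length. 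I would spell this out by noting that a path segment of hyperbolic length $\le 1$ in $\mathcal V$ has endpoints at $d'_{\HHH}$-distance $\le$ some uniform constant (again using that excursions into $\HH$ are penalized). With that, the translate of the Proposition~\ref{floydorig} argument gives $|g|\le 1 + C'\ell(\gamma')$ with $C'$ depending only on $\epsilon$ and the generating set, completing the proof.
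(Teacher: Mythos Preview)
Your proof is correct and follows essentially the same approach as the paper: bound $\ell(j_G(\lambda_1)) \prec |\lambda_1|$ using that $\lambda$ is a $d_G$-geodesic and edges have bounded image, then bound $|\lambda_1|$ by the $\mathcal V$-path length via Floyd's division argument. The ``obstacle'' you raise is not one: since $\gamma' \subset \mathcal V$ and $d'_{\HHH}$ is \emph{by definition} the infimum of lengths of paths in $\mathcal V$, the inequality $d'_{\HHH}(O,g\cdot O) \leq \ell(\gamma')$ is automatic, so $|g| \leq 1 + C\, d'_{\HHH}(O,g\cdot O) \leq 1 + C\,\ell(\gamma')$ requires no further justification and Lemma~\ref{expdist1} plays no role here.
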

\begin{proof}
 If $G$ is convex cocompact, the result follows immediately from
Equation~\eqref{eqn:floyd1}, which says that  
any $d_G$-shortest path  is  a quasi-geodesic in $\HHH^3$.

In the general case, consider any subsegment $\lambda_1 \subset \lambda$  and let $x,y \in \Gr G$ be  its endpoints.
  Let $\mu_1$ be the  shortest path in $\mathcal V$ joining $j_G(x),j_G(y)$.
As in the proof of Proposition~\ref{floydorig}, there exists a path in $\Gr G$ from $x$ to $y$  of length $L$ say, such that $$   \ell(\mu_1) \asymp L.$$
Since $\lambda_1$ is a shortest $d_G$-path from $x$ to $y$ we have $  |\lambda_1|  \leq L$ 
where $ |\lambda_1| $ denotes the length  of the geodesic  $\lambda_1$ in $\Gr G$. Thus
$$\ell(j_G(\lambda_1)) \prec |\lambda_1| \leq L \prec \ell (\mu_1) $$
where the first inequality follows as usual from the right hand inequality of~\eqref{eqn:floyd2}. This  shows that $j_G(\lambda )$ is an ambient quasi-geodesic in $\mathcal V$ as claimed. \end{proof}
 
 \begin{cor} \label{floyd3}Suppose that $G$ is geometrically finite and that $\lambda$ is a geodesic in $(\Gr G,d_G)$.  Then $j_G(\lambda )$    lies within bounded distance of $[j_G(\lambda) ] \cup \HH ([j_G(\lambda) ])$.
 \end{cor}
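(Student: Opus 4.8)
The plan is to deduce Corollary~\ref{floyd3} as an immediate consequence of the two results that precede it, namely Lemma~\ref{floyd1a} and Proposition~\ref{mcmtracking}. Since this is stated as a corollary rather than a theorem, I expect no new ideas are needed; the work is simply to chain the two statements together.

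First I would invoke Lemma~\ref{floyd1a}: since $G$ is geometrically finite and $\lambda$ is a $d_G$-geodesic in $\Gr G$, the image $j_G(\lambda)$ is an ambient $K$-quasi-geodesic in $\mathcal V = \mathcal N \setminus \bigcup_{H \in \HH} H$, where $K$ depends only on the constants appearing in Proposition~\ref{floydorig} (and hence only on $G$ and the choice of $\epsilon$). Next I would apply Proposition~\ref{mcmtracking} directly to the ambient quasi-geodesic $\beta = j_G(\lambda)$: that proposition tells us $\beta$ lies within bounded distance $R(K)$ of $[\beta] \cup \HH([\beta])$, where $[\beta] = [j_G(\lambda)]$ is the hyperbolic geodesic with the same endpoints and $\HH([\beta]) = \HH([j_G(\lambda)])$ is the union of those horoballs and Margulis tubes $H \in \HH$ meeting $[j_G(\lambda)]$. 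This is precisely the assertion of the corollary, so the proof is complete.

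One small point worth flagging explicitly: although $\lambda$ may be an infinite or semi-infinite geodesic in $\Gr G$, the ambient quasi-geodesic condition and the tracking conclusion are stated for arbitrary subsegments, so applying Proposition~\ref{mcmtracking} to $j_G(\lambda)$ is legitimate regardless of whether $\lambda$ is finite; one reduces to finite subsegments and lets the length tend to infinity, the tracking constant $R(K)$ being uniform. I do not anticipate any genuine obstacle here — the only thing to be careful about is that the quasi-geodesic constant $K$ from Lemma~\ref{floyd1a} depends only on $G$ (via the constants $a,b,k,C$ of Proposition~\ref{floydorig} and on $\epsilon$), so that $R(K)$ in Proposition~\ref{mcmtracking} is a genuine bounded constant independent of the particular segment $\lambda$.

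\begin{proof}
By Lemma~\ref{floyd1a}, since $G$ is geometrically finite and $\lambda$ is a $d_G$-geodesic in $\Gr G$, the path $j_G(\lambda)$ is an ambient $K$-quasi-geodesic in $\mathcal V = \mathcal N \setminus \bigcup_{H \in \HH} H$, with $K$ depending only on the constants of Proposition~\ref{floydorig} and on $\epsilon$. Applying Proposition~\ref{mcmtracking} to $\beta = j_G(\lambda)$ (reducing to finite subsegments if $\lambda$ is infinite, the constant $R(K)$ being uniform), we conclude that $j_G(\lambda)$ lies within bounded distance $R(K)$ of $[j_G(\lambda)] \cup \HH([j_G(\lambda)])$, as claimed.
\end{proof}
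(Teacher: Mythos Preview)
Your proposal is correct and takes exactly the same approach as the paper: the paper's proof simply reads ``This follows immediately from Proposition~\ref{mcmtracking} and Lemma~\ref{floyd1a},'' which is precisely the chaining you describe. Your additional remarks about uniformity of the constant $R(K)$ and about reducing infinite geodesics to finite subsegments are reasonable clarifications, though the paper does not spell them out.
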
  \begin{proof}
This follows immediately from Proposition~\ref{mcmtracking} and Lemma~\ref{floyd1a}.
 \end{proof}

  \begin{proof} [Proof of Theorem~\ref{floyd}]   
First assume that $G$ contains no parabolics, equivalently, that $G$ is convex cocompact. 
Suppose that $\lambda$ is a $d_G$-geodesic segment in ${\mathcal G}G$ which  lies outside $B_{G}(1; N)$. Then   inequality~\eqref{eqn:floyd1} in Proposition~\ref{floydorig}  gives $d_{\HHH}(O, h \cdot O) \geq b|h| \geq bN$ for all $ h \in j_{G}(\lambda)$. Hence $j_{G}(\lambda)$ lies  outside $B_{\HHH}(O;bN)$.

Equation~\eqref{eqn:floyd1} says that
$j_{G}(\lambda)$ is a $K$-quasi-geodesic in $\HHH^3$ with $K = \max \{b,1/a\}$. Thus $j_{G}(\lambda)$ is at bounded distance from the hyperbolic geodesic $[j_{G}(\lambda)]$ with the same endpoints, with constants depending only on   $K$. Since $j_{G}(\lambda)$ lies  outside $B_{\HHH}(O;bN)$,  it follows that $[j_{G}(\lambda)]$ lies  outside $B_{\HHH}(O;bN - c)$ for some $c>0$ depending only on $a,b$. This completes the proof in the convex cocompact 
 case.

Now suppose that $G$ contains parabolics.  The left hand inequality of~\eqref{eqn:floyd2} 
shows that  $j_{G}(\lambda)$ lies  outside $B_{\HHH}(O;f(N))$ where $f(N) \gadd 2 \log N $.
 Without loss of generality, we may as usual assume that the basepoint $O \in \HHH^3$ lies outside $\bigcup_{H \in \HH} H$.
It remains to show that $\alpha = [j_{G}(\lambda)]$  lies outside some ball  $B_{\HHH}(O;f(N))$ for some $f(N)\to \infty$ as $N \to \infty$.
 
By Lemma~\ref{floyd1a},   $j_G(\lambda )$ is an ambient quasi-geodesic in $\mathcal V$.  
Let $P_1,P_2$ be the entry and exit points of $\alpha$ to some horoball $H \in \HH(\alpha)$. By Proposition~\ref{mcmtracking},  $P_1$ and $P_2$ lie within bounded distance of points on  $j_G(\lambda)$, hence by~\eqref{eqn:floyd2} outside  $B_{\HHH}(O; 2\log N -c')$ for some $c'>0$.
Thus by Lemma~\ref{horoballs}, the segment $[P_1,P_2]$, and hence $\alpha$,   lies outside
$B_{\HHH}(O;  \log N/2 -c'')$ for some $c''>0$. This completes the proof.
\end{proof}


 \section{Existence of  \emph{CT}-maps}
\label{sec:CT}
In this section we state and prove a criterion for the existence of   \emph{CT}-maps, Theorem~\ref{crit1}. Variants will  be used later to prove our main results.

In \cite{mitra-ct}, the existence of  \emph{CT}-maps is discussed in the context of hyperbolic metric spaces.  Suppose that $i: X \rightarrow Y$ is an inclusion of such spaces. A \emph{Cannon-Thurston map} in this context is by definition a continuous extension of $i$ to a map $\hat i \co \hat X \rightarrow \hat Y$, where  for a hyperbolic space $Z$, $\dd Z$ denotes the 
 Gromov boundary and $\hat Z= Z \cup \dd Z $ carries the natural topology obtained by extending the Gromov product to the boundary, see~\cite{BH}.
Lemma 2.1 of \cite{mitra-ct}
asserts that a Cannon-Thurston map exists
if and only if for all $M > 0$ and $x \in X$, there exists $N > 0$ such that if a geodesic $\lambda$ in $X$
lies outside an $N$-ball around $x$ in $X$,  then
any geodesic in $Y$ joining the endpoints of $\lambda$ lies
outside the $M$-ball around $i(x)$ in $Y$.

If now $\rho\co  \G \to G$ is a weakly type preserving isomorphism   of Kleinian groups, then as explained in Section~\ref{sec:basicCT}, the \emph{CT}-map $\hat i \co  \Lambda_{\G} \to \Lambda_G$ is the continuous extension, if it exists, of the embedding 
$i : j_{\G}(\Gr \G) \to \HHH^3$, $ i ( j_{\G}(g)) =  j_{G}\rho(g), g \in \G$.
It is well known that if $H$ is a finitely generated   convex cocompact Kleinian group, then its Cayley graph ${\mathcal G}H$  with the word metric  is a hyperbolic space and the metrics $d_{H}$ on $\mathcal GH$
and the induced hyperbolic metric on $j_{H}(\mathcal GH)$ are equivalent. 
Moreover the limit set $\Lambda_H$ may be naturally identified with the Gromov boundary of ${\mathcal G}H$, see~\cite{GhH}. 
Thus if both groups $\G, G$ are convex cocompact, the above result is a criterion for the existence of the \emph{CT}-map $\hat i : \Lambda_{\G} \to \Lambda_G$. 
The main result of this section is the following theorem which extends this to a criterion which applies without the assumption of cocompactness on either $\G$ or $G$.  
Notice that the hypothesis does not require that the image group $G$ be geometrically finite.

\begin{theorem}\label{crit1} Let $\rho \co \G \to G$ be a weakly type preserving  isomorphism of finitely generated Kleinian groups and suppose that $\G$ is geometrically finite. The \emph{CT}-map  $  \Lambda_{\G} \to \Lambda_G$
 exists if and only if    there exists a non-negative function  $f \co \mathbb N \to \mathbb N$, such that 
 $f(N)\rightarrow\infty$ as $N\rightarrow\infty$, and such that whenever  $\lambda$ is a $d_{\G}$-geodesic segment  lying outside $B_{\G}(1; N)$ in $\Gr \G$,  the  $\HHH^3$-geodesic joining
the endpoints of $i(j_{\G}(\lambda))$ lies outside $B_{\HHH}(O_G; f(N))$ in $\HHH^3$.
\end{theorem}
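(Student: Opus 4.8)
The plan is to reduce Theorem~\ref{crit1} to the metric-space criterion of \cite{mitra-ct} quoted above, by showing that for a geometrically finite group the word metric on $\Gr\G$ is comparable, in the sense relevant to the Cannon-Thurston criterion, to the induced path metric on the image $j_\G(\Gr\G)\subset\HHH^3$. The point is that although $\Gr\G$ with $d_\G$ need not be Gromov hyperbolic when $\G$ has parabolics, Floyd's completion $\bar\G$ still exists, and by Floyd's Theorem~\ref{floyd1} the quotient $\bar\G\setminus\G$ maps onto $\Lambda_\G$; so one can run the tracking argument directly in $\HHH^3$ rather than appealing to Gromov hyperbolicity of the domain.

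First I would address the forward direction: if the \emph{CT}-map $\hat i\co\Lambda_\G\to\Lambda_G$ exists, suppose the geometric condition fails, so there is $M>0$ and a sequence of $d_\G$-geodesic segments $\lambda_n$ lying outside $B_\G(1;n)$ whose image geodesics $[i(j_\G(\lambda_n))]$ meet $B_\HHH(O_G;M)$. Passing to subsequences, the endpoints of $\lambda_n$ converge in $\bar\G$ to points whose images in $\Lambda_\G$ we may take (using Floyd's map and Theorem~\ref{floyd}) to be a single point $\xi$, while the endpoints of $i(j_\G(\lambda_n))$ in $\HHH^3$ converge to distinct points of $\Lambda_G$ (since the connecting geodesics stay near $O_G$, their endpoints cannot both converge to the same boundary point). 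Equivariance plus continuity of $\hat i$ then forces $\hat i$ to send $\xi$ to two different points, a contradiction. The role of Theorem~\ref{floyd} here is precisely to guarantee that $\lambda_n$ outside $B_\G(1;n)$ forces $j_\G(\lambda_n)$, hence its endpoints, outside larger and larger $\HHH^3$-balls, so that the endpoints in $\bar\G$ do converge to an ideal point of $\Lambda_\G$.

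For the converse, assume the geometric condition holds with function $f$. I would show $i\co j_\G(\Gr\G)\to\HHH^3$ extends continuously to $\Lambda_\G$, i.e. that whenever $g_n^+, h_n^+\to\xi$ in $\Lambda_\G$ then $i(g_n\cdot O_\G)=j_G\rho(g_n)\cdot O_G$ and $j_G\rho(h_n)\cdot O_G$ converge to a common point of $\Lambda_G$. Using Theorem~\ref{floyd} applied to $\G$, the hypothesis $g_n^+,h_n^+\to\xi$ means the $d_\G$-geodesics $[1,g_n]$, $[h_n,g_n]$ etc. eventually lie outside $B_\G(1;N)$ for every $N$; the geometric condition then puts the $\HHH^3$-geodesics joining their $i$-images outside $B_\HHH(O_G;f(N))$, which forces those images to Euclidean-cluster on a single point of $\Lambda_G$. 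A diagonal/Cauchy argument over a countable dense set of directions in $\Lambda_\G$ (the attracting fixed points $\Lambda_\G^+$), together with a uniform-continuity packaging of the above, gives a well-defined continuous extension $\hat i\co\Lambda_\G\to\Lambda_G$; equivariance is immediate from that of $i$, so $\hat i$ is a \emph{CT}-map.

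I expect the main obstacle to be the converse direction's bookkeeping: turning ``the connecting $\HHH^3$-geodesic lies outside $B_\HHH(O_G;f(N))$'' into genuine Euclidean convergence of sequences of endpoints in $\Chat$, uniformly enough to define a continuous map on all of $\Lambda_\G$ rather than just pointwise. This requires care because the Euclidean and hyperbolic geometries near $\dd\HHH^3$ are badly distorted, and because we are not assuming $G$ geometrically finite, so we have no a priori control on $j_G(\Gr G)$ in $\HHH^3$ beyond what the hypothesis itself provides. The clean way through is: (i) note that a point $p\in\HHH^3$ far (in $d_\HHH$) from $O_G$ together with the fact that the geodesic $[O_G,p]$ avoids $B_\HHH(O_G;R)$ forces $p$ into a small Euclidean ball near $\Chat$ whose radius $\to0$ as $R\to\infty$; (ii) apply this with $p$ the (hyperbolic-)nearest point of the connecting geodesic to $O_G$, concluding that both endpoints $i(j_\G(g_n))$ and $i(j_\G(h_n))$ lie in a common Euclidean ball of radius $\varepsilon(N)\to0$; (iii) let $N\to\infty$. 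Steps (i)--(ii) are the hyperbolic-geometry estimates that the paper defers to the Appendix, so in the body I would simply cite them.
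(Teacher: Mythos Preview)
Your forward direction is fine and matches the paper. The gap is in the converse, at the step you treat as routine: from $g_m\cdot O,\,g'_m\cdot O\to\xi$ in $\Lambda_\G$ you assert that the $d_\G$-geodesic $\lambda_m=[g_m,g'_m]_\G$ eventually lies outside $B_\G(1;N)$, citing Theorem~\ref{floyd}. But Theorem~\ref{floyd} goes the other way (from $\lambda$ outside a $d_\G$-ball to $[j_\G(\lambda)]$ outside an $\HHH^3$-ball), and the implication you need is \emph{false} when $\xi$ is a parabolic fixed point: one can choose $g_m,g'_m$ on opposite sides of the horoball based at $\xi$ so that $j_\G(\lambda_m)$ must skirt around that horoball and hence always meets a fixed ball $B_\HHH(O;K)$, so $\lambda_m$ stays in a fixed $B_\G(1;K')$. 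Thus the hypothesis of the theorem cannot be invoked for such $\lambda_m$, and your argument collapses precisely at parabolic $\xi$.

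The paper deals with this by splitting cases. For parabolic $\xi=p^*$ one \emph{defines} $\hat i(\xi)=\rho(p)^*$ directly via weak type-preservation; continuity at $p^*$ then requires a separate nontrivial argument (Lemma~\ref{lemmaV}) producing a subsegment $\lambda'_m$ of $\lambda_m$ that avoids the offending horoball and to which the hypothesis can be applied. For non-parabolic $\xi$ one must still \emph{prove} that $\lambda_m$ escapes every $d_\G$-ball: the paper does this via Corollary~\ref{floyd3} (so $j_\G(\lambda_m)$ tracks $[j_\G(\lambda_m)]\cup\HH([j_\G(\lambda_m)])$), Lemma~\ref{horoballs} (to control segments inside horoballs), and the observation that since $\xi$ is not a horoball basepoint, only finitely many fixed horoballs can obstruct and these are eventually avoided. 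Only then does Proposition~\ref{floydorig} transfer the $\HHH^3$-escape back to $d_\G$-escape. The obstacle you flagged (hyperbolic-to-Euclidean conversion near $\partial\HHH^3$) is indeed handled by the Appendix estimates and is not the real difficulty.
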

\begin{proof} Since the result is clearly independent of the choice of basepoints for $\G$ and $G$, for simplicity we take $O_{\G} = O_{G} = O \in \mathbb B$.
As explained in Section~\ref{sec:basicCT}, the existence of the \emph{CT}-map is equivalent to the statement that  $i$ extends  to a continuous map $\hat i \co ( \Lambda_{\G} \cup j_{\G}(\Gr \G), d_{\mathbb E}) \to  (\Lambda_{G} \cup j_{G}(\Gr G), d_{\mathbb E}) $. 

Suppose first that  $i$ extends continuously. For each $N \in \mathbb N$, let $\phi(N) = \sup \{L\}$, where the sup is taken over all $L \geq 0$ with the property that  for all $d_{\G}$-geodesic segments
 $\lambda$  lying outside $B_{\G} (1; N) \subset  \Gr \G$,   the  $\HHH^3$-geodesic   $[i(j_{\G}(\lambda))]$ joining
the endpoints of $i(j_{\G}(\lambda))$ lies outside $B_{\HHH}({O}; L)$.
(Here $\phi(N) =0$ is possible.) Suppose that $ \sup_N \{\phi(N) \} \leq K < \infty$. 
Then we can find a sequence of $d_{\G}$-geodesic segments $\lambda_N \subset \Gr \G$  such that  $\lambda_N$ lies outside the ball $B_{\G} (1; N)$  in $\Gr \G$ while    $\beta_N = [i(j_{\G}(\lambda_N)) ] $   meets the ball 
$B_{\HHH} (O; K+1)$. Thus passing to a subsequence, the endpoints  of $i(j_{\G}(\lambda_N)) $
 converge to distinct points in 
$\Lambda_G$. 
However by Theorem~\ref{floyd}, there exists $f_1(N)$ such that 
the  geodesic
$[j_{\G}(\lambda_N)]$
 lies outside $B_{\HHH}(O; f_1(N))$, and such that $f_1(N) \to \infty$ as $N \to \infty$.
It follows that  after passing to a further subsequence,  the endpoints of $j_{\G}(\lambda_N)$
 converge to the same point in $\Lambda_{\G}$. This contradiction  shows that $\phi(N) \to \infty$ with $N$ and so the criterion is satisfied with $f = \phi$.

Now we show that the condition is sufficient. First we need to define $\hat i \co \Lambda_{\G} \to \Lambda_G$. If $\xi \in \Lambda_{\G}$ is a parabolic point corresponding to a parabolic $p \in \G$, using the hypothesis that $\rho $ is weakly type preserving   we  define
 $\hat i(\xi)$ to be the unique  fixed point of $\rho(p)$.

Now  assume that   $\xi$ is not a parabolic fixed point. 
Note that  $ i: j_{\G}(\Gr \G) \rightarrow \HHH^3$ is proper  (with respect to the hyperbolic metrics), for if not, there exist points $  g_n  \cdot O $ converging to $\Lambda_{\G}$ such that  $\{ i(g_n  \cdot O)\} $ lie in a compact set in $\HHH^3$, which contradicts our hypotheses. 
 By definition, if $\xi \in \Lambda_{\G}$, then there exists a sequence    
 $g_m \in \G$ such that    $j_{\G}(g_m) \rightarrow \xi$ in the Euclidean metric $d_{\mathbb  E}$.  Since $ i$ is proper,  
 $i(j_{\G} (g_m))$ has a subsequence which converges to a point $\eta \in \Lambda_G$.  We want to  define $\hat i (\xi) = \eta$, so we need to see that $\eta$ depends only on $\xi$ and not on the sequence $g_m$. So suppose that $g'_m \in{G}$ and $j_{\G}(g'_m)\rightarrow \xi$ in $(\mathbb B \cup \Chat, d_{\mathbb  E})$, but that  $i(j_{\G} (g'_m))\rightarrow \eta' \in \Lambda_G$ where 
  $\eta \neq \eta'$.

 Let $\lambda_m$ be  the $d_{\G}$-geodesic  joining $g_m$ and $g'_m$. Since $g_m \cdot O\rightarrow \xi$ and $g'_m  \cdot O\rightarrow \xi$, the $\HHH^3$-geodesic     $\alpha_m = [j_{\G}(\lambda_m)]$ joining $j_{\G}(g_m)$ and $j_{\G}(g'_m)$
  lies outside $B_{\HHH}(O, N_m)$,
 where $N_m\rightarrow\infty$ as $m\rightarrow\infty$.  
We claim that $j_{\G}(\lambda_m)$ also  lies outside some ball $B_{\HHH}(O, M_m)$ in $\HHH^3$, where $M_m \to \infty$ as $m \to \infty$.

 Let $ \HH =  \HH_{\epsilon;\G}$ denote the set of lifts to $\HHH^3$ of the thin parts of $\HHH^3/\G$, where $ \epsilon \leq \epsilon_{\mathcal M}$ is chosen so that all elements of $ \HH $ are horoballs. By Corollary~\ref{floyd3}, 
 $j_{\G}(\lambda_m)$ is at uniformly bounded distance to $\alpha_m \cup \HH(\alpha_m)$ where 
$  \HH(\alpha_m) \subset \HH$ is the union of those thin parts   traversed by $\alpha_m$. By Lemma~\ref{horoballs}, if the entry and exit points $P,P'$ of $\alpha_m$ to a component 
$H $ of $  \HH(\alpha_m)$ are at  distance at least $N$ to the base point $O$, then 
the segment $[P,P'] \subset \alpha_m$ is at distance at least $N/4 -c$  from $O$ for some universal $c>0$. It follows that $j_{\G}(\lambda_m)$ is outside a large ball   $B_{\HHH}(O, N'_m)$ where $N'_m \succ N_m$,  unless there is an infinite subsequence of the $\alpha_m$ each of which contains a segment $[P_m,P'_m]$ contained in a horoball $H_m \in    \HH(\alpha_m)$, such that
 $P_m \in B_{\HHH}(O, K)$ for some $K>0$ independent of $m$.  Since there are only finitely many   horoballs which meet $  B_{\HHH}(O, K)$, up to passing to a subsequence we may assume that all the   $ \alpha_m$  pass  through 
 a fixed horoball $H$. Since $\xi$ is not a parabolic point, it is not the basepoint of $H$. Hence by taking $m$ large enough, we can clearly find $g_m \cdot O$ and $g'_m  \cdot O$ close enough to $\xi$ so that $\alpha_m$ does not intersect $H$. This contradiction proves that $j_{\G}(\lambda_m)$  lies outside some ball $B_{\HHH}(O, M_m)$ in $\HHH^3$  as claimed.
 (Note that if $\xi$ is a parabolic fixed point the above discussion fails. For then we can find sequences  $g_m \cdot O, g'_m \cdot O $ which converge to $\xi$ while the $d_{\G}$-geodesic  $\lambda_m$  joining $g_m$ and $g'_m$ is such that $j_{\G}(\lambda_m)$
follows  around the horoball $H \in \HH$ based at $\xi$ and hence always penetrates a  hyperbolic ball  $B_{\HHH}(O, K)$ for fixed $K>0$.)

If  $j_{\G}(\lambda_m)$  lies outside some ball $B_{\HHH}(O, M_m)$, it follows immediately from the inequalities in Proposition~\ref{floydorig} that $\lambda_m$ lies outside some ball $B_{\G}(1, M'_m) \subset \Gr \G$, where $M'_m \to \infty$ as $m \to \infty$.
 On the other hand since
  $\eta \neq \eta'$, there exists $R>0$ such that the $\HHH^3$-geodesic joining  $\eta$ to $\eta'$
has to pass through $B_{\HHH}(O; R)$.  Hence there exist constants $c>0$ and $m_0 \in \mathbb N$ such that for all $m >m_0$,
the $\HHH^3$-geodesic $ [i (j_{\G}(\lambda_m))] $ joining $j_G  (g_m)$ and $j_G  (g'_m)$ 
passes through $B_{\HHH}(O; R+c)$. Since $R+c$ does not depending on the index $m$, 
this  contradicts the hypothesis of the theorem, so  $\eta = \eta'$ and we can define
$\hat i(\xi) = \lim_{m \to \infty} j_G (g_m)$ for any sequence $j_{\G}(g_m) \to  \xi$.
This completes our justification of the definition of the map $\hat i$.

Now we turn to the continuity of $\hat i$. If $\hat i$ is not continuous, there exist  
sequences $x_m,x'_m \in  j_{\G}(\Gr \G)  \cup \Lambda_{\G}$ such that 
$d_{\mathbb E}(x_m,x'_m) \to 0$ but  
so that  $  \hat i(x_m),  \hat i(x'_m)$ converge to distinct points in  $\Lambda_{G}$.
Replacing the points $x_m,x'_m $ by points in $ j_{\G}(\Gr \G)$ if needed, it suffices to  show that 
 for every sequence $g_m \cdot O \in j_{\G}(\Gr \G)$ with $g_m \cdot O  \to \xi$, we have 
$j_G(g_m) \to   \hat i(\xi)$.  This is of course exactly what we have already done, except in the case in which $\xi$ is  the fixed point $p^*$ of a parabolic $p \in \G$.

So suppose that  $x_m = g_m \cdot O \to p^*$  but $i(x_m)$ does not converge to $\rho(p)^*$. 
 Also let $u_m = p^m \cdot O $ so that $ i(u_m) = \rho(p)^m\cdot O  \to  \rho(p^*)$.  
  Let $\lambda_m$ be a $d_{\G}$-geodesic from   $g_m $ to $ p^m $ in $\Gr \G$ and consider the $\HHH^3$-geodesic $\gamma_m =[ j_{\G}(\lambda_m)]$.  If $\ell(\gamma_m)$ is bounded independent of $m$ then by Lemma~\ref{expdist1} $|\lambda_m|_{\G}$ must also be bounded. It follows that
$i(u_m )$ must be at bounded distance from $i(x_m)$ and so $i(x_m) \to \rho(p)^*$ contrary to our assumption.

Otherwise, $\ell(\gamma_m) \to \infty$ as $m \to \infty$.   Let $H$ be the horoball   based at $p^*$. If all but    bounded length initial and final segments of $\gamma_m$ are in $H$, then 
 $x_m$ and $u_m$ are at bounded distance to $H$.  By Lemma~\ref{lemmaV} below, we may assume that 
  $g_m = hp^m$ where $h \in \Stab_{\G} H$. In this case   $i(x_m) = \rho(h)\rho( p)^m \cdot O$
  and clearly $|hp^m|_{\G} \to \infty$ since $\ell(\gamma_m) \to \infty$. Thus $i(x_m)\to \rho(p)^*$ again contrary to our assumption.

 We have thus reduced to the case in which  $\gamma_m$ contains  a  segment $ \gamma'_m$  outside $H$   such that $\ell(\gamma'_m) \to \infty$ with $m$. Let 
  $\lambda'_m = j_{\G}^{-1}\pi^{-1}(\gamma'_m)$ where $\pi$ is the orthogonal projection from $j_{\G}(\lambda_m)$ to $\gamma_m$.
  The projection of $ j_{\G}(\lambda'_m)$ to $\gamma'_m$ is at bounded distance to 
 $\gamma'_m \cup \HH(\gamma'_m)$ and clearly $H \notin \HH(\gamma'_m)$. Thus by the same arguments as above, $ j_{\G}(\lambda'_m)$ lies outside a fixed ball  $B_{\HHH}(O; K)$ for all $m$ and hence
 $\lambda'_m$ lies outside a ball of fixed radius in $\Gr \G$.
 
 On the other hand, the endpoints of $i j_{\G}(\lambda'_m)$ converge to distinct points and so $i j_{\G}(\lambda'_m)$ always meets a fixed ball $B_{\HHH}(O; K')$ for some $K'>0$.
 The sequence $\lambda'_m$ thus violates the hypothesis of the theorem and we have proved 
  that whenever $x_m \to p^*$, $i(x_m)$ converges to $\rho(p)^*$. In view of our previous discussion, this completes the proof of  continuity of $\hat i$.
  \end{proof}

We  immediately deduce our first main result which can be viewed as the convergence of a constant sequence of \emph{CT}-maps.

\begin{theorem} \label{firstresult} Let $\G, G$ be finitely generated geometrically finite groups and let $\phi: \G \to G$ be weakly type preserving isomorphism. Then the \emph{CT}-map
$\hat i: \Lambda_{\G} \to \Lambda_G$ exists.   Moreover if $\xi \in \Lambda_{\G}$  and $g_r \in \G, g_r \cdot O \to \xi$, then $\hat i (\xi) = \lim_{r \to \infty} \rho  (g_r) \cdot O$.
\end{theorem}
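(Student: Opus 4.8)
My plan is to derive the theorem directly from the criterion in Theorem~\ref{crit1}, supplying the required function $f$ from Theorem~\ref{floyd} applied to the \emph{target} group $G$. Note that the statement ``the \emph{CT}-map $\Lambda_{\G}\to\Lambda_G$ exists'' makes no reference to a generating set of $G$, and neither does the criterion of Theorem~\ref{crit1}, which mentions only $\Gr\G$, the embedding $i=j_G\circ\rho$, and hyperbolic balls about $O_G$. I would therefore choose as generating set for $G$ the image $G^*=\rho(\G^*)$ of a fixed generating set $\G^*$ of $\G$. With this choice $\rho$ induces a graph isometry $(\Gr\G,d_{\G})\to(\Gr G,d_G)$ fixing the identity, and by construction $i(j_{\G}(g))=j_G(\rho(g))=\rho(g)\cdot O$ for every $g\in\G$.

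Given a $d_{\G}$-geodesic segment $\lambda\subset\Gr\G$ lying outside $B_{\G}(1;N)$, the segment $\rho(\lambda)$ is then a $d_G$-geodesic segment in $\Gr G$ lying outside $B_G(1;N)$, and $i(j_{\G}(\lambda))=j_G(\rho(\lambda))$. Since $G$ is geometrically finite, Theorem~\ref{floyd} applied to $G$ furnishes a function $f\co\mathbb N\to\mathbb N$ with $f(N)\to\infty$ such that the $\HHH^3$-geodesic $[j_G(\rho(\lambda))]$ joining the endpoints of $j_G(\rho(\lambda))$ lies outside $B_{\HHH}(O_G;f(N))$. The endpoints of $j_G(\rho(\lambda))$ coincide with those of $i(j_{\G}(\lambda))$, so this is exactly the hypothesis of Theorem~\ref{crit1} with $\G$ geometrically finite; hence the \emph{CT}-map $\hat i\co\Lambda_{\G}\to\Lambda_G$ exists.

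For the last assertion I would read off the construction of $\hat i$ in the proof of Theorem~\ref{crit1}. Let $\xi\in\Lambda_{\G}$ and $g_r\in\G$ with $g_r\cdot O\to\xi$. If $\xi$ is not a parabolic fixed point, then $\hat i(\xi)$ was \emph{defined} there as the limit of $i(j_{\G}(h_m))$ along \emph{some} sequence with $j_{\G}(h_m)\to\xi$, and the substance of that argument was to show this limit does not depend on the approximating sequence. Since $i$ is proper and $j_{\G}(g_r)\to\xi\in\dd\HHH^3$, every subsequence of $\bigl(\rho(g_r)\cdot O\bigr)_r=\bigl(i(j_{\G}(g_r))\bigr)_r$ has a sub-subsequence converging to a point of $\Lambda_G$, and the sequence-independence forces each such limit to equal $\hat i(\xi)$; therefore $\rho(g_r)\cdot O\to\hat i(\xi)$. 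If instead $\xi=p^*$ is the fixed point of a parabolic $p\in\G$, the continuity argument at the end of the proof of Theorem~\ref{crit1} already establishes $\rho(g_r)\cdot O\to\rho(p)^*=\hat i(p^*)$ whenever $g_r\cdot O\to p^*$. Either way $\hat i(\xi)=\lim_{r\to\infty}\rho(g_r)\cdot O$.

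I do not expect a genuine obstacle here, since the result is essentially a corollary of the two theorems already proved. The only points that need care are the bookkeeping that lets $\rho$ carry $d_{\G}$-geodesics to $d_G$-geodesics (arranged by taking $G^*=\rho(\G^*)$), and the observation that the a priori only subsequential recipe for $\hat i(\xi)$ in Theorem~\ref{crit1} in fact yields an honest limit once one knows it is independent of the approximating sequence.
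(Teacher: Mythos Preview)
Your proposal is correct and follows exactly the route the paper intends: the paper's own proof reads simply ``This follows immediately from Theorems~\ref{crit1} and~\ref{floyd}'', and you have supplied precisely the bookkeeping (choosing $G^*=\rho(\G^*)$ so that $\rho$ is a graph isometry, then feeding Theorem~\ref{floyd} for $G$ into the criterion of Theorem~\ref{crit1}) together with the observation that the second assertion is already contained in the construction and continuity argument of Theorem~\ref{crit1}.
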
 
\begin{proof} 
 This follows immediately from Theorems~\ref{crit1} and~\ref{floyd}.
 \end{proof}

Here is the lemma used in the proof of Theorem~\ref{crit1}. 

\begin{lemma}
\label{lemmaV} Let $\G$ be a geometrically finite Kleinian group. There exists $c = c(  \G)>0$ with the following property. 
Let $\lambda $ be a geodesic segment in $\mathcal{G} \Gamma$,  and let  $\pi$ denote projection from $j_{\G}(\lambda)$ to $[j_{\G}(\lambda)]$. Suppose that for some $ g, h \in \G$, the segment of $j_{\G}(\lambda)$ from $j_{\G}(g)$ to $j_{\G}(h)$  projects to an arc contained in a single component $T \in \HH_{\epsilon, \G}$ of the lift to $\HHH^3$ of the $\epsilon$-thin part of $\HHH^3/\G$ for some $\epsilon \leq \epsilon_{\mathcal M}$. Then $g^{-1}h = g^{-1}zgy$ where $z \in Stab_{\G} \, T$ and $d_{\G}(1, y) \leq c$.
\end{lemma}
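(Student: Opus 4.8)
The plan is to pass to the reference frame of $g$. Put $w := g^{-1}h$ and $T' := g^{-1}T$; since $\HH_{\e,\G}$ is $\G$-invariant, $T'$ is again one of its components and $\Stab_\G T' = g^{-1}(\Stab_\G T)g$. Writing $z' := g^{-1}zg$, the asserted identity $g^{-1}h = g^{-1}zgy$ is exactly the statement that $w = z'y$ with $z' \in \Stab_\G T'$ and $d_\G(1,y)\le c$. So it suffices to prove: \emph{if $O$ and $w\cdot O$ both lie within a bounded distance of $T'$, then $w = z'y$ for some $z' \in \Stab_\G T'$ and some $y$ with $d_\G(1,y)\le c = c(\G)$.} (As usual we may take $O$ inside $\mathcal N$ and outside $\bigcup_{H\in\HH}H$, so that $\G\cdot O$ avoids the horoballs. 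I treat the case in which $T$ is a horoball; a Margulis tube is handled identically, with the parabolic fixed point below replaced by the axis of the associated short loxodromic.)

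The first task is to see that the hypothesis forces $j_\G(g)$ and $j_\G(h)$ to lie within a distance $R_1 = R_1(\G)$ of $T$. By Lemma~\ref{floyd1a}, $j_\G(\lambda)$ is an ambient quasi-geodesic in $\mathcal V$, so by Corollary~\ref{floyd3} it lies within a bounded distance of $[j_\G(\lambda)]\cup\HH([j_\G(\lambda)])$, with constants depending only on $\G$; moreover for the fixed group $\G$ we may assume $\e$ so small that distinct components of $\HH_{\e,\G}$ are pairwise far apart. As in the proof of Theorem~\ref{floyd}, the entry and exit points of $[j_\G(\lambda)]$ to $T$ are within bounded distance of points of $j_\G(\lambda)$; combining this with Lemma~\ref{horoballs} and the hypothesis that the \emph{entire} sub-arc of $j_\G(\lambda)$ from $j_\G(g)$ to $j_\G(h)$ projects into the \emph{single} component $T$---so that its endpoints cannot be close to a different, distant component---one concludes that this sub-arc, in particular its endpoints $j_\G(g)$ and $j_\G(h)$, stays within bounded distance of $T$. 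Applying the isometry $g^{-1}$, both $O$ and $w\cdot O$ then lie within $R_1$ of $T'$.

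The conceptual core is the next step, which uses geometric finiteness. Since $\G$ is geometrically finite, $\mathcal V/\G$ is compact, and hence so is the cusp cross-section $(\dd T'\cap\mathcal N)/\Stab_\G T'$; as there are only finitely many $\G$-orbits of components of $\HH$, its diameter is at most some $D_0 = D_0(\G)$. Drop the perpendicular geodesic from $O$ to the horosphere $\dd T'$: its foot $\hat O$ lies on the geodesic ray $[O,p']$, where $p'\in\Lambda_\G$ is the parabolic fixed point of $T'$; since $O, p'\in\mathcal N$ and $\mathcal N$ is convex, this ray lies in $\mathcal N$, so $\hat O\in\dd T'\cap\mathcal N$ and $d_\HHH(O,\hat O) = d_\HHH(O,T')\le R_1$. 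Likewise (using $w\cdot O\in\mathcal N$ by $\G$-invariance of $\mathcal N$) the foot $\widehat{w\cdot O}$ of the perpendicular from $w\cdot O$ lies in $\dd T'\cap\mathcal N$ with $d_\HHH(w\cdot O,\widehat{w\cdot O})\le R_1$. By compactness of $(\dd T'\cap\mathcal N)/\Stab_\G T'$ there is $z'\in\Stab_\G T'$ with $d_\HHH(z'\hat O,\widehat{w\cdot O})\le D_0$; since $z'$ is an isometry preserving $T'$, the triangle inequality along $w\cdot O$, $\widehat{w\cdot O}$, $z'\hat O$, $z'\cdot O$ then gives $d_\HHH(z'\cdot O, w\cdot O)\le 2R_1 + D_0 =: c_0$.

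Finally put $y := (z')^{-1}w$, so $d_\HHH(O,y\cdot O) = d_\HHH(z'\cdot O, w\cdot O)\le c_0$. By Proposition~\ref{floydorig}---the left-hand inequality of~\eqref{eqn:floyd2}, or of~\eqref{eqn:floyd1} when $\G$ has no parabolics---this forces $d_\G(1,y)\le c$ for a constant $c = c(\G)$. Since $z' = g^{-1}zg$ with $z\in\Stab_\G T$, we conclude $g^{-1}h = w = z'y = g^{-1}zgy$, as required. I expect the first step to be the main obstacle: one must make precise, using the ambient quasi-geodesic tracking of Section~\ref{sec:cayley} and the horoball estimates of the Appendix, that a sub-arc of $j_\G(\lambda)$ whose projection to $[j_\G(\lambda)]$ lies in a single horoball is itself confined to a bounded neighbourhood of that horoball; once this is in hand the remainder is a clean consequence of the compactness of the cusp cross-section of a geometrically finite group.
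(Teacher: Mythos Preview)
Your proof is correct and follows the same two-step strategy as the paper: first show that $g\cdot O$ and $h\cdot O$ lie within a uniform distance of $T$ (both proofs invoke Corollary~\ref{floyd3} for this, and both are somewhat brief about it---the paper simply asserts $d_{\HHH}(X,\Pi(X))\le D$), then use cocompactness of the $\Stab_\G T$-action near $\partial T$ to extract $z$. The execution of the second step differs: the paper works with the Dirichlet domain, observing that $\Pi(g\cdot O)\in gx_1\mathcal D$ and $\Pi(h\cdot O)\in hx_2\mathcal D$ with $|x_i|$ bounded, and then finds $z\in\Stab_\G T$ with $zgx_1\mathcal D = hx_2\mathcal D$, giving $y=x_1x_2^{-1}$ directly; you instead pass to the frame of $g$, use compactness of the cusp cross-section $(\partial T'\cap\mathcal N)/\Stab_\G T'$ to bound $d_{\HHH}(z'\cdot O, w\cdot O)$, and then convert to a word-length bound via Proposition~\ref{floydorig}. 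Both arguments encode the same geometric fact; the paper's is a little more direct, yours a little more explicitly geometric. One small quibble: your citation of Lemma~\ref{horoballs} in the first step is misplaced---that lemma is about geodesics avoiding balls around $O$, not about proximity to $T$---but the conclusion you want is the same one the paper draws from Corollary~\ref{floyd3} alone.
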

\begin{proof} 
For $X \in j_{\G}(\lambda)$ let $\Pi(X)$ be the point on $\partial T$ at which the perpendicular from $X$ to  $[j_{\G}(\lambda)]$ meets $\partial T$. 
By Corollary~\ref{floyd3}, there exists $D = D(\G)>0$ so that
 $d_{\HHH}(X, \Pi(X)) \leq D$ for all $ X    \in j_{\G}(\lambda)$.

  Let $\mathcal D $ be the Dirichlet domain for ${\G}$ with centre $O$.  There exists $c>0$ such that any polyhedron which meets the $D$-neighbourhood of $ \mathcal D $
must be of the form $x \mathcal D $ where $ d_{\G}(1,x) \leq c(D)$.
Thus since $g \cdot O  \in  g \mathcal D $ and since $\Pi(g \cdot O)$ is a bounded distance away from $g  \cdot O$, we have
$\Pi(g  \cdot O) \in gx_1  \mathcal D $ where $x_1 \in B_{\G}(1, c(D)) $. Likewise 
 $\Pi(h  \cdot O) \in hx_2  \mathcal D $ with $x_2 \in B_{\G}(1, c(D))$.  Since $\Pi(g  \cdot O), \Pi(h  \cdot O) \in \partial T$, 
there exists $z \in Stab_{\G} \, T$ such that  $z gx_1\mathcal D = hx_2 \mathcal D$ from which we get 
$z gx_1 = hx_2$ and so  $g^{-1}h = g^{-1}zgx_1x_2^{-1}$ which gives the result.
\end{proof}


 \section{The criterion for uniform convergence}
\label{sec:criteria}

In this section we prove our criterion Theorem~\ref{unifcrit1} for the uniform convergence of a sequence of \emph{CT}-maps corresponding to a converging sequence of
representations $\rho_n$.

Let $\G$ be a fixed geometrically finite Kleinian group and suppose that
 $\rho_n\co  \G \rightarrow \PSL$ is a sequence
of discrete faithful weakly type preserving representations converging  algebraically    to $\rho_\infty \co  \G \rightarrow \PSL $. Let $G_n = \rho_n (\G), n = 1,2,\ldots, {\infty}$ and write $\Lambda_n$ for $ \Lambda_{G_n}$. 
To normalize, we embed all the Cayley graphs with the same base point $O = O_{G_n}$
for all $n$ and set $j_{n}(g)=  j_{G_n}(g) = \rho_n(g) \cdot O, g \in \Gr \G$.
Let  $i_n \co j_{\G}(\Gr \G) \to j_{n}(\Gr G_n) $   be the obvious extension  to $\Gr \G$ of the map $j_{\G}(g) \mapsto j_{n}(g), g \in \Gr \G$.

By Theorem~\ref{crit1},   $\hat i_n\co    \Lambda_{\G} \to \Lambda_n$
exists if and only if   there exists a   function  $f_n \co \mathbb N \to \mathbb N$, such that 
 $f_n(N)\rightarrow\infty$ as $N\rightarrow\infty$, and such that whenever  $\lambda$ is a $d_{\G}$-geodesic segment  lying outside $B_{ \G}(1; N)$ in $\Gr \G$,  the  $\HHH^3$-geodesic  $[i_n(j_{\G}(\lambda))]$ lies outside $B_{\HHH }(O;f_n(N))$ in $\HHH^3$. 
Assuming they exist, we shall say that the \emph{CT}-maps   $\hat i_n: \Lambda_{\G} \to  \Lambda_{n}$ converge uniformly (resp. pointwise) to $\hat i_{\infty}$ if they do so as maps from 
$\Lambda_{\G}$ to $\Chat $. 

Before stating the convergence criterion,
we introduce a  property  UEP (Uniform Embedding of Points) of the sequence $(\rho_n)$.

\begin{definition}   Let $\rho_n\co \G \to G_n$ be  a sequence of weakly type preserving  isomorphisms of  Kleinian groups.   Then $(\rho_n)$ is said to satisfy
  UEP   if there
exists a non-negative function  $f \co \mathbb N \to \mathbb N$, with
 $f(N)\rightarrow\infty$ as $N\rightarrow\infty$, such that for all $g
\in \Gamma$, 
 $d_\Gamma (1,g) \geq N$ implies  $d_{\Hyp} (\rho_n(g)\cdot O , O) \geq
f(N)$ for all $n  \in \mathbb N$.  
\end{definition}

Here is an alternative characterisation of UEP.

\begin{lemma} \label{alternative1} For $N \in \mathbb N$, define $$ u_N = \inf \{ t>0:d_{\HHH}(\rho_n(g)\cdot O,O )> t \},$$ where the infimum is taken over all $g \in \G$  with  $ |g|> N$ and all $n$. Then $(\rho_n)$ satisfies UEP if and only if $u_N \to \infty $ as $N \to \infty$.
\end{lemma}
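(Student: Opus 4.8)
The plan is to unwind both conditions and observe that each is a direct reformulation of the other; the only points needing attention are the mismatch between the strict inequality $|g|>N$ in the definition of $u_N$ and the non-strict inequality $d_\G(1,g)\geq N$ in the definition of UEP, together with the requirement that the witnessing function in UEP take values in $\mathbb N$. Recall that $u_N=\inf\{\,d_{\HHH}(\rho_n(g)\cdot O,O)\,\}$, the infimum being taken over all $g\in\G$ with $|g|>N$ and all $n$; in particular $u_N\geq 0$, $u_N$ is non-decreasing in $N$, and $u_N$ is a lower bound for every distance $d_{\HHH}(\rho_n(g)\cdot O,O)$ with $|g|>N$ and $n$ arbitrary.

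First I would show that UEP implies $u_N\to\infty$. Suppose UEP holds with witnessing function $f\co\mathbb N\to\mathbb N$, $f(N)\to\infty$. If $g\in\G$ satisfies $|g|>N$, then $|g|\geq N+1$, so UEP gives $d_{\HHH}(\rho_n(g)\cdot O,O)\geq f(N+1)$ for all $n$. Thus $f(N+1)$ is a lower bound for the set of distances defining $u_N$, whence $u_N\geq f(N+1)$, and since $f(N+1)\to\infty$ as $N\to\infty$ it follows that $u_N\to\infty$.

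Conversely, suppose $u_N\to\infty$; I would set $f(N)=\lfloor u_{N-1}\rfloor$ for $N\geq 1$ and $f(0)=0$. Since $u_{N-1}\geq 0$, this $f$ is a non-negative, $\mathbb N$-valued function, and $f(N)\to\infty$ because $u_{N-1}\to\infty$. If $g\in\G$ satisfies $|g|\geq N$ with $N\geq 1$, then $|g|>N-1$, so by the defining property of $u_{N-1}$ we get $d_{\HHH}(\rho_n(g)\cdot O,O)\geq u_{N-1}\geq f(N)$ for all $n$; hence UEP holds with witnessing function $f$. The argument is entirely formal, so I anticipate no genuine obstacle: the only care needed is the index shift between $|g|>N$ and $|g|\geq N$ and the flooring required to land in $\mathbb N$.
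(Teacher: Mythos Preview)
Your proof is correct and follows essentially the same approach as the paper's: both arguments simply unwind the definitions, noting that $u_N$ is monotone in $N$ and that UEP with witness $f$ forces $u_N\geq f(N+1)$ (or a close variant), while conversely $u_N\to\infty$ lets one take $f(N)$ to be (essentially) $u_{N-1}$. The only differences are cosmetic: the paper argues the implication ``UEP $\Rightarrow u_N\to\infty$'' by contradiction rather than directly, and is a bit casual about the index shift and about $f$ being integer-valued, points you handle more carefully.
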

\begin{proof} Clearly $ \ldots u_N \leq u_{N+1}$ for all $N$. If $u_N \to \infty$ as $N \to \infty$, then $(\rho_n)$ satisfies UEP  with $f(N) = u_N$.

Now suppose  there exists $K>0$ so that $u_N \leq K$ for all $N$. Suppose $(\rho_n)$ satisfies UEP with the function $f$. Choose $N$ so that $f(N) > K+1$. From the definition of $u_N$,  there exist $g$,  $n$, such that $|g|>N$  while $j_n(g) \in B_{\HHH}(O, K)$. On the other hand, by UEP $j_n(g)$ is outside  $B_{\HHH}(O, f(N))$. This is impossible.
\end{proof}

\begin{prop} \label{uep}
Suppose that a sequence of  discrete faithful weakly type preserving representations $(\rho_n \co  \G \rightarrow \PSL )$
converges algebraically to  $  \rho_\infty $.  Then $(\rho_n )$ converges strongly  if and only if   it satisfies UEP.
\end{prop}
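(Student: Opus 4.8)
The plan is to prove the two implications separately, using Lemma~\ref{alternative1} to reformulate UEP in terms of the quantities $u_N$. First I would establish that strong convergence implies UEP. Suppose, for a contradiction, that $u_N \not\to\infty$, so by Lemma~\ref{alternative1} there is $K>0$ with $u_N \leq K$ for all $N$; thus for each $N$ there exist $g_N \in \G$ with $|g_N| > N$ and $n_N$ with $d_{\HHH}(\rho_{n_N}(g_N)\cdot O, O) \leq K$. If the indices $n_N$ stay bounded, we may pass to a subsequence on which $n_N$ is a fixed $n$, and then the infinitely many distinct elements $\rho_n(g_N) \in G_n$ all lie in the compact ball $B_{\HHH}(O,K)$, contradicting discreteness of $G_n$. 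So $n_N \to \infty$; passing to a subsequence, $\rho_{n_N}(g_N) \to \gamma$ for some $\gamma \in \PSL$ with $d_{\HHH}(\gamma \cdot O, O) \leq K$. By definition of the geometric limit, $\gamma \in H$, where $H = \rho_\infty(\G)$ by strong convergence. Hence $\gamma = \rho_\infty(h)$ for some fixed $h \in \G$. Now I would use the standard fact about strong convergence: algebraic convergence $\rho_n \to \rho_\infty$ together with the group being finitely generated forces the elements $g_N$ to be ``eventually visible'', i.e., since $\rho_{n_N}(g_N) \to \rho_\infty(h)$ and $\rho_{n_N} \to \rho_\infty$ algebraically, one gets $\rho_{n_N}(g_N h^{-1}) \to \mathrm{id}$; but in a strongly convergent sequence the only way a sequence of group elements can converge to the identity is if they are eventually equal to the identity (this is where strong — as opposed to merely algebraic — convergence is essential, ruling out ``new'' short elements appearing in the geometric limit). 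Hence $g_N = h$ for all large $N$, contradicting $|g_N| > N \to \infty$.

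For the converse, assume UEP and suppose $\rho_n \to \rho_\infty$ algebraically; I want to show the convergence is strong, i.e., that the geometric limit $H$ equals $\rho_\infty(\G)$. Since algebraic limits are always contained in geometric limits, $\rho_\infty(\G) \subseteq H$ automatically, so the content is the reverse inclusion. Take $\gamma \in H$; then there exist $\gamma_n \in G_n$ with $\gamma_n \to \gamma$, say $\gamma_n = \rho_n(g_n)$ for $g_n \in \G$. Since $\gamma_n \cdot O \to \gamma \cdot O$, the distances $d_{\HHH}(\rho_n(g_n)\cdot O, O)$ are bounded, say by some $K_0$. By the contrapositive of UEP (equivalently, by Lemma~\ref{alternative1}, since $u_{N} \to \infty$), the word lengths $|g_n|$ must be bounded: if $|g_n| \geq N$ then $d_{\HHH}(\rho_n(g_n)\cdot O, O) \geq f(N)$, which exceeds $K_0$ once $N$ is large. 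So the $g_n$ lie in a finite subset of $\G$; passing to a subsequence, $g_n = g$ is constant, and then $\gamma = \lim \rho_n(g) = \rho_\infty(g) \in \rho_\infty(\G)$. This proves $H \subseteq \rho_\infty(\G)$, hence equality, hence strong convergence.

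The main obstacle is the first implication, specifically the step extracting a contradiction from $\rho_{n_N}(g_N) \to \rho_\infty(h)$. I must argue carefully that under strong convergence a sequence of group elements $\delta_k \in G_{n_k}$ with $\delta_k \to \mathrm{id}$ in $\PSL$ and $n_k \to \infty$ must satisfy $\delta_k = \mathrm{id}$ for all large $k$. This is a known rigidity feature of strong convergence — it is precisely the difference between the algebraic and geometric limits collapsing — and can be justified by noting that the geometric limit $H = \rho_\infty(\G)$ is discrete, so in a neighborhood of $\mathrm{id}$ in $\PSL$ the only element of $H$ is $\mathrm{id}$ itself, while $\delta_k$ being close to $\mathrm{id}$ and the groups $G_{n_k}$ converging geometrically to $H$ forces $\delta_k$ to be the image of an element close to (hence equal to, by discreteness of $\G$ and the word-length bound) the identity. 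One should double-check whether an appeal to the Jørgensen inequality or Margulis lemma is needed to rule out $\delta_k$ being a short parabolic or loxodromic in $G_{n_k}$ that does not come from a bounded-length word in $\G$; I expect the cleanest route is: boundedness of $|g_N|$ is impossible (as $|g_N| > N$), and boundedness is forced by the same discreteness/geometric-convergence argument applied to $\delta_N := \rho_{n_N}(g_N h^{-1})$, which would converge to $\mathrm{id}$ and hence by the Margulis lemma generate, together with nearby group elements, an elementary or trivial group — pinning down $\delta_N = \mathrm{id}$ and thus $g_N = h$ for large $N$.
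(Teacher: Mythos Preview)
Your proof is correct and follows the same approach as the paper, with the two directions handled in the opposite order. For strong $\Rightarrow$ UEP you correctly isolate the crux---that nontrivial $\rho_{n_N}(g_N h^{-1}) \to \mathrm{id}$ is impossible---which the paper compresses into the phrase ``impossible by the strong convergence''; the cleanest justification (simpler than invoking Margulis or J{\o}rgensen) is that suitable powers of such elements would force a one-parameter subgroup into the geometric limit, contradicting the discreteness of $H = \rho_\infty(\Gamma)$.
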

\begin{proof} Suppose that the sequence of representations $(\rho_n (\Gamma ))$
converges algebraically and
satisfies
UEP with a function $f$.  If the convergence is not strong, then there exists a sequence 
 $(\rho_n(g_{m_n}))$ with $|g_{m_n}| \to \infty$ and $n \to \infty$ which converges in $\PSL $, and hence for which
 $d_{\Hyp} (\rho_n(g_{m_n})\cdot O , O) < M$ for some $M>0$.

Choose $N \in \mathbb N $ such that $f(N) > M$. Then 
$d_\Gamma (1,g) \geq
N$ implies $d_{\Hyp} (\rho_n(g)\cdot O , O) \geq f(N) >M$. So for any $n$,
$d_{\Hyp} (\rho_n(g) \cdot O , O) \leq M$ implies $d_\Gamma (1,g) \leq
N$. 
Since the   ball $B_{\Gamma} (1; N) \subset {\mathcal G}\G$   is finite, we must have  $g_m = g_\ast$ for infinitely many $m$ and some $g_\ast \in
B_{\Gamma} (1; N)$. This contradicts  $|g_m| \to \infty$.

Conversely  if UEP fails, by Lemma~\ref{alternative1} there exists $K>0$ such that with $u_N$ defined as in that lemma,  $u_N \leq K$ for all $N$. 
Thus for all $N$ there exist $g_N \in \G$ and $n_N \in \mathbb N$ such that $d_{\mathbb H}(\rho_{n_N}(g_N)\cdot O , O) \leq K+1$ and $|g_N| > N$. Hence  $(\rho_{n_N}(g_N))$ has a convergent subsequence while $|g_N| \to \infty$, which is impossible by the strong convergence.
\end{proof}

Now we introduce a  further property  UEPP (Uniform Embedding of Pairs of Points) of the sequence $(\rho_n)$.

\begin{definition}\label{crit:unifcrit1} Let $\rho_n\co \G \to G_n$ be  a sequence of weakly type preserving  isomorphisms of  Kleinian groups.  
 Then $(\rho_n)$ satisfies UEPP  if there exists a  function  $f_1 \co \mathbb N \to \mathbb N$, such that 
 $f_1(N)\rightarrow\infty$ as $N\rightarrow\infty$, and such that whenever  $\lambda$ is a $d_{\G}$-geodesic segment  lying outside $B_{\G}(1; N)$ in $\Gr \G$,  the  $\HHH^3$-geodesic   $[j_n( \lambda))]$ lies outside $B_{\HHH}(O; f_1(N))$  for all $n \in \mathbb N$.
\end{definition}

Note that if a sequence of representations $(\rho_n)$ satisfies UEPP, then it automatically satisfies UEP and so by Proposition~\ref{uep}, if it converges algebraically then it also
 converges strongly.
We also remark that the condition of UEPP is just the  statement that the second condition of Theorem~\ref{floyd}  holds uniformly in $n$.    

Here is an alternative characterisation of UEPP, whose proof  is essentially identical to that of Lemma~\ref{alternative}.
\begin{lemma} \label{alternative} For $N \in \mathbb N$, define $$ v_N = \inf \{ t>0: d_{\HHH}([j_n(\lambda)], O) > t \},$$ where the infimum is taken over all $\Gr \G$ geodesics $\lambda$ which are outside $B_{\G}(1, N)$ and all $n$. Then $\rho_n$ satisfies UEPP if and only if $v_N \to \infty $ as $N \to \infty$.
\end{lemma}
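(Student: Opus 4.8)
The plan is to mirror exactly the proof of Lemma~\ref{alternative1} (the characterisation of UEP), replacing the quantity $d_{\HHH}(\rho_n(g)\cdot O, O)$ attached to a single group element $g$ by the quantity $d_{\HHH}([j_n(\lambda)], O)$ attached to a $\Gr\G$-geodesic $\lambda$. First I would record the trivial monotonicity: if $\lambda$ lies outside $B_{\G}(1,N+1)$ then it lies outside $B_{\G}(1,N)$, so the infimum defining $v_N$ is taken over a smaller family and hence $v_N \leq v_{N+1}$; the sequence $(v_N)$ is non-decreasing. This is the analogue of ``$u_N \leq u_{N+1}$'' in Lemma~\ref{alternative1}.

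For the first implication, suppose $v_N \to \infty$ as $N \to \infty$. Then taking $f_1(N) = v_N$ gives a non-negative, non-decreasing function tending to $\infty$, and by the very definition of $v_N$ as an infimum, any $\Gr\G$-geodesic $\lambda$ lying outside $B_{\G}(1,N)$ has $[j_n(\lambda)]$ outside $B_{\HHH}(O, v_N) = B_{\HHH}(O, f_1(N))$ for every $n$. (One should be mildly careful that the infimum is attained or approximated, but since we only need $[j_n(\lambda)]$ to lie outside the ball of radius $v_N$ and $v_N$ is by construction a lower bound for $d_{\HHH}([j_n(\lambda)],O)$ over all such $\lambda$ and $n$, this is immediate.) Hence $(\rho_n)$ satisfies UEPP.

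For the converse, argue by contraposition exactly as in Lemma~\ref{alternative1}: suppose there exists $K>0$ with $v_N \leq K$ for all $N$, and suppose for contradiction that $(\rho_n)$ satisfies UEPP with some function $f_1$. Choose $N$ with $f_1(N) > K+1$. From $v_N \leq K$ and the definition of the infimum, there exist a $\Gr\G$-geodesic $\lambda$ lying outside $B_{\G}(1,N)$ and an index $n$ with $d_{\HHH}([j_n(\lambda)], O) \leq K$, i.e.\ $[j_n(\lambda)]$ meets $B_{\HHH}(O,K) \subset B_{\HHH}(O, f_1(N))$. But UEPP forces $[j_n(\lambda)]$ to lie outside $B_{\HHH}(O, f_1(N))$, a contradiction. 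Therefore if UEPP holds then no such $K$ exists, i.e.\ $v_N \to \infty$.

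I do not expect any genuine obstacle here; the statement is purely a reformulation and the only point requiring a word of care is the handling of the infimum (whether to take the bound as a non-strict or strict inequality), which is harmless since enlarging $f_1$ slightly — say replacing $v_N$ by $v_N$ and noting every relevant geodesic satisfies $d_{\HHH}([j_n(\lambda)],O) \geq v_N$ — resolves it. The proof is therefore a routine transcription of the argument already given for UEP.
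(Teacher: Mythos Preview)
Your proposal is correct and follows exactly the approach indicated in the paper, which simply states that the proof is essentially identical to that of Lemma~\ref{alternative1}. You have carried out precisely that transcription, replacing the point quantity $d_{\HHH}(\rho_n(g)\cdot O, O)$ by the geodesic quantity $d_{\HHH}([j_n(\lambda)], O)$, and there is nothing to add.
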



Our main criterion for uniform convergence of \emph{CT}-maps is the following:
\begin{theorem}\label{unifcrit1}  Let $\Gamma$ be a geometrically finite Kleinian group  and let $\rho_n: \G \to G_n$ be  weakly type preserving  isomorphisms to Kleinian groups. 
Suppose that $\rho_n$ converges algebraically to a 
representation $\rho_{\infty}$. Then  if   $(\rho_n)$ satisfies UEPP, the \emph{CT}-maps  $\hat i_n\co  \Lambda_{\G} \to \Lambda_{n}$
converge uniformly to $\hat i_{\infty}$. If  $\Gamma$ is non-elementary,  the converse also holds.
\end{theorem}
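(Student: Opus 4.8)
The plan is to prove the two implications separately.

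\textbf{UEPP implies uniform convergence.} I would first check that all the maps involved exist. For each finite $n$ the function $f_1$ supplied by UEPP is exactly the function required by the criterion of Theorem~\ref{crit1}, so each $\hat i_n$ exists. Since a $d_\Gamma$-geodesic segment outside $B_\Gamma(1;N)$ is finite, its finitely many vertices $\rho_n(g)\cdot O$ converge to $\rho_\infty(g)\cdot O$, hence the geodesics $[j_n(\lambda)]$ converge to $[j_\infty(\lambda)]$, which therefore also lies outside $B_{\HHH}(O;f_1(N))$ (its endpoints, being limits of points lying on the $[j_n(\lambda)]$, are outside that ball); thus $\rho_\infty$ satisfies the same criterion and $\hat i_\infty$ exists. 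Note that UEPP also forces UEP, so by Proposition~\ref{uep} the convergence is strong and $\rho_\infty$ is discrete and faithful. The core of this direction is to prove that $\{\hat i_n : n\in\mathbb N\cup\{\infty\}\}$ is uniformly equicontinuous as a family of maps $\Lambda_\Gamma\to\Chat$. Conceptually this holds because the modulus of continuity produced for each $\hat i_n$ in the proof of Theorem~\ref{crit1} depends only on the function appearing in the criterion --- here the uniform $f_1$ --- and on the geometry of the fixed group $\Gamma$ (through Theorem~\ref{floyd}, Corollary~\ref{floyd3}, Lemma~\ref{lemmaV} and the horoball estimates), so it is independent of $n$. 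Concretely, one re-runs the contradiction argument: given $\xi_k,\eta_k\in\Lambda_\Gamma$ with $d_{\mathbb E}(\xi_k,\eta_k)\to 0$ (so both converge to a common point $\xi$) but $d_{\mathbb E}(\hat i_{n_k}(\xi_k),\hat i_{n_k}(\eta_k))\ge\epsilon_0$, one picks $g_k,h_k\in\Gamma$ with $j_\Gamma(g_k),j_\Gamma(h_k)$ near $\xi_k,\eta_k$ and $j_{n_k}(g_k),j_{n_k}(h_k)$ near $\hat i_{n_k}(\xi_k),\hat i_{n_k}(\eta_k)$. Then the $\HHH^3$-geodesic joining $j_{n_k}(g_k)$ and $j_{n_k}(h_k)$ meets a fixed ball $B_{\HHH}(O;R_0)$, so by UEPP and Lemma~\ref{alternative} the $d_\Gamma$-geodesic $\lambda_k$ from $g_k$ to $h_k$ meets a fixed ball $B_\Gamma(1;N_0)$; combining this with Corollary~\ref{floyd3} and the fact that $j_\Gamma(g_k),j_\Gamma(h_k)\to\xi$ forces, along a subsequence, one fixed horoball (from among the finitely many meeting a fixed ball about $O$) to be traversed by $[j_\Gamma(g_k),j_\Gamma(h_k)]$ for all large $k$ --- impossible if $\xi$ is not the parabolic point at which that horoball is based. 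This parabolic case is the delicate one and I expect it to be the main obstacle in this direction; it is dealt with exactly as in the final part of the proof of Theorem~\ref{crit1} using Lemma~\ref{lemmaV}: either $\lambda_k$ is, up to a bounded amount, a geodesic running into the cusp, whence $\hat i_{n_k}(\xi_k)$ is forced close to the corresponding parabolic point of $G_{n_k}$, contradicting $d_{\mathbb E}\ge\epsilon_0$; or $\lambda_k$ has a long sub-segment whose $j_\Gamma$-image stays outside that horoball and far from $O$ while its $i_{n_k}$-image joins two points whose $\HHH^3$-geodesic returns near $O$, contradicting UEPP.

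Granting uniform equicontinuity, uniform convergence follows from pointwise convergence on the dense set $\Lambda_\Gamma^+$ of attracting fixed points: for loxodromic $g\in\Gamma$ one has $\hat i_n(g^+)=\rho_n(g)^+\to\rho_\infty(g)^+=\hat i_\infty(g^+)$, because $\rho_n(g)\to\rho_\infty(g)$ in $\PSL$ with $\rho_\infty(g)$ loxodromic or parabolic, so its (attracting, or unique parabolic) fixed point varies continuously. A standard $\epsilon/3$ argument --- cover the compact set $\Lambda_\Gamma$ by finitely many balls, of the radius furnished by equicontinuity for $\epsilon/3$, centred at points of $\Lambda_\Gamma^+$ --- then gives $\hat i_n\to\hat i_\infty$ uniformly.

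\textbf{Uniform convergence implies UEPP when $\Gamma$ is non-elementary.} Uniform convergence of continuous maps on the compact space $\Lambda_\Gamma$, together with the uniform continuity of $\hat i_\infty$, immediately yields that $\{\hat i_n : n\le\infty\}$ is uniformly equicontinuous on $\Lambda_\Gamma$. Since the CT-maps are surjective onto their limit sets, $\Lambda_n=\hat i_n(\Lambda_\Gamma)\to\hat i_\infty(\Lambda_\Gamma)=\Lambda_\infty$ in the Hausdorff metric; as $\Gamma$ is non-elementary, Theorem~\ref{evans} (after passing to a geometrically convergent subsequence, which it then identifies with $G_\infty$, this applying to every subsequence) shows the convergence is strong, so by Proposition~\ref{uep} the sequence satisfies UEP.

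Finally, suppose UEPP fails. By Lemma~\ref{alternative} there are $K_0>0$, indices $n_N$, and $d_\Gamma$-geodesics $\lambda_N$ outside $B_\Gamma(1;N)$ with endpoints $a_N,b_N$ (so $|a_N|,|b_N|>N$) such that the $\HHH^3$-geodesic joining $j_{n_N}(a_N)$ and $j_{n_N}(b_N)$ meets $B_{\HHH}(O;K_0)$. By Theorem~\ref{floyd}, $[j_\Gamma(a_N),j_\Gamma(b_N)]$ leaves every ball about $O$, so $d_{\mathbb E}(j_\Gamma(a_N),j_\Gamma(b_N))\to 0$, while $d_{\mathbb E}(j_{n_N}(a_N),j_{n_N}(b_N))\ge\epsilon_2>0$; and by UEP both $j_{n_N}(a_N)$ and $j_{n_N}(b_N)$ leave every ball about $O$. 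The point is now to transfer this into a failure of equicontinuity on $\Lambda_\Gamma$ itself --- which is the main obstacle in this direction, since uniform convergence is assumed only on $\Lambda_\Gamma$, not along the embedded Cayley graph. Fix attracting fixed points $p_1,p_2,p_3\in\Lambda_\Gamma^+$ of pairwise non-commensurable loxodromics of $\Gamma$, so their images $\hat i_{n_N}(p_j)=\rho_{n_N}(g_j)^+$ remain pairwise separated for large $N$. A M\"obius map moving $O$ a large hyperbolic distance $L$ contracts Euclidean distances to $o_L(1)$ off a shrinking neighbourhood of its repelling direction; applying this to $a_N$ (with $d_{\HHH}(O,a_N\cdot O)\to\infty$ by Proposition~\ref{floydorig}) and to $\rho_{n_N}(a_N)$ (with $d_{\HHH}(O,\rho_{n_N}(a_N)\cdot O)\to\infty$ by UEP), one selects $\zeta_N\in\{p_1,p_2,p_3\}$ lying off both relevant repelling directions, so that $a_N\zeta_N$ is $o(1)$-close to $j_\Gamma(a_N)$ while $\hat i_{n_N}(a_N\zeta_N)=\rho_{n_N}(a_N)\,\hat i_{n_N}(\zeta_N)$ is $o(1)$-close to $j_{n_N}(a_N)$; a symmetric choice $\eta_N$ works for $b_N$. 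Then $d_{\mathbb E}(a_N\zeta_N,b_N\eta_N)\to 0$ while $d_{\mathbb E}(\hat i_{n_N}(a_N\zeta_N),\hat i_{n_N}(b_N\eta_N))\ge\epsilon_2-o(1)$, contradicting the uniform equicontinuity on $\Lambda_\Gamma$ established above. Hence UEPP holds.
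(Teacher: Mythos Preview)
Your proposal is correct in both directions, but the forward direction takes a genuinely different route from the paper, and the converse uses a different device at the final step.

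\textbf{Forward direction.} You argue by establishing uniform equicontinuity of the family $\{\hat i_n\}$ (re-running the contradiction argument of Theorem~\ref{crit1} with UEPP supplying uniform constants, including the delicate parabolic case via Lemma~\ref{lemmaV}), and then combining this with pointwise convergence on the dense set of loxodromic fixed points. The paper instead gives a direct Cauchy-type estimate: given $\xi$ and $g_r\cdot O\to\xi$, it uses Lemma~\ref{tail} (which combines UEPP with algebraic convergence) to bound $d_{\mathbb E}(i_n(g_r),i_m(g_r))$, and UEPP alone to bound $d_{\mathbb E}(i_n(g_r),\hat i_n(\xi))$; both bounds are uniform in $\xi$, yielding uniform convergence immediately. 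The paper's argument is considerably shorter and sidesteps the entire parabolic-point case analysis that you inherit from Theorem~\ref{crit1}; your approach, on the other hand, packages the content as an equicontinuity statement, which is conceptually natural and reusable.

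\textbf{Converse.} Both arguments begin identically: uniform convergence gives Hausdorff convergence of limit sets, Evans' theorem (using non-elementary) gives strong convergence, and Proposition~\ref{uep} gives UEP. If UEPP fails, both produce endpoints $g_r,h_r$ of $d_\Gamma$-geodesics outside $B_\Gamma(1;N)$ whose $j_{n_r}$-images have geodesic meeting a fixed ball. The paper shows $n_r\to\infty$ (using Theorem~\ref{floyd} on the finitely many groups otherwise), then passes from the orbit points $j_{n_r}(g_r),j_{n_r}(h_r)$ to the limit set by observing (via Lemma~\ref{easy1} and UEP) that $j_{n_r}(g_r)$ is Euclidean-close to $\hat i_{n_r}(g_r^+)$, and $g_r^+$ is close to $\xi$, whence uniform convergence and continuity of $\hat i_\infty$ force both images to $\hat i_\infty(\xi)$---a contradiction. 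Your three-fixed-points contraction trick achieves the same transfer and is a perfectly good alternative; it is slightly more elaborate but does not rely on Lemma~\ref{easy1}. You omit the verification that $n_N\to\infty$, but your argument does not really need it (the contradiction with equicontinuity goes through either way).
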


\begin{rmk}{\rm   The converse result is not needed for the proof  of Theorem~\ref{thm:strong=unif} but we include it for completeness. To see that the converse fails if $\G$ is elementary, consider  the sequence of groups $<A_n>$ where $A_n$ is a single loxodromic converging to a parabolic $A_{\infty}$  in such a way that a subsequence of powers $A_n^{m_n} $ has geometric limit $B$ where $B$ is a parabolic and $<B, A_{\infty}> \ = \mathbb Z^2$.  For the detailed construction  of such an example, see ~\cite{marden-book} Section 4.9.}\end{rmk}

To prove Theorem~\ref{unifcrit1} we need a   lemma which ensures that $d_{\Gamma}$-geodesic paths lying outside a large ball eventually have small visual diameter.

\begin{lemma} \label{tail}
Let $\rho\co \G \to G_n$ be  weakly type preserving  isomorphisms of finitely generated Kleinian groups.
  Suppose that $G_n$ converges algebraically to a  group $G_{\infty}$, normalised as above. Suppose that  $(\rho_n)$ satisfies UEPP.
 Then there exists $f_2 \co \mathbb N \to \mathbb N$ such that  $f_2(N) \to \infty$ as $N \to \infty$, and such that for all $g \in \mathcal G\G$ lying outside $B_{ {\Gamma}}(1;N)$  and $m, n \geq f_2(N)$, the $\HHH^3$-geodesic  
 $[j_m(g), j_n(g)]$ lies outside $B_{\HHH }(O;f_2(N))$.
\end{lemma}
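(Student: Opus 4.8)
The plan is to prove this by contradiction, building a bad sequence and using UEPP together with algebraic convergence and the properness estimates already established. First I would suppose the conclusion fails: then there is $K>0$ such that for every $N$ there exist $g_N \in \mathcal G\Gamma$ with $|g_N|_\Gamma > N$, indices $m_N, n_N \geq N$ (we can force the indices to grow since otherwise we contradict finiteness of balls in $\mathcal G\Gamma$ after passing to a subsequence, exactly as in Lemma~\ref{alternative1} and Proposition~\ref{uep}), and such that the $\HHH^3$-geodesic $[j_{m_N}(g_N), j_{n_N}(g_N)]$ meets $B_{\HHH}(O;K)$. In particular both endpoints $j_{m_N}(g_N) = \rho_{m_N}(g_N)\cdot O$ and $j_{n_N}(g_N) = \rho_{n_N}(g_N)\cdot O$ stay within bounded distance behaviour relative to this ball along a segment, so after passing to a subsequence the geodesics $[j_{m_N}(g_N), j_{n_N}(g_N)]$ converge (on compact sets) to a geodesic or point through $B_{\HHH}(O;K)$.

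The key step is to feed $g_N$ into UEPP. Apply UEPP (in the form of Lemma~\ref{alternative}) to the trivial $d_\Gamma$-geodesic $\lambda_N$ consisting of the single vertex $g_N$ — or more robustly, to a short $d_\Gamma$-geodesic segment $\lambda_N$ centred at $g_N$, which still lies outside $B_\Gamma(1;N-c)$ for a fixed $c$. UEPP gives that for \emph{every} index $k$, the hyperbolic geodesic $[j_k(\lambda_N)]$ — and in particular the point $j_k(g_N)$ itself, since it lies on or near that geodesic — lies outside $B_{\HHH}(O; f_1(N-c))$ with $f_1(N-c) \to \infty$. Thus $d_{\HHH}(\rho_{m_N}(g_N)\cdot O, O) \to \infty$ and $d_{\HHH}(\rho_{n_N}(g_N)\cdot O, O) \to \infty$ as $N \to \infty$. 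So the two endpoints of our geodesic segment both escape to infinity in $\HHH^3$, yet the geodesic joining them keeps meeting the fixed ball $B_{\HHH}(O;K)$. This forces the two endpoints $\rho_{m_N}(g_N)\cdot O$ and $\rho_{n_N}(g_N)\cdot O$ to converge (in the Euclidean metric on $\mathbb B \cup \Chat$) to two \emph{distinct} points $\eta, \eta' \in \Chat$, with the limiting geodesic $[\eta,\eta']$ passing through $B_{\HHH}(O;K)$.

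The final step is to derive a contradiction from this separation using algebraic convergence. Since $\rho_{m_N} \to \rho_\infty$ and $\rho_{n_N} \to \rho_\infty$ algebraically, one expects $\rho_{m_N}(g_N)$ and $\rho_{n_N}(g_N)$ to be "close" in $\PSL$ when $N$ is large — but one must be careful because $g_N$ itself varies with $N$, so pointwise algebraic convergence on fixed elements does not directly apply. The fix is to write $\rho_{m_N}(g_N)\rho_{n_N}(g_N)^{-1}$ and analyse its action: it fixes nothing a priori, but I would instead argue via the common "escape" — both $\rho_{m_N}(g_N)\cdot O$ and $\rho_{n_N}(g_N)\cdot O$ leave every compact set, so by the argument of Theorem~\ref{crit1} (properness of the maps $i_n$, applied uniformly, which is precisely what UEP/UEPP buys us) their images under $\hat i_\infty$-type limits coincide; concretely, pull back by $\rho_{m_N}(g_N)^{-1}$ so that $O$ maps to a point at bounded distance from the geodesic, and use that $\rho_{n_N}(g_N)\rho_{m_N}(g_N)^{-1} \to \mathrm{id}$ on compact sets (this is where algebraic convergence enters, now applied to the \emph{bounded} group elements $\rho_k(g_N)\rho_j(g_N)^{-1}$ which converge to $\mathrm{id}$ uniformly as $j,k,N\to\infty$ by a standard equicontinuity/diagonal argument). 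This forces $\eta = \eta'$, the desired contradiction. The main obstacle I anticipate is exactly this last point — making the "$g_N$ varies" version of algebraic convergence rigorous — and I would handle it by the diagonal argument sketched above, exploiting that the elements $\rho_k(g)\rho_j(g)^{-1}$ for $j,k$ large are uniformly close to the identity, uniformly over all $g \in \Gamma$, which is a consequence of strong convergence (guaranteed here since UEPP implies UEP implies strong convergence by Proposition~\ref{uep}).
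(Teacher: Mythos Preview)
Your proposed contradiction argument founders at exactly the point you flag as the main obstacle, and the suggested fix does not work. The claim that $\rho_k(g)\rho_j(g)^{-1}$ is close to the identity \emph{uniformly over all $g\in\Gamma$} as $j,k\to\infty$ is false in general, and is not a consequence of strong convergence. Algebraic convergence gives you control on $\rho_n(g)$ for each \emph{fixed} $g$, and hence uniform control only over finite subsets of $\Gamma$; geometric convergence gives control on elements $\rho_n(g)$ that themselves lie in a fixed compact set of $\PSL$. Neither gives what you need: for instance, if $p\in\Gamma$ is loxodromic and $\rho_j(p),\rho_k(p)$ have slightly different axes, then $d_{\HHH}(\rho_j(p^m)\cdot O,\rho_k(p^m)\cdot O)\to\infty$ as $m\to\infty$ for fixed $j\neq k$, so $\rho_k(p^m)\rho_j(p^m)^{-1}$ certainly does not tend to the identity. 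Thus there is no contradiction at the end, and the argument as written does not close.

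The paper's proof avoids this difficulty by a direct constructive argument that separates the two issues cleanly. Given $h$ with $|h|_\Gamma\geq N+1$, it picks an intermediate point $h_0$ on a $d_\Gamma$-geodesic $[1,h]_\Gamma$ with $|h_0|_\Gamma=N$. Since $h_0$ lies in the \emph{finite} ball $B_\Gamma(1;N+1)$, ordinary algebraic convergence gives an $N_0$ (depending only on $N$) with $d_{\HHH}(j_m(h_0),j_n(h_0))\leq 1$ for $m,n\geq N_0$. Now UEPP applied to the segment $[h_0,h]_\Gamma$ (which lies outside $B_\Gamma(1;N)$) forces both $[j_m(h_0),j_m(h)]$ and $[j_n(h_0),j_n(h)]$ outside $B_{\HHH}(O;f_1(N))$. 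A thin-quadrilateral estimate in $\HHH^3$ then traps $[j_m(h),j_n(h)]$ in a $2\delta$-neighbourhood of these two segments together with the short segment $[j_m(h_0),j_n(h_0)]$, and one takes $f_2(N)=\max\{N_0,\,f_1(N)-1-2\delta\}$. The point is that algebraic convergence is invoked only at the bounded-word-length point $h_0$, where it legitimately applies; UEPP handles the rest.
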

\begin{proof}  
Given $N \in \mathbb  N$, by the algebraic convergence we can choose $N_0$ such that for all $m, n \geq N_0$ and $g \in B_{{\Gamma}}(1;N+1)$ we have
 $d_{\Hyp}(j_m(g),j_n(g)) \leq 1$. Now, let $h \in \Gamma $ be such that $d_\Gamma(h,1) \geq N+1$.  For $g,g' \in \Gr \G$, denote by 
 $ [g,g']_{\G}$   a $d_{\G}$-geodesic segment in $\Gr G$ from $g$ to $g'$.
Let $h_0 \in [1,h]_{\G}$
such that $d_\Gamma (1, h_0) = N$. By hypothesis, the $\Hyp^3$-geodesic segments $\gamma_m = [j_m([h_0,h]_{\G})]$  and $\gamma_n = [j_n([h_0,h]_{\G})]$ lie  outside  $B_{\HHH}(O; f_1(N))$. Let $\delta$ be a hyperbolicity constant for $\Hyp^3$, in the sense that any side of a triangle is contained in a $\delta$-neighbourhood of the other two.  Then the $\Hyp^3$-geodesic segment  $[j_m(h),j_n(h)]$ is within distance $2\delta$  of  $\gamma_m \cup \gamma_n \cup [j_m(h_0),j_n(h_0)]$ and hence lies outside the $(  f_1(N) - 1 - 2\delta)$-ball around 
$O \in \Hyp^3$. Choosing $f_2(N) = \max \{N_0, f_1(N) - 1 - 2\delta \}$ gives the result.
\end{proof}

\begin{proof}[Proof of Theorem~\ref{unifcrit1}] 
Given $\xi \in \Lambda_{\Gamma}$, choose $g_r \in \G, g_r \cdot O \to \xi$. Since $\G$ acts properly discontinuously on $\HHH^3$,  $|g_r| \to \infty$ as $r \to \infty$. By Theorem~\ref{firstresult}, $\hat i_n(\xi) = \lim_{r \to \infty} \rho_n(g_r \cdot O)$ for $n = 1,2,\ldots, \infty$. We first show that  if  $(\rho_n)$ satisfies UEPP, then
 $ \lim_{r \to \infty} \rho_n(g_r) \cdot O$ converges to    $ \lim_{r \to \infty} \rho_{\infty}(g_r) \cdot O$ uniformly in $\xi$ as $n \to \infty$.
 
 By UEPP, given $N \in \mathbb N$, there exists $f_1(N)$ such that  for all $n$ and $ \xi$, 
 $i_n(g_r) = \rho_n(g_r) \cdot O$ is outside $B_{\HHH}(O;f_1(N))$ whenever $|g_r| > N$. Thus by Lemma~\ref{tail}, the $\HHH^3$-geodesic $[i_n(g_r),i_m(g_r)]$ is outside $B_{\HHH^3}(O;f_2(N))$
 whenever $|g_r| \geq N$ and $n,m \geq f_2(N)$, where $f_2(N) $ is determined from $f_1(N)$ as in that lemma. Thus working in the ball model with  $d_{\mathbb E}$ denoting Euclidean distance, from Lemma~\ref{easy} we have $d_{\mathbb E}(i_n(g_r),i_m(g_r)) \prec e^{-f_2(N)}$ whenever $|g_r| > N$ and $n,m \geq f_2(N)$.

 Now $i_n(g_r) \to \hat i_n(\xi) $ as $r \to \infty$. Moreover  by  UEPP and Lemma~\ref{easy}, we have $d_{\mathbb E}(i_n(g_r),i_n(g_s)) \prec e^{-f_1(N)}$  whenever $|g_r| , |g_s|> N$.  Thus $d_{\mathbb E}(i_n(g_r), \hat i_n(\xi)) \prec e^{-f_1(N)}$
 whenever $|g_r|> N$.
  So      $d_{\mathbb E}(\hat i_n(\xi),\hat i_m(\xi))  \prec (e^{-f_2(N)} + e^{-f_1(N)})$ which gives uniform convergence.

To prove the converse, suppose the convergence is uniform and that $\Gamma$ is non-elementary. Uniform convergence implies diagonal convergence and hence that the limit sets of the geometric and algebraic limits are the same. Now  Theorem~\ref{evans}, together with the assumption that $\G$ is  non-elementary, gives that the convergence is strong. By Proposition~\ref{uep}, this implies UEP.

Suppose that UEPP fails. By Lemma~\ref{alternative}, the sequence $v_N = \inf \{ t>0: d([j_n(\lambda)], O) > t \}$ is bounded above by $K$ say, so for all $N$ there exists a $\Gr \G$ geodesic $\lambda_N$ and $n_N \in \mathbb N$ such that $\lambda_N$ is outside 
$B_{\G}(1, N)$ while $ [j_{n_N}(\lambda_N)]$ intersects the ball $B_{\HHH}(O, K)$.

We claim that $n_N \to \infty$ as $N \to \infty$. If not, there exists $L>0$ such that 
$n_N \leq L$ for all $N$.  By Theorem~\ref{floyd},  for each $r$ there is a function $f_r\co \mathbb N \to \mathbb N$ with $f_r (N) \to \infty$ with $N$, and such that   if $\lambda$ is outside $B_{\G}(1,N)$ then
$[j_r(\lambda)]$ is outside $B_{\HHH}(O, f_r(N))$.
Thus if $N$ is large enough that $\min \{f_1(N),f_2(N),\ldots, f_L(N) \} >K$ then $[j_{n_N}(\lambda_N)]$ is outside $B_{\G}(1,K)$, contrary to the choice of $\lambda_N$.

Thus the sequence $n_N $ is unbounded, so that we can   choose 
 a sequence $n_{r} \to \infty$ and  $\Gr \G$ geodesics  $\lambda_{r}$ such that $\lambda_r$ is outside 
$B_{\G}(1, r)$ while  $[j_{n_{r}}(\lambda_{r})]$ meets $B_{\G}(1,K)$ for all $r$.

  Suppose that $ \lambda_r$ has endpoints $g_r,h_r \in {\mathcal G} \G$.
  By Theorem~\ref{floyd}, after passing to a subsequence we may assume that the points $g_r \cdot O$ and $h_r\cdot O$ converge to the same point $\xi \in \Lambda_{\Gamma}$.
  It follows from Lemma~\ref{easy} combined with UEP and the uniform convergence that 
$i_{n_r}(g_r ) $ and $i_{n_r}(h_r )$ both limit on $\hat i_{\infty}(\xi) \in \Lambda_{\infty}$, contradicting the fact that $[j_{n_{r}}(\lambda_{r})]$
meets $B_{\mathbb H}(O,  K)$ for all $r$.
\end{proof}

\begin{proof}[Proof of Theorem~\ref{lambdalemma}] 
Theorem~\ref{thm:strong=unif}  gives an alternative proof of Theorem~\ref{lambdalemma}.
With the notation of the statement in the Introduction, by Theorem~\ref{firstresult}, if $\G$ is a finitely generated Fuchsian group and $\rho \co \G \to G_z$ is a type preserving isomorphism to a  quasi-Fuchsian group $G_z$, the natural   map $i_z \co \Lambda_{\G}^+ \to \Lambda_{G}^+$ extends to a continuous map $\hat i_z \co \Lambda_{\G} \to \Lambda_{G}$.

Now replace the parameter $z \in A$  by a sequence $(z_n) \to z_\infty$ and write $G_n$ for $G_{z_n}$ etc.
Suppose   that  $\G, G_n, G_\infty$ are all quasifuchsian. 
If $(G_n)$ converges algebraically  to $G_\infty$,  it automatically converges strongly. Hence by Theorem~\ref{thm:strong=unif}   the $CT$-maps $\hat i_n : \Lambda_{\G} \to  \Lambda_{G_n} $ converge uniformly to  $\hat i_\infty$. Uniform convergence gives diagonal convergence. Since this argument applies to any  sequence $z_n \in A$ with limit $z_\infty \in A$, we easily deduce joint continuity of the map $\hat i_z$. 

Let $\xi \in \Lambda_{\G}$  and pick a sequence of attractive fixed points $\xi_m = g_m^+ \to \xi$ where $g_m \in \G$ are hyperbolic. The maps $ z \mapsto  \rho_z(g_m)^+ = \hat i_z(\xi_m)$ are holomorphic for each $m$ and by our result  $\hat i_z(\xi_m) \to \hat i_z(\xi ) $ for each $z$.  Moreover  the family of maps $ z \mapsto \hat i_z(\xi_m)$ is uniformly bounded and hence normal. It follows that  $ z \mapsto \hat i_z(\xi)$ is holomorphic as claimed. 
\end{proof} 
 

\section{Strong convergence}  \label{sec:strong}

In this section we prove Theorem~\ref{thm:strong=unif},  that if a geometrically finite group $G_{\infty}$ is a strong limit of a  sequence $G_n$, then the corresponding $CT$-maps converge uniformly.  To do this, it is sufficient in view of Theorem~\ref{unifcrit1} to check the criterion UEPP.
The main ingredient is the following uniform bound on  the diameters of the thick parts of the convex cores. Note that as long as the geometric limit is geometrically finite, the hypothesis only requires algebraic rather than strong convergence; this will be important when we come to the proof of Theorem~\ref{thm:alg=ptwise}.

  \begin{proposition} \label{prop:diameters} Suppose given a sequence of geometrically finite Kleinian groups $G_n = \rho_n(\G)$ 
  where $\Gamma$ is   geometrically finite and the representations $\rho_n$ are faithful and weakly type preserving. 
Suppose that the sequence $(G_n)$ converges geometrically to a geometrically finite group $H$. Then the algebraic limit $G_{\infty}$ is geometrically finite. Moreover the thick parts of the convex cores have uniformly bounded diameters.
\end{proposition}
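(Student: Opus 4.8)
The plan is to argue by contradiction. Suppose the thick parts $\mathcal V_n = \mathcal V_{\epsilon; G_n}$ of the convex cores do not have uniformly bounded diameter. Then there are points $x_n, y_n \in \mathcal V_n$ with $d_{\HHH}(x_n, y_n) \to \infty$. Since $G_n \to H$ geometrically and $H$ is geometrically finite, the thick part $\mathcal V_H = \mathcal V_{\epsilon; H}$ of the convex core of $H$ is compact (it has finite diameter, being the thick part of a finite-volume region). Geometric convergence means that, after fixing the basepoint $O$, compact pieces of $\HHH^3/G_n$ converge geometrically to the corresponding piece of $\HHH^3/H$; in particular the convex cores and their thick parts converge in the appropriate (Gromov–Hausdorff / Chabauty) sense, so $\operatorname{diam}(\mathcal V_n)$ should be controlled in terms of $\operatorname{diam}(\mathcal V_H)$ plus a bounded error. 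The contradiction then follows, and this also forces $G_\infty$ to be geometrically finite: once the thick parts of the convex cores are uniformly bounded, one checks directly that the convex core of $G_\infty$ has compact thick part (alternatively, since $G_\infty$ is a subgroup of $H$ and $H$ is geometrically finite, cite \cite{marden-book} Theorem 4.6.1 and \cite{jor-mar} as flagged in the Introduction).

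Concretely, the key steps, in order, would be: (1) Recall from Section~\ref{sec:background} that geometric finiteness of $H$ is equivalent to the thick part $\mathcal V_H$ of its convex core having finite diameter, and fix $\epsilon \le \epsilon_{\mathcal M}$ small enough that all $\HH_{\epsilon; H}$ and $\HH_{\epsilon; G_n}$ components are as in the standard Margulis picture. (2) Use the definition of geometric convergence: for every $R>0$, the $R$-ball about $O$ in $\HHH^3/G_n$ is bilipschitz-close (with bilipschitz constant $\to 1$) to the $R$-ball about $O$ in $\HHH^3/H$, for $n$ large. (3) Show the convex hulls $\mathcal N_n$ converge to $\mathcal N_H$ on compact sets: a point lies in $\mathcal N_n$ iff it is in the convex hull of $\Lambda_n$, and by Evans' Theorem~\ref{evans} (or the easier \cite{jor-mar} Proposition 4.2 cited in the text, valid since the geometric limit is geometrically finite with nonempty regular set) $\Lambda_n \to \Lambda_H$ in the Hausdorff metric, so convex hulls converge on compact sets. (4) Deduce that $\mathcal V_n$, the thick part of $\mathcal N_n/G_n$, is eventually contained in a fixed neighbourhood of (the image of) $\mathcal V_H$ in the geometrically convergent models, hence has diameter $\le \operatorname{diam}(\mathcal V_H) + O(1)$, uniformly in $n$; this is the desired bound. (5) For the statement that $G_\infty$ is geometrically finite: since algebraic convergence exhibits $G_\infty$ as a subgroup of the geometric limit $H$, and $H$ is geometrically finite, invoke \cite{marden-book} Theorem 4.6.1 together with \cite{jor-mar}.

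The main obstacle is step (3)–(4): making precise the statement that the \emph{convex cores} (not just the limit sets) converge geometrically and that their thick parts behave continuously. One has to be careful because the convex core is a nonlocal object, and a priori $\mathcal V_n$ could have long thin tentacles escaping any fixed ball; the point is that such tentacles would have to stay in the convex hull of $\Lambda_n$, which is Hausdorff-close to the compact set $\mathcal N_H \cap (\text{ball})$, and would have to avoid the thin parts, so cannot in fact escape — but this requires a genuine argument rather than a citation. A clean way around it is to use the volume characterisation: a $\delta$-neighbourhood of $\mathcal N_n/G_n$ has volume uniformly bounded above (by geometric convergence to the finite-volume $\delta$-neighbourhood of $\mathcal N_H/H$), and then the standard ball-packing argument (as in Section~\ref{sec:background}) converts a uniform volume bound into a uniform diameter bound for the thick part. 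That packing argument is the cleanest route and I would organize the proof around it.
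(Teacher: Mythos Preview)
You correctly identify the central difficulty: Hausdorff convergence of limit sets and geometric convergence of fundamental domains only control $\mathcal N_n \cap \mathcal D_n$ inside a fixed ball $B_r$, and say nothing \emph{a priori} about pieces of $\mathcal V_n \cap \mathcal D_n$ that escape far from the basepoint. However, your proposed workaround via volumes has exactly the same gap. The assertion that ``a $\delta$-neighbourhood of $\mathcal N_n/G_n$ has volume uniformly bounded above by geometric convergence to the finite-volume $\delta$-neighbourhood of $\mathcal N_H/H$'' is not justified: geometric (polyhedral, pointed Gromov--Hausdorff) convergence is a statement about balls of each fixed radius around the basepoint, and gives no control over volume accumulating far away. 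The paper's own toy example makes this explicit: the sets $Q_n = \mathcal Z_n \cup C_n \subset \mathbb R^2$, with $C_n$ a disk of radius $n$ centred at $2n$, converge on compact sets to a finite-area set $\mathcal Z$, yet both the diameters \emph{and the areas} of $Q_n$ tend to infinity. So the ball-packing argument cannot be launched without first establishing the volume bound by some other means, and that is precisely the step you have not supplied.

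The paper handles the escape problem by a direct geometric argument in the spirit of Canary's filling theorem. One first proves a ``shell'' claim: if $\mathcal V_n \cap \mathcal D_n$ contains points outside $B_{r+2+A}$, then it also contains points in the shell $B_{r+2} \setminus B_{r+1}$. The argument uses that $\partial \mathcal N_n/G_n$ has fixed topological type, so its thick part has uniformly bounded diameter $A$; one then picks two non-commuting incompressible loops $\alpha,\beta$ on a far-away boundary component, notes that by algebraic convergence their geodesic representatives in $M_n$ lie inside $B_{r+1}$, and observes that the homotopies from the boundary loops to these geodesics must cross $\partial B(O_n^*;r+3/2)$ in loops that cannot both be in a single thin part. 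This produces the required thick points in the shell. Those shell points then limit, by compactness and geometric convergence of $\mathcal D_n$ on $B_{r+2}$, to a point of $\overline{\mathcal D}_H$ outside $B_r$ at which some $h \in H$ is short, contradicting the assumption that the $Y_n$ were in the thick part. This shell argument is the genuine content missing from your outline; your step (5) on geometric finiteness of $G_\infty$, by contrast, is fine and matches the paper's citation of \cite{marden-book} Theorem~4.6.1.
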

\begin{proof} For the first statement, see~\cite{marden-book} Theorem 4.6.1.
By the same result, the limit sets $\Lambda_n$ converge  to $\Lambda_H$ in the Hausdorff topology and  the ordinary sets $\Omega_n$ converge  to $\Omega_H$ in the sense of Carath{\'e}odory.

Let $\mathcal D_H$ be a fundamental domain for $H$ and let $ \mathcal N_H$ be the hyperbolic convex hull of the limit set $\Lambda_H$. Denote by $\mathcal V_H=\mathcal V_{\epsilon,H}$ the $\epsilon$-thick part of $ \mathcal N_H$ relative to $H$ and define $ \mathcal D_n, \mathcal N_n, \mathcal V_n$ similarly.

Since $H$ is geometrically finite, we can find a hyperbolic ball $B_r = B_{\HHH}(O;r) \subset \HHH^3$ which contains $ \mathcal V_H \cap \mathcal D_H$.
By geometric convergence we have uniform convergence of $\mathcal D_n$ to $\mathcal D_H$ inside 
$B_r$. Moreover the Hausdorff convergence of $\Lambda_n$    to $\Lambda_H$
implies that  inside $B_r$, $ \mathcal N_n$ is eventually contained in a bounded neighbourhood  of $ \mathcal N_H$. This does not however automatically give a uniform bound on the diameters of $\mathcal N_n$, as it says nothing about what happens far outside $B_r$.

To understand the problem outside $B_r$, consider the following toy example.  Let   $\mathcal Z = \{(x,y)\co  x \in [1,\infty), 0 \leq y \leq 1/x   \}  \subset   \mathbb R^2$.
Let $\mathcal Z_n$ be the part of $\mathcal Z $  with $  x \in [0, 2n]$. Let  $C_n \subset \mathbb R^2$ be the disk of radius $n$ and centre $2n \in \mathbb R$ and let $Q_n =   \mathcal Z_n \cup C_n$. Then the sets $Q_n$ converge  uniformly on compact sets in the  Hausdorff metric to $  \mathcal Z$ and  $\mathcal Z$ has finite area, but   ${\rm diam}  \; Q_n \to \infty$.

To resolve this problem it we shall prove the following claim: there exists $A>0$ such 
that
 if $ \mathcal V_n  \cap \mathcal D_n$ contains points
outside $B_{r+2+A}$, then there are points of $ \mathcal V_n  \cap \mathcal D_n $  in the shell between $B_{r+1}$ and $B_{r+2}$.

Suppose the claim holds.  Suppose that (up to a subsequence) there are points $X_n \in \mathcal V_n  \cap \mathcal D_n$ with $d_{\HHH}(O, X_n) >r+2+A$. Use the claim to choose points 
$Y_n \in \mathcal V_n \cap \mathcal D_n $ with $r+1 \leq d_{\HHH}(O, Y_n) \leq r+ 2 $. By compactness we may  assume $Y_n \to Y \in \mathcal N_H$, and by the geometric convergence of $\mathcal D_n$ to $\mathcal D_H$ inside $B_{r+2 }$, we have $Y \in \overline{ \mathcal D}_H$. Since $Y$ is outside $B_r$ it must be in the complement of $\mathcal V_H \cap \mathcal D_H$, that is, in  the thin part of 
$ \mathcal N_H \cap \mathcal D_H$. So there exist $h \in H$ and a neighbourhood $U$ of $Y$ such that  such that $d_{\HHH}(hY', Y') < \epsilon$ for all $Y' \in U$.
Choose $g_n \in \G$ with $\rho_n(g_n) \to h$. We have  $d_{\HHH}(\rho_n(g_n)Y', Y') < \epsilon$ for all large enough $n$
and hence  $d_{\HHH}(\rho_n(g_n)Y_n, Y_n) < \epsilon$ as $n \to \infty$. This contradicts the choice of $Y_n$.
We conclude that  eventually 
$\mathcal V_n \cap \mathcal D_n$ is contained in $B_{r+2+A}$ so that the sequence of diameters $ \mathcal V_n \cap \mathcal D_n$ is uniformly bounded above.

 Now we prove the claim, which can be seen as a very rudimentary form of Canary's filling theorem~\cite{canary}.  Since all groups $G_n$ are isomorphic and geometrically finite, the boundaries  $  \partial \mathcal N_n/G_n$ are homeomorphic and  the thick parts of $ \partial \mathcal N_n/G_n$
have uniformly bounded diameter $A$ say (depending only on the maximum genus of the hyperbolic components of $  \partial \mathcal N_{\Gamma}/{\Gamma}$).  
  If there are points of $\mathcal V_n \cap \mathcal D_n $ outside $B_{r+2+A}$, then there are also points of $\partial \mathcal V_n \cap \mathcal D_n$ outside $B_{r+2+A}$. Then any component $S_n \subset \partial \mathcal N_n/G_n$ containing such points  is  outside $B_{r+2}$.
Since $ \partial \mathcal N_n/G_n$ has finitely many components,   passing to a subsequence, we may fix one such component which is homeomorphic to some fixed hyperbolic surface $S$, and whose thick part lies outside $B_{r+2}$.  (Since we arranged that $\mathcal V_n \cap \mathcal D_n \subset B_r$), we do not have to worry about components of $ \partial \mathcal N_n/G_n$
on the boundary of horoballs or tubes.)
 
Fix a lift $\tilde S \subset \dd \mathcal N_n$ and let  $W \subset \Chat$ denote the component of the regular set $\Omega_{\Gamma}$ of $\G$ corresponding to $\tilde S$. Let $K = \Stab \; W  \subset \Gamma$. Since $K \subset \Gamma$ is  non-elementary (because $\tilde S/K$  is a hyperbolic component of $ \partial \mathcal N_{\G}/\G$), we can choose a pair 
of  non-commuting elements  $\alpha, \beta \in K$, both of which are non-trivial in $  \Gamma$ and hence in $  H$.  (For the fact that $\Gamma$ injects into $H$, see Lemma 4.4.1 in~\cite{marden-book}.) Since by geometric finiteness only finitely many elements of $\Gamma$ are parabolic or short in $H$, we may assume in addition that neither $\alpha$ nor $\beta$ is parabolic or the core of a large Margulis tube in $H$.

Set $ M_n= \HHH^3/G_n$ and $ M_H = \HHH^3/H$ and let $O_n^*, O_H^*$ be the projections of the basepoint $O \in \HHH^3$ to $M_n, M_H$  respectively. 
Let $d_n, d_H$ denote distance in 
$M_n, M_H$ respectively.
Note that no pair of loops    in the homotopy classes $[\alpha], [\beta]$  of $  \alpha, \beta   $ can be contained in the same component of a thin part of $ M_n $, since the fundamental group of any such component  is abelian.
Choose loops $  \alpha_n \in [\alpha],   \beta_n \in [\beta]$ on $ S \subset \partial \mathcal N_n/G_n$.  
 By construction the distance from each of  $  \alpha_n,   \beta_n  $  to $O_n^* $ is at least  $ {r+2}$. 
On the other hand, in   $M_H  $, the geodesic representatives   $\rho_{\infty}(\alpha )^*,\rho_{\infty}(\beta)^*$  are contained inside $\mathcal N_H/H$ and hence, since by construction neither is parabolic in $H$ and additionally neither is in the thin part of $M_H  $, $d_H(\rho_{\infty}(\alpha )^*,O^*) \leq r$ and similarly for $\beta$. By algebraic convergence, we can find lifts to $\HHH^3$ of   the geodesic representatives   $  \rho_n(\alpha)^*,  \rho_n(\beta)^*$  of $ [\alpha],    [\beta]$ in $M_n$    near  to corresponding  $\rho_{\infty}(\alpha )^*,\rho_{\infty}(\beta)^*$ in $\HHH^3$. Hence $d_n(\rho_{n}(\alpha )^*,O_n^*) \leq r+1$  for sufficiently large $n$, and similarly for $\beta$.

Choose homotopies  $H^n_{\alpha}, H^n_{\beta} $ 
 between  
$ \rho_n(\alpha)^*$  and $  \alpha_n$ and   between $ \rho_n(\beta)^*$  and $\beta_n$.
More precisely, let  $H^n_{\alpha}$  be the image of a continuous family of maps $f_t \co [0,1]  \to M_n, t \in [0,1] $ with $f_0([0,1] ) =   \rho_n(\alpha)^*$ and  $f_1([0,1] ) =  \alpha_n$, and similarly for $H^n_{\beta}$.

Let $\dd B(O_n^*;{R})$ denote the boundary of the ball of radius $R$ centre $O_n^*$ in $M_n$. By construction, $  \alpha_n,   \beta_n  $ lie on a component of $\partial \mathcal N_n/G_n$
outside $B(O_n^*;{r+2})$, so  $H^n_{\alpha}, H^n_{\beta}$ must both intersect the shell between $B(O_n^*;{r+1})$ and $B(O_n^*;{r+2})$, and in particular the surface $\dd B(O_n^*;{r+3/2})$. (Note that $\dd B(O_n^*;{R})$ is just the projection of the boundary of the ball of radius $R$ in $\HHH^3$ and hence an immersed $2$-manifold in $M_n$.) Hence we can find paths $\alpha_1 \in [\alpha], \beta_1 \in [\beta]$ in $ \dd B(O_n^*;{r+3/2})$. In detail, by adjusting $H^n_{\alpha}$ slightly if needed, we can arrange that $\dd B(O_n^*;{r+3/2})$ and $H^n_{\alpha}$ are transverse, so that the intersection $\dd B(O_n^*;{r+3/2}) \cap H^n_{\alpha}$ consists of a finite number of closed loops. Adjust $H^n_{\alpha}$ to `push off' any components which are homotopically trivial. Since $H^n_{\alpha}$ is a cylinder whose two boundary components  are separated by $\dd B(O_n^*;{r+3/2})$,   $\dd B(O_n^*;{r+3/2}) \cap H^n_{\alpha}$ must contain at least one homotopically non-trivial component which we take to be $\alpha_1$. The construction of $\beta_1$ is similar.

As discussed above, $\alpha_1$ and $ \beta_1 $ cannot both be contained in the thin part of $\mathcal N_n/G_n $. Lifting to $\HHH^3$, this produces the points $Y_n$ as required.

Thus $\mathcal V_n \cap \mathcal D_n$ is eventually contained in a compact ball $B_r$, which immediately gives the required bound.

\end{proof}

It is now easy to prove Theorem~\ref{thm:strong=unif}.

 \begin{proof}[Proof of Theorem~\ref{thm:strong=unif}] By Theorem~\ref{unifcrit1}  it is  sufficient to check that the sequence $(\rho_n)$ satisfies UEPP.
   In other words, we have to check that  the second condition   of Theorem~\ref{floyd}   holds uniformly
 in all the groups $G_n$.  To do this, we go through that proof of Floyd's result Proposition~\ref{floydorig}  with this requirement in mind.
 (Notice that $\rho_{\infty}$ is automatically weakly type preserving.)

 First consider the right hand inequality of~\eqref{eqn:floyd1}: 
 $d_{\HHH}(O, g \cdot O) \leq a|g| $ where $a = \max_i d(O,e_i \cdot O)$ and $\{e_1, \ldots, e_k\}$ is a finite set of generators for $\G$.
 Algebraic convergence implies that $\rho_n(e_i)$ converges to $\rho_{\infty}(e_i)$ for each $i$, so we have $  \max_i d(O, \rho_n(e_i) \cdot O) \leq A$  for some uniform $A$ independent of $n$. Thus the right hand side of~\eqref{eqn:floyd1} holds uniformly, precisely: 
 $$d_{\HHH}(O, \rho_n(g)\cdot O) \leq A|\rho_n(g)|  = A|g| \  {\rm for \ all}  \ g \in \G   \ {\rm and} \  n \in \mathbb N.$$

We now need to prove the left hand inequality. The constants involved are the diameter $d$ of the truncated fundamental domain $  \mathcal D \cap \mathcal V$, and 
the constant $C = \max \{ |g|: d_{\HHH}(O,g \cdot O) \leq 1+2d\}$ in the proof of  Proposition~\ref{floydorig}.

The diameters of the truncated fundamental domain $\mathcal D_n \cap \mathcal V_n$ are uniformly bounded by Proposition~\ref{prop:diameters}. 
That we can choose the constant $C_n = \max \{ |g|: d_{\HHH}(O,g \cdot O) \leq 1+2d\}$ with a uniform bound $ C_n \leq C$ for all $n$  follows immediately from UEP which has already been proved in Proposition~\ref{uep}. 

The proof now follows exactly as in Theorem~\ref{floyd}, using in addition the distortion Lemmas~\ref{expdist2} and~\ref{tubes}  for Margulis tubes. 
\end{proof}

\section{Algebraic limits}   \label{sec:algebraic}

 \subsection{Pointwise convergence criterion}
\label{sec:ptwise} 

In this section we prove Theorem~\ref{thm:alg=ptwise}. 
We will use conditions similar to UEP and UEPP,   relaxed so as  to allow for dependence on the limit point $\xi$.
We call these new conditions $EP(\xi)$ (Embedding of Points)  and $EPP(\xi)$ (Embedding of Pairs of Points).

Let $G$ be a  geometrically finite Kleinian group with  generators  $G^* = \{e_1, \ldots, e_k \}$, and let $\xi \in \Lambda_{G}$.  
Suppose given  a sequence 
  $e_{i_1},e_{i_2} \ldots  \in G^*$  such that setting $g_r = e_{i_1} e_{i_2} \ldots e_{i_r}$
we have
  $g_n \cdot O  \to \xi$ in $\mathbb H^3 \cup \partial \mathbb H^3$. 
   We denote the infinite path  joining vertices  $1, g_1,g_2, g_3, \ldots   $ in $\Gr G$  by $[1,\xi )$ and    write  $\hat j_{G}(e_{i_r}e_{i_2} \ldots e_{i_s} )  $ for the piecewise geodesic path in $\HHH^3$ joining in order the points  $j_{G}({g_r}), j_{G}({g_{r+1}}),  \ldots, j_{G}({g_s}) $.

\begin{definition}\label{critep}  Let $\Gamma$ be a fixed finitely generated
 Kleinian group and $\rho_n\co \Gamma  \to G_n$ be a sequence
 of  isomorphisms to Kleinian groups $G_n$.  Let $\xi \in \Lambda_{\G}$ and let $(g_r) = [1,\xi)$ be any infinite path as above.  
 \begin{enumerate} \item The pair $((\rho_n),[1,\xi))$ is said to satisfy
  $EP(\xi)$  if there
exist functions  $f_{\xi}  \co \mathbb N \to \mathbb N$ and $M_{\xi} \co \mathbb N \to \mathbb N$, with
 $f_{\xi}(N) \rightarrow\infty$ as $N\rightarrow\infty$, such that for all $g
\in [1,\xi)$, 
 $d_\Gamma (1,g) \geq N$ implies  $d_{\Hyp} (\rho_n(g)\cdot O , O) \geq
f_{\xi}(N)$ for all $ n \geq M_{\xi}(N)$. 
 
\item The pair $((\rho_n),[1,\xi))$ satisfies $EPP(\xi)$ if there exists a function  $f_{1,\xi}(N )\co \mathbb N \to \mathbb N$ such that $f_{1,\xi}(N)\rightarrow \infty$ as $N\rightarrow \infty$
such that for any  subsegment $e_{i_r}  e_{i_{r+1}} \ldots e_{i_s}$ of  $[1,\xi ) $ lying outside $B_{\G}(1;N)$ in $\mathcal G \G$,  
the geodesic $[j_n (g_{i_r}), j_n(g_{i_s})]$ lies outside $B_{\HHH} (O; f_{1,\xi}(N))$  for all $n \geq M_{\xi}( N)$, where $M_{\xi}$ is as in (1). \end{enumerate}
\end{definition}

  \begin{remark}
{\rm   Although \emph{a priori} the   conditions $EP(\xi), EPP(\xi)$  depend on the choice of sequence $e_{i_1}  e_{i_2} \ldots$,  it will follow from our proof below that  (in the case of  geometrically finite limits)  provided we choose suitable paths $[1,\xi)$, they depend only on $\xi$. To do this we will restrict the class of paths used, so that  $[1,\xi)$ is quasi-geodesic and satisfies an additional hypothesis which ensures that it tracks shortest Euclidean paths across the boundaries of rank $2$-horoballs. We call such paths \emph{standard}.   It is not completely obvious that such paths $[1,\xi)$ exist; we   prove this   in Proposition~\ref{quasiwordpath} below.  We will show that  for a sequence $(\rho_n)$ of representations as in the hypotheses of Theorem~\ref{thm:alg=ptwise},
the   conditions $EP(\xi)$ and $ EPP(\xi)$  hold for any standard quasi-geodesic $[1, \xi)$.  }
\end{remark}

Although clearly, $EPP(\xi)$ implies $EP(\xi)$,  we shall first prove $EP(\xi)$ and deduce $EPP(\xi)$.

\begin{theorem} \label{ptwisecrit} Suppose  that   $\rho_n : G \rightarrow \PSL$ is a sequence
of discrete faithful representations converging algebraically to $\rho_\infty \co G \rightarrow \PSL $, and suppose the corresponding CT maps $\hat i_n \co  \Lambda_{\G} \to \Lambda_{G_n}$ exist, $n = 1,2 \ldots,\infty$.
Let $\xi \in \Lambda_{\G}$.  Then
$\hat i_n (\xi) \to \hat i_{\infty}(\xi) $  as $n \to \infty$
 if  $((\rho_n);\xi)$ satisfies $EPP(\xi)$.
\end{theorem}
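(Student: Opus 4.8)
The plan is to mimic the sufficiency argument of Theorem~\ref{crit1}, but tracking carefully the dependence of all constants on a single fixed point $\xi$ and the index $n$, replacing the uniform-in-$n$ control of UEPP by the weaker $EPP(\xi)$. Recall from Theorem~\ref{firstresult} that, since $\G$ and each $G_n$ (and $G_\infty$) are geometrically finite, the CT-maps $\hat i_n$ exist and $\hat i_n(\xi) = \lim_{r\to\infty}\rho_n(g_r)\cdot O$ for any sequence $g_r\cdot O\to\xi$. So fix the standard quasi-geodesic path $[1,\xi)$ with vertices $g_r$, so that $j_\G(g_r)\to\xi$ in $(\mathbb B\cup\Chat,d_{\mathbb E})$ and $|g_r|\to\infty$. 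The goal is then to show $\lim_{r\to\infty}j_n(g_r)$, as a point of $\Chat$, converges to $\lim_{r\to\infty}j_\infty(g_r)$ as $n\to\infty$.

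First I would estimate, for fixed large $N$, the Euclidean diameter of the ``tail'' $\{j_n(g_r):|g_r|>N\}$, uniformly for $n\geq M_\xi(N)$. By $EPP(\xi)$, for any $r<s$ with $g_r,\dots,g_s$ outside $B_\G(1;N)$, the $\HHH^3$-geodesic $[j_n(g_r),j_n(g_s)]$ lies outside $B_\HHH(O;f_{1,\xi}(N))$ whenever $n\geq M_\xi(N)$; combined with Lemma~\ref{easy} (the elementary fact that a hyperbolic geodesic staying outside $B_\HHH(O;t)$ has endpoints at Euclidean distance $\prec e^{-t}$) this gives $d_{\mathbb E}(j_n(g_r),j_n(g_s))\prec e^{-f_{1,\xi}(N)}$ for all such $r,s$ and all $n\geq M_\xi(N)$. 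Letting $s\to\infty$ yields $d_{\mathbb E}(j_n(g_r),\hat i_n(\xi))\prec e^{-f_{1,\xi}(N)}$ whenever $|g_r|>N$ and $n\geq M_\xi(N)$; the same estimate with $n=\infty$ (note $\rho_\infty$ also satisfies the relevant condition, either directly or because $EPP(\xi)$ is assumed to include the case $n=\infty$ via Theorem~\ref{firstresult} applied to $G_\infty$) gives $d_{\mathbb E}(j_\infty(g_r),\hat i_\infty(\xi))\prec e^{-f_{1,\xi}(N)}$.

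Next I would interpolate in the other direction using algebraic convergence. Fix $N$ and then fix a single index $r=r(N)$ with $|g_r|>N$. Since $g_r$ is a fixed group element, algebraic convergence of $\rho_n$ to $\rho_\infty$ gives $j_n(g_r)=\rho_n(g_r)\cdot O\to\rho_\infty(g_r)\cdot O=j_\infty(g_r)$ in $\HHH^3$, hence in $d_{\mathbb E}$; so there is $N_1(N)$ with $d_{\mathbb E}(j_n(g_r),j_\infty(g_r))<e^{-f_{1,\xi}(N)}$ for $n\geq N_1(N)$. Then for $n\geq\max\{M_\xi(N),N_1(N)\}$ the triangle inequality gives
\[
d_{\mathbb E}(\hat i_n(\xi),\hat i_\infty(\xi)) \;\leq\; d_{\mathbb E}(\hat i_n(\xi),j_n(g_r)) + d_{\mathbb E}(j_n(g_r),j_\infty(g_r)) + d_{\mathbb E}(j_\infty(g_r),\hat i_\infty(\xi)) \;\prec\; e^{-f_{1,\xi}(N)}.
\]
Since $f_{1,\xi}(N)\to\infty$ as $N\to\infty$, letting $N\to\infty$ shows $\hat i_n(\xi)\to\hat i_\infty(\xi)$, as required.

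The main obstacle is the bookkeeping of the two competing regimes: $EPP(\xi)$ only controls $j_n(g_r)$ once $n$ is large (how large depending on $N$), while algebraic convergence only controls $j_n(g_r)$ for a \emph{fixed} element $g_r$ (how large $n$ must be again depending on $N$ through the choice of $r$). One must choose the index $r$ after fixing $N$ but the threshold for $n$ after fixing $r$, and check these choices are compatible — this is exactly the diagonal-type argument above, and it is routine once one is careful that $r(N)$ is genuinely a group element and not something moving with $n$. A secondary point to address is legitimacy of passing to the limit $s\to\infty$ inside the estimate $d_{\mathbb E}(j_n(g_r),j_n(g_s))\prec e^{-f_{1,\xi}(N)}$ to reach $\hat i_n(\xi)$; this is immediate from continuity of $d_{\mathbb E}$ and the already-established convergence $j_n(g_s)\to\hat i_n(\xi)$ coming from Theorem~\ref{firstresult}. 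One should also note in passing that $EPP(\xi)$ formally presupposes existence of the CT-maps, which is guaranteed here by geometric finiteness via Theorem~\ref{firstresult}, so the hypothesis ``$\hat i_n$ exist'' in the statement is consistent.
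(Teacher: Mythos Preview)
Your argument is correct and follows the same overall strategy as the paper: bound the Euclidean diameter of the tail $\{j_n(g_r): |g_r|>N\}$ via $EPP(\xi)$ and Lemma~\ref{easy}, then compare the approximants $j_n(g_r)$ and $j_\infty(g_r)$ using algebraic convergence, and finish with the triangle inequality. The paper phrases this by saying ``the proof is the same as Theorem~\ref{unifcrit1}, with Lemma~\ref{tail} adapted by replacing UEPP with $EPP(\xi)$''; your version simply inlines a special case of that lemma by applying algebraic convergence directly to a single fixed element $g_{r(N)}$, which bypasses the thin-triangle step of Lemma~\ref{tail}. This is a mild simplification that is available precisely because pointwise convergence at a single $\xi$ lets you fix $r$ before sending $n\to\infty$.

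Two small clean-ups. First, you invoke Theorem~\ref{firstresult} and geometric finiteness of the $G_n$ to justify $\hat i_n(\xi)=\lim_r j_n(g_r)$, but the statement of Theorem~\ref{ptwisecrit} only assumes the CT-maps \emph{exist}; the limit formula then follows directly from the definition of a CT-map as a continuous extension, with no need for geometric finiteness of the targets. Second, for the term $d_{\mathbb E}(j_\infty(g_r),\hat i_\infty(\xi))$ you do not actually need the quantitative bound $\prec e^{-f_{1,\xi}(N)}$ from an $n=\infty$ instance of $EPP(\xi)$: since $\hat i_\infty$ exists, $j_\infty(g_r)\to\hat i_\infty(\xi)$ as $r\to\infty$, so you may simply choose $r(N)$ large enough to make this term at most $e^{-f_{1,\xi}(N)}$. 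This avoids having to argue whether $EPP(\xi)$ formally includes $n=\infty$.
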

\begin{proof} The proof is the same as that of  Theorem~\ref{unifcrit1}.  Lemma~\ref{tail} works just as before with  the condition $EPP(\xi)$ replacing  $UEPP$.  Notice that in both of these proofs, it is sufficient to require that the conditions (1) and (2) hold only for all $ n \geq M(N)$.\end{proof}

Contrary to the case of strong convergence, the converse to Theorem~\ref{ptwisecrit}, that if $\hat i_n (\xi) \to \hat i_{\infty} (\xi)$ 
  then $((\rho_n);\xi)$ satisfies $EPP(\xi)$, is false. In fact if $\xi$ is the fixed point of a loxodromic element $p$ which becomes parabolic so as to give rise to a $\mathbb Z^2$- cusp in the geometric limit, then $\hat i_n (\xi) \to \hat i_{\infty}(\xi) $ by the algebraic convergence but $EP(\xi)$ fails, because we have a sequence $\rho_n(p^{m_n}) $ where $m_n \to \infty$ with $d_{\HHH}(\rho_n(p^{m_n}) \cdot O, O)$ remaining bounded. This is discussed further below.

\subsection{Standard quasi-geodesics}
Recall from Proposition~\ref{mcmtracking} that if $G$ is geometrically finite, then any ambient quasi-geodesic $\beta$ in the thick part of the convex core $\mathcal V_{G}$ is a bounded distance from $[\beta] \cup \HH ([\beta])$.
In what follows, we would like to assert further that any two ambient quasi-geodesics in $\mathcal V_{G}$ are a bounded distance apart, with constants depending only on $G$. However this may not be true if  $G$ contains rank  two cusps. Indeed let $H \subset \HHH^3$ be a horoball   corresponding to a such a cusp and let $x,y \in \partial H$. Since the induced metric on $  \partial H$ is Euclidean, there can be no  bound on the distance between  quasi-geodesic paths from $x$ to $y$ independent of $d_{\HHH}(x,y)$. To deal with this, if $H$ is a rank  two  horoball and $x,y \in \partial H$, we say a path from $x$ to $y$ is \emph{D-standard}  (resp. \emph{standard}) if it is within bounded distance $D$ (resp. bounded distance depending only on $G$)
 of the Euclidean geodesic from $x$ to $y$ on $ \partial H$. 
 
 Let $\beta$  be an ambient quasi-geodesic $\beta$  in $\mathcal V = \mathcal V_G$.
 For a component $H \in \HH([\beta])$, let  $\beta_H \subset \beta$ be the 
segment which projects to $[\beta] \cap H$ and 
 for $x \in \beta_H$  denote by 
 $\Pi_H(x) \in \dd H$ the point where the perpendicular from $x$ to $[\beta]$ meets $\dd H$.
We say 
 $\beta$  is  \emph{standard} if  
 for any component $H$ of $\HH([\beta])$ corresponding to a rank  two  cusp, the path 
  $\Pi_H(\beta)$ given by  $x \mapsto \Pi_H(x);  x \in  \beta_H $ across $\partial H$  joining the initial and final points  of $[\beta] \cap H$  is   standard. 
  Finally we call a   path $\lambda \subset \Gr G$  \emph{standard}  if  its image 
  $j_{G}(\lambda) $  in $\mathcal V_{G}$ is standard. 
  
 

  \begin{lemma}  \label{ambientdist} Let $K>0$.  The image of a standard $K$-quasi-geodesic in $(\Gr G,d_G)$ is a standard ambient quasi-geodesic in $\mathcal V_{G}$. Moreover
any two standard $K$-quasi-geodesics in  $(\Gr G,d_G)$ with the same endpoints are a bounded distance apart in $\HHH^3$ with a constant which depends only on $K$ and $(G, G^*)$.
\end{lemma}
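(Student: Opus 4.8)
The plan is to prove the two assertions in turn. For the first statement, let $\lambda$ be a standard $K$-quasi-geodesic in $(\Gr G, d_G)$. By Floyd's estimates (Proposition~\ref{floydorig}), $j_G$ distorts distances in a controlled, $\log$-bounded way, and the argument in Lemma~\ref{floyd1a} shows that a $d_G$-geodesic maps to an ambient quasi-geodesic in $\mathcal V_G$; the identical chain of inequalities, $\ell(j_G(\lambda_1)) \prec |\lambda_1| \prec L \prec \ell(\mu_1)$ for any subsegment $\lambda_1$ with the shortest $\mathcal V_G$-path $\mu_1$ between the images of its endpoints, works verbatim when $\lambda$ is only a $K$-quasi-geodesic, at the cost of constants depending on $K$. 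That $j_G(\lambda)$ is moreover a \emph{standard} ambient quasi-geodesic is immediate: being standard is a property of the image path $j_G(\lambda)$ in $\mathcal V_G$, so it is inherited by definition.

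For the second statement, let $\lambda, \lambda'$ be two standard $K$-quasi-geodesics in $(\Gr G, d_G)$ with the same endpoints. By the first part, $\beta = j_G(\lambda)$ and $\beta' = j_G(\lambda')$ are standard ambient $K'$-quasi-geodesics in $\mathcal V_G$ with the same endpoints. By Proposition~\ref{mcmtracking}, each lies within $R(K')$ of $[\beta] \cup \HH([\beta])$; since $[\beta] = [\beta']$ (same endpoints), both lie within $R(K')$ of the \emph{same} set $[\beta] \cup \HH([\beta])$. So it suffices to control, for each horoball $H \in \HH([\beta])$, the distance between the two subarcs of $\beta, \beta'$ that project into $H$. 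Outside all such horoballs the two paths are both within $R(K')$ of the common geodesic $[\beta]$, hence within $2R(K')$ of each other. For a rank-one horoball $H$, the intrinsic geometry of $\partial H$ is that of a Euclidean plane modulo a $\mathbb Z$-action, i.e.\ an infinite cylinder, and two quasi-geodesics across $H$ with matched endpoints on $\partial H$ stay a bounded distance apart — this is a standard thin-part tracking estimate (cf.\ the Appendix lemmas on horoballs and Margulis tubes); here no ``standard'' hypothesis is needed. The only problematic case is a rank-two horoball $H$, where $\partial H$ is a genuine Euclidean plane and quasi-geodesics across it can diverge arbitrarily — but this is precisely where the ``standard'' hypothesis enters: both $\Pi_H(\beta)$ and $\Pi_H(\beta')$ are within a bounded distance (depending only on $(G,G^*)$) of the \emph{same} Euclidean geodesic on $\partial H$ joining the common entry and exit points of $[\beta]\cap H$, hence within a bounded distance of each other; combining this with the perpendicular-projection bound of Corollary~\ref{floyd3} returns the estimate to $\beta, \beta'$ themselves.

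Assembling: the distance between $\beta$ and $\beta'$ is bounded by the maximum of the outside-horoball bound $2R(K')$ and, over the (finitely many relevant types of) horoballs, the rank-one and rank-two tracking bounds, all of which depend only on $K$ and on $(G, G^*)$ through the Margulis constant $\epsilon$, the constant $R(K')$, and the standardness constant. This yields the claimed uniform bound.

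The main obstacle is the rank-two horoball case: one must check carefully that ``standard'' has been defined strongly enough that the two projected paths $\Pi_H(\beta), \Pi_H(\beta')$ really are forced close on $\partial H$ — this hinges on both being tracked by the \emph{same} Euclidean geodesic (which is determined by the common endpoints of $[\beta]\cap H$), and on transferring closeness on $\partial H$ back to closeness of the original arcs via the bounded perpendicular distance from Corollary~\ref{floyd3}. The rank-one and outside-horoball estimates are routine hyperbolic-geometry bookkeeping, drawing on the Appendix lemmas already invoked elsewhere in the paper.
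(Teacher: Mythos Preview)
Your approach is essentially the same as the paper's: prove the first claim by the argument of Lemma~\ref{floyd1a} adapted to $K$-quasi-geodesics, then for the second claim use Proposition~\ref{mcmtracking} to reduce to controlling the projected paths across each horoball, with the ``standard'' hypothesis handling the rank-two case.

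One imprecision worth flagging is your rank-one justification. You write that the intrinsic geometry of $\partial H$ is ``a Euclidean plane modulo a $\mathbb Z$-action, i.e.\ an infinite cylinder'' and that quasi-geodesics with matched endpoints therefore stay close. But we are working upstairs in $\HHH^3$, where $\partial H$ is simply a horosphere isometric to $\mathbb R^2$, and in flat geometry quasi-geodesics need not fellow-travel. The paper's argument instead uses geometric finiteness: for a rank-one cusp, $\partial H \cap \mathcal V$ is a \emph{strip of bounded width} (since the convex core near a rank-one cusp is a thickened half-plane), and both projected paths $\Pi_H(\beta), \Pi_H(\beta')$ lie in (a bounded neighbourhood of) this strip, hence are automatically close to the Euclidean geodesic $\eta$ joining the entry and exit points. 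This is where the dependence on $(G,G^*)$ enters for the rank-one case, and it is not captured by a generic ``thin-part tracking estimate'' from the Appendix.
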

\begin{proof} 
Let $\gamma $ be a standard  $K$-quasi-geodesic  in $\Gr G$. That $j_{\G}(\gamma )$ is an ambient $K'$-quasi-geodesic in $\mathcal V_{G}$ for suitable $K'= K'(K,G)$ follows from the same argument as in Lemma~\ref{floyd1a}. By definition its image in $\mathcal V_{G}$ is standard.




Now let $\beta, \beta'$ be two standard ambient  quasi-geodesics in  $\mathcal V$ with the same endpoints so that 
 $  [\beta] = [\beta']$ is the   $\HHH^3$-geodesic joining their common endpoints.  
 Adjust the  constant $\e$ so that the only components of $M_{thin}(\e)$ of $\HHH^3/G$
are horoballs. 
By Proposition~\ref{mcmtracking} both   $\beta, \beta'$   are within bounded distance of $[\beta] \cup \HH([\beta])$  with constants which depend  only on $K$ and $(G, G^*)$.

We have to show that the projections of $\beta, \beta'$ onto $ [\beta]   \cup \HH([\beta])$ are bounded distance apart. 
This is certainly true of the projections onto segments of $ [\beta]$ outside  $\HH( [\beta])$.
Suppose $H \in \HH( [\beta])$ is a horoball corresponding to a parabolic $p \in G$.  
Let $\eta$ be the Euclidean shortest path 
in $  \partial H \cap \mathcal V$ joining the entry and exit points of $ [\beta]$ to $H$. If $H \in \HH( [\beta])$ corresponds to a rank one parabolic, then since  $G$ is geometrically finite, $\partial H \cap \mathcal V$ is a strip of bounded width containing $\eta$ and which also contains the paths  $\Pi_H(\beta), \Pi_H(\beta')$, so the result is clear. If $H  $ corresponds to a rank two parabolic, then the condition that  $\beta, \beta'$ be standard again ensures that  $\Pi_H(\beta), \Pi_H(\beta')$ are within bounded distance of 
$\eta$
and the result follows.
 \end{proof}

    \begin{prop} \label{quasiwordpath} 
Let $\G$ be a fixed geometrically finite Kleinian group with Cayley graph  ${\mathcal G}\Gamma$ relative to a  fixed set of generators  $\G^* = \{e_1, \ldots, e_k \}$, and let $\xi \in \Lambda_{\G}$.   Then there exists a sequence 
  $e_{i_1},e_{i_2} \ldots  \in \G^*$  such that 
  the path $\hat j_{\G}( e_{i_1} e_{i_2} \ldots)$ is a standard quasi-geodesic  in ${\mathcal G}\Gamma$ and such that $g_n\cdot  O  \to \xi$ in $\mathbb H^3 \cup \partial \mathbb H^3$.  \end{prop}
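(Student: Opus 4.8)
The plan is to construct the word path vertex by vertex, at each stage choosing the next generator so that the hyperbolic geodesic to $\xi$ is shadowed efficiently, and then to upgrade the resulting path to a \emph{standard} quasi-geodesic by surgery inside the rank two horoballs it meets. The starting point is Floyd's Proposition~\ref{floydorig}: since $\G$ is geometrically finite, word length and hyperbolic distance to $O$ are comparable up to the logarithmic distortion coming from parabolic horoballs, and by Lemma~\ref{floyd1a} and Corollary~\ref{floyd3} the $j_\G$-image of any $d_\G$-geodesic is an ambient quasi-geodesic in $\mathcal V_\G = \mathcal N_\G\setminus\bigcup_{H\in\HH}H$ tracking $[j_\G(\lambda)]\cup\HH([j_\G(\lambda)])$.

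\textbf{Step 1: produce a word path converging to $\xi$.} First suppose $\xi$ is not a parabolic fixed point. Pick points $g_r\cdot O$ on (or near) the hyperbolic ray $[O,\xi)$ with $d_\HHH(g_r\cdot O,\, g_{r+1}\cdot O)$ bounded, which is possible because $\G\cdot O$ is coarsely dense in $\mathcal N_\G$ near $\xi$ (the ray stays in $\mathcal N_\G$ by convexity and, outside horoballs, in $\mathcal V_\G$ whose thick part has bounded-diameter pieces). Consecutive $g_r$ then differ by a group element of bounded word length, so interpolating gives a word path $e_{i_1}e_{i_2}\ldots$ with $g_n\cdot O\to\xi$ and $j_\G$-image at bounded Hausdorff distance from $[O,\xi)$ wherever $[O,\xi)$ is outside the horoballs. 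Inside a horoball $H$ based at a parabolic point $p\neq\xi$, the ray enters and exits at two points of $\partial H$; I replace the portion of the word path there by a $d_\G$-geodesic between the corresponding group elements, which by Lemma~\ref{floyd1a} again tracks $\partial H$. If $\xi$ \emph{is} parabolic, say fixed by $p\in\G$, take $g_r=p^r$ directly; the path $\hat j_\G(p,p,\ldots)$ runs along $\partial H_\xi$ and is manifestly a quasi-geodesic by the Euclidean geometry on the horosphere.

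\textbf{Step 2: verify the quasi-geodesic property.} For any subsegment, the word length is comparable to the hyperbolic length of the corresponding subpath of $[O,\xi)$ by Proposition~\ref{floydorig} (using the $d'_\HHH$ estimate of Lemma~\ref{expdist1} across horoballs exactly as in that proof), and the latter is comparable to the hyperbolic distance between endpoints since $[O,\xi)$ is itself a geodesic; this is precisely the $K$-quasi-geodesic inequality $\ell([\alpha])/K-K\le\ell(\alpha)\le K\ell([\alpha])+K$.

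\textbf{Step 3: make it standard.} The only obstruction is rank two horoballs $H\in\HH([j_\G(\lambda)])$: there $\partial H$ carries a Euclidean metric and the word path could wander far from the Euclidean geodesic $\eta$ joining the entry/exit points. The fix is local: in each such $H$, replace the word-path segment by a $d_\G$-geodesic between the relevant group elements. By Lemma~\ref{ambientdist} (applied to the ambient quasi-geodesic inside $\mathcal V_\G\cap\partial H$), any $d_\G$-geodesic segment with those endpoints has $j_\G$-image within bounded distance — depending only on $(\G,\G^*)$ — of $\eta$, which is exactly the definition of standard. Since word length is bounded above by hyperbolic distance (right inequality of~\eqref{eqn:floyd2}) while the $d_\G$-geodesic is shortest, this surgery does not increase word length, so the quasi-geodesic constants are preserved; and it changes the path only inside each $H$, so the endpoint behaviour $g_n\cdot O\to\xi$ is unaffected. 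I expect the main obstacle to be the bookkeeping in Step 3: one must check that the competing quasi-geodesics compared in Lemma~\ref{ambientdist} genuinely lie in $\mathcal V_\G$ (not in the horoball interior) so the lemma applies, and that carrying out the surgery in infinitely many horoballs $H$ simultaneously still yields a single path with uniform constants — which it does, since the per-horoball bounds in Lemma~\ref{ambientdist} depend only on $K$ and $(\G,\G^*)$, not on $H$.
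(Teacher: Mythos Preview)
Your overall strategy---track the hyperbolic ray $[O,\xi)$ by nearest orbit points as in Floyd's construction---is exactly what the paper does, and Steps~1--2 are essentially correct (if informal). The gap is in Step~3.

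You claim that replacing the word-path segment across a rank-two horoball $H$ by a $d_\G$-geodesic yields a path whose $j_\G$-image lies within bounded distance of the Euclidean geodesic $\eta$ on $\partial H$, citing Lemma~\ref{ambientdist}. But Lemma~\ref{ambientdist} \emph{assumes} standardness in its hypotheses; it does not prove that a $d_\G$-geodesic is standard. And in fact an arbitrary $d_\G$-geodesic need not be: if $\Stab_\G H=\langle p,q\rangle$ with $p,q\in\G^*$, then the word $p^Nq^N$ is a $d_\G$-geodesic for the element $(N,N)\in\mathbb Z^2$, yet its $j_\G$-image is an L-shaped path on $\partial H$ whose corner sits at distance $\asymp N$ from the Euclidean diagonal $\eta$. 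This L-shape \emph{is} an ambient quasi-geodesic in $\mathcal V_\G$ (length $2N$ versus shortest $\mathcal V_\G$-path $\sqrt 2\,N$), which is why Lemma~\ref{floyd1a} and Corollary~\ref{floyd3} are satisfied, but it is not standard. So the surgery as stated does not work.

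The paper avoids this by building standardness into the construction rather than repairing afterwards: in Floyd's method one chooses a path in $\mathcal V_\G$ and then picks nearest orbit points at unit spacing along it. By taking that path to follow the Euclidean geodesic $\eta$ across each rank-two $\partial H$ (rather than following the hyperbolic ray and then substituting an arbitrary $d_\G$-geodesic), the resulting word path is standard by construction, with constants depending only on $(\G,\G^*)$. Your argument is easily fixed the same way: in Step~1, when the ray enters a rank-two horoball, replace it by $\eta$ \emph{before} picking orbit points, and drop Step~3 entirely.
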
 
\begin{proof} Let $\alpha \subset \HHH^3$ be the infinite hyperbolic geodesic from $O$ to $\xi$ and as usual let $\mathcal H(\alpha)$ be the set of thin parts traversed by $\alpha$. Exactly as in the proof of Proposition~\ref{floydorig} we can construct a path in $\Gr \G$ whose image under $j_{\G}$ tracks $\alpha \cup \mathcal H(\alpha)$
and which is ambient quasi-geodesic in the thick part $\mathcal V_{\G}$ of $\mathcal N_{\G}$. We can clearly ensure that in addition, segments of the path which track the boundary of rank two horoballs are standard. The result follows.
\end{proof}

\subsection{Proof of Theorem~\ref{thm:alg=ptwise}}
Suppose now that we are in the situation of   Theorem~\ref{thm:alg=ptwise}, that is, $\G$ is a fixed geometrically finite Kleinian group and  $\rho_n \co \G \rightarrow \PSL$ is a sequence
of discrete faithful weakly type preserving representations converging algebraically to $\rho_\infty \co \G \rightarrow \PSL $ and geometrically to $H$, and such that $H$ is geometrically finite.  (As noted in the introduction, this implies  $G_{\infty}$ is also geometrically finite.) Let $G_n = \rho_n (\G), n = 1,2,\ldots, {\infty}$. In this section we prove $EP(\xi)$ and $EPP(\xi)$, from which  Theorem~\ref{thm:alg=ptwise} follows.

The example to keep in mind is  that of a loxodromic $\rho_n(p), p \in \G $ converging to a parabolic $\rho_{\infty}(p) $, in such a way that 
suitable powers of the loxodromic also converge to another parabolic $q = q(p) \in H \setminus G_{\infty}$ which together with $\rho_{\infty}(p)$ generates a $\mathbb Z^2$-subgroup. This process is explained in detail in~\cite{marden-book} \S 4.9 and is what drives the well known Kerckhoff-Thurston examples~\cite{kerckhoff-thurston} of groups whose algebraic and geometric limits differ.  The main point is, that as $n \to \infty$  the translation length and rotation angle of  $\rho_n(p) $  are  related in such a way that 
for suitable powers $\rho_n(p^{m_n}), m_n \to \infty $,  the sequence $\rho_n(p^{m_n})$ converges to $q$.
In particular, $d_{\HHH}(O, \rho_n(p^{m_n}) \cdot O)$ is  bounded  while the word length $|p^{m_n}| \to \infty$. This of course violates  UEP. When $H$ is geometrically finite, this is the worst that can happen:
\begin{theorem} [\cite{jor-mar, marden-book} Theorem 4.6.1]
\label{extraelts} Suppose that $\G$ is   geometrically finite  and that  $\rho_n : \G \rightarrow \PSL$ are discrete, geometrically finite,  faithful representations converging algebraically to $\rho_\infty\co G \rightarrow \PSL $ and geometrically to $H$. If $H$ is also geometrically finite,  then 
it is generated by $\{ \G^*, q_1, \ldots, q_s\}$ where $\G^*$  is a generating set for $\G$ and $q_i$ are the `extra' parabolics  which arise in the limit as a result of the phenomenon above.
\end{theorem}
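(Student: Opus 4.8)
The statement is that of~\cite{jor-mar} and~\cite{marden-book} Theorem~4.6.1, so the plan is to recall the mechanism by which the geometric limit $H$ acquires its ``extra'' elements, and to indicate why, under the hypothesis that $H$ is geometrically finite, finitely many of them together with a generating set of $\G$ suffice. I would begin by comparing the three limits. Algebraic convergence gives $\rho_n(g)\to\rho_\infty(g)$ for each fixed $g\in\G$, so $G_\infty=\rho_\infty(\G)\subseteq H$, and the inclusion $G_\infty\hookrightarrow H$ descends to a covering $\HHH^3/G_\infty\to\HHH^3/H$. Geometric convergence means precisely that every $h\in H$ has the form $h=\lim_n\rho_n(g_n)$ for some sequence $(g_n)$ in $\G$, and conversely that any convergent such sequence has limit in $H$. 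The content of the theorem is therefore to describe the elements arising from sequences $(g_n)$ with $|g_n|\to\infty$ but $d_{\HHH}(O,\rho_n(g_n)\cdot O)$ bounded.

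Next I would recall the degeneration mechanism. Since $\rho_n\to\rho_\infty$ algebraically with discrete faithful limit, an element $p\in\G$ for which $\rho_n(p)$ is loxodromic may have $\rho_\infty(p)$ parabolic; in that case, as explained in~\cite{marden-book} \S4.9 and in the discussion preceding the theorem, there are powers $p^{m_n}$ with $m_n\to\infty$ such that $\rho_n(p^{m_n})$ converges to a parabolic $q=q(p)\in H$, with $\langle\rho_\infty(p),q\rangle$ a rank two parabolic subgroup of $H$. Each such degenerating cyclic subgroup contributes one new generator. Because $H$ is geometrically finite it has a finite-sided fundamental polyhedron and hence finitely many cusps, so there are only finitely many conjugacy classes of rank two parabolic subgroups, and only finitely many $q_1,\dots,q_s$ are produced this way. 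Conversely, by the J\o{}rgensen--Marden analysis of algebraic--geometric convergence near the cusps, every cusp of $H$ either lifts to a cusp of $G_\infty$ or is a rank two cusp obtained from a rank one cusp of $G_\infty$ by exactly such a degeneration; these are the only sources of elements of $H\setminus G_\infty$.

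Finally I would verify that $\{\G^*,q_1,\dots,q_s\}$ generates $H$. Given $h\in H$, write $h=\lim_n\rho_n(g_n)$ and decompose the word $g_n$ according to the thick--thin structure of $M_n=\HHH^3/G_n$: the maximal sub-words whose $j_n$-images stay in the $\epsilon$-thick part $\mathcal V_n$ of the convex core have uniformly bounded length --- by Proposition~\ref{prop:diameters} together with the uniform Floyd estimate of Proposition~\ref{floydorig} (this is precisely the uniformity established in the proof of Theorem~\ref{thm:strong=unif}) --- so, after passing to a subsequence, each contributes a fixed element of $\G$; the maximal sub-words whose images penetrate to unbounded depth in a horoball of $M_n$ must, since $\rho_n(g_n)\cdot O$ stays bounded, lie within bounded word-distance of a power $p^{m_n}$ of a degenerating loxodromic, hence contribute up to bounded error a power of the corresponding $q_i$. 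Passing to the limit expresses $h$ as a word in $\G^*\cup\{q_1,\dots,q_s\}$.

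I expect the main obstacle to be the control of these ``deep excursion'' sub-words: one must rule out any other way for $\rho_n(g_n)\cdot O$ to remain bounded while $|g_n|\to\infty$. This is exactly the place where geometric finiteness of $H$ enters --- it forbids any new geometrically infinite end and guarantees the decomposition above terminates with only finitely many parabolic directions --- and it is the technical heart of~\cite{jor-mar} and~\cite{marden-book} Theorem~4.6.1.
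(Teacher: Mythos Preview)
The paper does not prove this statement: Theorem~\ref{extraelts} is quoted from J{\o}rgensen--Marden~\cite{jor-mar} and~\cite{marden-book} Theorem~4.6.1 and used as a black box, so there is no proof in the paper to compare against. Your sketch is therefore not a comparison candidate but an attempt to reprove a cited result.

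As a sketch it has a circularity problem. In your final step you appeal to ``the uniformity established in the proof of Theorem~\ref{thm:strong=unif}'' and to the thick--thin decomposition machinery. But the uniform Floyd constant $C_n$ in the proof of Theorem~\ref{thm:strong=unif} is bounded via UEP (Proposition~\ref{uep}), and UEP is equivalent to \emph{strong} convergence; in the present non-strong setting UEP fails precisely because of the elements you are trying to isolate. Likewise, the word-decomposition apparatus the paper develops in Section~\ref{sec:algebraic} (Lemma~\ref{unique}, the generating set $G_n^*$, Propositions~\ref{Prop1} and~\ref{PropA}) explicitly \emph{uses} Theorem~\ref{extraelts} to define the new generators $q_i$ and to match $G_n^*$ with $H^*$. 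So you cannot invoke that machinery to prove the theorem.

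The actual argument in~\cite{jor-mar} and~\cite{marden-book} proceeds via polyhedral convergence of the Dirichlet domains $\mathcal D_n\to\mathcal D_H$: since $H$ is geometrically finite, $\mathcal D_H$ has finitely many faces, and for large $n$ the face-pairings of $\mathcal D_n$ inside a large ball approximate those of $\mathcal D_H$; the ``extra'' face-pairings of $\mathcal D_H$ not coming from $\rho_\infty(\G)$ are shown to arise exactly at the rank-two cusps, giving the $q_i$. Your paragraphs~1--3 correctly summarise the geometric picture, but paragraph~4 needs to be replaced by this polyhedral argument rather than by the paper's own downstream lemmas.
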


In the proof of Theorem~\ref{thm:strong=unif}, we used the crucial fact (Lemma~\ref{floyd1a}) that the image in $\HHH^3$ of a geodesic segment in $\Gr \G$ is uniformly an ambient quasi-geodesic in $\mathcal V_n = \mathcal V_{\epsilon,G_n} $. However in the above situation,  the elements $\rho_n(p^{m_n})$ in the 
approximating groups $G_n$ 
give paths 
which  track  a Margulis tube for distance $O(m_n)$, but whose initial and final points are  distance  $O(1)$ apart. Clearly such paths cannot be  ambient quasi-geodesics 
 with uniform constants independent of $n$. To get around this, we first approximate by 
taking a new set of generators $G^*_n$ for $G_n$  constructed so as to be close to those of the geometric limit $H$, and then substitute back in for the original generators of $\G$. Before doing this, however, we pause to discuss parabolic elements in $\G, G_n$ and  $G_{\infty}$.

\subsection{Parabolic blocks} \label{parabblock}The group $G_{\infty}$ contains a finite number of conjugacy classes of parabolic subgroups. Choose  $\mathcal P = \{p_1, \ldots, p_t \} \subset \G$ so that 
$\{\rho_{\infty}(p)\co   p \in \mathcal P\}$ contains one representative of each class, such that   the   horoball  $T^p_{\infty}$ based at the fixed point of 
$ \rho_{\infty}(p)$
intersects the Dirichlet domain $\mathcal D_{\infty}$, and such that $\rho_{\infty}(p)$ generates $Stab_{G_{\infty} } \,(T^p_{\infty})$. 
(We changed notation here from $H$ for a horoball  to $T$  for a tube, because a thin component which is the quotient of a  horoball in some $\HHH^3/G_n$ may be a tube in another. From now on we shall use the notation $T$ for both horoballs and equidistant tubes, the latter being the  lift of a Margulis tube to $\HHH^3$, see   immediately before the proof of Lemma~\ref{expdist1} for a precise definition.)

By the algebraic convergence, there is a group $G_{n_0}$ such that if $p \in \mathcal P$ then
$\rho_{n}(p)$ is  short in $G_n$ for all $ n \geq n_0$, in the sense that its length is less than the   constant $\e < \e_{\mathcal M}$ chosen above. Denote by 
$T^p_n$ the 
lift to $\HHH^3$ of the thin part $M_{thin}(\e)$ of $M= \HHH^3/G_n$  whose stabiliser is generated by $\rho_{n}(p)$.  Note that while by construction $T^p_{\infty}$ is a horoball,  $T^p_n$ may be either a horoball if $\rho_{n}(p)$ is parabolic or an equidistant tube from the short loxodromic
$\rho_{n}(p)$ otherwise. (We do not exclude the possibility that $\HHH^3/G_n$ may contain other thin parts or even horoballs  not associated to elements of $\mathcal P$.) By slightly reducing the choice of $\epsilon$  if necessary, we can choose $a>0$ such that the distance between any two thin parts  of $\mathcal N_n$ is at least $a$  for any $n \in \mathbb N \cup \infty$.

The set $\mathcal P$ is divided into two subsets, the set $\mathcal P_0 = \{p_1, \ldots, p_s \} $ which commute with the `extra' parabolics $ q_1, \ldots, q_s$
of Theorem~\ref{extraelts}, and the remaining set $\mathcal P_1 = \mathcal P \setminus \mathcal P_0$.
 By assumption the maps $\rho_{n}, \rho_{\infty}$ are weakly type preserving, in other words every   parabolic element $g \in  \G$  is also parabolic in $G_n, n = 1,2 \ldots, \infty $.  Note also that  a parabolic element in a Kleinian group   lies in a $\mathbb Z^2$-subgroup  if and only if it stabilises a rank two cusp. Since  the groups $\G, G_n, G_\infty$ are all abstractly isomorphic, this means that  rank two parabolics $\rho_\infty(p), p \in \mathcal P$ are `persistent', so that 
necessarily $p \in \G$ and $ \rho_n(p) \in G_n$ are also parabolic and moreover $p \in \mathcal P_1$.

We can choose $n_0 \in \mathbb N$ such that if $p \in \mathcal P$ then
$\rho_{n}(p)$ is  short  in $G_n$ for all $ n \geq n_0$, in the sense that its length is less than the Margulis constant $\e_{\mathcal M}$.  
We would like to renumber so that $G_{n_0} $ becomes $  \Gamma$.  However this causes a minor technical difficulty in the case in which $G_{n_0} $ contains parabolic elements which are not parabolic in either $\G$ or $G_{\infty}$, so that  we cannot replace the maps 
$\hat i_n \co \Lambda_{\G} \to \Lambda_{G_n}$ by a map $\Lambda_{G_{n_0}} \to \Lambda_{G_n}$. To simplify notation we renumber so that $G_{n_0} = G_0 , j_{n_0} = j_0$ etc, and live with the minor annoyance of two distinct groups $\G$ and $G_0$.

To make the next definition, we use the following generalisation of  Lemma~\ref{lemmaV}, whose proof is identical to the earlier version.  We write  $X=_{\G} Y$
to indicate that   $X$ is equal to $Y$ as elements in   $\G$, but not necessarily as words in $ \G^*$.  

\begin{lemma}
\label{lemmaVa} Let $\G, G_{0}$ be as above. There exists $c >0$ with the following property. 
Let $\lambda $ be a quasi-geodesic segment in $\mathcal{G} \Gamma$,  and let  $\pi$ denote projection from $j_{{0}}(\lambda)$ to $[j_{{0}}(\lambda)]$. Suppose that for some $ g, h \in \G$, the segment of $j_{0}(\lambda)$ from $j_{{0}}(g)$ to $j_{{0}}(h)$  projects to an arc contained in a single component $T \in \HH_{\epsilon,G _{0}}$. Then $g^{-1}h =_{\G} g^{-1}zgy$ where $z \in Stab_{G_0} \, T$ and $d_{\G}(1, y) \leq c$.
\end{lemma}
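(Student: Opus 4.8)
The strategy is to imitate exactly the proof of Lemma~\ref{lemmaV}, with the single change that $\lambda$ is now only assumed to be a quasi-geodesic rather than a geodesic, and that we work with the group $G_0$ and basepoint $O$ (hence $j_0$) in place of $\G$ and $j_\G$. First I would recall the ingredient that makes the earlier proof go through: by Corollary~\ref{floyd3} (combined with Lemma~\ref{ambientdist} to handle the fact that we now have a quasi-geodesic rather than a geodesic --- or simply by Lemma~\ref{floyd1a} applied to a quasi-geodesic, since $j_0(\lambda)$ is still an ambient quasi-geodesic in $\mathcal V_{G_0}$ with constants depending only on the quasi-geodesic constant and on $(G_0, G_0^*)$), there exists $D = D(G_0)>0$ such that every point $X \in j_0(\lambda)$ lies within hyperbolic distance $D$ of the point $\Pi(X) \in \partial T$ obtained by dropping the perpendicular from $X$ to $[j_0(\lambda)]$ and continuing to $\partial T$. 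This is the only place the hypothesis on $\lambda$ enters; once it is in hand the argument is combinatorial and identical to the geodesic case.

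Next I would set up the Dirichlet domain $\mathcal D$ for $G_0$ centred at $O$. Since $\mathcal D$ is locally finite, there is a constant $c = c(D) > 0$ such that any translate $x\mathcal D$ meeting the $D$-neighbourhood of $\mathcal D$ satisfies $d_{G_0}(1,x) \leq c$. Because $g\cdot O \in g\mathcal D$ and $\Pi(g\cdot O)$ is within $D$ of $g\cdot O$, we get $\Pi(g\cdot O) \in gx_1\mathcal D$ with $d_{G_0}(1,x_1)\leq c$; similarly $\Pi(h\cdot O) \in hx_2\mathcal D$ with $d_{G_0}(1,x_2)\leq c$. Since both $\Pi(g\cdot O)$ and $\Pi(h\cdot O)$ lie on $\partial T$, there is $z \in \Stab_{G_0}\,T$ with $z\,gx_1\mathcal D = hx_2\mathcal D$, hence $z g x_1 =_{G_0} h x_2$, so $g^{-1}h =_{G_0} g^{-1} z g x_1 x_2^{-1}$; setting $y = x_1 x_2^{-1}$ gives $d_{G_0}(1,y) \leq 2c$, which is the desired form (after renaming the constant). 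I would note the slight bookkeeping point that the conclusion is stated as an identity $=_\G$ in the group $\G$ rather than in $G_0$; since $\G$ and $G_0$ are abstractly isomorphic (both being isomorphic copies of the fixed abstract group, via the $\rho_n$), the relation transports, and the word-length bound on $y$ is measured in $\G^*$ with a constant adjusted accordingly --- this is the ``identical'' part the lemma statement alludes to.

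The main (and really only) obstacle is making sure the tracking constant $D$ is genuinely uniform over the relevant class of quasi-geodesics: one must check that Corollary~\ref{floyd3} / Proposition~\ref{mcmtracking} apply with constants depending only on the quasi-geodesic constant $K$ and on $(G_0, G_0^*)$, not on the particular segment $\lambda$. This is exactly the content already packaged in Lemma~\ref{ambientdist} and the remark following Proposition~\ref{mcmtracking} (McMullen's constants depend only on $\epsilon$), so no new work is needed --- it is just a matter of citing it correctly. Everything else is a verbatim repetition of the proof of Lemma~\ref{lemmaV}, which is why I would state it briefly and refer back.
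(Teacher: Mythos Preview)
Your proposal is correct and matches the paper's approach exactly: the paper simply states that the proof is identical to that of Lemma~\ref{lemmaV}, and you have correctly identified the one place where the quasi-geodesic hypothesis enters (the uniform tracking constant $D$, supplied by Lemma~\ref{ambientdist} and Proposition~\ref{mcmtracking}) and the minor bookkeeping needed to pass between $\G$ and $G_0$. Nothing further is required.
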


  \begin{definition} A geodesic segment $ e_{r+1} \ldots e_s $ in $\mathcal {G} \Gamma$
is called a \emph{parabolic block relative to $  p \in \mathcal P$} if  $ e_{r+1} \ldots e_s =_{\G} p^ky$ for some $y$ with $d_{\G}(1, y) \leq c$. We call $|k|$ the  \emph{length} of the block.  \end{definition}

 \begin{remark} {\rm   It is worth clarifying exactly  how this definition  relates to  the lemma. 
 Setting $g_r = e_{i_1} e_{i_2} \ldots e_{i_r}$,  the segment  $\hat j_{0}( e_{r+1} \ldots e_s) $ runs from $j_{0}(g_r)$ to $j_{0}(g_s)$. Set $g=g_r$ and $h= g_s$ in the lemma. Then  $ e_{r+1} \ldots e_s  = g_r^{-1} g_s = g_r^{-1}zg_r y$ where 
   $\rho_0( z) \in \Stab T$. Thus  $\rho_0( g_r^{-1}z g_r) \in \Stab \rho_0( g_r)^{-1}T$ and $\rho_0( g_r)^{-1}T = T_0^p$ for some $ p \in \mathcal P$. }\end{remark}


\begin{lemma}
\label{penetrates}  Given $D>0$, there exists $k_0 = k_0(D)>0$ such that  given  $k_1>k_0$, there exists $M = M(k_1,D) \in \mathbb N$  such that if
  $  p^ky, p \in \mathcal P_0$ is a parabolic block of length  $ k \in [k_0, k_1]$, then
any geodesic segment whose endpoints are at distance at most $D$ from 
$j_n(1)$ and $j_n(p^ky)$   respectively penetrates the thin part $T^p_n$ for all $n > M$.
If $ p \in \mathcal P_1$ then the same result is true for all  $ k \geq  k_0$.
 \end{lemma}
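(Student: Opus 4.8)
The strategy is to reduce the claim to a statement purely about the geometric limit $H$, where the thin part $T^p_n$ becomes a genuine $\mathbb Z^2$-cusp (for $p \in \mathcal P_0$) or persists as a parabolic horoball (for $p \in \mathcal P_1$), and then transfer back using algebraic/geometric convergence. First I would set up the picture in $H$: for $p \in \mathcal P_0$, the element $\rho_n(p)$ is a short loxodromic whose powers $\rho_n(p^{m_n})$ converge to the extra parabolic $q = q(p)$, so $\langle \rho_\infty(p), q\rangle$ is a $\mathbb Z^2$ stabilising a rank-two horoball $T^p_\infty = T^p_H$. A geodesic in $\mathbb H^3$ running from within distance $D$ of $j_H(1) = O$ to within distance $D$ of a point corresponding to $p^k y$ — where in $H$ the element $\rho_H(p^k y)$ is, up to the bounded word $y$, a large power of the parabolic $\rho_\infty(p)$, hence deep inside the $\mathbb Z^2$-cusp — must penetrate $T^p_H$ provided $k$ is large enough. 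Here ``large enough'' depends only on $D$, the fixed constant $c$ bounding $|y|$, the Margulis constant, and the geometry of the cusp: there is $k_0 = k_0(D)$ so that $d_{\mathbb H}\big(O, \rho_\infty(p^k y)\cdot O\big)$ exceeds the diameter of the part of $T^p_H$ ``visible from $O$'' plus $2D$, forcing any such geodesic to enter $T^p_H$. This is the content of the standard hyperbolic geometry estimates in the Appendix (the horoball/tube penetration lemmas in the spirit of Lemma~\ref{horoballs} and Lemma~\ref{expdist1}).

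Next I would run the transfer argument. Fix $k_1 > k_0$ and consider any $k \in [k_0,k_1]$ together with any admissible $y$ (finitely many choices, since $d_\G(1,y)\le c$). By algebraic convergence, $\rho_n(p^k y)\cdot O \to \rho_\infty(p^k y)\cdot O$ and $\rho_n(p)\cdot O \to \rho_\infty(p)\cdot O$; by geometric convergence the thin parts $T^p_n$ converge (on compact sets, in the Hausdorff sense) to $T^p_\infty = T^p_H$, using that $H$ is geometrically finite so that $T^p_H$ is a genuine horoball meeting $\mathcal D_\infty$. Since all the relevant data — the basepoint, the endpoints (within $D$ of $j_n(1)$ and $j_n(p^ky)$), and $T^p_n$ — now lie in a fixed compact region of $\mathbb H^3$ (bounded in terms of $D$, $k_1$, $c$), and since in the limit every such geodesic strictly penetrates $T^p_H$ by a definite amount, a compactness/continuity argument produces $M = M(k_1,D)$ so that for $n > M$ the approximating geodesics also penetrate $T^p_n$. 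Concretely: if not, there would be a subsequence with geodesics avoiding $T^p_n$; their endpoints subconverge to endpoints of a limiting geodesic which then avoids the interior of $T^p_H$, contradicting the strict penetration established in the first paragraph. The finitely many choices of $(k,y)$ are handled by taking the maximum of the finitely many $M$'s.

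For the case $p \in \mathcal P_1$, the situation is actually cleaner: by the discussion in Section~\ref{parabblock}, rank-two parabolics of $\rho_\infty$ are persistent, so $\rho_n(p)$ is already parabolic in every $G_n$ (for $n \ge n_0$) and $T^p_n$ is a horoball of a shape converging to that of $T^p_H$; there is no need to bound $k$ from above, because penetration of a horoball by a geodesic between a point near $O$ and a point at parabolic-depth $\asymp \log k$ inside it holds once $k \ge k_0(D)$, uniformly in $n \ge M$. (If $p\in\mathcal P_1$ is a rank-one parabolic the same horoball penetration estimate applies verbatim.)

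The main obstacle I expect is making the ``penetrates by a definite amount'' statement in the limit genuinely uniform and then turning the Hausdorff convergence of $T^p_n$ into a clean conclusion about geodesics — i.e. ensuring the compactness argument in the second paragraph is not circular. The subtlety is that for $p\in\mathcal P_0$ the tube $T^p_n$ is thin and long, so one must be careful that the portion of $T^p_n$ relevant to geodesics with endpoints in the fixed compact set really does converge to the corresponding portion of the rank-two horoball $T^p_H$; this is exactly where geometric (not merely algebraic) convergence, and geometric finiteness of $H$, are essential, and where one invokes the explicit estimates of the Appendix together with the uniform lower bound $a$ on distances between distinct thin parts fixed in Section~\ref{parabblock}. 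Everything else is bookkeeping with the finitely many words $y$ and the fixed constant $c$ from Lemma~\ref{lemmaVa}.
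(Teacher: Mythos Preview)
Your approach is sound but genuinely different from the paper's. You argue by passing to the limit: establish strict penetration of the limiting horoball $T^p_\infty$ (for fixed $k \ge k_0$ this is a standard horoball estimate), then use algebraic convergence of the endpoints together with Hausdorff convergence of $T^p_n$ to $T^p_\infty$ on compacta, and finish by compactness over the finitely many pairs $(k,y)$ with $k\in[k_0,k_1]$ and $|y|_\G\le c$. The paper instead works \emph{directly inside each $G_n$}: it projects the two endpoints onto $\partial T^p_n$, obtaining points $X$ and $Y=\rho_n(p)^k\!\cdot X$, and shows that the distance from $X$ to $Y$ along $\partial T^p_n$ is at least a fixed constant $c$, which forces the geodesic to enter the tube. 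The crux there is controlling the \emph{rotation angle} of the short loxodromic $\rho_n(p)$: since the multiplier tends to $1$, given $k_1$ one chooses $M$ so that $\arg\lambda_n<\pi/(4k_1)$, whence $\rho_n(p^k)$ for $k\le k_1$ has negligible rotation and translates along $\partial T^p_n$ by $\asymp k\epsilon$. This makes completely explicit \emph{why} the upper bound $k_1$ is needed for $p\in\mathcal P_0$ --- precisely the values $k\approx m_n$ with $\rho_n(p^{m_n})\to q$ would give $d(X,\rho_n(p^k)X)\asymp 1$ --- whereas in your argument that obstruction is hidden inside the ``finitely many $(k,y)$'' step. What your route buys is conceptual cleanliness and no explicit tube computations; what the paper's route buys is that it never needs the (true but nontrivial) convergence of Margulis tubes to horoballs, and it yields quantitative dependence of $M$ on $k_1$. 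One small correction: the object your tubes $T^p_n$ converge to on compact sets is the $G_\infty$-horoball $T^p_\infty$ (defined by $\rho_\infty(p)$ alone), not the larger $H$-thin part $T^p_H$; this is harmless for your argument since the limiting geodesic already penetrates $T^p_\infty$.
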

\begin{proof} 
 Since $y$ is in a bounded neighbourhood  of $1 \in {\mathcal G} \G$, using the algebraic convergence we get  a uniform bound  $d_{\HHH}(O, j_n(y)) \leq D_1$
for all $n$.   Hence \begin{eqnarray*}
  & &d_{\HHH}( j_n( p^ky), T^p_n) \leq d_{\HHH}( j_n( p^k ), T^p_n) + d_{\HHH}( j_n( p^ky),  j_n( p^k) \\ & &=    d_{\HHH}( j_n( p^k), T^p_n)+ d_{\HHH}( j_n( y),    O) \\
  & &=    d_{\HHH}( O, \rho_n( p^{-k}) T^p_n)+ d_{\HHH}( j_n( y),    O) =  d_{\HHH}( O,  T^p_n)+ d_{\HHH}( j_n( y),    O) .
  \end{eqnarray*}
Since  by Proposition~\ref{prop:diameters},  $O$ is a uniformly bounded distance  from $T^p_n$, this gives a uniform bound  $  d_{\HHH}( j_n( p^ky), T^p_n) \leq D_2$ say.
  
 Now let $A, B$ be points at distance at most $D$ to  $O = j_n(1)$ and $  j_n( p^ky)$ respectively,   and let $X, Y$ denote their projections
 onto $T^p_n$. Then $A,B$ are at distance at most $D+D_2$ to  $T^p_n$. Thus there exists a uniform constant $c>0$ such that if the distance between $X$ and $Y$ along $\partial T^p_n$  is at least $c$, then  $[A,B]$ penetrates $T^p_n$.

 If $ \rho_n(p)$ is parabolic, since $Y = \rho_n(p)^k \cdot X$ and since $\rho_n(p)$ translates a definite distance along $\dd T$  (because the injectivity radius of $\HHH^3/G_n$ at points on $\partial T/G_n$ is some fixed $\epsilon>0$), the result is now straightforward. However if $ \rho_n(p)$ is loxodromic, we have to take care that the multiplier is not such that $  d_{\HHH}( X, \rho^k_n(p) \cdot X)  = O(1)$.
 
 Let $G$ be a Kleinian group and let $S\in G$ be a loxodromic transformation with  multiplier bounded in modulus by some fixed $\theta_0 $. Let $T \subset \HHH^3$ be the equidistant tube corresponding to $S$, such that the injectivity radius of  $\HHH^3/G$ at points on the boundary of the image of $T$ in $\HHH^3/G$ is  $\epsilon>0$. Then  there exists a   constant $  c' >0$ such that provided $r\theta_0 < \pi/4 $,  the distance  between $Q  \in \partial T$  and $S^r(Q) \in \dd T$ along $ \partial  T$ is 
at least $c' r\epsilon$.   In particular, since the $T^p_n$ are all components of $\HH_{\epsilon;G_n}$ for  fixed $\epsilon$, this discussion applies to our present situation with $S=\rho_n(p)$.

 Choose $k_0 \in \mathbb N$ so that $c' k\epsilon > c$ whenever $k \geq k_0$. 
Since $\rho_n(p) \to \rho_{\infty}(p)$, the multiplier $\lambda_n$ of $\rho_n(p)$ converges to $1$.
Thus by the above discussion,  given $ k_1>k_0$   we can find $M = M( k_1)$, so that for all $n \geq M$, the argument of the multiplier of 
$\rho_n(p)$ is bounded in modulus by $\pi/4 k_1$ and hence so that the argument of the multiplier of
$\rho_n(p^r)$ is bounded in modulus by $\pi/4$ for all $|r| \leq  k_1$.

It follows that if
$X \in \partial T^p_n$ then,  provided $k_0 \leq k \leq  k_1$,  the distance from $X$ to $\rho_n(p^k) \cdot X$ along $ \partial  T^p_n$ is 
at least $c$ 
and so if $A,B$ are points at distance at most $D$ to  $O = j_n(1)$ and $  j_n( p^ky)$ respectively,   then the geodesic segment $[A,B]$   penetrates $T^p_n$.  

 Note that if $ p \in \mathcal P_0$ it is crucial here to have the fixed upper bound $k \leq  k_1$; in fact $d(X, \rho_n(p^k) \cdot X) \asymp 1$ for values of $k$ such that $\rho_n(p^k)$ is close to $q \in H$. If  $ p \in \mathcal P_1$ this restriction is unnecessary.
 \end{proof}

\subsection{Correspondence of generators}  We  now make a precise correspondence between a set of generators of $G_n$ and those of the geometric limit $H$.

\begin{lemma}\label{unique} There exists $\delta_0 >0$ such that for  any $h \in H$,  there  exists $m_0 =m_0(h)$ such that
there is a unique $g \in \G$  with $j_n(g)\in B_{\HHH}(h \cdot  O, \delta_0)$ whenever $ n \geq m_0$.
\end{lemma}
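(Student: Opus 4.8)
The plan is to split the assertion into an easy existence half, coming straight from geometric convergence, and a uniqueness half, which I will reduce to a single uniform fact about the approximating groups: \emph{there exist $\delta_1 > 0$ and $m_1 \in \mathbb N$ such that $d_{\HHH}(\gamma \cdot O, O) > \delta_1$ for every $\gamma \in G_n \setminus \{1\}$ and every $n \geq m_1$.} Granting this, set $\delta_0 = \delta_1/2$. For existence: since $G_n \to H$ as closed subsets of $\PSL$ there is a sequence $\gamma_n \in G_n$ with $\gamma_n \to h$, so $\gamma_n \cdot O \to h \cdot O$ and hence $\gamma_n \cdot O \in B_{\HHH}(h \cdot O, \delta_0)$ for all large $n$; writing $\gamma_n = \rho_n(g)$ with $g = g(n) \in \G$ (possible since $\rho_n$ is an isomorphism) produces, for all $n$ beyond some $m_0(h)$, an element $g \in \G$ with $j_n(g) \in B_{\HHH}(h \cdot O, \delta_0)$. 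For uniqueness: if $g, g' \in \G$ are such that $j_n(g), j_n(g') \in B_{\HHH}(h \cdot O, \delta_0)$ for some $n \geq m_1$, then $d_{\HHH}(\rho_n(g^{-1}g') \cdot O, O) = d_{\HHH}(j_n(g'), j_n(g)) < 2\delta_0 = \delta_1$, so $\rho_n(g^{-1}g') = 1$ by the displacement bound and therefore $g = g'$ since $\rho_n$ is injective. Taking $m_0(h) \geq m_1$ finishes the reduction.

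Thus the entire content is the displacement bound, and I would prove it by contradiction. If it fails then, passing to a subsequence, there are $\gamma_n \in G_n \setminus \{1\}$ with $d_{\HHH}(\gamma_n \cdot O, O) \to 0$. The elements $\gamma_n$ lie in the compact set $\{A \in \PSL : d_{\HHH}(A \cdot O, O) \leq 1\}$, so after a further subsequence $\gamma_n \to \gamma_\ast \in \PSL$; since $H$ is the geometric (Chabauty) limit of the $G_n$ we get $\gamma_\ast \in H$, and $\gamma_\ast \cdot O = O$ forces $\gamma_\ast = 1$ because $H$ is discrete and torsion free. So $\gamma_n \to 1$ with every $\gamma_n \neq 1$. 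Ruling this out is the heart of the matter. The cleanest way is to invoke the standard continuity of the injectivity radius at the basepoint under geometric convergence: with $O_n^\ast, O_H^\ast$ the images of $O$ in $\HHH^3/G_n$ and $\HHH^3/H$, one has $\operatorname{injrad}(\HHH^3/G_n, O_n^\ast) \to \operatorname{injrad}(\HHH^3/H, O_H^\ast) = \tfrac12 \inf \{ d_{\HHH}(h \cdot O, O) : h \in H \setminus \{1\} \} > 0$, the positivity being discreteness and torsion-freeness of $H$; but $d_{\HHH}(\gamma_n \cdot O, O) \to 0$ makes the left side tend to $0$, a contradiction. Alternatively one argues directly: writing $\gamma_n = \rho_n(w_n)$ with $w_n \neq 1$, the case of bounded word lengths $|w_n|$ is killed by algebraic convergence and faithfulness of $\rho_{\infty}$, while the case $|w_n| \to \infty$ is killed by the Margulis--Zassenhaus lemma --- for $n$ large $\gamma_n$ and its conjugates $\rho_n(e_i) \gamma_n \rho_n(e_i)^{-1}$ (over the generators $e_i$ of $\G$) lie in a Zassenhaus neighbourhood of $1$ and hence commute, so every $\rho_n(e_i)$ preserves the axis or fixed point of $\gamma_n$, making $G_n$ elementary and contradicting $G_n \cong \G$ when $\G$ is non-elementary.

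The only real obstacle I anticipate is this displacement bound, and specifically the step just discussed: excluding nontrivial elements of the groups $G_n$ that collapse to the identity. Everything else is formal --- the reduction of both halves of the lemma is bookkeeping, and existence is immediate from the definition of the geometric limit.
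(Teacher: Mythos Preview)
Your proof is correct and follows the same overall architecture as the paper's: existence from geometric convergence, uniqueness reduced to a uniform lower bound on the displacement $d_{\HHH}(\gamma\cdot O,O)$ for $\gamma\in G_n\setminus\{1\}$. The only real difference is how this displacement bound is obtained. The paper appeals to polyhedral convergence and Marden's \emph{universal ball property} (Lemma~4.3.11 of \cite{marden-book}), which says directly that the Dirichlet domains $\mathcal D_n$ all contain a fixed ball $B_{\HHH}(O,\delta)$; since the interior of $\mathcal D_n$ contains no nontrivial orbit point, $d_{\HHH}(O,\rho_n(g^{-1}g')\cdot O)<\delta$ forces $g=g'$. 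You instead argue via continuity of the injectivity radius under geometric convergence (or, alternatively, a Margulis--Zassenhaus collapsing argument). These are essentially equivalent facts --- the universal ball property is precisely an injectivity-radius lower bound phrased in terms of fundamental domains --- so the two proofs are very close. The paper's route is slightly more economical, being a one-line citation; your injectivity-radius argument is more self-contained. Your Zassenhaus alternative is also fine but, as you note, needs $\G$ non-elementary, which is an extra hypothesis the other two arguments avoid.
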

  \begin{proof}       Let  $\mathcal D_n,\mathcal D_H $ be the Dirichlet domains  for $G_n,H $ centre $O$ respectively. As in the proof of Proposition~\ref{prop:diameters}, 
the thick part of $ \mathcal  N_H \cap \mathcal D_H$ is contained in a ball $B_r$ of finite diameter in $\HHH^3$.  
 The groups $G_n$ converge geometrically, and hence polyhedrally, to $H$ (meaning that the faces of $\mathcal D_n$ converge to those of $\mathcal D_H $ uniformly on compact subsets of $\HHH^3$).  By the universal ball property, see~\cite{marden-book} Lemma 4.3.11, there exists $\delta>0$ such that  $\mathcal D_n \cap B_r$ contains the ball  $B_{\HHH}( O, \delta)$ for all $n$.
 Suppose that $g, g' \in \G$ are such that $j_n(g) ,j_n(g')   \in B_{\HHH}(h \cdot O, \delta/2)$ for some $h \in H$. Then $d(O, j_n(g^{-1} g')) < \delta$ from which it follows  that   
 $g= g'$.  That  $B_{\HHH}(h \cdot O, \delta/2) \cap G_n \cdot O \neq \emptyset$ for sufficiently large $n$ follows from the geometric convergence, proving the lemma with $\delta_0 = \delta/2$.
  \end{proof} 
  
Given any finite set $A \subset H$, Lemma~\ref{unique}  allows us to make a bijective correspondence between   $A$ and a corresponding subset    $ A_n \subset G_n$ for all sufficiently large $n$.  Choose a set of generators $\G^*$ for $\Gamma$ and set  $H^* = \{ \G^*, q_1, \ldots, q_s\}$ with $q_i$ chosen as in Theorem~\ref{extraelts}.
We define $G^*_n$ to be the corresponding set of elements in $G_n$. This is well defined for any $n > \max \{m_0(g): g \in \G^*\}$, with $m_0(g)$ as in Lemma~\ref{unique}. According to that lemma, each element of $G_n^*$ is either already in $\G^*$, or is close to $q_i$ for one of the additional generators as in Theorem~\ref{extraelts}. Since each such $q_i$ is the limit of a sequence of the form $\rho_n(p^{m_n})$ where $ p \in \mathcal P_0$ and $m_n \to \infty$, we may suppose that $n_0$ in Section~\ref{parabblock} above is also chosen so that the additional elements in $G_n^*$
are all of this form. Once again, we renumber so that $G_{n_0}$ becomes $G_0$.

\subsection{Proof of Theorem~\ref{thm:alg=ptwise}} Now we turn to the main part of the proof of Theorem~\ref{thm:alg=ptwise}.

For $g \in \G$,  we use $|g |_{\G}$ or $|g | $ to denote word length relative to the generating set $\G^* $  and $|g |_{n}$ to denote word length relative to    the generating set $G^*_n$.  
We shall need to distinguish between equality as group elements and equality as words (meaning that all letters are identical). We write $W=AB$ to mean that all letters of $W$ are exactly the same as those in the juxtaposed strings of letters $A,B$ and $W=_{G} AB$
to mean that   $W$ is equal to $AB$ as elements in a group $G$.

\begin{definition} Let $G$ be a Kleinian group with generators $G^*$ and let $|\cdot|_G$ denote the word metric.
We say a path $e_{i_1}e_{i_2} \ldots e_{i_s} ,  e_{i_j} \in G^*$ is $L$-\emph{quasi-geodesic} with respect to $(\mathcal {G} G, |\cdot|_G)$
if  $(b-a)/L \leq |e_{i_a} \ldots e_{i_{b-1}}|_G \leq L(b-a) $ for any $ 1 \leq a < b \leq s$.
\end{definition}

\begin{lemma}
\label{compare} Suppose that $W = w_{i_1}w_{i_2}\ldots w_{i_s}, w_{i_j} \in \G^*$ is $L$-quasi-geodesic in $(\mathcal {G} G_n, |.|_{n})$ for some $n \geq 0$.Then $W$ is also $L$-quasi-geodesic  in $(\mathcal {G} \Gamma, |.|_{\G})$.
 \end{lemma}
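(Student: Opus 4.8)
The plan is to compare the two word metrics $|\cdot|_\G$ and $|\cdot|_n$ directly, exploiting the fact that — under the correspondence of Lemma~\ref{unique} used to define $G^*_n$ — the generating set $\G^*$ is literally contained in $G^*_n$. The guiding principle is the elementary one that enlarging a generating set can only decrease word lengths, so that $|g|_n \le |g|_\G$ for every $g \in \G$; the content of the lemma is then almost immediate.

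First I would note that $L \ge 1$: taking $b = a+1$ in the hypothesis gives $1/L \le |w_{i_a}|_n \le L$, and since $w_{i_a}$ is a non-trivial element of $G^*_n$ its $G^*_n$-word length is the integer $1$, so $L \ge 1$. This makes the upper bound in the conclusion automatic, for the string $w_{i_a}w_{i_{a+1}}\cdots w_{i_{b-1}}$ exhibits the corresponding element of $\G$ as a product of $b-a$ members of $\G^*$, whence $|w_{i_a}\cdots w_{i_{b-1}}|_\G \le b-a \le L(b-a)$. The real work is the lower bound, and for this I would establish the claim $|g|_n \le |g|_\G$ for all $g \in \G$. The key observation is that $\G^* \subseteq G^*_n$ once $n$ is large enough (hence, after the renumbering of Section~\ref{parabblock}, for all $n \ge 0$): indeed, for $e \in \G^*$ one has $j_n(e) = \rho_n(e)\cdot O \to \rho_\infty(e)\cdot O$ by algebraic convergence, so $e$ is its own partner under Lemma~\ref{unique} and is therefore one of the chosen generators in $G^*_n$. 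Given a geodesic $\G^*$-expression $g = s_1 \cdots s_k$ with $k = |g|_\G$, this same product writes $g$ using $k$ letters from $G^*_n \cup (G^*_n)^{-1}$, so $|g|_n \le k = |g|_\G$. Applying this with $g = w_{i_a}\cdots w_{i_{b-1}}$ and combining with the hypothesis yields $(b-a)/L \le |w_{i_a}\cdots w_{i_{b-1}}|_n \le |w_{i_a}\cdots w_{i_{b-1}}|_\G$, which together with the upper bound is precisely the assertion that $W$ is $L$-quasi-geodesic in $(\mathcal G\Gamma, |\cdot|_\G)$.

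I do not expect a genuine obstacle. The only point needing any care is the inclusion $\G^* \subseteq G^*_n$ — one must be sure that the original generators of $\G$ correspond to themselves rather than to the extra parabolics $q_1, \ldots, q_s$ of Theorem~\ref{extraelts} — and this is exactly where algebraic convergence and the explicit construction of $G^*_n$ from Lemma~\ref{unique} are used. Everything else is the observation that passing to a larger generating set cannot lengthen any word.
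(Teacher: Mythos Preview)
Your argument is correct and is essentially identical to the paper's: both use the inclusion $\G^* \subseteq G^*_n$ to get $|g|_n \le |g|_\G$, from which the lower bound follows immediately, while the upper bound is automatic since the word is already written in $\G^*$. You give more detail (in particular, you spell out $L \ge 1$ and justify the inclusion via Lemma~\ref{unique}), but the substance is the same.
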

\begin{proof} Let $V =  w_{i_a} \ldots w_{i_{b-1}}$ be a subsegment of $W$.
Then
$b-a  < L |V|_n \leq L |V|_{\G}$ where the last inequality follows since any expression in $\G^*$ is also one in $G^*_n$. Since $w_{i_j} \in \G^*$ for all $j$, the inequality $  |V|_{\G} \leq b-a$ is obvious.
\end{proof}

 The key step in the proof of Theorem~\ref{thm:alg=ptwise} is the following proposition, which says that although the image  $j_n(\lambda)$ of a quasi-geodesic $\lambda \subset \Gr \G$ may not be an ambient quasi-geodesic in $\mathcal V_n$ (see the discussion following the statement of Theorem~\ref{extraelts}), it is still within uniformly bounded distance of $ [j_n(\lambda)] \cup \mathcal H([  j_n(\lambda)])$. The idea is first to track $[j_n(\lambda)]$ with uniform bounds by   word paths $W_n$ in the generators $G^*_n$, and then to study carefully how these word paths look  when rewritten in terms of the generators $\G^*$. This second step is the content of Proposition~\ref{PropA}, which is needed in the proof of Proposition \ref{Prop1}. 
  
\begin{proposition} \label{Prop1} Suppose that $\lambda$ is a standard  $K$-quasi-geodesic in $\mathcal {G} {\Gamma}$.  Then there exists $n_1 \in \mathbb N$ such that $  j_n(\lambda)$ is at uniformly bounded distance to $ [j_n(\lambda)] \cup \mathcal H([  j_n(\lambda)])$ for all $n \geq n_1$, with a bound $D$ that depends on $K$ but  is independent of $n$.
 \end{proposition}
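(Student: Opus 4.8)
The plan is to reduce immediately to a finite standard $K$-quasi-geodesic segment running from $1$ to some $g \in \G$ — the case of an infinite path $[1,\xi)$, and the uniformity over all subsegments of $[1,\xi)$ that we will need for $EPP(\xi)$, then being automatic since subsegments of $K$-quasi-geodesics are again $K$-quasi-geodesics — and to write $\alpha_n = [j_n(\lambda)]$ for the $\HHH^3$-geodesic $[O,\rho_n(g)\cdot O]$. As indicated before the statement, the strategy has two stages: first track $\alpha_n$ with $n$-independent constants by a word path over the adapted generating set $G^*_n$, and then understand what that word path looks like once it is rewritten over $\G^*$.

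For the first stage I would show that, for all large $n$, the embeddings $j_n\co (\mathcal G(G_n,G^*_n),|\cdot|_n) \to \HHH^3$ satisfy the conclusions of Proposition~\ref{floydorig}, Lemma~\ref{floyd1a} and Corollary~\ref{floyd3} with constants \emph{independent of $n$}. This is exactly where the work of Section~\ref{sec:strong} is used: the thick parts $\mathcal V_n$ of the convex cores have uniformly bounded diameter by Proposition~\ref{prop:diameters}; the generators of $G^*_n$ converge to those of $H^*$ by Lemma~\ref{unique}, so the right-hand Floyd inequality holds uniformly; the left-hand inequality is precisely UEP for the pairs $(G_n,G^*_n)$, which follows from the bounded geometry as in Proposition~\ref{uep}; and the distortion of horoballs and equidistant tubes is controlled uniformly by Lemmas~\ref{expdist1}, \ref{expdist2} and~\ref{tubes}. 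Feeding this into Proposition~\ref{mcmtracking} produces $n_1$ and a constant $D_0 = D_0(K)$ such that, for $n \geq n_1$, any $|\cdot|_n$-geodesic word $W_n$ over $G^*_n$ from $1$ to $\rho_n(g)$ satisfies $[j_n(W_n)] = \alpha_n$ and has $\hat j_n(W_n)$ within $D_0$ of $\alpha_n \cup \HH(\alpha_n)$.

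For the second stage I would rewrite $W_n$ over $\G^*$: every letter of $G^*_n$ either already lies in $\G^*$ or equals one of the extra generators $\tilde q_i^{(n)} = \rho_n(p_i^{m_n^{(i)}})$ with $p_i \in \mathcal P_0$, and replacing each $\tilde q_i^{(n)}$ by the word $p_i^{m_n^{(i)}}$ gives a word $W'_n$ over $\G^*$, equal to $g$ in $\G$, which is a concatenation of letters of $\G^*$ and of parabolic blocks $p_i^{\,k}$ with $k$ a positive multiple of $m_n^{(i)}$. Proposition~\ref{PropA} below will supply the structural facts: $W'_n$ is a standard $K'$-quasi-geodesic in $\mathcal G\G$, and it fellow-travels $\lambda$ in $\mathcal G\G$, both with bounds independent of $n$ (here standardness of $\lambda$ is essential, being what pins down the behaviour across rank-two horoballs, where quasi-geodesics of $\mathcal G\G$ with common endpoints need not stay close); and the vertices $\rho_n(p_i^{\ell})\cdot O$ internal to a substituted block all lie within bounded distance of the thin part $T^{p_i}_n$. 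Combining the first two facts with the uniform Lipschitz bound $d_\HHH(O,j_n(h)) \le A|h|_\G$ established in the proof of Theorem~\ref{thm:strong=unif} shows that $j_n(\lambda)$ and $\hat j_n(W'_n)$ lie within a uniformly bounded distance in $\HHH^3$, while the third shows $\hat j_n(W'_n)$ lies within bounded distance of $\hat j_n(W_n)$ together with those $T^{p_i}_n$ its blocks traverse. It then remains to check that each such $T^{p_i}_n$ contributing a non-bounded excursion belongs to $\HH(\alpha_n)$: the block endpoints $u,v$ are vertices of $W_n$, hence within $D_0$ of $\alpha_n$ by the first stage; since $\tilde q_i^{(n)} \to q_i$ and $q_i$ moves the relevant horoball boundary of $H$ by a definite amount, for $n$ large $\tilde q_i^{(n)}$ moves $\partial T^{p_i}_n$ by a definite amount, so once a block contains enough copies of $\tilde q_i^{(n)}$ the segment $[j_n(u),j_n(v)]$ penetrates $T^{p_i}_n$ (the mechanism of Lemma~\ref{penetrates}), and a thin-triangle comparison with $\alpha_n$, which passes within $D_0+\delta$ of that segment, forces $\alpha_n$ to meet $T^{p_i}_n$ too; blocks too short to penetrate give excursions of bounded length and are absorbed into the constant. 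Assembling the stages yields the bound $D = D(K)$.

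The main obstacle is the middle stage, carried out in Proposition~\ref{PropA}: one must control exactly how expanding the extra generators $\tilde q_i^{(n)}$ affects the combinatorics of the word path over $\G^*$, and in particular verify that this only inserts harmless excursions into the thin parts $T^{p_i}_n$ — which lie in $\HH(\alpha_n)$ — and never produces a genuine detour away from $\alpha_n \cup \HH(\alpha_n)$, all while keeping the fellow-travelling comparison with $\lambda$ (and the standardness bookkeeping across rank-two horoballs of $\G$) uniform in $n$.
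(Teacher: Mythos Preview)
Your two-stage architecture matches the paper's: first obtain a word $W_n$ over $G^*_n$ tracking $\alpha_n \cup \HH(\alpha_n)$ with constants uniform in $n$ (via Proposition~\ref{prop:diameters} and geometric convergence, exactly as you say), then rewrite over $\G^*$ and compare to $\lambda$. The gap is in what you ask Proposition~\ref{PropA} to deliver. The direct substitution $W'_n = V_1 U_1 V_2 U_2 \cdots V_s$ is in general \emph{not} a quasi-geodesic in $\Gr\G$ and does not fellow-travel $\lambda$ there with $n$-independent bounds: a segment $V_i$ may end with a long power of $p_{j_i}$ that cancels into $U_i$, and begin with one cancelling into $U_{i-1}$ (the backtracking of Figure~\ref{fig:backtracking}); since $|U_i|_\G = m_{j_i}|U_i|_n$ with $m_{j_i} \to \infty$, this cancellation is unbounded in $n$. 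So the fellow-travelling you assert in $\Gr\G$ fails, and with it your passage from $j_n(\lambda)$ to $\hat j_n(W'_n)$.

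What Proposition~\ref{PropA} actually does is \emph{rearrange} $W'_n$: split each $V_i = X_i Y_i Z_i$, absorb $Z_i$ and $X_{i+1}$ into $U_i$ to form a new parabolic block $\hat U_i$, and show that the resulting word $Y_1 \hat U_1 Y_2 \hat U_2 \cdots Y_s$ has $\hat j_0$-image a standard ambient quasi-geodesic in $\mathcal V_0$. The comparison with $\lambda$ then takes place not in $\Gr\G$ but in $\HHH^3$ via $j_0$: $\hat j_0(\lambda)$ is likewise a standard ambient quasi-geodesic in $\mathcal V_0$ with the same endpoints, so Lemma~\ref{ambientdist} bounds their $\HHH^3$-distance, and the uniform Lipschitz constant of $j_n j_0^{-1}$ (from algebraic convergence) transports this to $\hat j_n$. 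One then checks that every vertex of $Y_1\hat U_1\cdots Y_s$ either already lies on $W_n$ (the $Y_i$ parts) or is near a thin part $T_i \in \HH(\alpha_n)$ (the $\hat U_i$ parts). Your final paragraph correctly flags the obstacle; the resolution is this rearrangement, not a direct quasi-geodesicity claim for $W'_n$.
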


  \begin{proof} To simplify, we may as well assume that the initial point  of  $ \lambda$ is at  $1 \in \Gr \G$. Denote the final point by $g_{\lambda} \in \Gamma$.
 Floyd's construction in Proposition~\ref{floydorig}  allows us to find a standard word path $W$ in the generators $G_n^*$  so that $W =_{\Gamma}g_{\lambda}$,  such that 
$ j_n(\lambda)$ is  at bounded distance $K_n$ say to $[  j_n(\lambda)] \cup \mathcal H[(  j_n(\lambda))]$
  and whose length is comparable to $| \lambda|_n$, the length of $\lambda$ relative to the generators $G_n^*$.     
   The constants involved in determining $K_n$ are:
  the diameter $d_n$ of the thick part of the truncated Dirichlet  domain $\mathcal D_n \cap \mathcal V_n$, $a_n = \max \{d(O, g \cdot O) \co  g \in G^*_n\}$ and $C_n = \max \{|g|_n \co d(O, g \cdot O) \leq D\}$ for some uniform $D>0$. (It is crucial here that word length $|\cdot |_{n}$ is defined using the generating set $G^*_n$.) Then $d_n$ is uniformly bounded by Proposition~\ref{prop:diameters}, 
  $a_n$ is uniformly bounded by the \emph{geometric} convergence since so is 
  $\max \{d(O, h \cdot O) \co g \in H^*\}$, see Lemma~\ref{unique},  and finally, again in virtue of Lemma~\ref{unique}, $C_n$  is uniformly bounded
  by   $C_H= \max \{|h|_H \co h \in H \co d(O, h \cdot O) \leq 1 + 2d_n+ \delta\}$, where $\delta$ is as in Lemma~\ref{unique}. Thus all constants involved are uniform in $n$ and hence
there exists $L>0$ such that   the path $W$ (which is written in terms of the generators $G^*_n$) is $L$-quasi-geodesic in $(\Gr G_n, |\cdot|_n)$ for all $n$.

The only letters in $W$ which are not also generators of $\G^*$ correspond to elements in $G^*_n \setminus \G^*$  and are therefore of the form 
$u= p^k$ for some $ p \in \mathcal P_0$. 
Rewriting $W$ in terms of  the generators $\Gamma^*$ by substituting these terms we find $W =_{\G}  V_{1}U_{1} V_2U_2\ldots V_s $ where  all of the letters in $V_i$ belong  to $\G^*$, and where $U_i = u_i^{a_i}= p^{a_im_i}$ for some $ p \in \mathcal P_0$.
 In this expression we may have $V_i = \emptyset$ but in this case (by combining adjacent terms belonging to the same parabolic) we assume that 
 $U_i, U_{i+1}$ are associated to distinct elements of $  \mathcal P_0$.  We define $W_{\G}$ to be the word $  V_{1}U_{1} V_2U_2\ldots V_s $ in $\G^*$ and emphasize once again that  
 $W$ and $ W_{\G}$ both depend  heavily on $n$.

Since $V_i$ is by construction $L$-quasi-geodesic in $(\Gr G_n, |\cdot|_n)$, by Lemma~\ref{compare} it is also $L$-quasi-geodesic in $(\Gr \G, |\cdot|_{\G})$.   We would like to claim that $W_{\G}$ is quasi-geodesic in $(\Gr \G, |\cdot|_{\G})$.
 However this may not be the case because, as illustrated in Figure~\ref{fig:backtracking}, there could be a segment  which is a power of $p_i $ at the  end of   $V_i$ which cancels into $ U_i$, and likewise a segment   at the beginning of $V_i$ which cancels into $ U_{i-1}$. To remedy this we will show in Proposition~\ref{PropA} below that it is possible to split $V_i$ into blocks
 $V_i= X_iY_iZ_i$ where  $  X_i $ cancels into $ U_{i-1}$ and $Y_i $ cancels into $ U_{i}$, in such a way that  $Z_iU_i X_{i+1} =_{\Gamma}  \hat U_i $ and $j_0(  Y_{1}\hat U_1 Y_2\hat U_2\ldots Y_s )$ is a standard ambient $L'$-quasi-geodesic in $\mathcal V_0$, for some $L'$ depending on $L$ but independent of $n$. The essential idea is illustrated in Figure~\ref{fig:backtracking}. Notice that  following through the definitions,    $g_{\lambda} =_{\G}  Y_{1}\hat U_1 Y_2\hat U_2\ldots Y_s  $.

Assuming this we proceed as follows.  By hypothesis,  $\lambda$ is  a standard quasi-geodesic in $\mathcal {G} {\Gamma}$ with initial point $O$ and endpoint $g_{\lambda} \cdot O$. Thus  by construction $\hat j_{0}(\lambda)$ has the same endpoints as $\hat j_{0}(Y_{1}\hat U_1 Y_2\hat U_2\ldots Y_s) $.  
Hence by Lemma~\ref{ambientdist}, the paths $\hat j_{0}(Y_{1}\hat U_1 Y_2\hat U_2\ldots Y_s) $, $\hat j_{0}(\lambda)$ are  a bounded distance apart.  Since $j_{n}j_{0}^{-1}\co  \Gr \G \to \HHH^3$ is uniformly Lipschitz because of the algebraic convergence, $\hat j_{n}(\lambda )$  is a uniformly bounded distance from $\hat j_{n}(Y_{1}\hat U_1 Y_2\hat U_2\ldots Y_s) $, with a bound that depends on $K$ and $L'$.

Now  we show that   any point $Q \in \hat j_{n}(  Y_{1}\hat U_1 Y_2\hat U_2\ldots Y_s) $
is at uniformly bounded distance from $[j_n(\lambda)] \cup \mathcal H ([j_n(\lambda)])$.
We have $ Q = j_n(A)$ where $A$ is a subsegment of $Y_{1}\hat U_1 Y_2\hat U_2\ldots Y_s$ starting at $1$.
  Those subwords starting at $1$ and ending in a letter in some $\hat U_i$ have images under $ \hat j_{n}$ within uniformly bounded distance of the corresponding thin part $T_i\in \mathcal H ([j_n(\lambda)])$, 
 while the image of  points corresponding to subsegments
starting  from $1$  and ending at a letter contained in a segment $Y_i$ are within uniformly bounded distance of $[j_n(\lambda)] \cup \mathcal H ([j_n(\lambda)])$ since any word of the form $Y_{1}\hat U_1 Y_2\hat U_2\ldots Y'_r$  with $Y'_r$ an initial subsegment of $ Y_r, r \leq s$, is by construction equal to a subword of $W$.

We have shown that $\hat j_{n}(\lambda )$  is a uniformly bounded distance from  $\hat j_{n}(Y_{1}\hat U_1 Y_2\hat U_2\ldots Y_s)$ and that any point  in $ \hat j_{n}(  Y_{1}\hat U_1 Y_2\hat U_2\ldots Y_s) $
is at uniformly bounded distance from $[j_n(\lambda)] \cup \mathcal H ([j_n(\lambda)])$.
The result follows.
   \end{proof}

To prove Proposition~\ref{PropA} we need two easy lemmas.

 \begin{lemma} \label{quasigeod1}   There exists $\ell_0>0$ with the following property. 
 Suppose that 
 $T_i, i=1,\ldots, s$ is a sequence of thin parts $T_i \in \mathcal {H}_{\epsilon, G_0}$ such that  $T_i \neq T_{i+1}$ for all $i$.
  Suppose that  $[Q_iP_{i+1}]$ is the common perpendicular to   $T_i,T_{i+1}$ and that $\mu$ is the piecewise geodesic arc joining points $Q_1,P_2,Q_2, \ldots ,Q_{s-1}, P_s$,  and suppose also that the geodesic segment $[P_iQ_i]$ has length at least $\ell_0, i=2, \ldots, s-1$. Then $\mu$ is a quasi-geodesic  in $\HHH^3$.\end{lemma}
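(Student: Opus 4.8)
The plan is to treat $\mu$ as a piecewise geodesic in $\HHH^3$ that alternates between the common perpendiculars $[Q_i,P_{i+1}]$ and the ``chords'' $[P_i,Q_i]$ running across the thin parts $T_i$, to show that its segments have length bounded below and that its turning angles at the vertices can be made uniformly small by choosing $\ell_0$ large, and then to invoke the standard criterion that a piecewise geodesic in the $\delta$-hyperbolic space $\HHH^3$ with long segments and small turning angles is a quasi-geodesic, with constants depending only on those bounds (see e.g.\ \cite{GhH}, \cite{BH}).

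First I would record the observation which makes the angle estimate uniform and confines it to horoball geometry. Since $G_0$ is geometrically finite it has only finitely many closed geodesics of length at most $\epsilon_{\mathcal M}$, so the radii of the Margulis tube components of $\mathcal{H}_{\epsilon, G_0}$ are bounded by some $r_0=r_0(G_0)$; hence any geodesic segment with both endpoints on the boundary of such a tube and interior inside it has length at most $2r_0$. Choosing $\ell_0>2r_0$ therefore forces every $T_i$ with $2\le i\le s-1$ (the thin parts across which $\mu$ has a chord of length $\ge\ell_0$) to be a \emph{horoball}. The end thin parts $T_1,T_s$, which contribute only the single points $Q_1,P_s$ and no chord, may be of either type and play no role.

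Next come the two families of estimates. For the lengths: by the choice of $\epsilon$ in Section~\ref{parabblock}, any two distinct components of $\mathcal{H}_{\epsilon, G_0}$ are at distance at least $a>0$, so $\ell([Q_i,P_{i+1}])\ge a$ for every $i$, while $\ell([P_i,Q_i])\ge\ell_0$ by hypothesis; thus every segment of $\mu$ has length at least $\min\{a,\ell_0\}>0$. For the angles: since $[Q_{i-1},P_i]$ is the shortest geodesic between the disjoint convex sets $T_{i-1},T_i$, it meets $\partial T_i$ orthogonally at $P_i$ and arrives from outside $T_i$, so its forward direction at $P_i$ is the inward normal to the horosphere $\partial T_i$. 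The chord $[P_i,Q_i]\subset T_i$ has length at least $\ell_0$, and an elementary hyperbolic computation (a chord of a horoball which is nearly tangent to its bounding horosphere is short; cf.\ Lemma~\ref{expdist1}) shows that it meets $\partial T_i$ at $P_i$ in an angle $\psi(\ell_0)$ with $\psi(\ell_0)\to\pi/2$ as $\ell_0\to\infty$. Hence the turning angle of $\mu$ at $P_i$, the angle between the inward normal and the chord direction, is at most $\pi/2-\psi(\ell_0)$; the same bound holds at $Q_i$ with the roles of the chord and the common perpendicular interchanged. So every turning angle of $\mu$ is at most $\theta_0:=\pi/2-\psi(\ell_0)$, which tends to $0$ as $\ell_0\to\infty$.

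Finally I would fix $\ell_0$ once and for all, large enough that $\ell_0>2r_0$ and that $\theta_0=\pi/2-\psi(\ell_0)$ is small enough, \emph{relative to} the segment–length lower bound $\min\{a,\ell_0\}$, for the local-to-global criterion to apply; this choice depends only on the hyperbolicity constant of $\HHH^3$ and on $(G_0,\epsilon)$. The criterion then gives that $\mu$ is a $K$-quasi-geodesic in $\HHH^3$ with $K$ depending only on those data, in particular independent of $s$ and of the particular sequence $(T_i)$. I expect the delicate point to be exactly the invocation of the local-to-global criterion: one must have the turning angles small relative to the length lower bound, and here the two kinds of segments carry different (but both uniformly positive) lower bounds, which is precisely what the choice of $\ell_0$ is designed to reconcile; the horoball angle estimate itself, and the reduction to horoballs via the tube-radius bound, are routine.
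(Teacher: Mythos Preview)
Your reduction to horoballs contains a genuine error. An equidistant tube $T=\{x:d_{\HHH}(x,L)\le r_0\}$ around a geodesic $L$ is \emph{unbounded}: it extends infinitely along $L$, so two points on $\partial T$ can lie at arbitrarily large hyperbolic distance from one another (just slide one of them along the axis direction), and the chord between them, being contained in the convex set $T$, is still a chord of the tube. Thus the assertion ``any geodesic segment with both endpoints on the boundary of such a tube and interior inside it has length at most $2r_0$'' is false, and the conclusion that $\ell_0>2r_0$ forces the intermediate $T_i$ to be horoballs does not follow. Without this reduction your horosphere angle computation does not cover the tube case, and the argument as written has a gap.

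The paper's proof avoids this entirely by observing that no horoball/tube dichotomy or sharp angle estimate is needed: convexity of each $T_i$ already gives interior angle at least $\pi/2$ at every vertex of $\mu$. Indeed $[Q_iP_{i+1}]$, being the common perpendicular of the convex sets $T_i,T_{i+1}$, meets $\partial T_i$ orthogonally at $Q_i$, while the chord $[P_iQ_i]$ has both endpoints in the convex set $T_i$ and hence its direction at $Q_i$ lies in the inward (or tangent) half-space; the angle between them is therefore at least $\pi/2$, and similarly at each $P_i$. Together with the uniform lower bound $\ell([Q_iP_{i+1}])\ge a>0$ on the perpendicular segments and $\ell([P_iQ_i])\ge\ell_0$ on the chords, the standard fact for piecewise geodesics in $\HHH^3$ with angles $\ge\pi/2$ and alternately long segments yields the quasi-geodesic conclusion once $\ell_0$ is chosen large relative to $a$. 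So you were working harder than necessary: the weaker bound $\ge\pi/2$ is immediate and uniform over both kinds of thin part, and suffices for the standard criterion.
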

  \begin{proof}  The angle between the segments $[P_iQ_i]$ and $[Q_iP_{i+1}]$ is at least $\pi/2$ and by the choice of $\epsilon$ (see the discussion in Section~\ref{parabblock}), $[Q_iP_{i+1}]$ has length at least $a>0$. The result is now standard.
  \end{proof}
  
  \begin{lemma} \label{quasigeod2}  Suppose that $D>0$ is given and that $T,T' \in  \mathcal {H}_{\epsilon, G_0}$. Let $P,P'$ be points within distance $D$ of $T,T'$ respectively such that the geodesic segment  $ [P,P']$ is disjoint from both $T$ and $T'$. Then $ [P,P']$ is within bounded distance of the common perpendicular to  $T$ and $T'$ with a bound that depends only on $D$ and $\epsilon$.\end{lemma}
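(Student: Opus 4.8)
The plan is to reduce everything to two standard facts about nearest-point projection in $\HHH^3$ and then a thin-quadrilateral estimate. Write $[X,X']$ for the common perpendicular, with $X\in\partial T$, $X'\in\partial T'$, and let $\pi_T,\pi_{T'}$ denote nearest-point projection onto the convex sets $T,T'$; recall that the common perpendicular realizes $d_{\HHH}(T,T')$, so that $X=\pi_T(X')$ and $X'=\pi_{T'}(X)$. The two facts I will use are: (i) $\pi_T$ and $\pi_{T'}$ are $1$-Lipschitz; (ii) the $\pi_T$-image of a geodesic segment disjoint from $T$, and also the $\pi_T$-image $\pi_T(T')$, have diameter bounded by a constant depending only on $\e$. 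For horoballs this bound is universal; for equidistant tubes it follows from hyperbolicity of $\HHH^3$ together with the lower bound $a=a(\e)>0$ on the distance between distinct thin parts fixed in Section~\ref{parabblock}, since, being disjoint from $T$, neither the segment nor $T'$ can be asymptotic to $T$ in $\partial\HHH^3$, and so its $\pi_T$-image is bounded.

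First I would locate the endpoints of $[P,P']$ near $X$ and $X'$. Set $Y=\pi_T(P)$ and $Y'=\pi_{T'}(P')$, so $d_{\HHH}(P,Y)=d_{\HHH}(P,T)\le D$ and $d_{\HHH}(P',Y')\le D$. Since $[P,P']$ is disjoint from $T$ and contains both $P$ and $P'$, fact (ii) gives $d_{\HHH}(Y,\pi_T(P'))\le B_0$; fact (i) gives $d_{\HHH}(\pi_T(P'),\pi_T(Y'))\le d_{\HHH}(P',Y')\le D$; and since $Y'$ and $X'$ both lie in $T'$, fact (ii) gives $d_{\HHH}(\pi_T(Y'),X)=d_{\HHH}(\pi_T(Y'),\pi_T(X'))\le B_0'$ because both points lie in $\pi_T(T')$. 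Hence $d_{\HHH}(Y,X)\le B_0+D+B_0'=:B_1$, so $d_{\HHH}(P,X)\le D+B_1$. Running the same argument with the roles of $T$ and $T'$ interchanged, which uses that $[P,P']$ is also disjoint from $T'$, yields $d_{\HHH}(P',X')\le D+B_1$.

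Finally I would compare the geodesic segments $[P,P']$ and $[X,X']$, whose corresponding endpoints are within $R:=D+B_1$ of one another. For any $z\in[P,P']$, $\delta$-thinness of the quadrilateral with vertices $P,X,X',P'$ puts $z$ within $2\delta$ of $[P,X]\cup[X,X']\cup[X',P']$, and since $[P,X]$ and $[X',P']$ have length at most $R$, it follows that $z$ lies within $R+2\delta$ of $[X,X']$; the symmetric estimate bounds the distance from points of $[X,X']$ to $[P,P']$. Thus $[P,P']$ lies within Hausdorff distance $D+B_1+2\delta$ of the common perpendicular, a bound depending only on $D$, $\e$ and the universal hyperbolicity constant $\delta$ of $\HHH^3$. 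The only point requiring genuine care is fact (ii): the uniform boundedness of $\pi_T$ applied both to a disjoint geodesic and to a second thin part, which is where the distinction between horoballs and tubes and the constant $a(\e)$ actually enter; the rest of the argument is formal.
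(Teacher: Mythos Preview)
Your projection-based argument is a valid alternative to the paper's proof, which proceeds more directly: with $Y=\pi_T(P)$, $Y'=\pi_{T'}(P')$ and $X,X'$ the feet of the common perpendicular (your notation), the paper considers the broken geodesic $[Y,X]\cup[X,X']\cup[X',Y']$, observes that the interior angles at $X,X'$ are $\ge\pi/2$ (since $[X,X']$ is the outward normal while $[Y,X]\subset T$ and $[X',Y']\subset T'$ by convexity) and that $|XX'|\ge a(\e)$, so this path is a uniform quasi-geodesic and $[Y,Y']$ tracks it; then, since $[Y,X]\subset T$ while $[P,P']$ (which is $(D+2\delta)$-close to $[Y,Y']$) avoids $T$, the segment $[Y,X]$ must be short. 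Your argument reaches the same conclusion $d(Y,X)\le C$ by chaining projection estimates instead, which is conceptually cleaner but shifts the work into your fact~(ii).

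One genuine point of care in your write-up: your justification of fact~(ii) for tubes is not quite right. The constant $a(\e)$ is the separation between \emph{distinct} thin parts; it controls $\mathrm{diam}\,\pi_T(T')$ because $d(T,T')\ge a$, but it does \emph{not} control $\mathrm{diam}\,\pi_T([P,P'])$, since $[P,P']$ is only assumed disjoint from $T$ and may graze it. For a horoball the projection of a disjoint geodesic is universally bounded, but for an equidistant tube $N_R(L)$ the bound is essentially $2\,\mathrm{arctanh}(\mathrm{sech}\,R)$ (via projection to the core $L$), which blows up as $R\to 0$; and Margulis tubes in $\mathcal H_{\e,G_0}$ can have $R$ arbitrarily small when the core length is just below $\e$. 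In the paper's application this is harmless---for a fixed geometrically finite $G_0$ there are finitely many conjugacy classes of thin parts, hence a positive minimum radius---but your stated dependence ``only on $\e$'' for this half of~(ii) is not established by the sentence you wrote.
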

  \begin{proof}  Let $X,X'$ be the nearest points to $P,P'$ on $T,T'$ respectively and let $Q \in T,Q' \in T'$ be the endpoints of the  common perpendicular to  $T$ and $T'$. Consider the quadrilateral with vertices $X,Q,Q',X'$. Since the angles at $Q,Q'$ are at least $\pi/2$ and since $d_{\HHH}(Q,Q') >a$, the geodesic segment $[XX']$ is at bounded distance to the union of the arcs $[XQ], [QQ'], [X'Q']$. Since
 $[XQ]$ penetrates $T$,  so does $[PP']$ unless  $[XQ]$ has bounded length; likewise with $[X'Q']$. 
  \end{proof}

  \begin{proposition} \label{PropA} Given $L>0$, there  exist  $L' = L'(L)>0$ and $n_1 \in \mathbb N$ with the following property. 
  Suppose given a word $   V_{1}U_{1} V_2U_2\ldots V_s $  in $\G^*$  as in the proof of Proposition~\ref{Prop1}, that is, so that  $V_i$ is either empty or is $L$-quasi-geodesic in $(\Gr \G, |\cdot|_{\G})$, and where $U_i =   p_{j_i}^{a_im_{j_i}}$ for some $ p_{j_i} \in \mathcal P_0, a_i \in \mathbb Z, a_i \neq 0$, and so that  if $V_i = \emptyset$ then 
 $ p_{j_i} \neq  p_{j_{i+1}}$ for all $i$. 
  Then we can split $V_i$ into blocks as $V_i= X_iY_iZ_i$  (where any of $X_i, Y_i$ or $ Z_i$ may be empty) in such a way  that  there exists a word $\hat U_i =_{\Gamma}  Z_iU_i X_{i+1} $ in the generators $\G^*$ which is a parabolic block relative  to $ { p_{j_i} }$  and  such that $  \hat j_0(Y_{1}\hat U_1 Y_2\hat U_2\ldots Y_s) $ is a standard ambient $L'$-quasi-geodesic in $\mathcal V_0$, for all $n \geq n_1$.  
\end{proposition}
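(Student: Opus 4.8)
The plan is to build the decomposition $V_i = X_iY_iZ_i$ inductively, proceeding along the word $V_1U_1V_2U_2\cdots V_s$ and keeping track of how much of each $V_i$ is absorbed into the neighbouring parabolic blocks. First I would fix the thin parts: the element $U_i = p_{j_i}^{a_im_{j_i}}$ determines, via the remark following Lemma~\ref{lemmaVa}, a thin part $T_i \in \mathcal H_{\epsilon,G_0}$ (namely $\rho_0(g_r)^{-1}T_0^{p_{j_i}}$ where $g_r$ is the prefix of $W_\G$ up to $U_i$), and the hypothesis $p_{j_i}\neq p_{j_{i+1}}$ whenever $V_i=\emptyset$ guarantees $T_i \neq T_{i+1}$, so Lemma~\ref{quasigeod1} is available. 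Because $p_{j_i}\in\mathcal P_0$, these $T_i$ are horoballs in $\HHH^3/G_0$ after our renumbering; the additional generators of $G_0^*$ were arranged in Section~\ref{parabblock} to be powers of such $p$.

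Next I would define $X_{i+1}$ (resp. $Z_i$) to be the maximal prefix (resp. suffix) of $V_i$ that, together with $U_i$ (resp. $U_{i+1}$), still lies within bounded distance of $\partial T_i$ (resp. $\partial T_{i+1}$) after applying $j_0$ — concretely, the longest prefix $X_{i+1}$ of $V_{i}$ such that $\hat j_0(U_iX_{i+1})$ stays within distance, say, $D_0$ of $T_i$, with $D_0$ a fixed multiple of the tracking constant $D$ from Lemma~\ref{lemmaV}/Corollary~\ref{floyd3}. Here one must check that these two prefixes/suffixes of $V_i$ do not overlap: if they did, the whole of $j_0(V_i)$ would stay near both $T_i$ and $T_{i+1}$, which (using $d_{\HHH}(T_i,T_{i+1})\geq a$ and that $V_i$ is $L$-quasi-geodesic, hence $j_0(V_i)$ is a quasi-geodesic by Lemma~\ref{floyd1a} applied to the subsegment) forces $|V_i|_\G$ to be bounded, and one can simply absorb such short $V_i$ entirely into the parabolic blocks. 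So after discarding these bounded cases we get a genuine splitting $V_i = X_iY_iZ_i$ with $Y_i$ non-empty (or empty in the bounded case), $X_i$ a power-of-$p_{j_{i-1}}$-type backtracking segment absorbed into $U_{i-1}$, and $Z_i$ similarly absorbed into $U_i$. Defining $\hat U_i$ to be a chosen geodesic word in $\G^*$ representing $Z_iU_iX_{i+1}$, Lemma~\ref{lemmaVa} (or its proof) shows $\hat U_i$ is a parabolic block relative to $p_{j_i}$, because $Z_iU_iX_{i+1}$ differs from $U_i$ by bounded-length pieces on each side which, being absorbable into $\partial T_i$, conjugate into $\Stab_{G_0}T_i$ up to a bounded error.

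Then I would verify that $\hat j_0(Y_1\hat U_1Y_2\hat U_2\cdots Y_s)$ is a standard ambient $L'$-quasi-geodesic in $\mathcal V_0$. The image $\hat j_0(\hat U_i)$ is within bounded distance of an arc $[P_iQ_i]\subset \partial T_i$ of length at least $\ell_0$ once $k_0$ (hence the exponents $a_im_{j_i}$, which go to infinity with $n$) is large enough — this is exactly where $n\geq n_1$ enters, via Lemma~\ref{penetrates} and the fact that $m_{j_i}\to\infty$; and $\hat j_0(Y_i)$ is (by construction, being a subword of $W$, which is $L$-quasi-geodesic in $(\Gr G_0,|\cdot|_0)$ and hence by Lemma~\ref{compare} and Lemma~\ref{ambientdist} a standard ambient quasi-geodesic) within bounded distance of the common perpendicular $[Q_{i-1}P_i]$ between $T_{i-1}$ and $T_i$ — here one uses Lemma~\ref{quasigeod2}, noting $Y_i$ was chosen precisely so that its endpoints are near $\partial T_{i-1},\partial T_i$ while the geodesic $[j_0(\text{start }Y_i),j_0(\text{end }Y_i)]$ exits both $T_{i-1}$ and $T_i$ (otherwise $Y_i$ would have been absorbed). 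Now Lemma~\ref{quasigeod1} applies directly to the piecewise-geodesic $Q_1P_2Q_2\cdots$ (the $[P_iQ_i]$ have length $\geq \ell_0$), giving that this concatenation is a quasi-geodesic in $\HHH^3$; feeding the bounded tracking distances back in, $\hat j_0(Y_1\hat U_1\cdots Y_s)$ is an ambient quasi-geodesic in $\mathcal V_0$ with constant $L'$ depending only on $L$ (through $K'$, $D$, $\ell_0$, $a$). Standardness across rank-two horoballs is inherited because $W$, coming from Floyd's construction in Proposition~\ref{floydorig} adapted as in Proposition~\ref{quasiwordpath}, was taken standard, and the splitting only removes backtracking, which does not affect the Euclidean-geodesic-tracking property on $\partial T_i$.

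The main obstacle I expect is the non-overlap argument for $X_{i+1}$ and $Z_i$ inside a single $V_i$, together with the bookkeeping needed to make the constants genuinely uniform in $n$: one has to argue that the amount of $V_i$ absorbed into each side is either bounded (so $Y_i$ non-empty with uniformly controlled geometry) or else $V_i$ is entirely short, and simultaneously ensure that the exponents in $\hat U_i$ stay large enough — uniformly in $i$ — for Lemma~\ref{quasigeod1} to apply, which is why the statement only claims the conclusion for $n\geq n_1$ and needs $m_{j_i}\to\infty$. Getting Lemma~\ref{penetrates} to deliver the lower bound $\ell_0$ on $[P_iQ_i]$ with a threshold $n_1$ independent of which $p\in\mathcal P_0$ occurs (there are finitely many) is the technical heart of the matter.
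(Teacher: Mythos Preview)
Your overall architecture matches the paper's: split each $V_i$ by peeling off the pieces that track $T_{i-1}$ and $T_i$, use Lemma~\ref{quasigeod2} to put the middle pieces $Y_i$ near common perpendiculars, and then invoke Lemma~\ref{quasigeod1} once the $\hat U_i$-segments are long enough. Your splitting via ``maximal prefix/suffix staying within $D_0$ of the horoball'' is, up to bounded error, the same as the paper's definition via projection of $\hat j_\G(V_i)$ onto $[j_0(V_i)]$ and its intersection with $T^1,T^2$; and your handling of the overlap case (absorb short $V_i$ entirely) is essentially what the paper does when $C$ precedes $B$.

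The genuine gap is exactly the step you flag as the ``technical heart'': showing that the hyperbolic length of the geodesic under $\hat U_i$ (the paper's $\hat\gamma_i$) tends to infinity uniformly in $i$ as $n\to\infty$. Your sketch appeals to ``Lemma~\ref{penetrates} and $m_{j_i}\to\infty$'', but this does not close the argument. Lemma~\ref{penetrates} concerns penetration of $T_n^p$ in the approximating groups $G_n$, not lengths of arcs on $\partial T_i$ in $G_0$; and more importantly, the exponent you need large is the \emph{net} exponent in $\hat U_i = Z_iU_iX_{i+1}$, not the exponent $a_im_{j_i}$ in $U_i$ alone. Since $Z_i$ and $X_{i+1}$ are themselves parabolic blocks relative to $p_{j_i}$ whose $\G$-length is not bounded a priori (the words $V_i$ depend on $n$ and can be arbitrarily long), one must rule out that their contributions cancel most of $U_i$.

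The paper resolves this by a word-length comparison in the two metrics $|\cdot|_\G$ and $|\cdot|_n$. Writing $\hat\gamma_i = [C_{i-1},B_i]$, one has $|C_{i-1}B_i|_\G \geq |C_{i-1}B_i|_n$, and since the segment $Z_{i-1}u_{i-1}^{a_{i-1}}X_i$ of $W$ is $L$-quasi-geodesic in $|\cdot|_n$, this is bounded below (up to multiplicative constants) by $|Z_{i-1}|_\G + |U_{i-1}|_n + |X_i|_\G$. Combining with the triangle inequality $|Z_{i-1}|_\G + |X_i|_\G \geq |U_{i-1}|_\G - |C_{i-1}B_i|_\G$ and the key relation $|U_{i-1}|_\G = m_{j_{i-1}}|U_{i-1}|_n$ yields $2|C_{i-1}B_i|_\G \succ (m_{j_{i-1}}+1)|U_{i-1}|_n$, which tends to infinity with $n$. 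This is the missing ingredient in your sketch; without it the choice of $n_1$ making $\ell(\hat\gamma_i)\geq \ell_0$ uniformly in $i$ is not justified.

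(A minor point: your indexing slips --- you write ``$X_{i+1}$ the maximal prefix of $V_i$'' where you mean $X_i$, and the pairing of prefixes/suffixes with $T_{i-1},T_i$ is off by one in places --- but this is cosmetic.)
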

 \begin{proof}  
We continue with the notation and discussion of Proposition~\ref{Prop1}. The statement and the idea of the proof are illustrated in Figure~\ref{fig:backtracking}.
  Throughout the proof, when we say that  various distances are `bounded', we mean they are uniformly bounded in $n$ and independent of the choices of $\lambda$ and words $W$.   
  
  Let $g_i = \rho_{0}(V_{1}U_{1} V_2U_2\ldots U_{i-1}  )$ and $g'_i = \rho_{0}(V_{1}U_{1} V_2U_2\ldots V_{i} )$.
 For $ i = 1, \ldots, s$,  let  $ \beta_i  = [j_{0}(g_i),   j_{0}(g_i' )]$ and $  \gamma_i = [ j_{0}(g_i' ), j_{0}(g_{i+1} )]. $
Thus  $\beta_i =   g_i ( [j_{0}(V_i)])$  and 
$\gamma_i =  g'_i( [j_{0}(U_i)])$.  Recall that each word $U_i$ corresponds to a thin part $T_i \in \HH_{\epsilon, G_n}$.

Temporarily let us drop the subscript $i$ and    write $V = V_i$ etc. For simplicity we will work with $\beta= [j_{0}(V)] = g_i^{-1}(\beta_i)$ rather than $\beta_i$.  Let $A,  D$ be  the initial  and final points of $\beta$.

\begin{figure}[hbt] 
 \centering
\includegraphics[height=6cm]{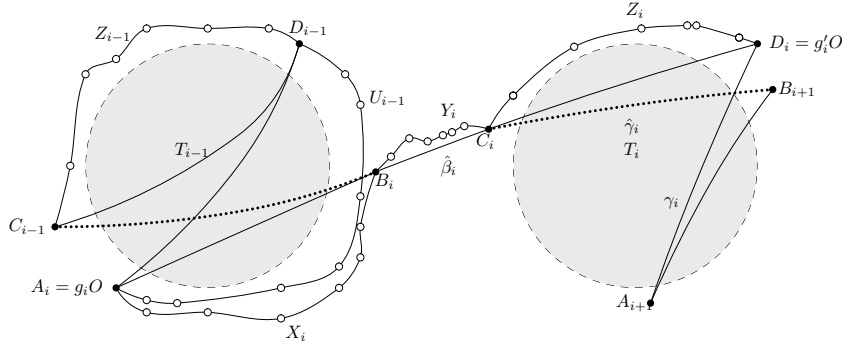} 
\caption{A segment of the path  $j_{0}(W)$  showing the segment $V_i$ from $A_i$ to $D_i$ split as  $V_i = X_iY_iZ_i$. Note the cancellation  in the path $U_{i-1}V_i$ from $D_{i-1}$ to $A_i$ to $D_i$. We shorten $\beta_i = [A_i,D_i]$ to $\hat \beta_i = [B_i,C_i]$ and replace $[D_{i-1},A_i] = \gamma_i$ by $[C_{i-1},B_i] = \hat \gamma_i$. The key point is to see  $\ell(\hat \gamma_i) \to \infty$ independent of the choices made.}
\label{fig:backtracking}
 \end{figure}

 Let $T^1,T^2$ denote respectively the (distinct) horoballs $g_i^{-1}(T_{i-1}), g_i^{-1}(T_{i})$. It follows from the construction  that $A$ is a bounded distance from $T^1$ and $D$ is a bounded distance from $T^2$.
Consider the projection $\pi$ from  $\hat j_{\G}(V)$ to  $\beta$. Set $\beta^j = \beta \cap T^j, j=1,2$.  
Now define orbit points $B,C$ on $\hat j_{\G}(V)$ as follows: if 
$\beta^1 = \emptyset $ then $B=A$, otherwise $B$ is a nearest orbit point to the final point of  $\pi^{-1}(\beta^1)$; likewise  if 
$\beta^2 = \emptyset $ then $C=D$,  otherwise
$C$ is a nearest orbit point to the first point on $\pi^{-1}(\beta^2)$.
   (We remark that it is possible that $C$ precedes $B$ in order along $\hat j_{\G}(V)$. In this case by Lemma~\ref{penetrates} there is a bounded distance between $B$ and $C$.
Thus up to changing constants by a bounded amount, we can replace $C$ by the point $B$.)
  
Denote the segments of $V$ from $A$ to $B$, from $B$ to $C$, and from $C$ to $D$ by $X, Y, Z$ respectively. Thus $V = XYZ$.
By Lemma~\ref{lemmaVa}, since $A$ and $B$ are at a bounded distance from $T^1$, the segment $X$ is a parabolic block relative to  $T^1$, and similarly for $Z$ relative to $T^2$.

Now let $A_i, B_i, C_i, D_i, X_i, Y_i, Z_i$ be the images of $A,B,C,D$ and   $ j_{0}(X)$, $ j_{0}(Y)$ and $ j_{0}(Z)$ under $g_i$. 
  Let $\hat \gamma_i$ denote the geodesic from $C_{i-1}$ to $B_i$. We claim that (independent of all the many choices made) 
its hyperbolic length  $\ell(\hat \gamma_i) \to \infty$ uniformly with $n$. In particular, given $\ell_0 >0$ we can choose $n_1 $ so that  $\ell(\hat \gamma_i) \geq \ell_0$, whenever $n \geq n_1$.
To prove this, it is clearly enough to show that $ |C_{i-1}B_i|_{\G} \to \infty$ as $n \to \infty$. We have 
\begin{eqnarray*} &&  |C_{i-1}B_i|_{\G} \geq   |C_{i-1}B_i|_{n} \succ \\ &&  |C_{i-1}D_{i-1}|_{n} + |D_{i-1}A_i|_{n} +  |A_{i}B_{i}|_{n}=  (|C_{i-1}D_{i-1}|_{\G} + | A_iB_i|_{\G}) +  |U_{i-1}|_{n},\end{eqnarray*} where 
the second inequality is because the path $C_{i-1}D_{i-1}A_iB_i$ is by definition $L$-quasi-geodesic in $G_n^*$ and the final equality follows since by definition the words $Z_{i-1}$ corresponding to the path $C_{i-1}D_{i-1}$ and $X_i$ corresponding to the path $D_{i-1}A_i$ are quasi-geodesic words in $G_n^*$ which happen to be expressed entirely by generators in $\G^*$.
Since
$$|C_{i-1}D_{i-1}|_{\G} +  |A_{i}B_{i}|_{\G} \geq   |D_{i-1}A_i|_{\G}-  |C_{i-1}B_i|_{\G} $$
we have
 $$2  |C_{i-1}B_i|_{\G} \succ    |D_{i-1}A_i|_{\G} +    |U_{i-1}|_{n} = m_{j_{i-1}} |U_{i-1}|_n +    |U_{i-1}|_{n}, $$  where we  used $ |D_{i-1}A_i|_{\G} = |U_{i-1}|_{\G} = m_{j_{i-1}}|U_{i-1}|_{n}$, as in the proof of Proposition~\ref{Prop1}.   Now by definition $ m_{j_{i}}$ is the exponent such that $p_{j_i}^{m_{j_{i}}} \to q_{j_i}$ in the geometric limit, so  $m_{j_{i}}\to \infty$ with $n$. Hence  $\ell(\hat \gamma_i) \to \infty$ with $n$ as claimed.

Now let $\hat \beta_i$ denote the   geodesic from $B_{i}$ to $C_i$. Note that $\hat \beta_i$ has endpoints within bounded distance of the segment of $\beta_i$ which is outside both $T_{i-1}$ and $T_i$. We claim that for sufficiently large $\ell_0$,  the path $\beta$ obtained by concatenating $\hat   \beta_1, \hat\gamma_1, \hat\beta_2, \ldots, \hat\gamma_s$ is quasi-geodesic in $\HHH^3$,   whenever $n \geq n_1$.
By construction, the endpoints of  $\hat\beta_i$ are within bounded distance of $T_{i-1}$ and $T_i$ respectively. Moreover by construction the segment $\hat\beta_i$  is outside both $T_{i-1}$ and $T_i$. Thus we are in the situation of Lemma~\ref{quasigeod2}
so that $\hat\beta_i$ is within bounded distance of the common perpendicular to 
$T_{i-1}$ and $T_i$. Adjusting the endpoints of each $\gamma_i$ by at most a uniformly bounded amount, we see we are in the situation of Lemma~\ref{quasigeod1}, and the result follows.

From the construction,  the path $\hat j_{0}(Y_i)$  tracks $\hat \beta_i$ at bounded distance.  
Now consider the segment $\hat j_0(Z_{i} U_i X_{i+1})$ from $C_i$ to $B_{i+1}$. We claim that this can be replaced by a path $\hat j_0( \hat U_i )$ with the same initial and final points, and where $\hat U_i =_{\G} Z_{i} U_i X_{i+1}$  is a  parabolic block relative to $p_{j_i}$. 
By construction the initial and final points $C_i$ to $B_{i+1}$ are at bounded distance to 
$T_i$. Hence by the method of Lemma~\ref{lemmaVa},  if $C_i = h_iO$ to $B_{i+1} = h'_{i+1}O$ then $(h'_{i+1})^{-1} h_i$ is a parabolic block relative to $p_{j_i}$, proving the claim.


 Now $j_0(Y_i)$ tracks $ \hat \beta_i $ and $j_0(\hat U_i)$ tracks the shortest path from $C_i$ to $B_{i+1}$ on $\dd T_i$ at bounded distance,   $\hat \gamma_i$ being the geodesic with the same endpoints. Since the concatenation $\beta$ of $ \hat \beta_1, \hat \gamma_1,\ldots, \hat \beta_s, \hat \gamma_s  $  is quasi-geodesic for $n \geq n_1$ 
 it follows that 
 $\hat j_{0}(Y_{1}\hat U_1 Y_2\hat U_2\ldots Y_s)$  is an ambient quasi-geodesic in $\HHH^3$. 
All constants involved are independent of the various choices made and of $n$. This completes the proof.
\end{proof}

We are finally ready to prove Theorem~\ref{thm:alg=ptwise}.  
Let $e_{i_1} e_{i_2} \ldots $ be a standard quasi-geodesic path in $\Gr \G$ such that $e_{i_1} e_{i_2} \ldots e_{i_k} \cdot O \to \xi $.   There are three possibilities:
\begin{enumerate}
\item the length of parabolic blocks in $e_{i_1} e_{i_2} \ldots $ is bounded above;
\item $e_{i_1} e_{i_2} \ldots $ contains parabolic blocks of arbitrarily long length;
\item $e_{i_1} e_{i_2} \ldots $ terminates in an infinite parabolic block.  
\end{enumerate}

It is straightforward to see that case (3) happens if and only if $\xi$ is a parabolic fixed point,  in which case $\hat i_n(\xi) \to  \hat i_\infty(\xi)$ follows immediately from the algebraic convergence. Thus we have only to  prove $EP(\xi)$ and $EPP(\xi)$ for cases (1) and (2). We start with
$EP(\xi)$. As usual, denote $\hat j_{\G}(e_{i_1} e_{i_2} \ldots )$ by $[1,\xi)$.

\begin{prop} \label{EPxi1} \label{quasigeod} Let $  e_{i_1} e_{i_2} \ldots $ be a standard quasi-geodesic $\Gr \G$ in which there is an upper bound on the length of parabolic blocks. Then
$((\rho_n),[1,\xi))$ satisfies
  $EP(\xi)$. 
 \end{prop}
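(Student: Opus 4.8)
The plan is to produce functions $f_{\xi},M_{\xi}\co\mathbb N\to\mathbb N$ with $f_{\xi}(N)\to\infty$ that work simultaneously for \emph{every} vertex $g=g_{r}$ of the path $[1,\xi)$ with $d_{\G}(1,g)=|g|_{\G}\ge N$. Fix such a $g$, let $\lambda=\lambda_{r}$ be the initial segment of $[1,\xi)$ from $1$ to $g$, and let $B$ be the given upper bound for the lengths of the parabolic blocks of $[1,\xi)$. By Proposition~\ref{quasiwordpath} the path $[1,\xi)$, hence $\lambda$, is a standard $K$-quasi-geodesic in $\Gr\G$ with $K$ depending only on $\xi$; in particular $r\asymp|g|_{\G}$. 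By Proposition~\ref{Prop1} there is $n_{1}=n_{1}(\xi)$ so that $j_{n}(\lambda)$ lies within a uniform distance $D=D(\xi)$ of $[j_{n}(\lambda)]\cup\HH([j_{n}(\lambda)])$ for all $n\ge n_{1}$.

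The crux is the claim that, for $n$ at least some threshold $M=M(B)\ge n_{1}$, the geodesic $\alpha:=[j_{n}(\lambda)]=[O,\rho_{n}(g)\cdot O]$ penetrates every component of $\HH_{\epsilon;G_{n}}$ to depth at most a uniform constant $D_{0}$. Indeed, if $T=T^{p}_{n}$ is a thin part entered by $\alpha$ and $[P,P']\subset\alpha$ the corresponding sub-arc, then by Proposition~\ref{Prop1} the piece of $j_{n}(\lambda)$ shadowing $[P,P']$ runs between $j_{n}(g')$ and $j_{n}(h')$ for vertices $g',h'$ of $\lambda$ a bounded distance from $P,P'$, so by the analogue of Lemma~\ref{lemmaVa} for $G_{n}$ the sub-word $(g')^{-1}h'$ of $\lambda$ is a parabolic block relative to $p$, hence of length $|k|\le B$. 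Since $k$ is bounded, the displacement of a point of $\partial T^{p}_{n}$ by $\rho_{n}(p^{k})$ is bounded above uniformly once $n\ge M(B)$: if $\rho_{n}(p)$ is parabolic this is immediate, and for the pinching elements $p\in\mathcal P_{0}$ (where $\rho_{n}(p)$ is a short loxodromic) it follows from the estimates in the proof of Lemma~\ref{penetrates} applied with $k_{1}=B$ — this is the one place where the threshold must depend on $B$, since a power $\rho_{n}(p^{k})$ with $k$ comparable to the pinching exponent $m(n)$ would instead have nearly unipotent holonomy and displace $\partial T^{p}_{n}$ only boundedly while wrapping the Margulis tube. Hence the entry and exit points of $\alpha$ on $\partial T$ are a bounded distance apart along $\partial T$, and Lemmas~\ref{horoballs} and~\ref{tubes} then force the penetration depth of $\alpha$ into $T$ to be at most a uniform $D_{0}$. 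Since a bounded-depth excursion of a geodesic into a horoball or tube stays within bounded distance of its chord, $j_{n}(\lambda)$ lies within a uniform distance $D'=D'(\xi)$ of the single geodesic $\alpha$, for all $n\ge M(B)$.

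Finally I bound $\ell(\alpha)=d_{\HHH}(O,\rho_{n}(g)\cdot O)$ from below. The vertices $\rho_{n}(g_{0})\cdot O,\dots,\rho_{n}(g_{r})\cdot O$ ($g_{0}=1$) are $r+1$ distinct points of the $G_{n}$-orbit of $O$, all within $D'$ of $\alpha$. The vertices lying in the blocks that shadow thin parts number at most $B$ per block, and the number of such blocks is at most $\ell(\alpha)/a$ because distinct thin parts are at distance at least $a$; every other vertex lies within $D'$ of the part of $\alpha$ outside all thin parts, where the injectivity radius of $\HHH^{3}/G_{n}$ is at least $\epsilon$, so at most a uniformly bounded number of them can cluster within $D'$ of any unit sub-segment of $\alpha$ (using the uniform geometry of the convex cores, cf.\ Proposition~\ref{prop:diameters}). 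Therefore $r+1\prec\ell(\alpha)$ with constants depending only on $(\G,\G^{*})$, $\epsilon$ and $\xi$, so $d_{\HHH}(O,\rho_{n}(g)\cdot O)\succ r\succ|g|_{\G}\ge N$, and we may take $f_{\xi}(N):=\lfloor cN\rfloor$ for a suitable $c=c(\xi)>0$ and $M_{\xi}(N):=M(B)$ (which is in fact independent of $N$), which verifies $EP(\xi)$. The main obstacle is the second step: extracting, \emph{uniformly in $n$}, a bound on how deeply $[O,\rho_{n}(g)\cdot O]$ enters the thin parts of $\HHH^{3}/G_{n}$ from nothing more than the bound $B$ on the lengths of the parabolic blocks of $\lambda$ in $\Gr\G$ — precisely where the bounded-block hypothesis is used, and whose failure is what forces the more delicate treatment of case~(2).
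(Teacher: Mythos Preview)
Your argument takes a very different route from the paper's and, as written, has a real gap at the point where you invoke ``the analogue of Lemma~\ref{lemmaVa} for $G_{n}$''. That lemma produces $(g')^{-1}h'=_{\G}p^{k}y$ with $d_{\G}(1,y)\le c$, but the constant $c$ comes from bounding the number of Dirichlet translates meeting a $D$-neighbourhood of the base point, i.e.\ from a bound on $|x|_{\G}$ given $d_{\HHH}(O,\rho_{n}(x)\cdot O)\le D$. That is exactly UEP, which fails under mere algebraic convergence: the offending elements are precisely words involving the high powers $p_{j}^{m_{n}}$. So you cannot conclude that the subword shadowing a thin-part crossing is a parabolic block with the \emph{fixed} constant $c$ of the definition, and hence the bounded-block hypothesis does not directly bound $|k|$. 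The same circularity bites again in your final counting step: bounding the number of $G_{n}$-orbit points of $O$ within $D'$ of a unit segment of $\alpha$ uniformly in $n$ is again a form of UEP, not a consequence of the injectivity-radius bound at points of $\alpha$ (which controls displacements of nearby points, not clustering of a fixed orbit).

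By contrast the paper's proof is a two-line soft argument avoiding all of this: suppose $EP(\xi)$ fails, extract $n_{k},N_{k}\to\infty$ with $\rho_{n_{k}}(g_{N_{k}})\cdot O$ bounded, pass to a geometric limit $h\in H$, write $h$ as a word in $H^{*}=\rho_{\infty}(\G^{*})\cup\{q_{1},\dots,q_{s}\}$ via Theorem~\ref{extraelts}, and use Lemma~\ref{unique} to identify $g_{N_{k}}$ with the corresponding $G_{n_{k}}^{*}$-word; since $|g_{N_{k}}|_{\G}\to\infty$ at least one letter must be a $q_{j}$, which when rewritten in $\G^{*}$ forces an arbitrarily long power $p_{j}^{m_{n_{k}}}$ to appear, contradicting the bound on parabolic blocks. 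The point is that the paper uses geometric convergence to $H$ to sidestep the failure of UEP rather than fighting it; your direct approach needs the uniform estimates you were implicitly assuming, and those are not available here.
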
 \begin{proof}
We have to show that given $N \in \mathbb N$, there exists $f_{\xi}(N) \to \infty$ as $N \to \infty$ such that $d(O, j_n(e_{i_1} e_{i_2} \ldots e_{i_N} )) \geq f_{\xi}(N)$ for any $n \in \mathbb N$.  If the result is false, there exist $A>0$ and $n_k, N_k \to \infty$ such that $d(O, j_{n_k}(e_{i_1} e_{i_2} \ldots e_{i_{N_k}} )) \leq A$. Passing to a subsequence, we may assume that $\rho_{n_k}(e_{i_1} e_{i_2} \ldots e_{i_{N_k}})$ converges geometrically to some $h \in H$. By Theorem~\ref{extraelts}, we must have $ h = h_{i_1} \ldots  
  h_{i_p}$ where $h_{i_j} \in H^*$, and since $N_k \to \infty$ we must have 
   $h_{i_j} \in  \{ q_1, \ldots, q_s\}$ for some $j$. It follows that $e_{i_1} e_{i_2} \ldots e_{i_{N_k}}$ contains arbitrarily long parabolic blocks contrary to hypothesis.
  \end{proof}

\begin{prop}  \label{EPxi2} Suppose that  $e_{i_1} e_{i_2} \ldots $ contains arbitrarily long parabolic blocks. Then
$((\rho_n),[1,\xi))$ satisfies
  $EP(\xi)$. 
  \end{prop}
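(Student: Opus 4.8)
The plan is to prove $EP(\xi)$ directly, measuring the distance bound against the successive \emph{block endpoints} of $[1,\xi)$. Since we are in case (2) the path $[1,\xi)$ has infinitely many parabolic blocks; I would list the block endpoints as $g_{(1)},g_{(2)},\dots$ in order along $[1,\xi)$ and write $q_r$ for the position (number of letters) of $g_{(r)}$, so that $q_r\ge r\to\infty$. The points $j_{\G}(g_{(r)})$ converge to $\xi$, so by Theorem~\ref{firstresult} and continuity of $\hat i_{\infty}$ we get $j_{\infty}(g_{(r)})\to\hat i_{\infty}(\xi)\in\Lambda_{\infty}$, and hence $D_r:=d_{\HHH}(O,j_{\infty}(g_{(r)}))\to\infty$ as $r\to\infty$.

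The key step will be the claim that for each $r$ there exist $M_r\in\mathbb N$ and a constant $C$ depending only on $\G$ and on the quasi-geodesic constant $K$ of $[1,\xi)$ such that, whenever $n\ge M_r$ and $g$ lies on $[1,\xi)$ at position at least $q_r$, one has $d_{\HHH}(O,j_n(g))\ge D_r-C$. To prove it I would let $\lambda$ be the initial segment of $[1,\xi)$ ending at $g$, so $\lambda$ contains the whole of the block $U_r$ ending at $g_{(r)}$. For $n\ge n_1(K)$, Proposition~\ref{Prop1} gives that $j_n(\lambda)$ lies within a bounded distance of $[O,j_n(g)]\cup\mathcal H([O,j_n(g)])$; and, using the block decomposition of Proposition~\ref{PropA} together with the fact that the fixed-length block $U_r$ does not collapse under $j_n$ once $n$ is large (Lemma~\ref{penetrates}), the image $j_n(g_{(r)})$ of the block endpoint lies within a bounded distance $D_1=D_1(\G,K)$ of an entry/exit point of $[O,j_n(g)]$ at the thin part it traverses corresponding to $U_r$ — in particular within $D_1$ of a point of the geodesic $[O,j_n(g)]$ itself. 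Hence $d_{\HHH}(O,j_n(g))\ge d_{\HHH}(O,j_n(g_{(r)}))-D_1$, and since $g_{(r)}$ is a fixed element, algebraic convergence provides an $M_r\ge n_1(K)$ with $d_{\HHH}(O,j_n(g_{(r)}))\ge D_r-1$ for all $n\ge M_r$; the claim follows with $C=D_1+1$.

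Granting the claim, $EP(\xi)$ follows by bookkeeping: given $N$, let $r(N)$ be the largest $r$ with $q_r\le N$, so $r(N)\to\infty$; any $g\in[1,\xi)$ with $|g|_{\G}\ge N$ then lies at position at least $|g|_{\G}\ge N\ge q_{r(N)}$, and the claim gives $d_{\HHH}(O,j_n(g))\ge D_{r(N)}-C$ for all $n\ge M_{r(N)}$, so $((\rho_n),[1,\xi))$ satisfies $EP(\xi)$ with $f_{\xi}(N)=\max\{0,D_{r(N)}-C\}\to\infty$ and $M_{\xi}(N)=M_{r(N)}$. The part I expect to be the real obstacle is the assertion, inside the proof of the claim, that $j_n(g_{(r)})$ tracks a point of the geodesic backbone $[O,j_n(g)]$ rather than merely a point of some thin part it meets; making this precise requires unwinding the construction of Propositions~\ref{Prop1} and~\ref{PropA} so that, for $n$ large, each parabolic block of $\lambda$ corresponds to a genuine (non-collapsed) excursion of $[O,j_n(g)]$ into a thin part, with the block endpoints matching the entry/exit points of that excursion.
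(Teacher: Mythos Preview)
Your approach is genuinely different from the paper's. The paper never looks at the limit group $G_\infty$ at all: it defines $f_\xi(N)$ to be the \emph{number} of parabolic blocks of length at least $k_0$ in $e_{i_1}\ldots e_{i_N}$ (with $k_0$ from Lemma~\ref{penetrates}), takes $k_1(N)$ to be the maximum length of these blocks, and sets $M_\xi(N)=M(k_1,D)$. For $n\ge M_\xi(N)$, Lemma~\ref{penetrates} combined with Proposition~\ref{Prop1} forces $\alpha_n=[O,j_n(e_{i_1}\ldots e_{i_N})]$ to penetrate the translate of $T_n^p$ corresponding to each of these blocks; since distinct thin parts are separated by at least $a$, one reads off $\ell(\alpha_n)\ge a\,f_\xi(N)$. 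The distance function is thus a block \emph{count}, not a distance in $G_\infty$.

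The obstacle you flag is real, and your sketch does not close it. Proposition~\ref{Prop1} only places $j_n(g_{(r)})$ within $D$ of $\alpha_n\cup\mathcal H(\alpha_n)$; if the nearest point lies on some $T\in\mathcal H(\alpha_n)$, the entry point of $\alpha_n$ into $T$ can be arbitrarily close to $O$ even though $j_n(g_{(r)})$ is far from $O$ (horoballs are unbounded). So from $d_{\HHH}(O,j_n(g_{(r)}))\ge D_r-1$ you cannot conclude $d_{\HHH}(O,j_n(g))\ge D_r-C$ without an extra argument pinning $j_n(g_{(r)})$ to the geodesic backbone rather than to a thin part. Your proposed fix via Proposition~\ref{PropA} does not help directly: that proposition rewrites the auxiliary Floyd path $W$ in the generators $G_n^*$, and its block structure need not match the parabolic blocks of the fixed path $[1,\xi)$. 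To actually force $\alpha_n$ to have a point near $j_n(g_{(r)})$ you would have to argue that $\alpha_n$ penetrates thin parts on both sides of $g_{(r)}$ and that the transit segment between them passes near $j_n(g_{(r)})$ --- which is precisely Lemma~\ref{penetrates} plus the separation constant $a$, i.e.\ the paper's argument. So your route does not avoid the core mechanism; it wraps it inside a less direct framework.
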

  \begin{proof} 
 By Proposition~\ref{Prop1},  $j_n(e_{i_1} e_{i_2} \ldots e_{i_N})$ is at uniformly bounded distance $D$ say to $\alpha_n \cup \mathcal H(\alpha_n)$, where $\alpha_n = [j_n(e_{i_1} e_{i_2} \ldots e_{i_N})]$. Choose $k_0 = k_0(D)$ as in Lemma~\ref{penetrates}.
    Define $f_{\xi}(N)  $ to be the number of parabolic blocks of length at least $k_0$
in $e_{i_1}e_{i_2} \ldots e_{i_N}$. 
By our assumption, $f_{\xi}(N)\rightarrow\infty$ as $N\rightarrow\infty$.

Let $k_1 = k_1(N;\xi)$ be the maximum length of these $f_{\xi}(N)$ blocks.
By Lemma~\ref{penetrates}, there exists  $M = M(k_1,D) = M_{\xi}(N)$ so that $\alpha_n$ penetrates $\rho_n(g)T^p_n$
 provided $n \geq M$.  Recall there is a constant $a>0$ such that the distance between any two components  of $\mathcal \HH_{\epsilon, G_n} $ is at least $a$  for any $n$. It follows that  $d(O, j_n(e_{i_1}e_{i_2} \ldots e_{i_r})) \geq Na$ provided $ n \geq M_{\xi}(N)$ and the result follows.
\end{proof}

\begin{proposition} \label{Prop3}The pair
$((\rho_n),\xi)$ satisfies
  $EPP(\xi)$. 
  \end{proposition}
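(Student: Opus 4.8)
The plan is to deduce $EPP(\xi)$ from $EP(\xi)$, which has already been established in Propositions~\ref{EPxi1} and~\ref{EPxi2}, by repeating the scheme used to prove Theorem~\ref{floyd} from Proposition~\ref{floydorig} and Corollary~\ref{floyd3}, now with Proposition~\ref{Prop1} in place of Corollary~\ref{floyd3} and with the \emph{uniform} statement $EP(\xi)$ in place of the left hand inequalities of Proposition~\ref{floydorig}. Fix $N$ and let $\lambda = e_{i_r}\ldots e_{i_s}$ be a subsegment of the standard quasi-geodesic $[1,\xi)$ supplied by Proposition~\ref{quasiwordpath} which lies outside $B_{\G}(1;N)$ in $\Gr\G$, so that every vertex $g_t = e_{i_1}\ldots e_{i_t}$, $r \le t \le s$, satisfies $d_{\G}(1,g_t) \ge N$. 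I would take $f_{\xi}, M_{\xi}$ to be the functions provided by $EP(\xi)$, after enlarging $M_{\xi}$ if necessary so that $M_{\xi}(N) \ge n_1$ for all $N$, where $n_1$ is the threshold appearing in Proposition~\ref{Prop1}; enlarging $M_{\xi}$ does not affect the validity of $EP(\xi)$.

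The steps would be as follows. First, $EP(\xi)$ gives $d_{\HHH}(j_n(g_t),O) \ge f_{\xi}(N)$ for all $r \le t \le s$ and all $n \ge M_{\xi}(N)$; since consecutive vertices of $\lambda$ are joined by edges of length $d_{\HHH}(O,\rho_n(e_{i_{t+1}})\cdot O)\le A$, with $A$ the uniform bound coming from algebraic convergence, the piecewise geodesic $\hat j_n(\lambda)$ lies outside $B_{\HHH}(O; f_{\xi}(N)-A)$ for $n\ge M_{\xi}(N)$. Next, since $\lambda$ is itself a standard $K$-quasi-geodesic in $\Gr\G$, Proposition~\ref{Prop1} applies: for $n\ge n_1$ the path $\hat j_n(\lambda)$ lies within a uniform distance $D$ of $\alpha_n \cup \mathcal H(\alpha_n)$, where $\alpha_n = [\hat j_n(\lambda)]$ is the $\HHH^3$-geodesic joining the endpoints. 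Then, exactly as in the proof of Theorem~\ref{floyd}, for each component $T\in\mathcal H(\alpha_n)$ the entry and exit points $P_1,P_2$ of $\alpha_n$ to $T$ lie within bounded distance of $\hat j_n(\lambda)$, hence outside $B_{\HHH}(O; f_{\xi}(N)-A-D)$; applying Lemma~\ref{horoballs} when $T$ is a horoball and Lemmas~\ref{expdist2} and~\ref{tubes} when $T$ is an equidistant tube, the segment $[P_1,P_2]\subset\alpha_n$ lies outside $B_{\HHH}(O; \tfrac14(f_{\xi}(N)-A-D)-c)$ for a universal $c$. Since the portions of $\alpha_n$ lying outside $\bigcup_{T\in\mathcal H(\alpha_n)}T$ stay within $D$ of $\hat j_n(\lambda)$, I would conclude that $\alpha_n$ lies outside $B_{\HHH}(O; f_{1,\xi}(N))$ with $f_{1,\xi}(N)=\tfrac14(f_{\xi}(N)-A-D)-c\to\infty$, for all $n\ge M_{\xi}(N)$, which is exactly $EPP(\xi)$ with the same $M_{\xi}$ as in $EP(\xi)$.

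Two points would need the same care as in the proofs of Theorems~\ref{crit1} and~\ref{floyd}, but neither is the real difficulty. The first is the possibility that an endpoint $j_n(g_r)$ or $j_n(g_s)$ lies inside a component $T\in\mathcal H(\alpha_n)$; I would treat this exactly as there, using that in cases~(1) and~(2) the point $\xi$ is not a parabolic fixed point, so that no such $T$ can persistently trap the endpoints near $O$. The second is verifying that the constant $D$ in Proposition~\ref{Prop1} is genuinely uniform in $n$, which it is in virtue of Proposition~\ref{prop:diameters} and the uniform Lipschitz control from algebraic convergence. I expect the main obstacle to lie not in this formal deduction at all — it is little more than the bookkeeping of Theorem~\ref{floyd} carried out uniformly in $n$ — but in the input it rests on, namely Propositions~\ref{Prop1} and~\ref{PropA}; granting those, $EPP(\xi)$ follows as above.
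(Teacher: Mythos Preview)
Your proposal is correct and follows essentially the same route as the paper: use $EP(\xi)$ from Propositions~\ref{EPxi1} and~\ref{EPxi2} to push all vertices of $\hat j_n(\lambda)$ outside a large ball, invoke Proposition~\ref{Prop1} to control the entry and exit points of $[j_n(\lambda)]$ to each thin part, and then apply Lemma~\ref{tubes} (and Lemma~\ref{horoballs}) exactly as in Theorem~\ref{floyd}. The paper's proof is just a terser version of this; your extra caveats about endpoints trapped in a thin part are unnecessary here but harmless, and the additional hypothesis of Lemma~\ref{tubes} is supplied by the sub-path of $\hat j_n(\lambda)$ lying outside the ball, as you effectively note.
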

  \begin{proof}
   We have to show that  there exists $f_{1,\xi}(N) = f_1(N)$ such that $f_1(N)\rightarrow \infty$ as $N\rightarrow \infty$ and 
such that for any geodesic subsegment $\lambda$ of $[1,\xi ) $ lying outside $B_{\G}(1;N)$,
the $\Hyp^3$-geodesic  $[j_n(\lambda)]$  lies outside $B_{\HHH} (O; f_1(N))$ in $\Hyp^3$.

Use Propositions~\ref{EPxi1} and ~\ref{EPxi2} to find $f_1(N)$ such that  $d_\Gamma (1,g) \geq N$ implies  $d_{\Hyp} (j_n(g) , O) \geq
f_1(N)$ for all $ n \geq M_{\xi}(N)$.  Then $d_{\Hyp} (\rho_n(\lambda)  , O) \geq
f_1(N)$  for all $ n \geq M_{\xi}(N)$.

By Proposition~\ref{Prop1}, $j_n(\lambda)$ is at uniformly bounded distance $D$ to $[j_n(\lambda) ]\cup \mathcal H([j_n(\lambda) ])$.  Hence the entry and exit points of 
$[j_n(\lambda) ]$ to any thin component $T \in \mathcal H([j_n(\lambda) ])$ are outside $B_{\Hyp} (O ,  
f_1(N) -D)$, as is the sub-path of $j_n(\lambda)$  joining them. So by Lemma~\ref{tubes}, $[j_n(\lambda) ] $ is outside $B_{\Hyp} (O ,  
f_1(N)/4 -c)$ for a suitable uniform constant $c$. The proof follows as in Theorem~\ref{floyd}.
\end{proof}

  \begin{proof} [Proof of Theorem~\ref{thm:alg=ptwise}] 
  This follows from Proposition~\ref{Prop3}  and Theorem~\ref{ptwisecrit}.\end{proof}


\section*{Appendix: Hyperbolic Geometry Estimates} \label{sec:estimates}

\begin{appendthm}\label{easy} In the ball model with $O$ as centre, suppose that $X,Y \in \HHH^3$ lie outside $B_{\HHH}(O;R)$  and that the geodesic $[X,Y  ]$ joining them is also outside $B_{\HHH}(O;R)$. Then 
$d_{\mathbb E}(X,Y) \prec  e^{-R}$.
\end{appendthm}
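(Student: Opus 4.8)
The plan is to bound $d_{\mathbb E}(X,Y)$ by the \emph{Euclidean} length $\ell_{\mathbb E}([X,Y])$ of the hyperbolic geodesic segment $[X,Y]$, and to estimate that length using the parametrization of $[X,Y]$ by hyperbolic arc length. It is worth stressing at the outset that one cannot simply argue that $X,Y$ are Euclidean-close to $\partial\mathbb B$ and hence to each other: two points near the boundary sphere can be Euclidean-far apart (e.g.\ nearly antipodal), so the hypothesis that the \emph{whole} geodesic segment $[X,Y]$ avoids $B_{\HHH}(O;R)$ is essential and must be used.

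Here is the set-up. Let $L$ be the complete geodesic through $X$ and $Y$, let $P_0$ be the foot of the perpendicular from $O$ to $L$, and set $\delta=d_{\HHH}(O,L)=d_{\HHH}(O,P_0)$. Parametrize $L$ by hyperbolic arc length $s\in\mathbb R$ with $P_0$ at $s=0$, and write $d_s=d_{\HHH}(O,x(s))$. I would use two standard facts about the ball model (taken with curvature $-1$; the constants below absorb any rescaling of the normalization): first, a point at hyperbolic distance $d$ from the centre $O$ lies at Euclidean distance $\tanh(d/2)$ from $O$, so that the Euclidean speed along $L$ at parameter $s$ equals $\tfrac12\bigl(1-\tanh^2(d_s/2)\bigr)=(1+\cosh d_s)^{-1}$; and second, the hyperbolic Pythagoras relation $\cosh d_s=\cosh\delta\,\cosh s$ for the right triangle with legs $\delta$ and $|s|$. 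Combining these, the Euclidean speed along $L$ at $s$ is $\bigl(1+\cosh\delta\cosh s\bigr)^{-1}$.

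Now the hypothesis that $[X,Y]$ lies outside $B_{\HHH}(O;R)$ is exactly the statement that $\cosh\delta\cosh s\ge\cosh R$ for every $s$ in the parameter interval $[s_X,s_Y]$ of $[X,Y]$. Since $[s_X,s_Y]$ is connected and $\{s:\cosh\delta\cosh s\ge\cosh R\}$ is either all of $\mathbb R$ (when $\cosh\delta\ge\cosh R$) or the union of two rays $\{|s|\ge s_*\}$, where $\cosh s_*=\cosh R/\cosh\delta$, we get $[s_X,s_Y]\subseteq\{|s|\ge s_*\}$, taking $s_*=0$ in the first case. Therefore
\[
d_{\mathbb E}(X,Y)\ \le\ \ell_{\mathbb E}([X,Y])\ \le\ \int_{|s|\ge s_*}\frac{ds}{1+\cosh\delta\cosh s}\ \le\ \frac{1}{\cosh\delta}\int_{|s|\ge s_*}\frac{ds}{\cosh s}\ =\ \frac{4\arctan(e^{-s_*})}{\cosh\delta}\ \le\ \frac{4\,e^{-s_*}}{\cosh\delta}.
\]
If $s_*=0$ this is $\le\pi/\cosh\delta\le\pi/\cosh R$; if $s_*>0$, then $e^{s_*}\ge\cosh s_*=\cosh R/\cosh\delta$ gives $e^{-s_*}\le\cosh\delta/\cosh R$, so the bound is $\le 4/\cosh R$. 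In either case $d_{\mathbb E}(X,Y)\le 4/\cosh R<8e^{-R}$, which is the assertion.

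The one mildly delicate point — and the one I would flag as the main obstacle — is precisely this last estimate when the complete geodesic $L$ passes close to $O$ (small $\delta$): the two tails of $L$ lying outside $B_{\HHH}(O;R)$ are then hyperbolically very long, so a crude bound of their Euclidean length by $(\cosh R)^{-1}$ times their hyperbolic length would diverge. The resolution is to retain the exact Euclidean speed $(1+\cosh\delta\cosh s)^{-1}$, use the genuine $\cosh s$-decay in the integral, and invoke the elementary inequality $e^{s_*}\ge\cosh s_*=\cosh R/\cosh\delta$, after which the two factors of $\cosh\delta$ cancel and one is left with the clean $O(1/\cosh R)$ bound. Everything else is routine hyperbolic trigonometry in the ball model.
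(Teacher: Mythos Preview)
Your proof is correct and complete; in particular the Euclidean-speed formula $(1+\cosh d_s)^{-1}$, the Pythagoras relation $\cosh d_s=\cosh\delta\cosh s$, the tail integral $\int_{|s|\ge s_*}\frac{ds}{\cosh s}=4\arctan(e^{-s_*})$, and the cancellation of $\cosh\delta$ via $e^{s_*}\ge\cosh s_*=\cosh R/\cosh\delta$ are all right, and the bound $d_{\mathbb E}(X,Y)\le 4/\cosh R<8e^{-R}$ follows.

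The paper argues differently. It takes $P$ to be the foot of the perpendicular from $O$ to the \emph{segment} $[X,Y]$ (so $d_{\HHH}(O,P)\ge R$ directly, avoiding your small-$\delta$ issue altogether), observes that the angle $\angle XOP$ satisfies $\tan\theta=\tanh d_{\HHH}(X,P)/\sinh d_{\HHH}(O,P)\le 1/\sinh R\prec e^{-R}$, and then uses that $X$ and $P$ are each Euclidean-close to their radial boundary points (since both are hyperbolically far from $O$) to conclude $d_{\mathbb E}(X,P)\prec e^{-R}$; likewise for $Y$. Your approach instead works with the full geodesic line, allows $\delta=d_{\HHH}(O,L)$ to be small, and integrates the Euclidean speed explicitly; this is more computational but yields an explicit constant and makes transparent exactly how the hypothesis on the whole segment (as opposed to just its endpoints) enters, via the restriction of the parameter interval to $\{|s|\ge s_*\}$. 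The paper's route is shorter and more geometric; yours is self-contained and quantitatively sharper.
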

\begin{proof} In the ball model $\mathbb B$, let $\xi,\eta$ denote the endpoints of the radial lines from $O$ through $X,Y$ on $\partial \HHH^3$. Let $P$ be the footpoint of the perpendicular from $O$ to the geodesic segment $[XY]$ and let $P'$ be the endpoint of this ray at $\infty$.
It will be sufficient to show that $d_{\mathbb E}(P,X) \prec e^{-R}$.

 Let $\theta = \angle XOP$. Then 
$$\tan \theta = \frac{\tanh d_{\mathbb H}(X,P) } {\sinh d_{\mathbb H}(O,P)} < e^{-R}/2,$$
 from which it follows that $d_{\mathbb E}(\xi,P') < e^{-R}/2$.

Now from $R  \leq d_{\mathbb H}(O,X)$ 
 we find easily $d_{\mathbb E}(X,
\xi) \prec e^{-R}$. The result follows.
\end{proof}

The following lemma  allows us to replace  orbit points by attracting fixed points.

 \begin{appendthm}\label{easy1}  Suppose that $A \in \PSL$ is loxodromic (resp. parabolic), and that  
$d_{\mathbb H}(O,A \cdot O) > R$, where $O$ is the centre of the ball model $\mathbb B$. Let $A^+ \in \partial \mathbb B$  be the attracting  fixed point (resp. fixed point) of $A$. Then $d_{\mathbb E}(A \cdot O,A^+) \prec  e^{-R/2}$.
\end{appendthm}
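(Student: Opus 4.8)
The plan is to reduce the statement to a single geometric estimate: \emph{the hyperbolic geodesic ray $\rho$ from $A\cdot O$ to $A^+$ stays at distance at least $R/2-c_0$ from $O$, for a universal constant $c_0$.} Granting this, for every $Z\in\rho$ the subsegment $[A\cdot O,Z]\subset\rho$ together with its two endpoints all lie outside $B_{\HHH}(O;R/2-c_0-1)$, so Lemma~\ref{easy} gives $d_{\mathbb E}(A\cdot O,Z)\prec e^{-(R/2-c_0-1)}$ with a universal implied constant; letting $Z\to A^+$ along $\rho$ (so $Z\to A^+$ in $\mathbb B\cup\Chat$ while $[A\cdot O,Z]$ stays outside the ball) yields $d_{\mathbb E}(A\cdot O,A^+)\prec e^{-R/2}$. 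For $R$ below any fixed bound the asserted inequality is trivial, since $d_{\mathbb E}$ is bounded by the Euclidean diameter of $\mathbb B$, so all additive constants can be absorbed into $\prec$.

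To prove the estimate I normalise $\HHH^3$ to the upper half-space model with $A^+=\infty$, writing $O=(\zeta_0,t_0)$ with $\zeta_0\in\C$, $t_0>0$, and using $\cosh d_{\HHH}\big((\zeta,t),(\zeta',t')\big)=1+\frac{|\zeta-\zeta'|^2+(t-t')^2}{2tt'}$. In the parabolic case $A(\zeta,t)=(\zeta+v,t)$ with $v\neq0$, so $A\cdot O=(\zeta_0+v,t_0)$ and $\rho$ is the vertical ray $\{(\zeta_0+v,t):t\ge t_0\}$; a one-variable minimisation over $t\ge t_0$ gives $\cosh^2 d_{\HHH}(O,\rho)=1+\frac{|v|^2}{t_0^2}=2\cosh d_{\HHH}(O,A\cdot O)-1>2\cosh R-1\ge\cosh R$, whence $d_{\HHH}(O,\rho)>\frac R2-c_0$.

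In the loxodromic case I also conjugate $A^-$ to $0$, so that the axis of $A$ is the vertical line over $0$ and $A(\zeta,t)=(e^{\ell+i\phi}\zeta,e^{\ell}t)$ with translation length $\ell>0$ and rotation angle $\phi$; then $\rho$ is the vertical ray over $\zeta_1:=e^{\ell+i\phi}\zeta_0$ starting at height $e^{\ell}t_0$. Putting $a=|\zeta_1-\zeta_0|^2$, a direct computation gives $\cosh d_{\HHH}(O,A\cdot O)=\cosh\ell+\frac{a}{2e^{\ell}t_0^2}$, and minimising $\cosh d_{\HHH}\big((\zeta_0,t_0),(\zeta_1,t)\big)$ over $t\ge e^{\ell}t_0$ shows that either the minimum is attained at the endpoint, in which case $d_{\HHH}(O,\rho)=d_{\HHH}(O,A\cdot O)>R$ and we are done, or else $\sinh^2 d_{\HHH}(O,\rho)=\frac{a}{t_0^2}=2e^{\ell}\big(\cosh d_{\HHH}(O,A\cdot O)-\cosh\ell\big)$. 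In the latter case, if $\cosh\ell\le\frac12\cosh d_{\HHH}(O,A\cdot O)$ then $\sinh^2 d_{\HHH}(O,\rho)\ge e^{\ell}\cosh d_{\HHH}(O,A\cdot O)\ge\cosh R$, so $d_{\HHH}(O,\rho)>\frac R2-c_0$; while if $\cosh\ell>\frac12\cosh d_{\HHH}(O,A\cdot O)>\frac12\cosh R$ then, discarding the nonnegative $a$-term and using $\frac{(t-t_0)^2}{2t_0t}\ge\cosh\ell-1$ for all $t\ge e^{\ell}t_0$, every point $P\in\rho$ satisfies $\cosh d_{\HHH}(O,P)\ge\cosh\ell>\frac12\cosh R$, and again $d_{\HHH}(O,\rho)>\frac R2-c_0$.

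The only delicate point is the loxodromic case: an ``almost parabolic'' element, with $\ell$ and $\phi$ both small but $O$ far from the axis so that $d_{\HHH}(O,A\cdot O)$ is nonetheless large, would defeat a cruder argument, and it is precisely the identity $\sinh^2 d_{\HHH}(O,\rho)=2e^{\ell}\big(\cosh d_{\HHH}(O,A\cdot O)-\cosh\ell\big)$ together with the dichotomy on the size of $\cosh\ell$ that disposes of it. Everything else is a routine half-space computation.
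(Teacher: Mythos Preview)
Your argument is correct. The normalisation to upper half-space with $A^+=\infty$ reduces the claim to showing that the vertical ray from $A\cdot O$ stays hyperbolically far from $O$, and your case analysis handles this cleanly: the parabolic case via the identity $\cosh^2 d_{\HHH}(O,\rho)=2\cosh d_{\HHH}(O,A\cdot O)-1$, and the loxodromic case via the identity $\sinh^2 d_{\HHH}(O,\rho)=2e^{\ell}\bigl(\cosh d_{\HHH}(O,A\cdot O)-\cosh\ell\bigr)$ together with the dichotomy on $\cosh\ell$. The final appeal to Lemma~\ref{easy} and the limiting argument $Z\to A^+$ are fine (the extra $-1$ in the radius is unnecessary but harmless).

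The paper takes a quite different and rather shorter route. It invokes the fact (from Marden's book) that in the ball model the isometric sphere $I_{A^{-1}}$ coincides with the perpendicular bisector of $[O,A\cdot O]$. Since $A^+$ always lies on $I_{A^{-1}}$, and since both $A\cdot O$ and $A^+$ lie in the half-space cut off by this bisector away from $O$, the whole argument reduces to the single observation that a hyperbolic plane at distance $>R/2$ from $O$ bounds a region of Euclidean diameter $O(e^{-R/2})$. No case split between parabolic and loxodromic, and no explicit half-space computation, is needed. Your approach, by contrast, is entirely self-contained: it uses only the distance formula and Lemma~\ref{easy}, with no appeal to isometric circles. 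The trade-off is length and the need to handle the ``almost parabolic'' loxodromics separately, which you do correctly via the $\cosh\ell$ dichotomy.
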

\begin{proof}  We use a fact  which we learned from ~\cite{marden-book} Lemma 1.5.4:
  in $\mathbb B$, the isometric circle $I_{A^{-1}}$ of $A^{-1}$ is the perpendicular bisector $L $ of the line from $O$ to $A \cdot O$. (To see this, let $\tau$ denote inversion in  $L $. Then 
$A^{-1} \tau$ fixes $O$ and hence is a Euclidean isometry. Using this together with the fact that  $|D\tau |_{L } = 1$, it follows easily from the chain rule that $|DA^{-1}|_{L } = 1$.)

Now $I_{A^{-1}}$ contains both $A  \cdot O$ and the attracting fixed point $A^+$. 
Since $d_{\mathbb H}(O,A \cdot O) > R$ we have  $d_{\mathbb H}(O,I_{A^{-1}}) > R/2$, hence the Euclidean diameter of $I_{A^{-1}}$ is $O(e^{-R/2})$ and the result follows.
\end{proof}

We need estimates of the distortion caused by skirting around horoballs or Margulis tubes.  If $L \subset \HHH^3$ is a geodesic we call  $T  = \{ x \in \HHH^3: d_{\HHH}(x, L) \leq R \}$  the \emph{equidistant tube of radius $R $} around $L$.    If $H$ is a horoball or  $T$ is a tube,  and if  $P_1,P_2 \in \partial H$ (resp. $ P_1,P_2 \in \partial T$), we denote by $ d_{\dd H}(P_1,P_2)$ (resp. $  d_{\dd T}(P_1,P_2)$)
  the length of the shortest path on $\dd H$ (resp. ${\dd T}$) from $P_1$ to $P_2$.

\begin{appendthm} \label{expdist1} Let $H \subset \HHH^3$ be a horoball and suppose $P_1,P_2 \in \partial H$. Set $l= d_{\dd H}(P_1,P_2)$ and   $d= d_{\HHH }(P_1,P_2)$. Supose that $d > d_0$ for some fixed $d_0 > 0$.
Then $l \asymp e^{d/2}$  with constants depending only on $d_0$. \end{appendthm}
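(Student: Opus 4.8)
The plan is to reduce to an explicit computation in the upper half-space model $\{(z,t) : z \in \C,\, t > 0\}$. Since all horoballs in $\HHH^3$ are isometric, we may normalise so that $H$ is the horoball based at $\infty$, namely $H = \{(z,t) : t \geq 1\}$, with $\partial H$ the horosphere $\{t = 1\}$. The hyperbolic metric $ds^2 = (|dz|^2 + dt^2)/t^2$ restricts on $\{t = 1\}$ to the flat Euclidean metric $|dz|^2$; hence $\partial H$ with its induced path metric is isometric to the Euclidean plane, and the intrinsic distance $l = d_{\partial H}(P_1, P_2)$ equals the Euclidean distance $|z_1 - z_2|$, where $P_j = (z_j, 1)$.

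Next I would use the standard formula for hyperbolic distance in upper half-space,
$$\cosh d_{\HHH}\bigl((z_1, t_1), (z_2, t_2)\bigr) = 1 + \frac{|z_1 - z_2|^2 + (t_1 - t_2)^2}{2 t_1 t_2}.$$
Setting $t_1 = t_2 = 1$ and $|z_1 - z_2| = l$ yields $\cosh d = 1 + l^2/2$, that is, $l^2 = 2(\cosh d - 1) = 4\sinh^2(d/2)$, so $l = 2\sinh(d/2)$ identically.

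Finally, the hypothesis $d > d_0$ is precisely what converts this identity into the asserted coarse equivalence. One always has $l = 2\sinh(d/2) = e^{d/2} - e^{-d/2} \leq e^{d/2}$, while for $d \geq d_0$ one has $l = e^{d/2}\bigl(1 - e^{-d}\bigr) \geq (1 - e^{-d_0})\, e^{d/2}$, and $1 - e^{-d_0}$ is a positive constant depending only on $d_0$. Hence $l \asymp e^{d/2}$ with comparison constants depending only on $d_0$. I do not expect any real difficulty here: the only points needing attention are choosing the normalisation of $H$ so that $\partial H$ carries the flat metric at unit scale, and noting that the comparison constants are universal (independent of $H$), which holds because any two horoballs are related by an isometry of $\HHH^3$.
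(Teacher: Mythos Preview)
Your proof is correct and follows essentially the same approach as the paper: both derive the exact identity $l = 2\sinh(d/2)$ by a direct computation (the paper cites the angle of parallelism formula, you use the upper half-space distance formula) and then read off the asymptotic under the hypothesis $d > d_0$. Your version is in fact more explicit than the paper's one-line proof.
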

\begin{proof} As is easily checked by explicit computation using the angle of parallelism formula, $l = 2 \sinh {d/2}$, see for example~\cite{floyd} p.\ 213.  
\end{proof}

One can make a  similar  estimate for tubes of sufficiently large radius.

\begin{appendthm}\label{expdist2}
Fix positive constants $  R_0,d_0$ and $h_0$. Let $T \subset \HHH^3$ be the equidistant tube of radius $R \geq  R_0$  around a  geodesic $ L \subset \HHH^3$ and let $P_1,P_2\in \partial T$.   Let $l= d_{\dd T}(P_1,P_2)$ and   $d= d_{\HHH }(P_1,P_2)$. Then  $l  \prec e^{d /2}$  where  the constant involved depends only on $R_0$. If moreover the distance between the projections of $P_1,P_2$ onto the axis $L$ of $T$ is at  most $h_0 $ and $d \geq d_0$, then $l  \asymp e^{d /2}$  where  the constant involved  depends only on $R_0,h_0$ and $d_0$.
\end{appendthm}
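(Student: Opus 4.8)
The plan is to pass to Fermi (cylindrical) coordinates about $L$ and reduce both estimates to an elementary comparison of two explicit functions. In coordinates $(t,\theta,r)$ on $\HHH^3$, where $r=d_{\HHH}(\cdot,L)$, $\theta$ is the angle about $L$, and $t$ is arclength along $L$ of the foot of the perpendicular, the metric is $dr^2+\cosh^2 r\,dt^2+\sinh^2 r\,d\theta^2$; hence the induced metric on $\dd T=\{r=R\}$ is the flat cylindrical metric $\cosh^2 R\,dt^2+\sinh^2 R\,d\theta^2$. Therefore, if $P_1,P_2\in\dd T$ have feet of perpendiculars to $L$ at distance $w=|\Delta t|$ apart and angular difference $v\in[0,\pi]$ (choosing the shorter way round), then $l^2=\cosh^2 R\,w^2+\sinh^2 R\,v^2$. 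A short computation in the upper half-space model with $L$ the vertical axis (equivalently, the cylindrical-coordinate law of cosines) gives the companion formula $\cosh d=\cosh^2 R\cosh w-\sinh^2 R\cos v$; using $\cosh^2 R-\sinh^2 R=1$ I would rewrite this as $\cosh d=\cosh w+\sinh^2 R(\cosh w-\cos v)$, with both summands nonnegative. Note also $\cosh d\geq\tfrac12 e^{d}$ throughout.

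For the upper bound it suffices to show $l^2\prec\cosh d$ with an absolute constant, since then $l\prec\sqrt{\cosh d}\leq e^{d/2}$. Writing $\cosh^2 R=\sinh^2 R+1$, the three terms of $l^2=\sinh^2 R\,w^2+w^2+\sinh^2 R\,v^2$ are each bounded by a multiple of $\cosh d$: from $\cosh w-\cos v\geq 1-\cos v\geq\tfrac{2}{\pi^2}v^2$ on $[0,\pi]$ one gets $\sinh^2 R\,v^2\prec\cosh d$; from $\cosh w-\cos v\geq\cosh w-1\geq\tfrac12 w^2$ one gets $\sinh^2 R\,w^2\prec\cosh d$; and $w^2\leq 2(\cosh w-1)\leq 2\cosh w\leq 2\cosh d$. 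Adding these gives $l\prec e^{d/2}$ with a universal constant, which a fortiori depends only on $R_0$.

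For the lower bound, with the extra hypotheses $w\leq h_0$ and $d\geq d_0$, the crucial extra ingredient is the trivial inequality $l\geq d$ — the intrinsic distance on $\dd T$ dominates the ambient one — which gives $l^2\geq d_0^2$. Since $\cosh w-1\leq C_1(h_0)w^2$ and $\cosh w\leq\cosh h_0$ for $w\in[0,h_0]$, and $1-\cos v\leq\tfrac12 v^2$, the rewritten distance formula yields $\cosh d\leq\cosh h_0+\sinh^2 R\,C_2(h_0)(w^2+v^2)\leq\cosh h_0+C_2(h_0)\,l^2$, using $l^2\geq\sinh^2 R(w^2+v^2)$. A two-case split then finishes: if $C_2(h_0)l^2\geq\cosh h_0$ then $l^2\geq\tfrac{1}{2C_2(h_0)}\cosh d$, while otherwise $\cosh d\leq 2\cosh h_0$ and then $l^2\geq d_0^2\geq\tfrac{d_0^2}{2\cosh h_0}\cosh d$. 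Either way $l^2\succ\cosh d\succ e^{d}$, so $l\succ e^{d/2}$ with a constant depending only on $h_0,d_0$, hence only on $R_0,h_0,d_0$.

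The main obstacle is the second step described above: producing (or correctly citing) the cylindrical-coordinate distance formula $\cosh d=\cosh^2 R\cosh w-\sinh^2 R\cos v$ and making sure its angular variable is matched with the shorter-way-round angle $v$ that governs $l$; once this identity is in hand, everything else is routine estimation of $\cosh$, $\sinh$ and $\cos$. A secondary point is the constant bookkeeping in the lower bound, in particular the observation that the bounded range of $d$ where $\cosh d\leq 2\cosh h_0$ is dispatched not by any hyperbolic-geometry estimate but simply by $l\geq d\geq d_0$.
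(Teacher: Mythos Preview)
Your proof is correct and takes a genuinely different route from the paper's. The paper works synthetically: it parametrises $\partial T$ as a Euclidean cone in the upper half-space model to obtain $l \asymp e^R(h+\phi)$, and then bounds $d$ via a midpoint-and-perpendicular construction (an auxiliary point $P_1'$ obtained by rotating $P_1$ about $L$ through angle $\phi$, together with feet of perpendiculars $E,F$ in two right-angled triangles), deriving $e^{d/2}\succ \phi e^R$ and $e^{d/2}\succ h e^R$ separately, and reversing the inequalities under the extra hypotheses. You instead work entirely in Fermi coordinates, writing down the closed formula $\cosh d=\cosh^2 R\cosh w-\sinh^2 R\cos v$ and the flat-cylinder formula $l^2=\cosh^2 R\,w^2+\sinh^2 R\,v^2$, and then compare the two by elementary Taylor-type inequalities on $\cosh$ and $\cos$. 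Your approach is more computational but also sharper: the upper bound $l\prec e^{d/2}$ comes out with a universal constant rather than one depending on $R_0$, and the lower bound depends only on $h_0,d_0$. The paper's argument is more pictorial and perhaps easier to visualise (it comes with a figure), but it genuinely needs $R\ge R_0$ to pass between $\sinh R$ and $e^R$. The only point in your write-up that deserves a sentence of justification is the distance formula itself, which you flag; it is indeed the standard cylindrical law of cosines and follows by a direct computation in the upper half-space model with $L$ the vertical axis.
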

\begin{proof}   For convenience we arrange  things so that $L$ is the line from the origin   $\omega \in  \mathbb C$ to $\infty$ in the upper half space model of $\HHH^3$. 
 Then  $\partial T$ is a Euclidean cone with axis $L$.  By the angle of parallelism formula, the angle $\psi$ of the cone with the base plane $\mathbb C$ is given by $ \cosh R \sin \psi = 1$, alternatively $\sinh R = \cot \psi$.

Let $\pi$ denote perpendicular projection from $\HHH^3$ to $L$, and let 
$p_i = \pi(P_i), i=1,2$, see Figure~\ref{fig:cone}.  Let $h + i \phi$ be the complex distance between the  perpendiculars $[p_1,P_1]$ and $[p_2,P_2]$ (so that $h = d_{\HHH}(p_1,p_2)$ and   $\phi$ is the rotation angle).

We estimate $l$ by a line integral on $\partial T$.
 Take polar coordinates on $\partial T$, so that for a point $P= (t,\theta) \in \partial T$, $t>0$ denotes the Euclidean distance from $\omega$ to $P$  and $\theta$ is the angle between the plane containing $L$ and $\omega P$ and the plane containing $L$ and the real axis in $\mathbb C$.
 Since the hyperbolic metric  is conformally the same as the Euclidean metric, the element of hyperbolic arc length on $\partial T$ is
\begin{equation} 
\label{eqn:1} ds^2 = \dfrac{(t \cos \psi d\theta)^2 + dt^2}{(t\sin \psi)^2} = \sinh^2 Rd\theta^2 + \cosh^2R dt^2/t^2.
\end{equation}
 
 For $x,y>0$ we have $  \sqrt{x^2 + y^2} \asymp x+y$.
 Thus integrating~\eqref{eqn:1} gives
\begin{equation} 
\label{eqn:2}l \asymp  \phi   \sinh R+ h\cosh R \asymp e^R(h + \phi).
\end{equation}

\begin{figure}[hbt] 
 \centering
\includegraphics[height=8cm]{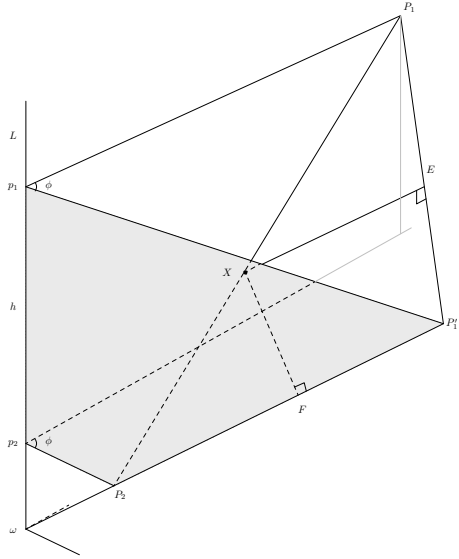} 
\caption{The estimate of $d$ in the proof of Lemma~\ref{expdist2}}
\label{fig:cone}
 \end{figure}

Now we estimate $d$.  Referring to Figure~\ref{fig:cone}, let $X$ be the midpoint of the segment $P_1P_2$, so that $d/2 = |XP_1|$, where $|AB|$ denotes the hyperbolic length of the geodesic segment from $A$ to $B$. Let $P_1'$ be the point  in the hyperbolic plane orthogonal to $L$ through $p_1$  (and thus containing $p_1$)
 such that  $\angle P_1'p_1P_1 = \phi$.  By symmetry, $  |XP_1| = |XP_1'|$, and by construction, $P_1', P_2$ and $L$ are coplanar. Let $E,F$ be the feet of the perpendiculars from $X$ to the geodesic lines
 $P_1P_1'$ and $P_1'P_2$ respectively. 
Considering the two right angled triangles $XEP_1'$ and $XFP_1'$ we find
$$ d/2 = |XP_1'| > |XE| + |EP_1'| -\mathrm{const.}>  |EP_1'| -\mathrm{const.}$$
and
$$ d/2 = |XP_1'| > |XF| + |FP_1'| -\mathrm{const.} >  |FP_1'| -\mathrm{const.}$$
so that 
$$ e^{d/2} \succ e^{ |FP_1'|} \ \ \mathrm {and} \ \ e^{d/2} \succ e^{ |EP_1'|}.$$

Computing
  in the plane through $P_1,P_1', p_1 $  we find $\sinh |EP_1'| = \sin \phi/2 \sinh R$, while computing in the plane containing $L, P_1'$ and $P_2$ we have 
$\sinh |FP_1'| = \sinh h/2 \cosh R$. 
Thus since $R$ is bounded below by $R_0$, 
$$ e^{d/2} \succ e^{ |EP_1'|} \succ \phi e^R$$ and likewise
$$ e^{d/2} \succ e^{ |FP_1'|} \succ h e^R.$$
Hence $$ e^{d/2}  \succ \max \{ \phi, h \} e^R \asymp ( \phi+ h ) e^R \asymp  l$$  which by~\eqref{eqn:2}    proves that $ e^{d/2} \prec l$.

To prove the inequality in the other direction note that provided that $h \leq h_0$ we have from the above $\sinh |EP_1'|  \prec  \phi/2 \sinh R$ and $\sinh |FP_1'|  \prec  h/2 \cosh R$.  Now $d \leq |P_1P_1'| + | P_1'P_2|$ gives $$d/2  \leq |EP_1'| + | FP_1'| \leq 2 \max (|EP_1'| , | FP_1'|).$$
Since $ d \geq d_0$ at least one of $\sinh |EP_1'| , \sinh |FP_1'| $ is bounded away from $0$ so that 
$$ e^{d/2} \leq  \max (e^{2|EP_1'| } , e^{2| Fa'| }) \asymp \max ( \phi e^R, he^R)  \asymp e^R (\phi +h).$$ The result follows from~\eqref{eqn:2}. 
\end{proof}

The next two lemmas involve the penetration of geodesics into tubes and horoballs.

\begin{appendthm}
\label{horoballs}
Suppose that  in the  ball  model $\mathbb B$, $H$ is a horoball such that $O \notin Int H$. Suppose also that  points $P_1,P_2 \in \dd H$  lie outside $B_{\HHH}(O;N)$. Then the 
 geodesic segment  $[P_1,P_2]$ lies outside $B_{\HHH}(O;  N/4 - c)$, for some universal $c>0$. 
\end{appendthm}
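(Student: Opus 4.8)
The plan is to pass to the upper half-space model of $\HHH^3$ with $H$ based at $\infty$, reduce the estimate to elementary computations with the hyperbolic distance formula, and dispose of the case where $O$ is far from $H$ using convexity of horoballs. Concretely, I would conjugate by an isometry so that $H=\{(z,t):t\ge 1\}$ and $O=(z_O,t_O)$ with $t_O\le 1$, the inequality $t_O\le 1$ being precisely the hypothesis that $O$ lies outside the interior of $H$. Set $\rho=d_{\HHH}(O,\overline H)=-\log t_O\ge 0$. Since horoballs are convex, $[P_1,P_2]\subset\overline H$, so $d_{\HHH}(O,[P_1,P_2])\ge\rho$; hence if $\rho\ge N/4$ there is nothing to prove, and from now on I assume $\rho<N/4$.

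Writing $P_i=(z_i,1)$, the formula $\cosh d_{\HHH}(O,P_i)=1+\frac{|z_O-z_i|^2+(1-t_O)^2}{2t_O}$, together with $d_{\HHH}(O,P_i)\ge N$ and $t_O=e^{-\rho}$, gives $|z_O-z_i|^2\ge e^{N-\rho}-3$, hence $|z_O-z_i|\ge D:=\tfrac12 e^{(N-\rho)/2}$ for $i=1,2$, provided $N$ exceeds an absolute constant (for the finitely many smaller $N$ the conclusion is vacuous once $c$ is large, since distances are non-negative).

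Let $M=(z_M,t_M)$ be a point of $[P_1,P_2]$ nearest to $O$. The geodesic through $P_1,P_2$ is a Euclidean semicircle of radius $r:=\sqrt{|z_1-z_2|^2/4+1}$ over the line through $z_1,z_2$, and the arc $[P_1,P_2]$ projects vertically onto the Euclidean segment $[z_1,z_2]$; thus $z_M\in[z_1,z_2]$ and $|z_M-c_{12}|^2+t_M^2=r^2$, where $c_{12}=(z_1+z_2)/2$. The one delicate point is that $z_M$ might lie close to $z_O$, making $|z_O-z_M|$ small; I would show this forces $t_M$ to be large. Indeed, if $|z_O-z_M|<D/2$ then by the previous step $z_M$ lies at distance $\ge D/2$ from each of $z_1,z_2$, so from $|z_1-z_2|=2\sqrt{r^2-1}$ one gets $\sqrt{r^2-1}\ge D/2$ and $|z_M-c_{12}|\le\sqrt{r^2-1}-D/2$, whence $t_M^2=r^2-|z_M-c_{12}|^2\ge 1+D\sqrt{r^2-1}-D^2/4\ge 1+D^2/4$. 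Either way, $\max(|z_O-z_M|,\,t_M)\ge D/2$.

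To finish, for any $X=(z,t)\in\overline H$ a routine lower bound on the distance formula --- splitting $\cosh d_{\HHH}(O,X)$ into its $|z_O-z|^2$ and $(t-t_O)^2$ parts, using $t\ge 1$, $t_O=e^{-\rho}$, and the convexity bound $\cosh d_{\HHH}(O,X)\ge\cosh\rho$ --- yields $\cosh d_{\HHH}(O,X)\succ e^{\rho}\max(|z_O-z|,\,t,\,1)$ with absolute implied constant, hence $d_{\HHH}(O,X)\ge\rho+\log\max(|z_O-z|,\,t,\,1)-C_0$ for an absolute $C_0$. Applying this to $M$ and using the preceding paragraph, $d_{\HHH}(O,M)\ge\rho+\log(D/2)-C_0=\tfrac{N+\rho}{2}-C_1\ge\tfrac N2-C_1$; combined with the case $\rho\ge N/4$ this gives $d_{\HHH}(O,[P_1,P_2])\ge\min(N/4,\,N/2-C_1)\ge N/4-C_1$. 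The main obstacle is the control of the nearest point $M$ in the third step; the factor $1/4$ is not sharp, just the loss from the crude split at $\rho=N/4$.
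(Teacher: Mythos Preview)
Your proof is correct, and it takes a genuinely different route from the paper's argument. The paper works synthetically: it first treats the case $O\in\partial H$, using Lemma~\ref{expdist1} (the horosphere distortion estimate $l\asymp e^{d/2}$) to compare horospherical and hyperbolic distances, and then a thin-triangle argument in the hyperbolic triangle $OP_1P_2$ with its altitude from $O$ to extract $d_{\HHH}(O,[P_1,P_2])\gadd N/2$. The general case $O\notin\partial H$ is then reduced to the first by projecting $O$ to its nearest point $O'\in\partial H$ and splitting according to whether $d_{\HHH}(O,O')\ge N/2$ or not, which is where the factor $1/4$ enters.

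You instead normalise in the upper half-space model, use the explicit distance formula, and analyse the nearest point $M$ on the geodesic semicircle directly, with the crude split at $\rho=N/4$ replacing the paper's projection step. Your argument is more computational but entirely self-contained: it does not rely on Lemma~\ref{expdist1} or on thin-triangle estimates. The paper's synthetic approach, on the other hand, makes the parallel with the tube case (Lemma~\ref{tubes}) more transparent, since that proof reuses the same horosphere-style distortion ideas. Both approaches lose the same factor of $2$ in the same place (the reduction from general $O$ to $O$ on or near $\partial H$), and neither constant is sharp.
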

\begin{proof} 
First consider the case in which $O \in \partial H$.  Let $d_i = d_{\HHH}(O,P_i)$ and   let $l_i = d_{\dd H}(O,P_i)$, where as above $d_{\dd H}(.,.)$ denotes distance measured along $\partial H$.  Also let $d  = d_{\HHH}(P_1,P_2)$    let $l  = d_{\dd H}(P_1,P_2)$. By hypothesis $d_i \geq N, i=1,2$. Clearly we may fix some $d_0 >0$ and assume that $d \geq d_0$, otherwise the result is trivial. Hence by Lemma~\ref{expdist1}, $ l_i \asymp e^{d_i/2}$ and $ l  \asymp e^{d/2}$.
Thus $$e^{d/2} \asymp  l  \leq l_1+ l_2 \asymp e^{d_1/2} + e^{d_2/2} \leq 2 e^{\max(d_1,d_2)/2}$$
so that 
$d  \ladd \max (d_1,d_2)$.

\begin{figure}[hbt] 
 \centering
\includegraphics[height=5.5cm]{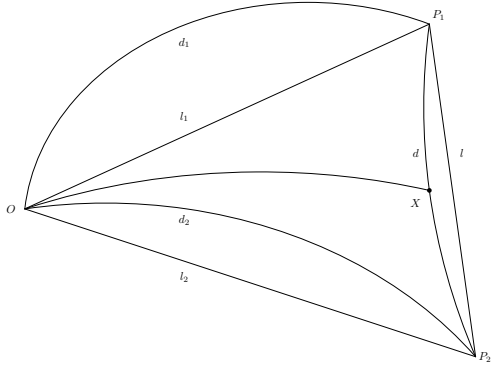} 
\caption{Configuration for Lemma~\ref{horoballs}. The points $O,P_1,P_2$ are all on the boundary of a horoball $H$.}
\label{fig:horoball}
 \end{figure}

 Assume that $ d_1 \geq d_2$.   Considering the hyperbolic triangle $OP_1P_2$  and its altitude $OX$, we have
   \begin{eqnarray*}
d = d_{\HHH}(X,P_2)+d_{\HHH}(X,P_1) & \ladd & d_1 \cr
d_{\HHH}(O,X)+d_{\HHH}(X,P_1) & \add & d_1 \cr
d_{\HHH}(O,X) +  d_{\HHH}(X,P_2) & \add & d_2 .
\end{eqnarray*}
The first two lines give  $d_{\HHH}(X,P_2) \ladd d_{\HHH}(O,X)$ and hence by the last line $ d_{\HHH}(O,X) \gadd d_2/2 \geq N/2$. Since $X$ is the closest point on $[P_1,P_2]$ to $O$, the result follows.

Finally suppose that $O \notin \dd H$. Let $O'$ be the foot of the perpendicular from $O$ to $H$. Then for any point $Y \in H$, since the angle between the geodesic segments $[O,O']$ and $[O',Y]$ is at least $\pi/2$, 
$$d_{\HHH}(O,Y) \add d_{\HHH}(O,O') + d_{\HHH}(O',Y) .$$
If $  d_{\HHH}(O,O') \geq N/2$ there is nothing to prove since by convexity the nearest point on $[P_1,P_2]$ to $O$ is in $  H$. If 
$  d_{\HHH}(O,O')< N/2$   then $  d_{\HHH}(O',P_i) \gadd  N/2$. The proof above with $N/2$ in place of $N$ then gives $  d_{\HHH}(O',[P_1,P_2]) \gadd  N/4$ so that    $  d_{\HHH}(O, [P_1,P_2]) \gadd d_{\HHH}(O,O') +  d_{\HHH}(O',[P_1,P_2]) \gadd  N/4$ as claimed.
\end{proof}

\begin{appendthm}
\label{tubes}
Suppose that $T \subset \HHH^3$ is an equidistant tube of radius $R \geq R_0$ around a geodesic $L $ in the ball model, and suppose that  $O \notin Int T$. Suppose also that  $P_1,P_2 \in \dd T$  lie outside $B_{\HHH}(O;N)$, and that in addition there is a path joining $P_1,P_2$ on $\partial T$ and  outside $B_{\HHH}(O;N)$. Then the 
geodesic segment of $[P_1,P_2]$ lies outside $B_{\HHH}(O;  N/4 -c)$  for some universal $c>0$. 
\end{appendthm}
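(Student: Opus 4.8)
The plan is to imitate the proof of Lemma~\ref{horoballs}, replacing the horoball distortion estimate Lemma~\ref{expdist1} by the tube distortion estimate Lemma~\ref{expdist2}. The essential new difficulty is that the two-sided bound $l\asymp e^{d/2}$ in Lemma~\ref{expdist2} is available only when the two points of $\partial T$ are at bounded distance along the axis $L$; in general only the unconditional half $l\prec e^{d/2}$ survives. This is exactly why the hypothesis of a path $\sigma\subset\partial T$ joining $P_1,P_2$ and lying outside $B_\HHH(O;N)$ is needed: it will be used to control the configuration in the ``long tube'' case.

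First I would reduce to the case $O\in\partial T$, as at the end of the proof of Lemma~\ref{horoballs}. Since $T$ is convex, $[P_1,P_2]\subset T$, so if $d_\HHH(O,T)=d_\HHH(O,L)-R\ge N/2$ there is nothing to prove. Otherwise I let $Q_0\in\partial T$ be the nearest point of $\partial T$ to $O$; because $Q_0$ minimises $d_\HHH(O,\cdot)$ over the convex set $T$, the segments $[O,Q_0]$ and $[Q_0,Y]$ meet at angle $\ge\pi/2$ for every $Y\in T$, whence $d_\HHH(O,Y)\gadd d_\HHH(O,Q_0)+d_\HHH(Q_0,Y)$. Thus $d_\HHH(Q_0,P_i)\gadd N/2$, the path $\sigma$ avoids $B_\HHH(O;N)\supseteq B_\HHH(Q_0;N/2)$, and it suffices to prove $d_\HHH(Q_0,[P_1,P_2])\gadd N/4$. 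So from now on I assume $O\in\partial T$ and aim for the stronger bound $d_\HHH(O,[P_1,P_2])\gadd N/2-c$. Then, as in the proof of Lemma~\ref{expdist2}, I work in the upper half-space model with $L$ vertical, so that $\partial T$ carries a flat cylinder metric with coordinates $(u,\theta)$; I put $O=(0,0)$, $P_i=(u_i,\theta_i)$, $d_i=d_\HHH(O,P_i)\ge N$, $d=d_\HHH(P_1,P_2)$. If $d<d_0$ the whole segment lies within $d_0$ of its endpoints, hence outside $B_\HHH(O;N-d_0)$, so assume $d\ge d_0$.

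The main case is $|u_1-u_2|\le h_0$, for a suitably fixed $h_0$. Here Lemma~\ref{expdist2} gives $d_{\partial T}(P_1,P_2)\asymp e^{d/2}$, while its unconditional half gives $d_{\partial T}(O,P_i)\prec e^{d_i/2}$, and the triangle inequality $d_{\partial T}(P_1,P_2)\le d_{\partial T}(O,P_1)+d_{\partial T}(O,P_2)$ on the length space $\partial T$ then yields $e^{d/2}\prec e^{d_1/2}+e^{d_2/2}$, i.e. $d\ladd\max(d_1,d_2)$. From here the right-angled triangle estimate of Lemma~\ref{horoballs}, applied to the triangle $OP_1P_2$ and the foot $X$ of the perpendicular from $O$ to $[P_1,P_2]$, gives $d_\HHH(O,X)\gadd\min(d_1,d_2)/2\ge N/2$; this part does not use $\sigma$. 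The remaining case $|u_1-u_2|>h_0$ is where $\sigma$ is essential. On the flat cylinder, for each fixed level $v$ the function $\theta\mapsto d_\HHH(O,(v,\theta))$ is monotone in $|\theta|$ up to $\theta=\pi$, with maximum at least $2R$ (by Lemma~\ref{expdist2}), so $\partial T\cap B_\HHH(O;N)$ is a topological disc when $N\le 2R$ and an annular band $\{|u|\le v_{\max}\}$ meeting every level when $N>2R$. In the latter case $\partial T\setminus B_\HHH(O;N)$ has two components, so $\sigma$ forces $P_1,P_2$ — and hence, using that nearest-point projection of one geodesic onto another in $\HHH^3$ is monotone, all of $\pi_L([P_1,P_2])=[\pi_L(P_1),\pi_L(P_2)]$ — to lie at levels of one sign, bounded away from the level of $O$; combined with $d_i\ge N$ this localises $[P_1,P_2]$ at distance $\gadd N/2$ from $O$ via the identity $\cosh d_\HHH(O,q)=\cosh R\cosh|v|$ for $q\in L$ at level $v$ together with $d_\HHH(O,X)\ge d_\HHH(O,\pi_L(X))-R$. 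When instead $N\le 2R$, one has $R\gadd N/2$; using convexity of $d_\HHH(\cdot,L)$ along $[P_1,P_2]$ and the identity $\cosh d_i=\cosh d_\HHH(O,X)\cosh d_\HHH(X,P_i)$, and noting that $d_i\ge N$ with $u_i$ near the level of $O$ forces a large angular separation and hence again $R\gadd N/2$, one reaches $d_\HHH(O,[P_1,P_2])\gadd N/2-c$.

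I expect this final case to be the main obstacle: one must extract the bound uniformly in the tube radius $R$ in a regime where $[P_1,P_2]$ is neither far from the tube $T$ nor far along its axis, and where the angular and axial contributions both matter. The hypothesis on $\sigma$ is precisely what excludes the configurations in which $[P_1,P_2]$ could otherwise cut back close to $O$, and obtaining a clean accounting of these two contributions will be the part requiring the most care.
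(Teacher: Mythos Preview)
Your reduction to $O\in\partial T$ and your main case $|u_1-u_2|\le h_0$ are correct and match the paper exactly. The divergence is in the remaining case $|h|=|u_1-u_2|>h_0$, and there your $N>2R$ subcase has a genuine gap: the topological separation of $\partial T\setminus B_\HHH(O;N)$ does force $u_1,u_2$ to have the same sign, but your estimate $d_\HHH(O,X)\ge d_\HHH(O,\pi_L(X))-R$ then needs the level $|\pi_L(X)|$ to be large, and nothing you have written bounds $\min(|u_1|,|u_2|)$ below when $N$ is only slightly larger than $2R$. Your $N\le 2R$ subcase is in fact the right conclusion, but you need the tracking argument (that $[P_1,P_2]$ lies within a universal constant of $[P_1,p_1]\cup[p_1,p_2]\cup[p_2,P_2]$ once $|h|\ge 1$, and $d_\HHH(O,[p_1,p_2])\ge R$), not a vague appeal to convexity.

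The paper's organisation of this case is cleaner and avoids the gap. It splits not by $N$ versus $2R$ but by whether $o=\pi_L(O)$ lies within distance $1$ of the segment $[p_1,p_2]$. If $o$ is at distance $\ge 1$ from $[p_1,p_2]$, the totally geodesic plane through the nearer endpoint perpendicular to $L$ cuts off a half-space containing $[P_1,P_2]$, and one bounds $d_\HHH(O,\cdot)$ on that half-space directly; $\sigma$ is not used. If instead $o$ lies within $1$ of $[p_1,p_2]$, then the projection $\pi_L(\sigma)$ is connected and contains both $p_1,p_2$, hence contains $o$; so $\sigma$ meets the circle $\pi_L^{-1}(o)\cap\partial T$ at some $o'$. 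That circle has hyperbolic diameter $2R$ and contains $O$, so $N\le d_\HHH(O,o')\le 2R$, giving $R\ge N/2$ in one stroke; the tracking argument then finishes. Note how this dichotomy absorbs your problematic boundary region: whenever $o$ is close to $[p_1,p_2]$ you are automatically forced into $R\ge N/2$, so the half-space case genuinely has $o$ bounded away from the segment.
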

\begin{proof}  As in the proof of Lemma~\ref{horoballs}, it will be enough to   show that  in the case   $O \in \partial T$ that   $[P_1,P_2]$   is outside $B_{\HHH}(O;N/2-c)$.

Let $\pi$ denote perpendicular projection from $\HHH^3$ onto the axis $L$ of $T$. As in the proof of Lemma~\ref{easy1}, let $p_i = \pi(P_i)$ and   write $h + i \phi$ for the complex distance between the perpendiculars $[p_i ,P_i], i=1,2$, see Figure~\ref{fig:cone}.

First suppose that the distance from $o = \pi(O)$ to the segment $  [p_1,p_2]$ is at least $1$, and suppose that $o$ is nearer to  $p_1$ than $p_2$. Let $\rho = d_{\HHH}(O, p_1)$.  Let $\Pi$ be the plane perpendicular to $L$ through $p_1$, and let $K$ be the closed half space cut off by $\Pi$ and not containing $o$. Then $K$ contains both $P_1$ and $P_2$, and hence the segment $[P_1,P_2]$; moreover
$d_{\HHH}(O,P_1) \add 2R + \rho $
while
for any point $X \in K$ we have $d_{\HHH}(O, X) \gadd R + \rho$. Since $d_{\HHH}(O,P_1) \geq N$ it follows that $R + \rho/2 \gadd N/2$ so that 
$d_{\HHH}(O,  [P_1,P_2]) \geq R + \rho \geq  N/2$.

Now suppose that $o$ is at distance at most $1$ to the segment $  [p_1,p_2]$, and suppose also that   $|h| \geq 1$. The hyperbolic geodesic $\alpha$ from $P_1$ to $P_2$ is at distance at most $c$ to the union of the geodesic segments $[P_1,p_1], [p_1, p_2], [p_2, P_2]$. Let $\Pi'$ be the plane orthogonal to $L$ containing $O$, so that $o \in \Pi'$.  Then $\Pi'$ separates $T$ and the points $P_1,P_2$ are in opposite sides of $\Pi'$. Hence the projection of any path from  $P_1$ to $P_2$
on $\dd T$ onto $L$ must contain $o$. Let $o' \in \pi^{-1}(o)  \in \beta$, where $\beta$ is a path from  $P_1$ to $P_2$ on $\dd T$.   
Note $o$ is the centre of a circle of radius $R$ whose boundary contains both $O$ and $o'$. Hence $R \geq d_{\HHH}(O,o')/2$. By hypothesis since $o' \in \beta$ we have 
$d_{\HHH}(O,o') \geq N$, so $R \geq N/2$.
On the other hand, since  $\alpha$ tracks $[P_1,p_1] \cup [p_1, p_2] \cup  [p_2, P_2]$ within distance $c$ for some universal $c>0$, since $o \in [p_1, p_2]$, and since $o$ is the nearest point to $O$ on $L$, we have that $d_{\HHH}(O,[P_1,P_2]) \add d_{\HHH}(O,o)  = R$. Thus $d_{\HHH}(O,[P_1,P_2]) \gadd N/2$ as claimed.

Finally suppose that  $|h| \leq 1$.  Let $d = d_{\HHH}(P_1,P_2)$,  $l = d_{\dd T}(P_1,P_2)$, $d_i = d_{\HHH}(O,P_i) \geq N$  and $l_i = d_{\dd T}(O,P_i), i=1,2$.  
Picking $h_0 = 2$ in Lemma~\ref{tubes} we have $ l \asymp e^{d/2}$, and in addition, since $o$ is at distance at most $1$ to the segment $  [p_1,p_2] \subset L$, we   have $d_{\HHH}(o,p_i) =  d_{\HHH}(\pi(O),\pi(P_i) \leq h_0$ so that $ l_i \asymp e^{d_i/2}$. 
 Then exactly the same proof as in Lemma~\ref{horoballs} gives that $  d_{\HHH}(O,[P_1,P_2]) \gadd  N/2$.
This completes the proof.
\end{proof}
 
\begin{appendremark} {\rm Lemma~\ref{tubes} required a hypothesis not needed in Lemma~\ref{horoballs}, namely that  the shortest path on $\partial T$ from  $P_1$ to $P_2$  is outside $B_{\HHH}(O;N)$.  This is only used in the case
 in which $\pi(O)$ is near to $[\pi(P_1), \pi(P_2)]$ and $|h| \geq 1$, however here it is crucial. To see this consider the situation in which $P_1, P_2 $  and $L$ are coplanar  and  $O$ is the midpoint of the path from $P_1$ to $P_2$  on $\dd T$ (contrary to the hypothesis under discussion). Then the geodesic  $[P_1, P_2]$ tracks $[P_1,p_1]\cup  [p_1, p_2]\cup [p_2, P_2]$ so that  $d_{\HHH}(O,[P_1, P_2] ) \add R$ but at the same time we could have   $N \gg R$. 
This would cause problems in Section~\ref{sec:algebraic} when we need to find uniform estimates for a sequence of groups with fixed $R_0$ but $N \to \infty$.} 
\end{appendremark}

\bibliography{cmotions}

\begin{thebibliography}{000}

 

\bibitem{BH} M. Bridson and A. Haefliger. \newblock Metric spaces of non-positive curvature.
\newblock {\em Springer Grundlehren} 319, Springer, Berlin, 1999.

 \bibitem{brock-itn} J. Brock. \newblock Iteration of mapping classes and limits of hyperbolic 3-manifolds.
\newblock {\em Inventiones Math.} 1043, 523 -- 570, 2001.


 \bibitem{canary} R. Canary.
       \newblock Ends of hyperbolic $3$-manifolds.
       \newblock {\em J. Amer. Math. Soc.}  6, 1 -- 35, 1993.


 \bibitem{CT} J. Cannon and W. P. Thurston.
       \newblock Group {I}nvariant {P}eano {C}urves.
       \newblock {\em Geometry and Topology}  11, 1315 -- 1355, 2007.
       

\bibitem{evans1} R. Evans.
 \newblock Deformation spaces of hyperbolic 3-manifolds: strong convergence and
tameness.
\newblock {\em Ph.D. Thesis, University of Michigan}, 2000.

 
 \bibitem{evans2} R. Evans. \newblock Weakly type-preserving sequences and strong convergence.
\newblock {\em Geometriae Dedicata} 108, 71 -- 92, 2004.

\bibitem{fenchel-n} W. Fenchel and J. Nielsen. \newblock Discontinuous groups of isometries in the hyperbolic plane.
\newblock {\em de Gruyter Studies in Math.} 319, Berlin 2003.


\bibitem{floyd} W. Floyd.
\newblock Group completions and limit sets of Kleinian groups.
\newblock {\em Inventiones Math.} 57, 205 -- 218, 1980.


\bibitem{franca} S. Francaviglia.
\newblock Constructing equivariant maps for representations.
\newblock {\em Ann. Inst. Fourier} 59, 393 -- 428, 2009.

\bibitem{GhH} E. Ghys and  P. de la Harpe (eds.).
\newblock  Sur les groupes hyperboliques d'apr\`{e}s {M}ikhael {G}romov.
       \newblock {\em Progress in Math. Vol 83, Birkhauser, Boston}, 1990.


\bibitem{kerckhoff-thurston} S. Kerckhoff and W. Thurston.
\newblock  {Non-continuity of the action of the modular group at the
         Bers' boundary of Teichm\"uller Space.}
\newblock {\em Inventiones Math.} 100, 25 -- 48, 1990.

\bibitem{kuusalo} T. Kuusalo.
\newblock  {Boundary mappings of geometric isomorphisms of Fuchsian groups.}
\newblock {\em Ann. Acad. Sci. Fennicae Ser. A Math.}  545, 1 -- 7, 1973.



\bibitem{jor-mar} T. J{\o}rgensen and A. Marden. 
\newblock Algebraic and geometric convergence of
                {K}leinian groups.
                \newblock {\em Math. Scand.} 66, 47 -- 72, 1990.

\bibitem{MSS} R.~Ma{\~n}\'e, P.~Sad  and D.~Sullivan. 
\newblock {On the dynamics of rational maps.}
\newblock {\em Ann. Sci. \'Ecole. Norm. Sup.}, 16, 193 -- 217, 1983.

\bibitem{marden-book} A.~Marden.
\newblock {\em Outer Circles: An introduction to hyperbolic $3$-manifolds.}
\newblock Cambridge University Press, 2007.

\bibitem{ctm-locconn} C. T. McMullen.
 \newblock Local connectivity, {K}leinian groups and geodesics on the blow-up of the torus.
\newblock {\em  Inventiones Math.} 97, 95 -- 127, 2001.


\bibitem{MinskyOPT}
Y. N. Minsky.
\newblock The classification of punctured-torus groups.
\newblock {\em Ann. of Math.} 149,  559 -- 626, 1999.


\bibitem{minsky-elc1} Y. N. Minsky.
\newblock The {c}lassification of {K}leinian surface groups {I}: Models and Bounds.
\newblock {\em Ann. of Math.} 171,  1 -- 107, 2010.

\bibitem{mitra-ct} M. Mitra.
\newblock Cannon-{T}hurston maps for hyperbolic group extensions.
\newblock {\em Topology} 37, 527 -- 538, 1998.

 \bibitem{miyachi} H. Miyachi.
\newblock Moduli of continuity of Cannon-Thurston maps.
\newblock {\em In Spaces of Kleinian groups; London Math. Soc. Lecture Notes 329, Cambridge University Press}, 121 --150, 
2006.   


\bibitem{mahan-kl} M. Mj.
\newblock Cannon-Thurston Maps for Kleinian Groups.
\newblock {\em preprint, arXiv:1002.0996}, 2010.




          
  \bibitem{mjs2} M. Mj and C. Series.
\newblock Limits  of limit sets II.
\newblock {\em In preparation.}
          
    


\bibitem{nielsen} J. Nielsen.  
 \newblock 	Untersuchungen zur Topologie der geschlossenen zweiseitigen Fl\"achen. 
\newblock {\em Acta Math.} 50, 189 -- 358, 1927.


 \bibitem{tukia} P. Tukia.
\newblock On isomorphisms of geometrically finite M\"obius groups. 
\newblock {\em IHES Publ.} 61, 127 -- 140, 1985.

 \bibitem{tukia1} P. Tukia.
\newblock A remark on a paper by Floyd.
\newblock {\em In Holomorphic functions and moduli Vol II; MSRI Publ. 11, Springer, New York}, 165 --172, 
1988.      


\end{thebibliography}
\bibliographystyle{alpha}

\end{document}